\documentclass{article} 
\usepackage{iclr2024_conference,times}


\usepackage{graphicx}
\usepackage[utf8]{inputenc}
\usepackage{amsfonts}
\usepackage{amsmath}
\usepackage{amsthm}
\usepackage{amssymb}
\usepackage{tikz}
\usepackage{mathtools}
\usepackage{mathrsfs}
\usepackage{algorithm}
\usepackage{algpseudocode}
\usepackage{enumitem}
\usepackage{comment}

\usepackage{hyperref}
\usepackage{url}
\usepackage{inconsolata}

 \hypersetup{
  colorlinks=true,
  linkcolor={red!80!black},
  citecolor={blue!80!black},
  urlcolor={magenta!80!black}
} 

\newtheorem{thm}{Theorem}[section] 
\newtheorem{lemma}[thm]{Lemma} 
\newtheorem{prop}[thm]{Proposition}

\newtheorem{ass}{Assumption}

\theoremstyle{definition} 
\newtheorem{defn}[thm]{Definition}

\algnewcommand\algorithmicinitialization{\textbf{Initialization:}}
\algnewcommand\Initialization{\item[\algorithmicinitialization]}

\newcommand{\norm}[1]{\left\lVert#1\right\rVert}

\DeclareMathOperator{\Var}{Var}

\DeclareMathOperator{\TV}{TV}

\newcommand{\E}[1]{\mathbb{E}[#1]}
\newcommand{\Ebig}[1]{\mathbb{E}\left[#1\right]}
\newcommand{\EE}[2]
{\mathbb{E}_{#1}[#2]}
\newcommand{\EEbig}[2]{\mathbb{E}_{#1}\left[#2\right]}

\DeclareMathOperator{\RR}{\mathbb{R}}

\DeclareMathOperator{\NN}{\mathbb{N}}
\DeclareMathOperator{\XX}{\mathcal{X}}
\DeclareMathOperator{\YY}{\mathcal{Y}}
\DeclareMathOperator{\LL}{\mathcal{L}}
\DeclareMathOperator{\I}{\mathrm{I}}
\DeclareMathOperator{\bb}{\mathfrak{b}}

\DeclareMathOperator{\Law}{\mathrm{Law}}
\DeclareMathOperator{\Lip}{\mathrm{Lip}}
\DeclareMathOperator{\pmu}{\widehat{\mu}}
\DeclareMathOperator{\pnu}{\widehat{\nu}}
\DeclareMathOperator{\PP}{\mathcal{P}_2}
\DeclareMathOperator{\Ent}{\mathrm{Ent}}
\DeclareMathOperator{\KL}{\mathrm{KL}}
\DeclareMathOperator{\NI}{\mathrm{NI}}
\DeclareMathOperator{\Ei}{\mathrm{Ei}}
\newcommand*{\rd}{\mathop{}\!\mathrm{d}}

\DeclareMathOperator*{\argmax}{arg\,max}

\allowdisplaybreaks

\title{Symmetric Mean-field Langevin Dynamics for Distributional Minimax Problems}


\author{%
  Juno Kim\textsuperscript{1,2}$^*$\quad Kakei Yamamoto\textsuperscript{3}\quad Kazusato Oko\textsuperscript{1,2}\quad Zhuoran Yang\textsuperscript{4}\quad Taiji Suzuki\textsuperscript{1,2}\\ 
  \textsuperscript{1}The University of Tokyo, Tokyo, Japan\quad\textsuperscript{2}Center for Advanced Intelligence Project, RIKEN\\
  \textsuperscript{3}Massachusetts Institute of Technology, Cambridge, MA\quad\textsuperscript{4}Yale University, New Haven, CT\\
  \texttt{$^*$junokim@g.ecc.u-tokyo.ac.jp}\\[-5pt]
}

\iclrfinalcopy
\begin{document}

\maketitle

\begin{abstract}
In this paper, we extend mean-field Langevin dynamics to minimax optimization over probability distributions for the first time with symmetric and provably convergent updates. We propose \emph{mean-field Langevin averaged gradient} (MFL-AG), a single-loop algorithm that implements gradient descent ascent in the distribution spaces with a novel weighted averaging, and establish average-iterate convergence to the mixed Nash equilibrium. We also study both time and particle discretization regimes and prove a new uniform-in-time propagation of chaos result which accounts for the dependency of the particle interactions on all previous distributions. Furthermore, we propose \emph{mean-field Langevin anchored best response} (MFL-ABR), a symmetric double-loop algorithm based on best response dynamics with linear last-iterate convergence. Finally, we study applications to zero-sum Markov games and conduct simulations demonstrating long-term optimality.
\end{abstract}

\section{Introduction}

The mean-field Langevin dynamics (MFLD) provides powerful theoretical tools to analyze optimization on the space of probability measures such as the training of two-layer neural networks \citep{Mei18, Chizat18}. The McKean-Vlasov stochastic process corresponds to the Wasserstein gradient flow minimizing an entropy-regularized convex functional, where the Gaussian noise encourages exploration and ensures global convergence \citep{Hu21, Chizat22, Nitanda22}. 
Langevin-based methods are especially attractive as they capture nonlinear aspects of learning as well as admit efficient particle discretizations. 
However, it remains unclear how to extend beyond single-objective optimization problems in a principled manner.


In this work, we develop MFLD for distributional minimax optimization problems. Denote by $\PP(\XX),\PP(\YY)$ the spaces of probability measures of finite variance on $\XX,\YY$ with fixed base measures $\rho^\mu,\rho^\nu$. We consider the entropy-regularized saddle point problem for a convex-concave functional $\LL:\PP(\XX)\times \PP(\YY)\to\RR$ with regularization strength or temperature $\lambda>0$,\footnote{Throughout the paper, sub/superscripts such as $\rho^\mu,\rho^\nu$ differentiate quantities related to the min and max variables, and do \emph{not} indicate dependency on the distributions $\mu,\nu$. Our results are easily extended to different temperatures for each variable. We will also present many results for $\mu$ and omit the analogous statement for $\nu$.}\\[-3pt]
\begin{equation}\label{eq-problemstatement}
\min_{\mu\in\PP(\XX)} \max_{\nu\in\PP(\YY)} \LL_\lambda(\mu,\nu),\quad \LL_\lambda(\mu,\nu):= \LL(\mu,\nu) + \lambda\KL(\mu\Vert \rho^\mu) -\lambda\KL(\nu\Vert \rho^\nu).
\end{equation}
This formulation encompasses all objectives of the form $\LL(\mu,\nu) = \iint Q(x,y)\mu(\rd x)\nu(\rd y)$ for generic nonconvex-nonconcave potentials $Q$. Such problems naturally arise for example in training generative adversarial networks \citep{Goodfellow20, Arjovsky17, Hsieh19}, robust learning \citep{Madry18, Sinha18} or solving zero-sum games in reinforcement learning \citep{Daskalakis19, Domingo20, Zeng22}.

One is immediately led to consider \emph{mean-field Langevin descent ascent} (MFL-DA) dynamics, the coupled distribution-dependent stochastic processes which seek to simultaneously minimize $\LL$ over $\mu$ and maximize over $\nu$ (see Appendix \ref{appendix-mne} for definitions of functional derivative and convexity):
\begin{equation*}
\begin{aligned}
\rd X_t&= \big(\!-\nabla_x \textstyle\frac{\delta \!\LL}{\delta \mu}(\mu_t,\nu_t)(X_t) +\lambda \nabla_x \log\rho^\mu(X_t) \big)\rd t + \sqrt{2\lambda} \rd W_t^\mu,\quad \mu_t = \Law(X_t),\\
\rd Y_t&= \big(\nabla_y \textstyle\frac{\delta \!\LL}{\delta \nu}(\mu_t,\nu_t)(Y_t) +\lambda\nabla_y \log\rho^\nu(Y_t) \big)\rd t + \sqrt{2\lambda} \rd W_t^\nu, \quad \nu_t=\Law(Y_t),
\end{aligned}
\end{equation*}
where $W_t^\mu, W_t^\nu$ are independent Brownian motions. Descent ascent methods are more challenging to analyze compared to their single optimization counterparts; it is known that simultaneous updates may display cyclic or divergent behavior even for the simplest matrix games \citep{Daskalakis19}. For finite strategy spaces, a vigorous line of research has established convergence guarantees by employing optimistic or extragradient update rules; see \citet{Cen23, Zeng22} for an overview of recent literature and applications to Markov games.

Unfortunately, the convergence of MFL-DA is to the best of our knowledge still an open problem, and mean-field minimax dynamics remains largely unexplored. Existing results fail to establish convergence guarantees \citep{Domingo20} or only give proofs for near-static flows where one strategy updates extremely or even infinitely quickly compared to the other \citep{Chao21, Lu22}. These works also impose the unrealistic assumption that $\XX,\YY$ are both compact Riemannian manifolds without boundary. In contrast, we allow $\XX,\YY$ to be Euclidean spaces.

Another fundamental consideration when implementing mean-field dynamics is to account for the errors arising from time discretization and particle approximation in a non-asymptotic manner, the latter referred to as \emph{propagation of chaos} \citep{Sznitman91}. Prior works generally give error bounds that blow up exponentially as training progresses \citep{Mei18, De20}; uniform-in-time results were proven in the single optimization case only recently by \citet{Chen22, Suzuki23}.
Hence we are faced with the following research question:
\begin{center}
\emph{Can we develop symmetric MFLD algorithms for distributional minimax problems with global convergence guarantees, and further provide uniform-in-time control over discretization errors?}
\end{center}

\subsection{Summary of Contributions}

We address the above problem by proposing \emph{mean-field Langevin averaged gradient}, a symmetric single-loop algorithm which takes inspiration from dual averaging methods and replaces the MFL-DA drift with the historical weighted average. We prove average-iterate convergence to the mixed Nash equilibrium. We also study both time and particle discretization and establish a new uniform-in-time propagation of chaos result. The analysis is greatly complicated by the dependence of the interactions on all previous distributions and the techniques developed are of independent interest.

In addition, we propose a symmetric double-loop algorithm, \emph{mean-field Langevin anchored best response}, which realizes the best-response flow suggested in \citet{Lascu23} via an inner loop running Langevin dynamics. We show that the outer loop updates enjoy last-iterate linear convergence to the mixed Nash equilibrium. Furthermore, we apply our theory to zero-sum Markov games and propose a two-step iterative scheme that finds the regularized Markov perfect equilibrium. Finally, we numerically demonstrate the superior optimality of both algorithms compared to MFL-DA.

\section{Problem Setting and Assumptions}\label{section-setting}

Denote by $\PP(\RR^d)$ the space of probability measures on $\RR^d$ equipped with the Borel $\sigma$-algebra with finite second moment. Let $\XX=\RR^{d_{\XX}}, \YY=\RR^{d_{\YY}}$ and $\LL:\PP(\XX) \times \PP(\YY) \to\RR$ be a weakly convex-concave functional. Our objective is to find the mixed Nash equilibrium (MNE) solving \eqref{eq-problemstatement}. Entropic regularization is frequently adopted in minimax optimization to account for imperfect information and ensure good convergence properties \citep{McKelvey95, Sokota23}.

We proceed to state our assumptions which are standard in the MFLD literature \citep{Suzuki23}.

\begin{ass}[Regularity of $\rho^\mu,\rho^\nu$]\label{ass-rhoreg}
We assume that 
$\rho^\mu = \exp(-U^\mu)$ and $\rho^\nu = \exp(-U^\nu)$ for $r_\mu$- and $r_\nu$-strongly convex potentials $U^\mu: \XX\to\RR$ and $U^\nu: \YY\to\RR$, respectively. Furthermore, $\nabla_x U^\mu$ and $\nabla_y U^\nu$ are $R_\mu$- and $R_\nu$-Lipschitz, repsectively, and $\nabla_x U^\mu(0)=\nabla_y U^\nu(0)=0$.
\end{ass}


\begin{ass}[Regularity of $\LL$ for MFL-AG]\label{ass-Lreg}
We assume 
$\LL$ is convex-concave and admits $C^1$ functional derivatives $\frac{\delta \!\LL}{\delta\mu}, \frac{\delta \!\LL}{\delta\nu}$ at any $(\mu,\nu)$, whose gradients are uniformly bounded, and  Lipschitz continuous with respect to the input and  $\mu,\nu$.  That is, there exist constants $K_\mu,L_\mu,M_\mu>0$ such that 
$\lVert\nabla_x\frac{\delta \!\LL}{\delta\mu}(\mu,\nu)(x)\rVert\leq M_\mu$ and\\[-5pt]
\begin{equation}
\textstyle\norm{\nabla_x\frac{\delta\!\LL}{\delta\mu}(\mu,\nu)(x) - \nabla_x\frac{\delta \!\LL}{\delta\mu}(\mu',\nu')(x')} \leq K_\mu\norm{x-x'} + L_\mu(W_1(\mu,\mu')+W_1(\nu,\nu'))
\end{equation}
for all $x, x'$,  $\mu$, and $\nu$. The same properties hold for $\nabla_y \frac{\delta \!\LL}{\delta\nu}$ with $K_\nu,L_\nu,M_\nu>0$.
\end{ass}

Assumption \ref{ass-Lreg} implies in particular that $\frac{\delta \!\LL}{\delta\mu}$ is $M_\mu$-Lipschitz and $\mu\mapsto \LL(\mu,\nu)$ is $M_\mu$-Lipschitz in $W_1$. To present our results at full generality, we do \emph{not} require boundedness of the functional derivatives $\frac{\delta\!\LL}{\delta\mu}$ and $\frac{\delta \!\LL}{\delta\nu}$, which would nevertheless simplify some arguments and improve for instance the log-Sobolev constants via the Holley-Stroock argument (Proposition \ref{thm-holley}).

The KL regularization is enough to assure existence and uniqueness of the MNE via an application of the Kakutani fixed-point theorem \citep{Conforti20}; see Appendix \ref{appendix-mne} for the proof.

\begin{prop}[Existence and uniqueness of MNE]\label{thm-mneexistence}
Under Assumptions \ref{ass-rhoreg} and \ref{ass-Lreg}, the solution $(\mu^*,\nu^*)$ to \eqref{eq-problemstatement} uniquely exists and satisfies the first-order equations
\begin{equation}\label{eq-mnedef}
\textstyle\mu^*\propto \rho^\mu \exp\Big(-\frac{1}{\lambda} \frac{\delta \!\LL}{\delta\mu}(\mu^*,\nu^*)\Big), \quad \nu^* \propto\rho^\nu \exp\Big(\frac{1}{\lambda} \frac{\delta \!\LL}{\delta\nu}(\mu^*,\nu^*)\Big).\\[-5pt]
\end{equation}
\end{prop}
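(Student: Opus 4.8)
The plan is to verify the two claims---existence/uniqueness of the MNE and the first-order characterization---as a consequence of a fixed-point argument, exploiting the fact that the entropic regularization makes the inner best responses unique and explicit.

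First I would set up the best-response maps. For fixed $\nu\in\PP(\YY)$, the map $\mu\mapsto\LL_\lambda(\mu,\nu)$ is a strictly convex, lower semicontinuous functional on $\PP(\XX)$ (strict convexity coming from the KL term, since $\LL$ itself is convex in $\mu$), so it admits a unique minimizer; by the standard first-variation computation for entropy-regularized convex functionals (the analogue of the Gibbs variational principle), this minimizer is $\Phi^\mu(\nu)\propto\rho^\mu\exp(-\frac1\lambda\frac{\delta\!\LL}{\delta\mu}(\cdot,\nu)(\cdot))$. Note this requires checking that the right-hand side defines a valid probability measure in $\PP(\XX)$: since $\nabla_x\frac{\delta\!\LL}{\delta\mu}$ is bounded by $M_\mu$ (Assumption \ref{ass-Lreg}) and $\rho^\mu=\exp(-U^\mu)$ with $U^\mu$ strongly convex, the exponent $-U^\mu-\frac1\lambda\frac{\delta\!\LL}{\delta\mu}$ is still strongly concave, so $\Phi^\mu(\nu)$ is a well-defined log-concave probability measure with finite second moment. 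Symmetrically, $\Phi^\nu(\mu)\propto\rho^\nu\exp(\frac1\lambda\frac{\delta\!\LL}{\delta\nu}(\mu,\cdot)(\cdot))$ is the unique maximizer of $\nu\mapsto\LL_\lambda(\mu,\nu)$.

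Next I would apply a fixed-point theorem to the composed map $(\mu,\nu)\mapsto(\Phi^\mu(\nu),\Phi^\nu(\mu))$ on a suitable convex compact subset of $\PP(\XX)\times\PP(\YY)$. Because of the uniform bound $M_\mu$ on the gradient of the functional derivative and the strong convexity of $U^\mu$, all images $\Phi^\mu(\nu)$ share a common Gaussian-type tail bound, so they live in a fixed set that is tight, hence weakly compact, and one can make it convex by taking a closed convex hull while preserving the moment/tail control; the same holds on the $\YY$ side. Continuity of $\Phi^\mu$ and $\Phi^\nu$ in the $W_1$ (equivalently weak) topology follows from the Lipschitz dependence of $\nabla\frac{\delta\!\LL}{\delta\mu}$ on $\nu$ in $W_1$ (Assumption \ref{ass-Lreg}) together with dominated convergence for the normalizing constants. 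Kakutani's (here, really Schauder's, since the maps are single-valued) fixed-point theorem then yields a pair $(\mu^*,\nu^*)$ with $\mu^*=\Phi^\mu(\nu^*)$ and $\nu^*=\Phi^\nu(\mu^*)$, which is exactly the system \eqref{eq-mnedef}; that any such fixed point is a saddle point of $\LL_\lambda$ (hence solves \eqref{eq-problemstatement}) is immediate from the definitions of $\Phi^\mu,\Phi^\nu$ as exact best responses.

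Finally, uniqueness. Convexity-concavity of $\LL$ plus strict convexity/concavity from the KL terms gives that $\LL_\lambda$ is strictly convex-concave, and the standard argument---if $(\mu_1^*,\nu_1^*)$ and $(\mu_2^*,\nu_2^*)$ were two distinct MNE then $\LL_\lambda(\mu_1^*,\nu_1^*)=\LL_\lambda(\mu_2^*,\nu_2^*)=$ the common value, while strict convexity along the segment joining them forces a contradiction---pins down the solution uniquely; alternatively one shows the fixed-point map is a contraction in $W_1$ for the argument to go through cleanly, but the strict-convexity route is cleaner and does not need a smallness condition on $L_\mu,L_\nu$. The main obstacle I anticipate is the non-compactness of $\XX=\RR^{d_\XX}$ and $\YY=\RR^{d_\YY}$: one must work to produce the convex compact invariant set for the fixed-point theorem, and this is precisely where Assumption \ref{ass-rhoreg} (strong convexity of $U^\mu,U^\nu$) and the uniform gradient bound $M_\mu,M_\nu$ in Assumption \ref{ass-Lreg} are essential, since they are what guarantee the uniform sub-Gaussian tails of all candidate best responses; the rest is routine functional-analytic bookkeeping. (The detailed execution is deferred to Appendix \ref{appendix-mne}, following \citet{Conforti20}.)
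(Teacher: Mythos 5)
Your proposal takes essentially the same route as the paper: existence via a fixed-point argument for the composed best-response map, first-order conditions from the Gibbs variational principle for the entropy-regularized inner problems, and uniqueness from strict convex-concavity. The paper is terser, delegating the existence and first-order characterization to \citet{Conforti20} (Theorem~3.6 and Corollary~3.3) after checking the preconditions (Talagrand for $\rho^\mu,\rho^\nu$, strong convexity of the inner problems, lower semi-continuity of KL), whereas you spell out the Schauder/Kakutani argument explicitly; both are fine, and your remark that Schauder suffices because the best responses are single-valued is a reasonable simplification.

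Two minor inaccuracies worth flagging. First, you claim $\Phi^\mu(\nu)$ is log-concave because the exponent $-U^\mu - \frac1\lambda\frac{\delta\!\LL}{\delta\mu}$ is ``still strongly concave.'' This does not follow from the stated assumptions: Assumption~\ref{ass-Lreg} bounds $\lVert\nabla_x\frac{\delta\!\LL}{\delta\mu}\rVert$ by $M_\mu$ and makes it $K_\mu$-Lipschitz, but there is no hypothesis that $K_\mu < r_\mu\lambda$, so the combined potential need not be convex at all. What you actually need and do have is integrability and tightness: a bounded-gradient (hence linearly growing) perturbation of a strongly convex potential still yields a well-defined measure in $\PP(\XX)$ with uniform sub-Gaussian tails, which is exactly what the compactness step requires. (This is also why the paper's LSI constant in Proposition~\ref{thm-aglsi} comes from the Bardet--Cattiaux machinery rather than Bakry--\'Emery.) Second, your uniqueness sketch --- ``strict convexity along the segment joining them forces a contradiction'' --- is not quite the right mechanism for a saddle function, since $\LL_\lambda$ is convex in one variable and concave in the other, not jointly convex. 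The clean argument is the exchange chain the paper uses: if $(\mu^*,\nu^*)\neq(\tilde\mu^*,\tilde\nu^*)$ then $\LL_\lambda(\mu^*,\nu^*) > \LL_\lambda(\mu^*,\tilde\nu^*) \geq \LL_\lambda(\tilde\mu^*,\tilde\nu^*) > \LL_\lambda(\tilde\mu^*,\nu^*) \geq \LL_\lambda(\mu^*,\nu^*)$, a contradiction, where the strict inequalities come from the strict convexity/concavity of the inner best responses. Neither issue is fatal, but both should be tightened before the sketch becomes a proof.
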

The suboptimality of any given pair $(\mu,\nu)$ is quantified via the \emph{Nikaid\^{o}-Isoda (NI) error} \citep{Nikaido55},
\begin{equation*}
\NI(\mu,\nu) := \max_{\nu'\in\PP(\YY)} \!\LL_\lambda(\mu,\nu') - \min_{\mu'\in\PP(\XX)} \!\LL_\lambda(\mu',\nu).
\end{equation*}
From the discussion in the proof of Proposition \ref{thm-mneexistence}, it follows that $\NI(\mu,\nu)\geq 0$ and $\NI(\mu,\nu)=0$ if and only if $\mu=\mu^*,\nu=\nu^*$. A pair $(\mu,\nu)$ satisfying $\NI(\mu,\nu)\leq\epsilon$ is called an $\epsilon$-MNE. As is usual in both discrete \citep{Cen21, Wei21} and continuous \citep{Lu22, Lascu23} minimax settings, our main goal is to prove convergence of the NI error along the proposed algorithms, which also implies convergence to the MNE in relative entropy (Lemma \ref{thm-sandwichlu}).

\citet{Cen23} also point out that the MNE serves to approximate the MNE of the unregularized objective $\LL$ as $\lambda\to 0$. However, $\LL$ may not possess an MNE at all, e.g. for some bilinear objectives. \citet{Domingo20,Lu22} bypass this issue by assuming $\XX,\YY$ are compact manifolds without boundary, in which case existence is guaranteed by Glicksberg's theorem. Alternatively, we may restrict the initialization and solution space to $\KL(\mu\Vert\rho^\mu)\leq R$, $\KL(\nu\Vert\rho^\nu)\leq R$ for some large radius $R$. Furthermore, if $\LL$ does possess an MNE, it is possible to adopt the $\lambda_t=\Theta(1/\log t)$ cooling schedule studied in \citet{Lu22} for which our results can be modified to ensure $O(1/\log t)$ convergence to the \emph{unregularized} MNE. Nonetheless, our focus is on the regularized problem $\LL_\lambda$. 


\section{Mean-field Langevin Averaged Gradient}

\subsection{Proposed Method}

The main obstruction to proving convergence of MFL-DA is the complicated dependency of the proximal Gibbs distribution $\pmu$ for $\mu$ on the opposing policy $\nu$. Motivated by dual averaging methods \citep{Nesterov09, Xiao09, PDA22}, our idea is simply to take the average of the drift over time so that the slowdown of the rolling average will ensure convergence of the KL gap.

We propose the \emph{mean-field Langevin averaged gradient} (MFL-AG) flow with a weighting scheme $(\beta_t)_{t\geq 0}$ and temperature $\lambda>0$ as the coupled pair of history-dependent McKean–Vlasov processes
\begin{equation}
\begin{aligned}\label{eq-agflow}
    \rd X_t&= -\left(\frac{1}{B_t}\int_0^t \beta_s\nabla_x \frac{\delta \!\LL}{\delta \mu}(\mu_s,\nu_s)(X_t) \rd s+\lambda \nabla_x U^\mu(X_t) \right)\rd t + \sqrt{2\lambda} \rd W_t^\mu,\\
    \rd Y_t&= \left(\frac{1}{B_t}\int_0^t \beta_s\nabla_y \frac{\delta \!\LL}{\delta \nu}(\mu_s,\nu_s)(Y_t) \rd s -\lambda\nabla_y U^\nu(Y_t) \right)\rd t + \sqrt{2\lambda} \rd W_t^\nu,
\end{aligned}
\end{equation}
where $\mu_t = \Law(X_t)$, $\nu_t=\Law(Y_t)$ and $W_t^\mu$, $W_t^\nu$ are independent Brownian motions on $\XX$ and $\YY$, respectively. The corresponding particle algorithm is studied in Section \ref{section-algo-ag}.

By \emph{weighting scheme} we mean any integrable function $\beta = (\beta_t):\RR_{\geq 0}\to \RR_{> 0}$ where the normalizing weight $B_t=\int_0^t \beta_s \rd s$ satisfies $B_t\to\infty$ and $\beta_t/B_t\to 0$ as $t\to\infty$. These conditions are roughly equivalent to $\widetilde{\Omega}(1/t) \leq\beta_t< \widetilde{O}(e^t)$ and ensure that the most recent update continues to influence the rolling average, but at an ever-decreasing rate. We will often substitute $\beta_t = t^r$ for a fixed exponent $r$ to obtain explicit convergence rates.

The dependence on previous distributions $(\mu_s,\nu_s)_{s\leq t}$ serves as a major point of departure from most existing works on mean-field dynamics. Nevertheless, existence and uniqueness of the flow \eqref{eq-agflow} can be verified by extending the classical contraction argument of \citet{Sznitman91}. The proof can be found in Appendix \ref{appendix-welldef}.

\begin{prop}[Well-definedness of MFL-AG flow]\label{thm-agflowdef}
Under Assumptions \ref{ass-rhoreg} and \ref{ass-Lreg}, the MFL-AG flow $(X_t,Y_t)$ \eqref{eq-agflow} with continuous sample paths uniquely exists for all $t\in [0,\infty)$ for any initial distribution $\mu_0\in\PP(\XX),\nu_0\in\PP(\YY)$.
\end{prop}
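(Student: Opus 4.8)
The plan is to adapt Sznitman's fixed-point argument for McKean--Vlasov equations \citep{Sznitman91}, the one new ingredient being the observation that the \emph{time-averaging} in \eqref{eq-agflow} tames the history dependence. Fix a horizon $T>0$ and work on $\mathcal{S}_T:=\PP(C([0,T];\XX\times\YY))$ equipped with the (complete) path-space Wasserstein metric $\mathcal{W}_T(\Theta,\Theta')^2=\inf_\pi\int\sup_{s\le T}(\norm{x_s-x'_s}^2+\norm{y_s-y'_s}^2)\,\rd\pi$, $\pi$ ranging over couplings of $\Theta,\Theta'$. Write the averaged drift as $b^\mu_t(x;\mu,\nu):=\frac{1}{B_t}\int_0^t\beta_s\nabla_x\frac{\delta\!\LL}{\delta\mu}(\mu_s,\nu_s)(x)\,\rd s$ and similarly $b^\nu_t$. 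The bound $\norm{\nabla_x\frac{\delta\!\LL}{\delta\mu}}\le M_\mu$ gives $\norm{b^\mu_t}\le M_\mu$ (so $t\mapsto b^\mu_t(x;\cdot)$ extends continuously to $t=0$ whenever the flow is continuous), and Assumption \ref{ass-Lreg} gives that $b^\mu_t(\cdot;\mu,\nu)$ is $K_\mu$-Lipschitz in $x$ uniformly in $t$ and in $(\mu,\nu)$. Crucially, since the weights $\beta_s/B_t$ integrate to one over $[0,t]$,
\[
\norm{b^\mu_t(x;\mu,\nu)-b^\mu_t(x;\mu',\nu')}\le \frac{L_\mu}{B_t}\int_0^t\beta_s\big(W_1(\mu_s,\mu'_s)+W_1(\nu_s,\nu'_s)\big)\rd s\le L_\mu\sup_{s\le t}\big(W_1(\mu_s,\mu'_s)+W_1(\nu_s,\nu'_s)\big),
\]
so although $b^\mu_t$ depends on the entire past $(\mu_s,\nu_s)_{s\le t}$, it is $L_\mu$-Lipschitz with respect to the uniform-in-time Wasserstein distance, exactly as a memoryless drift would be.

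Next I define the Picard map $\Phi$. Given $\Theta\in\mathcal{S}_T$ with time-marginals $(\mu^\Theta_s,\nu^\Theta_s)_{s\le T}$, freeze the interaction and consider the decoupled, time-inhomogeneous It\^o SDE obtained from \eqref{eq-agflow} by replacing $(\mu_s,\nu_s)$ with $(\mu^\Theta_s,\nu^\Theta_s)$; its coefficients are measurable in $t$ and Lipschitz in the state (the $\LL$-part by the above, and $\lambda\nabla U^\mu,\lambda\nabla U^\nu$ by Assumption \ref{ass-rhoreg}, which also yields linear growth), hence it admits a unique strong solution $(\tilde X,\tilde Y)$ with $\Ebig{\sup_{s\le T}(\norm{\tilde X_s}^2+\norm{\tilde Y_s}^2)}<\infty$ by Burkholder--Davis--Gundy and Gr\"onwall once $\mu_0\in\PP(\XX),\nu_0\in\PP(\YY)$. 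Set $\Phi(\Theta):=\Law(\tilde X,\tilde Y)\in\mathcal{S}_T$; a fixed point of $\Phi$ is exactly a (continuous-path) solution of \eqref{eq-agflow} on $[0,T]$, and conversely every such solution induces a fixed point.

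Finally I show $\Phi$ is eventually a contraction. For $\Theta,\Theta'\in\mathcal{S}_T$, drive the two frozen SDEs by identical Brownian motions and identical initial data (synchronous coupling), subtract, and apply the three properties above together with $W_1\le W_2\le\mathcal{W}_T$ on marginals; writing $\delta(t):=\Ebig{\sup_{s\le t}(\norm{\tilde X_s-\tilde X'_s}^2+\norm{\tilde Y_s-\tilde Y'_s}^2)}$, one obtains $\delta(t)\le C\int_0^t\delta(s)\,\rd s+Ct\,\mathcal{W}_T(\Theta,\Theta')^2$ for a constant $C$ depending only on $K_\mu,L_\mu,R_\mu,\lambda$ and their $\nu$-analogues, so Gr\"onwall yields $\mathcal{W}_T(\Phi(\Theta),\Phi(\Theta'))^2\le\delta(T)\le\phi(T)\,\mathcal{W}_T(\Theta,\Theta')^2$ with $\phi(T)\to0$ as $T\to0$. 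Choosing $T=T_0$ small enough (depending only on those constants) makes $\Phi$ a strict contraction, so Banach's theorem gives a unique solution on $[0,T_0]$. Since $T_0$ is independent of the starting time and, on a later interval $[kT_0,(k+1)T_0]$, the already-determined portion of the flow enters the \emph{causal} averaged drift only as a fixed known function and does not worsen the Lipschitz estimate, the same argument extends the solution one interval at a time, producing a unique global solution on $[0,\infty)$. The only genuinely delicate point is controlling the history dependence inside the Gr\"onwall loop; as observed above, the time-averaging is precisely what neutralizes it, so the proof is a careful adaptation of \citet{Sznitman91} rather than a fundamentally new argument.
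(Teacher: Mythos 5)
Your argument is correct and, like the paper's, is a Sznitman-style fixed-point proof whose key observation is that time-averaging leaves the drift $L_\mu$-Lipschitz in the running supremum of the 1-Wasserstein distance between the marginal laws, so the full history enters no more dangerously than a memoryless interaction would. The technical route differs from the paper's in two respects. First, you work with the 2-Wasserstein metric on path space, which obliges you to check that $\Phi$ maps into the finite-second-moment subspace via a BDG/Gr\"onwall moment bound; the paper instead uses the truncated metrics $\widetilde W_{1,T}$, $W_{1,T}$, which are bounded and complete for free, so no moment control is needed. Second, you obtain a strict contraction only for small $T_0$ and then glue intervals using causality of the averaging; the paper iterates the Gr\"onwall-type inequality $\widetilde W_{1,t}(\Phi(\mu,\nu),\Phi(\mu',\nu'))\le C_T\int_0^t\widetilde W_{1,s}((\mu,\nu),(\mu',\nu'))\,\rd s$ to get the factorial gain $\frac{(C_T T)^k}{k!}$ for $\Phi^k$, which yields a Cauchy sequence and uniqueness directly on all of $[0,T]$, then extends to $[0,\infty)$ by consistency of the finite-horizon fixed points. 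Both variants are standard; the paper's sidesteps moment estimates and interval-gluing, while yours is somewhat more elementary. Your concatenation step is sound because on $[kT_0,(k+1)T_0]$ the already-determined history enters as a fixed function and the effective Lipschitz factor in the interaction is $(B_t-B_{kT_0})/B_t\le 1$, so the contraction modulus does not deteriorate.
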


The Fokker-Planck equations corresponding to the system \eqref{eq-agflow} can be formulated as
\begin{equation*}
\begin{aligned}
\partial_t\mu_t &= \nabla_x\cdot\left(\frac{\mu_t}{B_t}\int_0^t\beta_s \nabla_x \frac{\delta \!\LL}{\delta \mu}(\mu_s,\nu_s)\rd s +\lambda\mu_t \nabla_x U^\mu \right) +\lambda\Delta_x\mu_t = \lambda\nabla_x\cdot\left( \mu_t \nabla_ x\log\frac{\mu_t}{\pmu_t}\right),\\
\partial_t\nu_t &= -\nabla_y\cdot\left(\frac{\nu_t}{B_t}\int_0^t\beta_s \nabla_y \frac{\delta \!\LL}{\delta \nu}(\mu_s,\nu_s)\rd s -\lambda\nu_t \nabla_y U^\nu \right) +\lambda\Delta_y\nu_t = \lambda\nabla_y\cdot\left( \nu_t \nabla_y \log\frac{\nu_t}{\pnu_t}\right),
\end{aligned}
\end{equation*}
where $\pmu_t, \pnu_t$ are the MFL-AG proximal distributions given as
\begin{equation}\label{eq-agprox}
\pmu_t\propto \rho^\mu \exp\left(-\frac{1}{\lambda B_t} \int_0^t \beta_s \frac{\delta \!\LL}{\delta\mu}(\mu_s,\nu_s) \rd s\right), \quad \pnu_t \propto\rho^\nu \exp\left(\frac{1}{\lambda B_t} \int_0^t \beta_s \frac{\delta \!\LL}{\delta\nu}(\mu_s,\nu_s) \rd s\right)
\end{equation}
which are well-defined due to the strong convexity of $U^\mu,U^\nu$ and Assumption \ref{ass-Lreg}.

MFL-AG is similar in spirit to fictitious play methods \citep{Brown51} in the two-player zero-sum game setting with $\beta_t\equiv 1$, where each player assumes their opponent has a stationary strategy and optimizes based on the average behavior of the opponent; the ideal fictitious play algorithm would perform the update $\mu_{t+1}=\pmu_t$. If $\beta_t$ is increasing, the algorithm can be considered to undervalue older information which is more suitable for non-stationary environments. However, such methods require exact computation of the optimal response at every step which is generally unfeasible. In contrast, the MFL-AG policies continuously flow towards their response policies at any given time.

As usual, $\pmu_t,\pnu_t$ satisfy a log-Sobolev inequality which is crucial to controlling the mean-field flows. The mild dependency $\alpha_\mu=\Omega(1/d_{\XX})$ is the only manifestation of dimensional dependence in our results, and can be avoided in cases where the Holley-Stroock argument applies. See Appendix \ref{appendix-ot} for details.
\begin{prop}\label{thm-aglsi}
Let the probability measure $\mu\propto \rho^\mu\exp(-\lambda^{-1}h)\in\PP(\XX)$ with $\norm{h}_{\Lip}\leq M_\mu$. Then under Assumption \ref{ass-rhoreg}, $\mu$ satisfies the log-Sobolev and Talagrand's inequalities with constant
\begin{align*}
\alpha_\mu \geq \frac{r_\mu}{2}e^{-\frac{4M_\mu^2}{r_\mu \lambda^2}\sqrt{\frac{2d_{\XX}}{\pi}}} \vee \left(\frac{4}{r_\mu} +\bigg(\frac{M_\mu}{r_\mu\lambda}+\sqrt{\frac{2}{\smash[b]{r_\mu}}}\bigg)^2 \bigg(2+\frac{d_{\XX}}{2} \log\frac{e^2 R_\mu}{r_\mu} +\frac{4M_\mu^2}{r_\mu\lambda^2}\bigg)e^{\frac{M_\mu^2}{2r_\mu\lambda^2}}\right)^{-1}.
\end{align*}
\end{prop}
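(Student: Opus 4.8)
The plan is to prove the log-Sobolev inequality (LSI) for $\mu$, since Talagrand's transport inequality with the same constant then follows from the Otto--Villani theorem. Write $\mu\propto\rho^\mu\exp(-\lambda^{-1}h)=\exp(-U^\mu-\lambda^{-1}h)$. By Assumption~\ref{ass-rhoreg} the potential $U^\mu$ is $r_\mu$-strongly convex, so the base measure $\rho^\mu$ satisfies LSI with constant $r_\mu$ by the Bakry--\'Emery criterion; thus $\mu$ is obtained from $\rho^\mu$ by the tilt $e^{-\lambda^{-1}h}$, in which $\lambda^{-1}h$ is $(M_\mu/\lambda)$-Lipschitz but \emph{not} bounded. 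This unboundedness is exactly what prevents a direct Holley--Stroock perturbation and forces a dimensional factor; I would establish the two lower bounds inside the $\vee$ by two separate perturbation arguments and keep whichever is larger.

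\emph{First bound.} Here I would transfer the LSI along the Lipschitz tilt. Since $\rho^\mu$ satisfies LSI$(r_\mu)$ it has Gaussian concentration; equivalently, by the Caffarelli contraction theorem $\rho^\mu$ is the pushforward of $\mathcal N(0,r_\mu^{-1}I)$ under a $1$-Lipschitz map, so $h$ behaves as if bounded up to a mean oscillation of order $M_\mu\sqrt{d_{\XX}/r_\mu}$. On a set of $\rho^\mu$-probability near $1$ one applies a Holley--Stroock estimate to the truncated tilt, while the complementary tail is absorbed by the same concentration; optimizing the truncation level and carrying the Gaussian constants through then yields the LSI for $\mu$ with constant at least $\tfrac{r_\mu}{2}\exp(-\tfrac{4M_\mu^2}{r_\mu\lambda^2}\sqrt{2d_{\XX}/\pi})$.

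\emph{Second bound.} This is the estimate giving $\alpha_\mu=\Omega(1/d_{\XX})$, and I would derive it from a Lyapunov-function criterion for the LSI (in the spirit of Cattiaux--Guillin--Wu), combined with the standard tightening of a defective LSI by a Poincaré inequality. Working with the generator $\mathcal L=\Delta-\langle\nabla U^\mu+\lambda^{-1}\nabla h,\nabla\cdot\rangle$ whose invariant law is $\mu$: (i) exhibit a Lyapunov function $W$ (e.g.\ $W(x)=\exp(a\norm{x}^2/2)$) satisfying a drift condition $\mathcal L W\le(-\theta+b\,\mathbf{1}_{B_\varrho})W$ outside a ball $B_\varrho$, the $r_\mu$-strong convexity of $U^\mu$ providing a restoring drift that dominates the size-$M_\mu$ extra drift $\nabla h$ for large $\norm{x}$; (ii) bound the local LSI constant of $\mu$ restricted to $B_\varrho$ by combining Bakry--\'Emery on the Gaussian-dominated part with a Holley--Stroock correction on the compact ball, where the ball's volume contributes the entropy term $\tfrac{d_{\XX}}{2}\log\tfrac{e^2 R_\mu}{r_\mu}$ (with $R_\mu$ entering through the Lipschitz gradient of $U^\mu$) and the exponential moment $\mathbb{E}_{\rho^\mu}\exp(\lambda^{-1}(h-\mathbb{E}_{\rho^\mu}h))\le\exp(M_\mu^2/2r_\mu\lambda^2)$ (Herbst) contributes the factor $e^{M_\mu^2/2r_\mu\lambda^2}$; (iii) feed (i)--(ii) into the ``global LSI from a local LSI plus a Lyapunov drift'' implication, in which the $\tfrac{4}{r_\mu}$ summand appears as the Poincaré/Talagrand contribution of the strongly log-concave bulk, and then optimize the free parameters $a,\varrho,b,\theta$ to recover the stated closed form.

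The main difficulty is that $h$ is only Lipschitz and not bounded, so every ingredient above must be re-derived so that only $M_\mu$ — never $\norm{h}_\infty$ — enters; this is what creates the dimensional factors and what makes the sharp concentration input (Caffarelli and Herbst estimates) and the careful choice of Lyapunov function and ball radius necessary. I expect step (iii) of the second bound — balancing all the constants to land on the clean closed form — to be the most delicate part of the argument.
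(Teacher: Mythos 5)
The paper's own ``proof'' of this proposition is essentially a citation: it invokes Lemma~2.1 of Bardet, Gozlan, Malrieu and Zitt (2018) for the first (exponential-in-$d_{\XX}$) bound and Theorem~2.7 of Cattiaux and Guillin (2022) for the second (polynomial-in-$d_{\XX}$) bound, and takes the maximum; the reader is directed to Lemma~6 of Suzuki et al.\ (2023) for the bookkeeping. Your high-level strategy --- two separate LSI estimates for an unbounded Lipschitz tilt, one via concentration of the base measure with a Holley--Stroock-type truncation, the other via a Lyapunov/local-LSI criterion \`a la Cattiaux--Guillin--Wu --- is indeed the same pair of ideas underlying those two references, so the route is not fundamentally different. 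The gap is that your sketch does not actually produce the stated constants, and in the first bound it would not: if you truncate $h$ at the bulk scale of $\rho^\mu$ and take the ``mean oscillation of order $M_\mu\sqrt{d_{\XX}/r_\mu}$'' as the effective sup-norm of the perturbation $\lambda^{-1}h$, Holley--Stroock yields a degradation of order $\exp(-4M_\mu\sqrt{d_{\XX}/r_\mu}/\lambda)$, whereas the claimed exponent is $\exp\!\big(-\tfrac{4M_\mu^2}{r_\mu\lambda^2}\sqrt{2d_{\XX}/\pi}\big)$; these disagree in their dependence on $M_\mu/\lambda$ (linear vs.\ quadratic) and the two are equal only when $M_\mu/\lambda\asymp\sqrt{r_\mu}$. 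The $M_\mu^2/(r_\mu\lambda^2)$ scaling indicates the truncation radius in the Bardet et al.\ argument is \emph{not} the intrinsic radius of $\rho^\mu$ but one tuned to $M_\mu/(r_\mu\lambda)$, which your sketch neither identifies nor justifies. Appealing to Caffarelli is also more than is used in the source and does not fix this.

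For the second bound, the Lyapunov route is the right one and the three ingredients you list are the ones Cattiaux--Guillin use; but the claim that ``optimizing the free parameters $a,\varrho,b,\theta$'' recovers exactly $\big(\tfrac{4}{r_\mu}+(\tfrac{M_\mu}{r_\mu\lambda}+\sqrt{2/r_\mu})^2(2+\tfrac{d_{\XX}}{2}\log\tfrac{e^2R_\mu}{r_\mu}+\tfrac{4M_\mu^2}{r_\mu\lambda^2})e^{M_\mu^2/2r_\mu\lambda^2}\big)^{-1}$ is asserted rather than derived, and in a blind reconstruction this is precisely where the argument could silently drift from the stated form (for instance, the appearance of $R_\mu$ through the smoothness of $U^\mu$ and the placement of the Herbst factor are nontrivial bookkeeping). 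If you want a self-contained proof rather than a citation, you would need to either reproduce the two source lemmas faithfully or redo the truncation-level optimization explicitly; as written, the first bound has a concrete dimensional mismatch and the second is an outline without the constant-tracking that the statement is really about.
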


\subsection{Continuous-Time Convergence}

We begin by studying the properties of the flow \eqref{eq-agflow}. At each time $t$ the policies evolve towards the proximal distributions, and the deceleration of the rolling average allows the flow to catch up with $\pmu_t,\pnu_t$; this observation plays a key part in further analyses. Note that we state many results for only the min policy $\mu$ and omit the analogous statement for $\nu$.

\begin{prop}[Proximal convergence of MFL-AG flow]\label{thm-agprox}
Under Assumptions \ref{ass-rhoreg} and \ref{ass-Lreg}, for the weighting scheme $\beta_t= t^r$ with a fixed exponent $r>-1$ the proximal KL gap is bounded as
\begin{equation*}
\KL(\mu_t\Vert\pmu_t) \leq \frac{2(r+1)^2M_\mu^2}{\alpha_\mu^3\lambda^4 t^2}+ O(t^{-3}).\\[-5pt]
\end{equation*}
\end{prop}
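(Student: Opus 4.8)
The plan is to derive a closed Grönwall-type differential inequality for the proximal KL gap $G_t:=\KL(\mu_t\Vert\pmu_t)$ and then integrate it. Write $g_t:=\frac1{B_t}\int_0^t\beta_s\frac{\delta\!\LL}{\delta\mu}(\mu_s,\nu_s)\rd s$, so that $\pmu_t\propto\rho^\mu\exp(-g_t/\lambda)$. First I would differentiate $G_t$ in time. The contribution from the Fokker--Planck evolution $\partial_t\mu_t=\lambda\nabla_x\cdot(\mu_t\nabla_x\log\frac{\mu_t}{\pmu_t})$ is the usual entropy dissipation $-\lambda\,\I(\mu_t\Vert\pmu_t)$ with $\I(\mu\Vert\pi)=\int\mu\norm{\nabla\log\frac\mu\pi}^2$ the relative Fisher information, while the motion of the reference measure contributes $\frac1\lambda\big(\EE{\mu_t}{\partial_t g_t}-\EE{\pmu_t}{\partial_t g_t}\big)$, obtained after differentiating the normalizing constant of $\pmu_t$, since $\partial_t\log\pmu_t=\frac1\lambda\EE{\pmu_t}{\partial_t g_t}-\frac1\lambda\partial_t g_t$.

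\textbf{The key identity.} The crucial observation is that differentiating the running average yields the simple form $\partial_t g_t=\frac{\beta_t}{B_t}\big(\frac{\delta\!\LL}{\delta\mu}(\mu_t,\nu_t)-g_t\big)$, so that the ``slowdown factor'' $\beta_t/B_t$ appears explicitly. Since $\nabla_x\frac{\delta\!\LL}{\delta\mu}(\mu_s,\nu_s)$ is uniformly bounded by $M_\mu$ (Assumption \ref{ass-Lreg}), both $\frac{\delta\!\LL}{\delta\mu}(\mu_t,\nu_t)$ and the average $g_t$ are $M_\mu$-Lipschitz, hence $\partial_t g_t$ is $\frac{2\beta_t M_\mu}{B_t}$-Lipschitz, and Kantorovich--Rubinstein duality bounds the reference-motion term by $\frac{2\beta_t M_\mu}{\lambda B_t}W_1(\mu_t,\pmu_t)$. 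Because $g_t$ is $M_\mu$-Lipschitz, Proposition \ref{thm-aglsi} applies to $\pmu_t$ with constant $\alpha_\mu$, giving $\I(\mu_t\Vert\pmu_t)\ge 2\alpha_\mu G_t$ and, via Talagrand's inequality, $W_1(\mu_t,\pmu_t)\le W_2(\mu_t,\pmu_t)\le\sqrt{2G_t/\alpha_\mu}$. Substituting $\beta_t=t^r$, so that $\beta_t/B_t=(r+1)/t$, I obtain
\begin{equation*}
\frac{\rd}{\rd t}G_t\ \le\ -2\alpha_\mu\lambda\,G_t+\frac{2\sqrt2\,(r+1)M_\mu}{\lambda\sqrt{\alpha_\mu}\,t}\sqrt{G_t}.
\end{equation*}

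\textbf{Solving the inequality.} Setting $H_t=\sqrt{G_t}$ linearizes this to $\dot H_t\le-\alpha_\mu\lambda H_t+\frac{\sqrt2(r+1)M_\mu}{\lambda\sqrt{\alpha_\mu}\,t}$. Comparing $H_t$ with the explicit solution of the corresponding linear ODE started from a fixed $t_0>0$, and using the asymptotics $\int_{t_0}^t e^{-\alpha_\mu\lambda(t-s)}\frac{\rd s}{s}=\frac1{\alpha_\mu\lambda\,t}+O(t^{-2})$, gives $H_t\le\frac{\sqrt2(r+1)M_\mu}{\alpha_\mu^{3/2}\lambda^2\,t}+O(t^{-2})$; squaring yields $G_t\le\frac{2(r+1)^2M_\mu^2}{\alpha_\mu^3\lambda^4\,t^2}+O(t^{-3})$, which is the claim. (If $G_t$ vanishes at some time the bound holds trivially there, so the substitution is legitimate once the flow has been regularized.)

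\textbf{Main obstacle.} The delicate point is the differentiation step: rigorously justifying differentiation of the relative entropy against the \emph{time-dependent} reference $\pmu_t$ --- in particular the smoothness and positivity of the density of $\mu_t$, the validity of the dissipation identity, and interchanging $\partial_t$ with the spatial integral --- which relies on parabolic regularization of the Fokker--Planck flow as in the standard MFLD analysis. A minor subtlety is that $\beta_t/B_t\sim 1/t$ is not integrable near the origin, so the Grönwall step must begin at a positive time $t_0$ (where $G_{t_0}<\infty$ by Proposition \ref{thm-agflowdef} together with instantaneous smoothing); this loses nothing since only the large-$t$ asymptotics are asserted.
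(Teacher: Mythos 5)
Your proof is correct and follows essentially the same route as the paper: decomposing $\partial_t\KL(\mu_t\Vert\pmu_t)$ into the entropy dissipation and a reference-motion term, bounding the latter by $\tfrac{2M_\mu\beta_t}{\lambda B_t}W_1(\mu_t,\pmu_t)$ via Kantorovich--Rubinstein, invoking the LSI (Proposition~\ref{thm-aglsi}) and Talagrand to close a differential inequality in $\sqrt{\KL}$, and then integrating with an exponential-integral asymptotic. Your reformulation of $\partial_t\log\pmu_t$ through $\partial_t g_t=\tfrac{\beta_t}{B_t}\big(\tfrac{\delta\!\LL}{\delta\mu}(\mu_t,\nu_t)-g_t\big)$ is a cosmetic rewriting of the same computation, and the constants carry through identically.
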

See Appendix \ref{appendix-agprox} for the proof. It is then clear that if MFL-AG converges, it must converge to the MNE \eqref{eq-mnedef} by setting $\mu_\infty=\pmu_\infty$, $\nu_\infty=\pnu_\infty$.

For ordinary MFLD, KL gap convergence of the above type is generally enough to show absolute convergence through entropy sandwich inequalities, see e.g. \citet{Nitanda22, Lu22}. In our case, however, the relative entropy no longer quantifies the optimality gap at $(\mu_t,\nu_t)$ since the proximal distributions are no longer `state functions' and depend on the entire history in \eqref{eq-agprox}. Nevertheless, we are able to obtain our first main result, average-iterate convergence of MFL-AG. Our approach, detailed in Appendix \ref{appendix-agavg}, extends conjugate function arguments from dual averaging to the minimax setting and also leverages the preceding $O(1/t^2)$ KL gap convergence.

\begin{thm}[Average-iterate convergence of MFL-AG flow]\label{thm-agavg}
Denote the weighted average of the MFL-AG distributions up to time $t$ as $\bar{\mu}_t=\frac{1}{B_t}\int_0^t \beta_s \mu_s \rd s$, $\bar{\nu}_t=\frac{1}{B_t}\int_0^t \beta_s \nu_s \rd s$. Then under Assumptions \ref{ass-rhoreg} and \ref{ass-Lreg}, for the weighting scheme $\beta_t= t^r$ with fixed exponent $r>0$, the NI error of the averaged pair $\bar{\mu}_t, \bar{\nu}_t$ converges with rate
\begin{equation*}
\NI(\bar{\mu}_t, \bar{\nu}_t) \leq \bigg(\frac{M_\mu^2}{\alpha_\mu^2}+\frac{M_\nu^2}{\alpha_\nu^2}\bigg)\frac{4(r+1)^2}{r\lambda^2 t} +O(t^{-2}),
\end{equation*}
and the leading term is optimized when $\beta_t=t$. For the unweighted averaging scheme $\beta_t\equiv 1$,
\begin{equation*}
\NI(\bar{\mu}_t, \bar{\nu}_t) \leq \bigg(\frac{M_\mu^2}{\alpha_\mu^2}+\frac{M_\nu^2}{\alpha_\nu^2}\bigg) \frac{4\log t}{\lambda^2 t} +O(t^{-1}).\\[-5pt]
\end{equation*}
\end{thm}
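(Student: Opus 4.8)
The plan is to bound $\NI(\bar\mu_t,\bar\nu_t)$ by exploiting convexity-concavity of $\LL_\lambda$ together with the proximal KL gap from Proposition \ref{thm-agprox}. Write $\NI(\bar\mu_t,\bar\nu_t) = \big(\max_{\nu'}\LL_\lambda(\bar\mu_t,\nu') - \LL_\lambda(\bar\mu_t,\bar\nu_t)\big) + \big(\LL_\lambda(\bar\mu_t,\bar\nu_t) - \min_{\mu'}\LL_\lambda(\mu',\bar\nu_t)\big)$, so it suffices to bound each half separately; by symmetry I focus on the $\mu$-side term $\sup_{\mu'}\big(\LL_\lambda(\bar\mu_t,\bar\nu_t)-\LL_\lambda(\mu',\bar\nu_t)\big)$. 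By convexity of $\mu\mapsto\LL_\lambda(\mu,\nu)$ we have $\LL_\lambda(\bar\mu_t,\bar\nu_t)-\LL_\lambda(\mu',\bar\nu_t) \le \frac{1}{B_t}\int_0^t\beta_s\big[\LL_\lambda(\mu_s,\bar\nu_t)-\LL_\lambda(\mu',\bar\nu_t)\big]\rd s$ — but $\bar\nu_t$ is awkward here, so instead I would first linearize using the functional derivative: $\LL(\bar\mu_t,\bar\nu_t)-\LL(\mu',\bar\nu_t)\le\int\frac{\delta\LL}{\delta\mu}(\bar\mu_t,\bar\nu_t)\rd(\bar\mu_t-\mu')$, and then pass to a telescoping-in-time comparison against the running-average drift $\frac{1}{B_t}\int_0^t\beta_s\frac{\delta\LL}{\delta\mu}(\mu_s,\nu_s)\rd s$ that actually appears in the flow and defines $\pmu_t$.

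The key technical device, borrowed from dual averaging, is to introduce the conjugate/free-energy functional associated with the averaged drift. Define $g_t := \frac{1}{B_t}\int_0^t\beta_s\frac{\delta\LL}{\delta\mu}(\mu_s,\nu_s)\rd s$ so that $\pmu_t\propto\rho^\mu\exp(-\lambda^{-1}g_t)$, and consider the entropy-regularized linearized objective $\mu\mapsto\int g_t\rd\mu+\lambda\KL(\mu\Vert\rho^\mu)$, whose minimizer is exactly $\pmu_t$ with optimal value $-\lambda\log\int\rho^\mu\exp(-\lambda^{-1}g_t)$. The standard Donsker–Varadhan / conjugacy identity gives, for any competitor $\mu'$,
\begin{equation*}
\int g_t\rd\mu' + \lambda\KL(\mu'\Vert\rho^\mu) = \Big(\!-\lambda\log\!\int\rho^\mu e^{-g_t/\lambda}\Big) + \lambda\KL(\mu'\Vert\pmu_t).
\end{equation*}
I would use this to rewrite the time-averaged suboptimality: summing (integrating against $\beta_s/B_t$) the linearized gaps $\int\frac{\delta\LL}{\delta\mu}(\mu_s,\nu_s)\rd(\mu_s-\mu') + \lambda\KL(\mu_s\Vert\rho^\mu)-\lambda\KL(\mu'\Vert\rho^\mu)$ and recognizing that the cross terms collapse into $\lambda\KL(\bar\mu_t\Vert\pmu_t)$-type quantities plus an Abel-summation remainder that measures how fast $g_t$ (equivalently $\pmu_t$) moves. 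The discrepancy between $\int\frac{\delta\LL}{\delta\mu}(\mu_s,\nu_s)\rd\mu_s$ and $\int g_t\rd\mu_s$ is controlled because $g_t$ is the running average and, by the Lipschitz-in-$W_1$ assumption (Assumption \ref{ass-Lreg}), $\frac{\delta\LL}{\delta\mu}$ changes slowly; the relevant bound is the $\beta_t/B_t=O(1/t)$ deceleration of the average. Combining with $\KL(\mu_s\Vert\pmu_s)=O(s^{-2})$ from Proposition \ref{thm-agprox}, the weighted sum $\frac{1}{B_t}\int_0^t\beta_s\cdot O(s^{-2})\rd s$ with $\beta_s=s^r$ evaluates to $O(t^{-1})$ for $r>0$, with the constant $(r+1)^2/r$ appearing from $B_t\sim t^{r+1}/(r+1)$ and the integral of $s^{r-2}$; at $r=1$ this constant $4(r+1)^2/r=16$ is minimized, and for $\beta_t\equiv1$ the integral $\frac1t\int_1^t s^{-2}\rd s$ must be replaced by $\frac{\log t}{t}$-type bookkeeping from the Abel remainder, yielding the stated $O(\log t/t)$ rate.

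The main obstacle I anticipate is handling the coupling cleanly: because $g_t$ depends on the entire history $(\mu_s,\nu_s)_{s\le t}$ and the competitor $\mu'$ is fixed while $\bar\nu_t$ also varies, the naive convexity bound mixes the $\mu$ and $\nu$ directions. The resolution is to bound the NI error by first passing to the linearized (bilinear-in-the-derivative) gaps on each side separately — using that $\nu\mapsto-\LL_\lambda$ is convex for the $\nu$-side — so that the $\mu'$-supremum and $\nu'$-supremum decouple into two independent dual-averaging estimates; the genuinely nontrivial bookkeeping is then the Abel summation controlling $\int_0^t\beta_s\big[g_s-g_t\big]$-type remainders, where one must show these lower-order terms do not dominate, which again reduces to the $\widetilde O(1/t^2)$ proximal gap and the regularity of $\beta$. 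I would carry out the $\mu$-side estimate in full, invoke symmetry for $\nu$, add the two halves, and finally optimize over $r$ and specialize to $\beta_t\equiv1$.
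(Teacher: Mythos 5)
Your overall strategy is aligned with the paper's proof in Appendix~B.3: introduce a conjugate (free-energy / dual-averaging) functional whose unique maximizer is the proximal distribution, linearize the NI error via convex-concavity, and close the argument with the proximal KL gap from Proposition~\ref{thm-agprox}. However, there are two genuine gaps.

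First, and most importantly, your rate calculation is wrong in a way that reflects a misidentification of the dominant term. You feed the KL gap $\KL(\mu_s\Vert\pmu_s)=O(s^{-2})$ directly into the weighted time-average and claim $\frac{1}{B_t}\int_0^t\beta_s\cdot O(s^{-2})\rd s = O(t^{-1})$. With $\beta_s=s^r$ and $B_t\sim t^{r+1}/(r+1)$ this integral is in fact $O(t^{-2})$ for $r>1$ (and $O(t^{-(r+1)})$ for $0<r<1$), not $O(t^{-1})$, and the constant that emerges is $(r+1)/(r-1)$ rather than $(r+1)^2/r$; your own observation that $\frac{1}{t}\int_1^t s^{-2}\rd s$ does not give $\frac{\log t}{t}$ for $\beta\equiv 1$ should have been the red flag. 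What actually produces the $O(1/t)$ rate is the \emph{linear} remainder term $\int\frac{\delta\LL}{\delta\mu}(\mu_s,\nu_s)(\mu_s-\pmu_s)(\rd x)\leq 2M_\mu W_1(\mu_s,\pmu_s)$, which via Talagrand's inequality gives $W_1(\mu_s,\pmu_s)\leq\sqrt{2\alpha_\mu^{-1}\KL(\mu_s\Vert\pmu_s)}=O(s^{-1})$ — the \emph{square root} of the KL gap — and then $\frac{1}{B_t}\int_0^t\beta_s\cdot O(s^{-1})\rd s = \frac{r+1}{r}\cdot\frac{1}{t}(1+o(1))$ yields exactly the $(r+1)^2/r$ constant and the $\log t/t$ rate for $\beta\equiv 1$. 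The KL terms $\lambda\KL(\mu_s\Vert\pmu_s)$ are strictly lower order, and there is no mysterious ``Abel remainder'' that rescues the arithmetic; the rate lives in the $W_1$ term.

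Second, your first-pass linearization $\LL(\bar\mu_t,\bar\nu_t)-\LL(\mu',\bar\nu_t)\leq\int\frac{\delta\LL}{\delta\mu}(\bar\mu_t,\bar\nu_t)\rd(\bar\mu_t-\mu')$ puts the functional derivative at $(\bar\mu_t,\bar\nu_t)$, which does \emph{not} match the running-average drift $\frac{1}{B_t}\int_0^t\beta_s\frac{\delta\LL}{\delta\mu}(\mu_s,\nu_s)\rd s$ driving the flow — the two differ by an $O(1)$ quantity (the derivative at an average of measures is not the average of derivatives), so the ``telescoping-in-time comparison'' you invoke would not yield a vanishing error. The paper avoids this by first using convexity/concavity to push the time integral inside (so each time slice contributes $\LL_\lambda(\mu_s,\nu)-\LL_\lambda(\mu,\nu_s)$), and \emph{then} linearizing at $(\mu_s,\nu_s)$ for each $s$ separately, which makes the derivative agree exactly with the drift; this is what lets the joint conjugate functional $\widehat{J}_t(\delta_t^\mu,\delta_t^\nu)$ telescope cleanly when you integrate the bound on $\partial_t\widehat{J}_t$. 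Your ``resolution'' paragraph gestures toward this but doesn't execute it, and your Donsker–Varadhan observation, while correct, omits the key ingredient that $\widehat{J}_t$ depends on $t$ both through $B_t$ and through the aggregate derivative $\delta_t^\mu$, so its total time derivative is a genuine piece of the estimate rather than just a Fenchel identity.
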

In light of Lemma \ref{thm-sandwichlu} (proved in Appendix \ref{appendix-mne}), Theorem \ref{thm-agavg} immediately implies convergence with the same rate of $(\bar{\mu}_t, \bar{\nu}_t)$ in relative entropy to the MNE.

\begin{lemma}[Entropy sandwich lower bound]\label{thm-sandwichlu}
For any $\mu\in\PP(\XX)$ and $\nu\in\PP(\YY)$ it holds that
\begin{equation*}
\KL(\mu\Vert\mu^*) + \KL(\nu\Vert\nu^*) \leq \lambda^{-1}\NI(\mu,\nu).\\[-5pt]
\end{equation*}
\end{lemma}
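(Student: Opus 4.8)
The plan is to unfold the Nikaid\^o--Isoda error using the definition of the KL regularizer and the first-order characterization \eqref{eq-mnedef} of the MNE. First I would note that for any fixed $\nu$, the inner minimization $\min_{\mu'} \LL_\lambda(\mu',\nu)$ over $\PP(\XX)$ is a strongly convex (in the relative-entropy sense) problem whose unique minimizer $\mu_\nu^*$ is the Gibbs-type measure $\mu_\nu^* \propto \rho^\mu\exp\big(-\tfrac1\lambda \tfrac{\delta\LL}{\delta\mu}(\mu_\nu^*,\nu)\big)$. Crucially, $\mu^*$ itself is the minimizer when $\nu=\nu^*$, but for the sandwich bound I only need a \emph{lower} bound on $\LL_\lambda(\mu,\nu) - \min_{\mu'}\LL_\lambda(\mu',\nu)$, so I can compare against $\mu^*$ rather than the exact minimizer $\mu_\nu^*$. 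By convexity of $\mu\mapsto\LL(\mu,\nu)$ and the standard identity $\KL(\mu\Vert\rho^\mu) - \KL(\mu^*\Vert\rho^\mu) \geq \int \log\tfrac{\mu^*}{\rho^\mu}\,\rd(\mu-\mu^*) + \KL(\mu\Vert\mu^*)$ (the three-point/Bregman identity for entropy), one obtains
\begin{equation*}
\LL_\lambda(\mu,\nu) - \LL_\lambda(\mu^*,\nu) \geq \int\!\Big(\tfrac{\delta\LL}{\delta\mu}(\mu^*,\nu^*) + \lambda\log\tfrac{\mu^*}{\rho^\mu}\Big)\rd(\mu-\mu^*) + \lambda\KL(\mu\Vert\mu^*) + \big(\LL(\mu^*,\nu)-\LL(\mu^*,\nu^*)\big),
\end{equation*}
and by \eqref{eq-mnedef} the bracketed linear term is constant in $x$, so its integral against $\mu-\mu^*$ vanishes.

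Next I would carry out the symmetric computation on the max side: for any fixed $\mu$, $\max_{\nu'}\LL_\lambda(\mu,\nu') \geq \LL_\lambda(\mu,\nu^*)$, and concavity of $\nu\mapsto\LL(\mu,\nu)$ together with the same Bregman identity for $-\KL(\nu\Vert\rho^\nu)$ gives
\begin{equation*}
\LL_\lambda(\mu,\nu^*) - \LL_\lambda(\mu,\nu) \geq \lambda\KL(\nu\Vert\nu^*) + \big(\LL(\mu,\nu^*) - \LL(\mu^*,\nu^*)\big) - \big(\LL(\mu,\nu) - \LL(\mu^*,\nu)\big) \ \text{(schematically)},
\end{equation*}
where again the linear term drops by the second equation in \eqref{eq-mnedef}. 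Adding the two chains of inequalities, $\NI(\mu,\nu) = \max_{\nu'}\LL_\lambda(\mu,\nu') - \min_{\mu'}\LL_\lambda(\mu',\nu) \geq \big[\LL_\lambda(\mu,\nu^*) - \LL_\lambda(\mu,\nu)\big] + \big[\LL_\lambda(\mu,\nu) - \LL_\lambda(\mu^*,\nu)\big] \geq \lambda\KL(\mu\Vert\mu^*) + \lambda\KL(\nu\Vert\nu^*)$, provided the leftover cross terms $\LL(\mu^*,\nu) - \LL(\mu^*,\nu^*) + \LL(\mu,\nu^*) - \LL(\mu,\nu)$ are nonnegative. But this last quantity is exactly $\big(\LL(\mu,\nu^*) - \LL(\mu,\nu)\big) - \big(\LL(\mu^*,\nu^*) - \LL(\mu^*,\nu)\big)$, which is $\geq 0$ because $\mu\mapsto \LL(\mu,\nu^*) - \LL(\mu,\nu)$ is convex and... actually the clean way is to use the \emph{bilinear-like} cross structure: one wants $\LL(\mu,\nu^*) - \LL(\mu^*,\nu^*) - \LL(\mu,\nu) + \LL(\mu^*,\nu) \geq 0$, which follows from concavity in $\nu$ applied at both $\mu$ and $\mu^*$ only if combined correctly — so I would instead keep these terms attached to the linear-vanishing step and observe they telescope against the functional-derivative integrals already accounted for. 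The cleanest route is: bound each side against the \emph{true} partial minimizer/maximizer is unnecessary; simply insert $\LL_\lambda(\mu^*,\nu^*)$ as a pivot, $\NI(\mu,\nu) \geq \big(\LL_\lambda(\mu,\nu^*) - \LL_\lambda(\mu^*,\nu^*)\big) + \big(\LL_\lambda(\mu^*,\nu^*) - \LL_\lambda(\mu^*,\nu)\big)$, and apply the Bregman lower bound to each parenthesis separately using that $\mu^*$ minimizes $\LL_\lambda(\cdot,\nu^*)$ and $\nu^*$ maximizes $\LL_\lambda(\mu^*,\cdot)$ — for which the vanishing of the linear term is exactly the content of \eqref{eq-mnedef}, and no cross terms appear at all.

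The main obstacle I anticipate is the bookkeeping around which reference point to pivot on: pivoting naively on $(\mu,\nu^*)$ or $(\mu^*,\nu)$ produces residual cross terms like $\LL(\mu,\nu^*) - \LL(\mu,\nu) - \LL(\mu^*,\nu^*) + \LL(\mu^*,\nu)$ whose sign is not obvious from convexity-concavity alone, whereas pivoting on the saddle value $\LL_\lambda(\mu^*,\nu^*)$ decouples the problem into two genuine single-variable entropic-convex subproblems, each handled by the first-order optimality \eqref{eq-mnedef} plus the entropy Bregman identity. Once that choice is made the proof is a two-line application of convexity plus the three-point identity for $\KL$, and the constant $\lambda^{-1}$ falls out directly from the $\lambda\KL$ terms; I would also double-check that $W_1$-Lipschitzness of $\LL$ (Assumption \ref{ass-Lreg}) guarantees the relevant integrals against $\mu-\mu^*$ and $\nu-\nu^*$ are finite, so all manipulations are justified for measures of finite second moment.
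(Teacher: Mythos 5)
Your final route — inserting $\LL_\lambda(\mu^*,\nu^*)$ as a pivot and handling each half by convexity/concavity, the entropy Bregman identity, and the first-order conditions \eqref{eq-mnedef} — is correct and is essentially the paper's proof; the paper simply performs the same pivot implicitly inside the convex-concavity step, bounding $\LL(\mu,\nu^*)-\LL(\mu^*,\nu)$ by the two linearizations at $(\mu^*,\nu^*)$ without writing the intermediate saddle value. Your initial worry about residual cross terms from pivoting at $(\mu,\nu^*)$ or $(\mu^*,\nu)$ is real and your resolution is the right one, so the argument as finally assembled goes through.
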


The weighting exponent $r$ can be thought of as a hyperparameter controlling the following trade-off. A larger $r$ tends to give more weight to recent information, which leads to a faster-moving average and slower convergence of the proximal gap (Proposition \ref{thm-agprox}). However, it also allows for faster convergence of the weighted average to the MNE. The rate is optimized when $r=1$, which is in agreement with works such as \citet{Tao21} on dual averaging and \citet{Guo20} on stochastic gradient descent which incorporate averaging with increasing weights $\beta_t\propto t$ to obtain improved rates ($\sim 1/t$) compared to the unweighted averages ($\sim \log t/t$).

\subsection{Time and Space Discretization}\label{section-algo-ag}

We now summarize our discretization analysis of MFL-AG developed throughout Appendix \ref{appendix-disc}. Our study incorporates both a discrete time step $\eta$ for the Langevin flow and particle approximations for the laws $\mu,\nu$. Denote ordered sets of $N$ particles by $\mathscr{X}=(X^i)_{i=1}^N\in\XX^N$, $\mathscr{Y}=(Y^i)_{i=1}^N\in\YY^N$ and the corresponding empirical distributions by $\mu_\mathscr{X} = \frac{1}{N}\sum_{i=1}^N \delta_{X^i}, \nu_\mathscr{Y} = \frac{1}{N}\sum_{i=1}^N \delta_{Y^i}$. The update $\mathscr{X}_{k+1}, \mathscr{Y}_{k+1}$ will depend on the full history $(\mathscr{X}_{1:k},\mathscr{Y}_{1:k})$, where $\mathscr{X}_1$ and $\mathscr{Y}_1$ are sampled independently from initial distributions $\mu^\circ\in\PP(\XX)$ and $\nu^\circ\in\PP(\YY)$, respectively.

In order to implement gradient averaging, the integral in \eqref{eq-agflow} must be replaced by the discrete-time average with respect to a sequence of weights $(\beta_k)_{k\in\NN}$; the cumulative weights are denoted as $B_k=\sum_{j=1}^k\beta_j$. Moreover, the final average of $\mu_{\mathscr{X}_1}, \cdots, \mu_{\mathscr{X}_K}$ may be computed by randomly sampling $\beta_k N/B_K$ particles from each set $\mathscr{X}_k$ and concatenating.
See Algorithm \ref{algo-ag} for details.

\begin{algorithm}[t]
\caption{Mean-field Langevin Averaged Gradient}\label{algo-ag}
\begin{algorithmic}
\Require temperature $\lambda$, max epochs $K$, learning rate $\eta$, number of particles $N$, exponent $r$
\Initialization $\overline{\mathscr{X}}_K, \overline{\mathscr{Y}}_K\gets\varnothing$, $\mathscr{X}_1$, $\mathscr{Y}_1$
\For{$k=1,\cdots,K-1$}

\State For all particles $i=1,\cdots, N$ sample $\xi_k^{\mu,i}\sim\mathcal{N}(0,\I_{d_{\XX}})$, $\xi_k^{\nu,i}\sim\mathcal{N}(0,\I_{d_{\YY}})$ and update

\State $X_{k+1}^i\gets X_k^i - \frac{\eta}{B_k}\sum_{j=1}^k \beta_j\nabla_x \frac{\delta \!\LL}{\delta \mu}(\mu_{\mathscr{X}_j},\nu_{\mathscr{Y}_j})(X_k^i) -\lambda\eta \nabla_x U^\mu(X_k^i) + \sqrt{2\lambda\eta} \xi_k^{\mu,i}$

\State $Y_{k+1}^i\gets Y_k^i + \frac{\eta}{B_k}\sum_{j=1}^k \beta_j\nabla_y \frac{\delta \!\LL}{\delta \nu}(\mu_{\mathscr{X}_j},\nu_{\mathscr{Y}_j})(Y_k^i)-\lambda\eta \nabla_y U^\nu(Y_k^i) + \sqrt{2\lambda\eta} \xi_k^{\nu,i}$
\EndFor

\For{$k=1,\cdots,K$}
\State Sample $\lfloor \beta_k N/B_K\rfloor$ particles from $\mathscr{X}_k, \mathscr{Y}_k$ and concatenate with $\overline{\mathscr{X}}_K, \overline{\mathscr{Y}}_K$, resp.
\EndFor

\State \Return $\overline{\mathscr{X}}_K$, $\overline{\mathscr{Y}}_K$
\end{algorithmic}
\end{algorithm}

The propagation of chaos framework recently developed in \citet{Chen22, Suzuki23} relies on a lifted proximal distribution $\pmu^{(N)}$ on the configuration space $\XX^N$. By integrating out the conditioning on the previous step in the continuity equation, this is used to elegantly control the evolution of the joint distribution $\mu^{(N)}$ of the $N$ particles. In our case, however, the dependency on the full history $(\mathscr{X}_{1:k},\mathscr{Y}_{1:k})$ cannot be integrated out consistently and must be retained:
\begin{equation*}
\pmu_k^{(N)}(\mathscr{X}) \propto \rho^{\mu\otimes N}(\mathscr{X}) \exp\bigg(-\frac{N}{\lambda B_k} \int_{\XX}\sum_{j=1}^k \beta_j \frac{\delta \!\LL}{\delta\mu}(\mu_{\mathscr{X}_j},\nu_{\mathscr{Y}_j}) \mu_\mathscr{X}(\rd x)\bigg).
\end{equation*}
This renders the KL gap argument with $\mu^{(N)}$ inaccessible and we must work step-by-step with the atomic measures $\mu_{\mathscr{X}_k}, \nu_{\mathscr{Y}_k}$, which further complicates matters as we cannot directly utilize metrics involving $\mu_{\mathscr{X}_k}$ in order to avoid the curse of dimensionality. 
Instead, we prove and exploit the following uniform law of large numbers (Appendix \ref{appendix-apple}).

\begin{prop}\label{thm-alternative}
Let $F: \PP(\XX)\times\PP(\YY)\times \XX \to\RR$, $(\mu,\nu,x)\mapsto F(\mu,\nu)(x)$ be a functional such that $F(\mu,\nu)$ is $M_\mu$-Lipschitz on $\XX$ and further satisfies
\begin{equation*}
\lVert F(\mu,\nu) - F(\mu',\nu')\rVert_{\Lip} \leq L_\mu(W_1(\mu,\mu') + W_1(\nu,\nu')).
\end{equation*}
If $\eta\leq\bar{\eta}:=\frac{r_\mu\lambda}{2(L_\mu+\lambda R_\mu)^2} \wedge\frac{r_\mu}{4\lambda R_\mu^2} \wedge \frac{r_\nu\lambda}{2(L_\nu+\lambda R_\nu)^2} \wedge\frac{r_\nu}{4\lambda R_\nu^2}$ and the weight sequence $\beta_k=k^r$ for $r\geq 0$, then for all integers $k,N$ it holds that
\begin{equation}\label{eq-thebound}
\EEbig{\mathscr{X}_{1:k},\mathscr{Y}_{1:k}}{\int_{\XX} F(\mu_{\mathscr{X}_k}, \nu_{\mathscr{Y}_k}) (\mu_{\mathscr{X}_k}-\Pi\pmu_{k-1}^{(N)})(\rd x)} \leq \frac{r+1}{k}C_1(\eta) + C_2\sqrt{\eta} +\frac{C_3}{\sqrt{N}}.
\end{equation}
The same bound also holds for the max policy $\nu$. The constants $C_2,C_3$ only depend on problem quantities (including the LSI constants) with at most polynomial order, while the function $C_1$ depends on problem quantities and $\eta$.
\end{prop}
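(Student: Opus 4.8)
Fix $F$ as in the statement. The plan is to split the left side of \eqref{eq-thebound} into a \emph{tracking error} measuring how far the discretized $N$-particle flow has fallen behind the slowly moving proximal target $\Pi\pmu_{k-1}^{(N)}$, and a \emph{sampling error} from replacing proximal expectations by empirical averages; throughout, the discipline is to never bound a Wasserstein distance (or any IPM over a rich class) between an atomic measure $\mu_{\mathscr{X}_k}$ and a smooth one, as this would incur the dimensional cost $N^{-1/d_{\XX}}$. The structural fact driving everything is that, conditionally on the history through step $j-1$, the averaged drift $g_{j-1}:=\tfrac{1}{B_{j-1}}\sum_{l=1}^{j-1}\beta_l\tfrac{\delta\!\LL}{\delta\mu}(\mu_{\mathscr{X}_l},\nu_{\mathscr{Y}_l})$ is a fixed $M_\mu$-Lipschitz function, so $\Pi\pmu_{j-1}^{(N)}=\pmu_{j-1}\propto\rho^\mu e^{-g_{j-1}/\lambda}$; the update $\mathscr{X}_{j-1}\!\to\!\mathscr{X}_j$ is exactly one unadjusted Langevin step (step size $\eta$, temperature $\lambda$, drift $-\lambda\nabla(-\log\pmu_{j-1})$) applied particlewise; $-\log\pmu_{j-1}$ is $(R_\mu+K_\mu/\lambda)$-smooth; and by Proposition~\ref{thm-aglsi}, $\pmu_{j-1}$ satisfies a log-Sobolev inequality with the \emph{deterministic} constant $\alpha_\mu$ for every realization of the history. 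The assumption $\eta\le\bar\eta$ is precisely what keeps the resulting one-step contraction factor in $(0,1)$ and validates the discretization estimates, and the $\nu$-side is treated identically.

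\textbf{The recursion.} For the tracking error I would control the per-particle relative entropy $a_j:=\tfrac1N\,\E{\KL\big(\mathrm{Law}(\mathscr{X}_j)\,\big\Vert\,\pmu_{j-1}^{(N)}\big)}$ to the (history-dependent) lifted proximal, which is dimension-free in $N$, and show it obeys a recursion combining two mechanisms. First, one Langevin step contracts $a_j$ toward the current target by a factor $1-\Theta(\alpha_\mu\lambda\eta)$ up to an additive discretization injection of order $\eta^2 d_{\XX}(\lambda R_\mu+K_\mu)^2$ — the standard one-step entropy-descent estimate under a log-Sobolev inequality. Second, the target moves slowly: since $\beta_{j-1}/B_{j-1}=\Theta((r+1)/j)$ for $\beta_j=j^r$, the increment $-\log\pmu_{j-1}+\log\pmu_{j-2}=\tfrac{\beta_{j-1}}{\lambda B_{j-1}}\big(\tfrac{\delta\!\LL}{\delta\mu}(\mu_{\mathscr{X}_{j-1}},\nu_{\mathscr{Y}_{j-1}})-g_{j-2}\big)+\mathrm{const}$ has Lipschitz norm $\Theta((r+1)M_\mu/(\lambda j))$; the change-of-reference identity for KL, the sub-Gaussian concentration of Lipschitz functions under the log-Sobolev inequality, and Talagrand's inequality $W_1\le\sqrt{2\KL/\alpha_\mu}$ then bound the cost of moving the reference from $\pmu_{j-2}$ to $\pmu_{j-1}$ by $\Theta\big(\tfrac{(r+1)M_\mu}{\lambda j}\big)\sqrt{a_{j-1}/\alpha_\mu}+\Theta\big(\tfrac{(r+1)^2M_\mu^2}{\alpha_\mu\lambda^2 j^2}\big)$. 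Combining, $a_j\lesssim(1-\Theta(\alpha_\mu\lambda\eta))\,a_{j-1}+\Theta\big(\tfrac{(r+1)M_\mu}{\lambda j}\big)\sqrt{a_{j-1}/\alpha_\mu}+\Theta\big(\tfrac{(r+1)^2M_\mu^2}{\alpha_\mu\lambda^2 j^2}\big)+\Theta\big(\eta^2 d_{\XX}(\lambda R_\mu+K_\mu)^2\big)$.

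\textbf{Unrolling, and the sampling term.} Absorbing the cross term by Young's inequality and unrolling this geometric recursion, the $1/j$-type forcing accumulates to a tracking gap $\sqrt{a_k}\lesssim\tfrac{(r+1)M_\mu}{\alpha_\mu^{3/2}\lambda^2\,\eta\,k}+\tfrac{\sqrt{\eta d_{\XX}}\,(\lambda R_\mu+K_\mu)}{\sqrt{\alpha_\mu\lambda}}$ (schematically; plus exponentially small terms from the initialization). The $\eta$ in the first denominator is exactly why $C_1$ is permitted to depend on $\eta$ — it scales like $1/\eta$ — while the second, $\Theta(\sqrt\eta)$ term carries only a polynomial prefactor in the problem constants and $\alpha_\mu$. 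Applying Talagrand's inequality to the averaged particle marginal of $\mathrm{Law}(\mathscr{X}_k)$ against $\pmu_{k-1}$, together with $|\langle G,\cdot\rangle|\le M_\mu W_1$ for the $M_\mu$-Lipschitz, history-measurable reference $G$ built from $F$, converts this into the $\tfrac{r+1}{k}C_1(\eta)+C_2\sqrt\eta$ contribution. The remaining sampling error — the discrepancy between the empirical $\mu_{\mathscr{X}_k}$ and the averaged marginal, seen against the single Lipschitz function $F(\mu_{\mathscr{X}_k},\nu_{\mathscr{Y}_k})$ — is $C_3/\sqrt N$ with $C_3$ polynomial in the problem constants and $\eta$-free, obtained from a conditional-variance bound together with an $O(1/N)$ control of the particle correlations; crucially, $F$ enters here only through one Lipschitz test function, never a Wasserstein ball, so no curse of dimensionality appears.

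\textbf{Main obstacle.} The delicate part, which this lemma exists to absorb, is the two-layer entanglement: the drift at every step depends on \emph{all} past empirical measures (so there is no deterministic reference — the randomness must be propagated through the recursion while keeping each $\pmu_{j-1}$ an admissible log-Sobolev reference with the uniform constant), and for non-bilinear $\LL$ those same past empirical measures reappear inside the \emph{current} particle drift, so the per-particle entropy recursion can neither be run conditionally on the full history (the conditional law degenerates to an atom) nor fully unconditionally (the reference becomes random). Threading this needle — conditioning on a filtration that freezes the drift functions with the correct lag, using convexity of $\LL$ to control the self-interaction, and carrying the $O(1/N)$ particle correlations simultaneously so that every $N$-dependent quantity is felt only through single Lipschitz test functions and lifted relative entropies, both dimension-free — is where the real work lies; the rest is the familiar unadjusted-Langevin-under-LSI machinery, now tracking a target that drifts at rate $\Theta((r+1)/k)$.
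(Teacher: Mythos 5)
The proposal correctly isolates the governing tension — you must never measure the gap between the atomic $\mu_{\mathscr{X}_k}$ and a smooth law in a rich IPM, you must exploit the $\Theta((r+1)/k)$ slowdown of the proximal target, and the history dependence of $\pmu_{j-1}^{(N)}$ forbids both fully conditional and fully unconditional entropy arguments — and even names the right cure, ``conditioning on a filtration that freezes the drift functions with the correct lag.'' But the central recursion you write down does not survive the obstacle you yourself describe in the last paragraph. The quantity $a_j=\tfrac1N\E{\KL(\Law(\mathscr{X}_j)\Vert\pmu_{j-1}^{(N)})}$ is ill-posed in the way your recursion needs it: to obtain the $(1-\Theta(\alpha_\mu\lambda\eta))$ one-step Langevin contraction you must condition on $(\mathscr{X},\mathscr{Y})_{1:j-1}$ so that the drift and reference $\pmu_{j-1}^{(N)}$ are deterministic, but then $\Law(\mathscr{X}_j\mid\text{history})$ is a Gaussian mixture centered on atoms and its entropy to the reference is not inherited from any ``$a_{j-1}$''; whereas if you do not condition, the reference $\pmu_{j-1}^{(N)}$ is itself random and the contraction inequality does not apply. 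The ``moving-target'' correction also does not simply add a $\Theta((r+1)/j)$ forcing per step: changing the reference from $\pmu_{j-2}^{(N)}$ to $\pmu_{j-1}^{(N)}$ changes the function you are contracting against, and this must be absorbed over a whole window, not step-by-step.

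The paper resolves this with a concrete device you do not construct: a \emph{gradient-stopped process} $\widetilde{\mathscr{X}}$ that duplicates the true dynamics but freezes the averaged drift at an earlier time $k-\ell$. Conditionally on $(\mathscr{X},\mathscr{Y})_{1:k-\ell}$ the stopped particles evolve \emph{independently} with a fixed drift toward a fixed $\Pi\pmu_{k-\ell}^{(N)}$, which is where the LSI contraction is legitimately applied (Proposition \ref{thm-klcontraction}); the proximal shift $\Pi\pmu_{k-\ell}^{(N)}\to\Pi\pmu_{k-1}^{(N)}$ is controlled via Lemma \ref{thm-timebound}; the error of replacing the real process by the stopped one over $\ell$ steps is bounded in Lemma \ref{thm-modifiederr} using $1-B_{k-\ell}/B_k\lesssim (r+1)\ell/k$; and the $O(1/\sqrt{N})$ term comes from a $W_1$-Lipschitz leave-one-out argument on the now conditionally i.i.d. particles, not a generic ``conditional-variance bound.'' The lag $\ell$ is chosen as $\ell=\Theta(\eta^{-1}\log\eta^{-1})$ to balance exponential decay against drift staleness. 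A symptom that your sketch underprices this balance is your claimed $C_1(\eta)=\Theta(1/\eta)$: the paper's $C_1(\eta)$ in fact scales like $\eta^{-\Theta(1/\alpha_\mu)}$ because the perturbation between real and stopped processes must be propagated through the $\ell$-step window, and this exponential cost cannot be avoided by your step-by-step accounting. So the intuition is right, but the proof as written has a gap exactly at the point your own ``Main obstacle'' paragraph flags, and the concrete construction filling that gap is missing.
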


Here, $\Pi$ denotes the average of the $N$ pushforward operators along the coordinate projection maps $\mathscr{X} \mapsto X^i$. 
The main idea of the proof is to \emph{look backwards in time}: close enough so that the dynamics is nearly particle-independent due to the slowdown of the averaged gradient, but far enough to assure exponential convergence to the approximate stationary distribution. Furthermore, the $W_1$-Lipschitz leave-one-out argument in Step 3 shows that the $O(1/\sqrt{N})$ rate is optimal.



We finally present our main discretization error bound; the proof is presented in Appendix \ref{appendix-discretemain}.

\begin{thm}[Convergence of discretized MFL-AG]\label{thm-agdiscrete}
Denote the averaged empirical distributions as $\mu_{\overline{\mathscr{X}}_k} = \frac{1}{B_k} \sum_{j=1}^k \beta_j\mu_{\mathscr{X}_j}$, $\nu_{\overline{\mathscr{Y}}_k} = \frac{1}{B_k} \sum_{j=1}^k \beta_j\nu_{\mathscr{Y}_j}$.
If $\eta\leq\bar{\eta}$ and $\beta_k=k^r$ with $r>0$, the MFL-AG discrete update satisfies for all $K,N$,
\begin{equation*}
W_1^2(\E{\mu_{\overline{\mathscr{X}}_K}}, \mu^*) + W_1^2(\E{\nu_{\overline{\mathscr{Y}}_K}}, \nu^*) \leq \frac{(r+1)^2}{rK} \widetilde{C}_1(\eta) +\widetilde{C}_2\sqrt{\eta} +\frac{\widetilde{C}_3}{\sqrt{N}}
\end{equation*}
with similar constants as in Proposition \ref{thm-alternative}. If $r=0$, the first term is replaced by $O(\log K/K)$.
\end{thm}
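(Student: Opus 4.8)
The plan is to combine three ingredients: (i) the continuous-time average-iterate bound of Theorem~\ref{thm-agavg} together with the entropy sandwich lower bound of Lemma~\ref{thm-sandwichlu}, which controls the idealized flow; (ii) the uniform law of large numbers of Proposition~\ref{thm-alternative}, which bridges the atomic empirical measures to their lifted proximal counterparts with the $O(1/k) + O(\sqrt{\eta}) + O(1/\sqrt{N})$ error decomposition; and (iii) a discrete analogue of the proximal-convergence estimate (Proposition~\ref{thm-agprox}) for the time-discretized, particle-approximated dynamics. The target quantity is a Wasserstein distance between the \emph{expected} averaged empirical distribution $\E{\mu_{\overline{\mathscr{X}}_K}}$ and the MNE $\mu^*$, so the first reduction is via Talagrand's inequality (available from Proposition~\ref{thm-aglsi}): it suffices to bound $\KL\big(\E{\mu_{\overline{\mathscr{X}}_K}}\,\Vert\,\mu^*\big)$ up to the LSI constant. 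Working with the expectation over particles lets us push the $1/\sqrt N$ term inside cleanly and apply Jensen where convenient.

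First I would set up the discrete NI-type functional evaluated at the averaged empirical pair, mirroring the conjugate-function argument of Appendix~\ref{appendix-agavg} but now tracking the three discretization errors at each step. Concretely, for each epoch $k$ one writes the one-step change in the relative entropy $\KL(\mu_{\mathscr{X}_k}\Vert \Pi\pmu_{k-1}^{(N)})$ using the discrete Langevin update, invoking the LSI to get geometric contraction with rate governed by $\alpha_\mu\lambda\eta$, and collecting the discretization bias $O(\sqrt\eta)$ from the Euler--Maruyama step (standard one-step Girsanov / interpolation argument) plus the drift-averaging error. Summing these against the weights $\beta_k = k^r$ and using the same telescoping/Abel-summation as in the continuous proof gives an averaged bound of the form $\frac{(r+1)^2}{rK}\widetilde C_1(\eta) + \widetilde C_2\sqrt\eta$ for the ``smoothed'' NI gap, where the $\frac{(r+1)^2}{rK}$ factor comes from $\beta_K/B_K \sim (r+1)/K$ exactly as in Theorem~\ref{thm-agavg} (and degrades to $\log K/K$ at $r=0$ because $B_K\sim\log K$). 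Then Proposition~\ref{thm-alternative}, applied with $F = \nabla_x\frac{\delta\LL}{\delta\mu}$ (whose Lipschitz hypotheses are exactly Assumption~\ref{ass-Lreg}), converts the residual terms involving $\mu_{\mathscr{X}_k} - \Pi\pmu_{k-1}^{(N)}$ into the stated $C_2\sqrt\eta + C_3/\sqrt N$, and the entropy sandwich Lemma~\ref{thm-sandwichlu} upgrades the NI control to simultaneous relative-entropy (hence, via Talagrand, $W_1^2$) control of both marginals.

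The main obstacle I expect is the bookkeeping around the \emph{history dependence}: unlike ordinary MFLD, the proximal target $\pmu_{k-1}^{(N)}$ at step $k$ depends on all of $(\mathscr{X}_{1:k-1},\mathscr{Y}_{1:k-1})$, so the naive per-step contraction argument does not close — the ``target'' moves as the history grows. The resolution is the ``look backwards in time'' idea already used for Proposition~\ref{thm-alternative}: over a short recent window the averaged drift $\frac{1}{B_k}\sum_{j\le k}\beta_j(\cdot)$ changes by only $O(\beta_k/B_k) = O(1/k)$, so the dynamics is effectively autonomous on that window and one gets quasi-stationarity with an $O(1/k)$ drift-freezing error; this is precisely where the $\frac{r+1}{k}C_1(\eta)$ term in \eqref{eq-thebound} originates, and it must be carefully summed against $\beta_k$ without blowing up — $\sum_k \beta_k \cdot \frac{1}{k} \cdot \frac{1}{B_K} \asymp \frac{(r+1)}{rK}$ for $r>0$, matching the claim. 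A secondary technical point is ensuring all constants $\widetilde C_i$ inherit only polynomial dependence on the problem parameters and LSI constants (with $C_1$ allowed to depend on $\eta$), which follows by tracking the explicit bounds from Propositions~\ref{thm-aglsi} and~\ref{thm-alternative} rather than absorbing them into $O(\cdot)$ notation; the step-size restriction $\eta\le\bar\eta$ is exactly what guarantees the per-step contraction factor $1 - \Theta(\alpha\lambda\eta)$ stays in $(0,1)$ and that the discrete proximal distributions remain log-concave perturbations of $\rho^\mu,\rho^\nu$.
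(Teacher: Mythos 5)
Your high-level roadmap (conjugate duality, Proposition~\ref{thm-alternative} for the discretization errors, the entropy sandwich Lemma~\ref{thm-sandwichlu}, Talagrand to convert to $W_1$, and the $\beta_K/B_K\sim (r+1)/K$ rate) matches the ingredients the paper assembles, and the observation that the history-dependence must be handled by freezing the drift over a short window is the right intuition behind Proposition~\ref{thm-alternative}. However, the ``concretely'' step is where the proposal breaks: you propose to track the one-step change of $\KL(\mu_{\mathscr{X}_k}\Vert \Pi\pmu_{k-1}^{(N)})$ and use the LSI for geometric contraction. This quantity is infinite --- $\mu_{\mathscr{X}_k}$ is an atomic empirical measure and $\Pi\pmu_{k-1}^{(N)}$ is absolutely continuous --- so there is no KL gap to contract. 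The paper flags exactly this obstruction (``we cannot directly utilize metrics involving $\mu_{\mathscr{X}_k}$'') and circumvents it by working purely with the \emph{weak} pairing $\int F\,(\mu_{\mathscr{X}_k}-\Pi\pmu_{k-1}^{(N)})(\rd x)$ for Lipschitz test functionals $F$, which is what Proposition~\ref{thm-alternative} actually controls.

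The mechanism that replaces the missing KL contraction in the paper's proof is a two-sided bound on a surrogate NI functional $\mathfrak{N}(\mu_{\overline{\mathscr{X}}_k},\nu_{\overline{\mathscr{Y}}_k})$, defined as a maximum over \emph{configuration-space} distributions $\mu^{(N)}\in\PP(\XX^N)$, $\nu^{(N)}\in\PP(\YY^N)$. The upper bound comes from the lifted conjugate functional $\widehat{J}_k^{(N)}$ and its one-step evolution inequality (Lemma~\ref{thm-onestep}, built on Proposition~\ref{thm-NJhatlip} and the product LSI of Lemma~\ref{thm-productlsi}); this is the discrete, $N$-particle analogue of the continuous conjugate argument, and it is here that the $\beta_j\cdot(1/j)\cdot (1/B_K)\asymp (r+1)^2/(rK)$ summation you describe happens. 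The lower bound comes from restricting the maximum to product distributions $\mu^{\otimes N}$ and using convexity of KL plus convex-concavity of $\LL$ to recover $\NI(\E{\overline{\Pi}\pmu_k},\E{\overline{\Pi}\pnu_k})$, after which the entropy sandwich and Talagrand finish the conversion to $W_1^2$. Your proposal mentions ``setting up a discrete NI-type functional'' but then detours into the infeasible KL-gap contraction instead of performing this two-sided sandwich on $\mathfrak{N}$; the telescoping must run on $\widehat{J}_k^{(N)}$, not on any per-step relative entropy. A secondary, smaller error: Proposition~\ref{thm-alternative} is applied in the proof with scalar test functionals such as $F=\frac{\delta\LL}{\delta\mu}(\mu_{\mathscr{X}_j},\nu_{\mathscr{Y}_j})$ or $F=\frac{\delta\LL}{\delta\mu}(\E{\overline{\Pi}\pmu_k},\nu)$, not with the gradient $\nabla_x\frac{\delta\LL}{\delta\mu}$, which would produce a vector-valued integrand and does not match the statement of \eqref{eq-thebound}.
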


Hence the errors arising from time and particle discretization are separately bounded as $O(\sqrt{\eta})$ and $O(1/\sqrt{N})$. An unfortunate byproduct of the perturbation analysis is a roughly $\eta^{-1/\alpha_\mu}$ order dependency in the constant $C_1(\eta)$; nonetheless, the convergence in time is $O(1/K)$ for any fixed $\eta$.
In particular, for any specified error $\epsilon>0$ we can take $\eta = O(\epsilon^2)$ and $N = O(\epsilon^{-2})$ as well as $K=O(\epsilon^{-1/\alpha_\mu\wedge\alpha_\nu})$ so that $W_1^2(\E{\mu_{\overline{\mathscr{X}}_K}}, \mu^*) + W_1^2(\E{\nu_{\overline{\mathscr{Y}}_K}}, \nu^*) < \epsilon$.

We remark that the squared Wasserstein distance is a natural measure of optimality consistent with the continuous-time rate obtained in Theorem \ref{thm-agavg} in view of Lemma \ref{thm-sandwichlu}. Note that Theorem \ref{thm-agdiscrete} quantifies the bias of the MFL-AG outputs, but does not tell us anything about the variance. In Appendix \ref{appendix-outside}, we give a bound for the expected distance $\E{W_1(\mu_{\overline{\mathscr{X}}_k}, \mu^*) + W_1(\nu_{\overline{\mathscr{Y}}_k}, \nu^*)}$ and also discuss why the curse of dimensionality is unavoidable in this setting.

\section{Mean-field Langevin Anchored Best Response}

\subsection{Proposed Method}

Our second proposal builds upon the \emph{mean-field best response} (MF-BR) flow recently proposed in \citet{Lascu23}. There, the authors prove that the strategies $(\mu_t,\nu_t)_{t\geq 0}$ given by the linear flow
\begin{equation*}
\rd\mu_t(x) = \beta(\pmu_t(x)-\mu_t(x))\rd t, \quad \rd\nu_t(x) = \beta(\pnu_t(x)-\nu_t(x))\rd t,
\end{equation*}
with speed $\beta>0$ converge exponentially  to the unique MNE, where $\pmu_t \propto \rho^\mu\exp\big(-\frac{1}{\lambda}\frac{\delta \!\LL}{\delta\mu}(\mu_t,\nu_t)\big)$, $\pnu_t \propto \rho^\nu\exp\big(\frac{1}{\lambda}\frac{\delta \!\LL}{\delta\nu}(\nu_t,\nu_t)\big)$ are the best response proximal distributions, so called because they are the optimal responses against the current policies of all players (rather than the historical average in MFL-AG). However, a major weakness of MF-BR is that the flow is not directly realizable by a particle algorithm.

We therefore propose the \emph{mean-field Langevin anchored best response} (MFL-ABR) process by incorporating an inner loop running Langevin dynamics, decoupled by anchoring the gradient at the output $(\mu_k,\nu_k)$ of the previous outer loop:
\begin{align*}
    &X_0^\dagger\sim\rho^\mu,\quad \rd X_t^\dagger= -\left(\nabla_x \frac{\delta \!\LL}{\delta \mu}(\mu_k,\nu_k)(X_t^\dagger)+\lambda\nabla_x U^\mu(X_t^\dagger) \right)\rd t + \sqrt{2\lambda} \rd W_t^\mu, \quad 0\leq t\leq\tau,
\end{align*}
and similarly for $Y_t^\dagger$. The outputs at time $\tau$, denoted by $\mu_{k,\tau}^\dagger = \Law(X_\tau^\dagger)$, $\nu_{k,\tau}^\dagger =\Law(Y_\tau^\dagger)$ serve as approximations of the best response proximal distributions (replacing time $t$ with the discrete index $k$).
The outer loop then performs the discretized MF-BR update,
\begin{equation*}
\mu_{k+1}=(1-\beta)\mu_k + \beta \mu_{k,\tau}^\dagger, \quad\nu_{k+1}=(1-\beta)\nu_k + \beta \nu_{k,\tau}^\dagger,
\end{equation*}
where $\mu_0=\rho^\mu$, $\nu_0=\rho^\nu$. The flow can be immediately realized by a simple particle algorithm; see Algorithm \ref{algo-abr} in the appendix. A similar method for single convex optimization was also recently implemented in \citet{Chen23} but without any theoretical guarantees.

\subsection{Continuous-Time Convergence}

To analyze the convergence of MFL-ABR, we require the following alternative assumptions for $\LL$ which are taken from \citet{Lascu23}.

\begin{ass}[Regularity of $\LL$ for MFL-ABR]\label{ass-Lregbr}
We assume that $\LL$ is convex-concave and admits $C^1$ functional derivatives which are uniformly bounded as $\lVert\frac{\delta \!\LL}{\delta\mu}(\mu,\nu)\rVert_\infty\leq C_\mu$, $\lVert\frac{\delta \!\LL}{\delta\nu}(\mu,\nu)\rVert_\infty\leq C_\nu$ for constants $C_\mu,C_\nu>0$. Furthermore, $L$ admits second order functional derivatives which are uniformly bounded as $\lVert\frac{\delta^2 \!\LL}{\delta\mu^2}\rVert_\infty \leq C_{\mu\mu}$, $\lVert\frac{\delta^2 \!\LL}{\delta\mu\delta\nu}\rVert_\infty \leq C_{\mu\nu}$, $\lVert\frac{\delta^2 \!\LL}{\delta\nu^2}\rVert_\infty \leq C_{\nu\nu}$ and symmetric in the sense that $\frac{\delta^2 \!\LL}{\delta\mu\delta\nu}(\mu,\nu,x,y) = \frac{\delta^2 \!\LL}{\delta\nu\delta\mu}(\mu,\nu,y,x)$ for all $\mu,\nu$ and $x\in\XX, y\in\YY$.
\end{ass}
Existence and uniqueness of the MNE still hold under this assumption as proved in \citet{Lascu23}. Also, $\pmu_t, \pnu_t$ both satisfy the LSI with constant $\alpha = r_\mu\exp\big(-\frac{4C_\mu}{\lambda}\big) \wedge r_\nu\exp\big(-\frac{4C_\nu}{\lambda}\big)$ by the Holley-Stroock argument; we take the minimum since it dominates the overall convergence rate.

We now present the overall convergence result for MFL-ABR. The proof, given in Appendix \ref{appendix-abrflow}, is a combination of a time-discretized version of the argument in \cite{Lascu23} for the outer loop and a TV distance perturbation analysis for the inner loop developed in Appendix \ref{appendix-abrinner}.

\begin{thm}[Convergence of MFL-ABR]\label{thm-abrconv}
The NI error of the MFL-ABR outer loop output after $k$ steps is bounded for a constant $C$ as
\begin{align*}
\NI(\mu_k,\nu_k)\leq 2(C_\mu+C_\nu)\exp(-\beta k) + 12\lambda^{-\frac{1}{2}}(C_\mu^\frac{3}{2}+C_\nu^\frac{3}{2})\exp(-\alpha\lambda\tau) + C\beta.
\end{align*}
\end{thm}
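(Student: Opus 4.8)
The plan is to decompose the NI error of the outer iterate $(\mu_k,\nu_k)$ into three contributions: (i) the convergence of the idealized MF-BR scheme run in discrete time with step $\beta$, (ii) the error incurred by replacing the exact best-response proximal distributions $\pmu_k,\pnu_k$ with the finite-time Langevin outputs $\mu_{k,\tau}^\dagger,\nu_{k,\tau}^\dagger$, and (iii) the first-order discretization error of the MF-BR flow itself. These will produce the three terms $2(C_\mu+C_\nu)\exp(-\beta k)$, $12\lambda^{-1/2}(C_\mu^{3/2}+C_\nu^{3/2})\exp(-\alpha\lambda\tau)$, and $C\beta$ respectively.

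First I would set up a Lyapunov functional adapted from \citet{Lascu23} — essentially the NI error or a closely related potential $\Phi_k$ built from $\KL(\mu_k\Vert\pmu_k)+\KL(\nu_k\Vert\pnu_k)$ type quantities — and show that under the \emph{exact} update $\mu_{k+1}=(1-\beta)\mu_k+\beta\pmu_k$ one has a contraction $\Phi_{k+1}\leq(1-c\beta)\Phi_k + O(\beta^2)$ for some constant $c$; iterating gives the $\exp(-\beta k)$ decay plus a $O(\beta)$ floor (the latter being term (iii), absorbed into $C\beta$). This step uses Assumption \ref{ass-Lregbr}: the uniform bounds on the first and second functional derivatives give Lipschitz control on $\mu\mapsto\pmu$ (via the explicit exponential form and Holley--Stroock), and the symmetry of $\frac{\delta^2\LL}{\delta\mu\delta\nu}$ is what makes the cross terms cancel so that the convex-concave structure yields genuine contraction rather than mere boundedness, exactly as in the continuous-time argument of \citet{Lascu23}.

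Next I would handle the inner-loop error. Running the decoupled Langevin dynamics $X_t^\dagger$ with fixed drift $\nabla_x\frac{\delta\LL}{\delta\mu}(\mu_k,\nu_k)$ for time $\tau$ from $X_0^\dagger\sim\rho^\mu$, the law $\mu_{k,\tau}^\dagger$ converges to its stationary distribution $\pmu_k$; since $\pmu_k$ satisfies the LSI with constant $\alpha$ (Holley--Stroock, using $\lVert\frac{\delta\LL}{\delta\mu}\rVert_\infty\leq C_\mu$), standard Langevin convergence gives $\KL(\mu_{k,\tau}^\dagger\Vert\pmu_k)\leq e^{-2\alpha\lambda\tau}\KL(\rho^\mu\Vert\pmu_k)$, and the initial KL is bounded by $2C_\mu/\lambda$ from the explicit density ratio. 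Converting via Pinsker to TV, then tracking how a TV perturbation of the best-response surrogate propagates through the convex combination $\mu_{k+1}=(1-\beta)\mu_k+\beta\mu_{k,\tau}^\dagger$ and into the NI error (using that $\LL$ is $C_\mu$-Lipschitz in the relevant sense and the NI error is controlled by the proximal gaps), yields the $\lambda^{-1/2}(C_\mu^{3/2}+C_\nu^{3/2})\exp(-\alpha\lambda\tau)$ term; the $3/2$ powers and the $\sqrt\lambda$ come precisely from $\sqrt{\KL}\sim\sqrt{C_\mu/\lambda}$ times a $C_\mu$ Lipschitz factor. I would do this as a TV-distance perturbation analysis (the paper flags this as Appendix \ref{appendix-abrinner}) so that the inner-loop bias enters additively at each outer step without compounding, using the $(1-\beta)$ contraction from step one to sum the geometric series of accumulated inner errors into a single $\exp(-\alpha\lambda\tau)$-scale term.

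The main obstacle I anticipate is the interplay between the three error sources: the outer contraction needs the surrogate $\mu_{k,\tau}^\dagger$ to be close to $\pmu_k$, but $\pmu_k$ itself depends on $(\mu_k,\nu_k)$ which carries accumulated error from previous steps, so one must verify the perturbed recursion $\Phi_{k+1}\leq(1-c\beta)\Phi_k + O(\beta^2) + (\text{inner error})$ remains stable and that the inner error does not depend on $\Phi_k$ in a way that breaks the contraction — i.e., a uniform-in-$k$ bound on $\KL(\rho^\mu\Vert\pmu_k)$ is essential, which is why the $\lVert\frac{\delta\LL}{\delta\mu}\rVert_\infty$ bound (rather than just Lipschitzness) is assumed here. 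A secondary technical point is making the discrete-time version of the \citet{Lascu23} continuous-time contraction rigorous: the continuous flow $\rd\mu_t=\beta(\pmu_t-\mu_t)\rd t$ has a clean derivative-of-Lyapunov computation, whereas the discrete update introduces $O(\beta^2)$ remainder terms that must be shown to be genuinely lower-order and to contribute only the benign additive $C\beta$, not something that degrades the exponential rate.
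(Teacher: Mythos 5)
Your plan matches the paper's proof closely: the paper tracks the Lyapunov functional $\KL(\mu_k\Vert\pmu_k)+\KL(\nu_k\Vert\pnu_k)$ via a one-step interpolation $\mu(s)=(1-\beta s)\mu_k+\beta s\mu_{k,\tau}^\dagger$, uses the Lascu-style convex-concave cancellation (exploiting the symmetry of $\frac{\delta^2\LL}{\delta\mu\delta\nu}$) to obtain the $-\beta\KL$ contraction plus $O(\beta^2)$ remainder, injects the inner-loop error through Proposition~\ref{thm-abrinner} with the uniform density-ratio bound of Lemma~\ref{thm-bddclass} (which is exactly the uniform-in-$k$ bound on $\KL(\rho^\mu\Vert\pmu_k)$ you flag as essential), Gronwalls over $s\in[0,1]$ and iterates in $k$, then converts KL to NI error via Lascu's Lemma 3.4. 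Your split into an exact-update contraction followed by a separate perturbation argument is a minor reorganization of the paper's single interpolated one-step inequality, not a different method.
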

Hence we achieve linear convergence in the outer loop iteration, with a uniform-in-time inner loop error linearly converging in $\tau$ and time discretization error proportional to $\beta$. It follows that an $\epsilon$-MNE may be obtained in $k = O(\frac{1}{\epsilon}\log\frac{1}{\epsilon})$ outer loop iterations with $\beta = O(\epsilon)$ and $\tau = O(\log \frac{1}{\epsilon})$.

We do not give a discrete-particle analysis of MFL-ABR and instead remark that discretization of the fixed-drift inner loop is trivial, while Theorem \ref{thm-abrconv} already covers the outer-loop error due to finite $\tau$ and nonzero $\beta$. The remaining element is particle discretization analysis of the outer loop momentum sampling which we feel strays from the scope of this work.

\section{Applications to Zero-Sum Markov Games}

\subsection{Bilinear Problems}\label{section-bilinear}

We briefly discuss the case when $\LL$ is bilinear, that is $\LL(\mu,\nu) = \iint Q(x,y)\mu(\rd x)\nu(\rd y)$ for a $C^1$ reward $Q:\XX\times\YY\to\RR$. Assumption \ref{ass-Lreg} is easily verified under the conditions $\norm{\nabla_x Q}_\infty\leq Q_x$ and $\nabla_x Q$ is $L_x^i$-Lipschitz in each coordinate $i=1,\cdots,d_{\XX}$ by taking $M_\mu=Q_x$, $K_\mu=L_\mu=\norm{L_x}_2$, while Assumption \ref{ass-Lregbr} holds if $Q$ is uniformly bounded. The averaged gradient in \eqref{eq-agflow} is then equal to
\begin{equation*}
\textstyle \frac{1}{B_t}\int_0^t \beta_s\nabla_x \frac{\delta \!\LL}{\delta \mu}(\mu_s,\nu_s)(X_t) \rd s+\lambda \nabla_x U^\mu(X_t) = \int_{\YY} \nabla_x Q(X_t,y) \bar{\nu}_t(\rd y) +\lambda \nabla_x U^\mu(X_t);
\end{equation*}
the drift only depends on the history through the average distributions $\bar{\mu}_t, \bar{\nu}_t$. Therefore, instead of storing and iterating over all previous states which could be computationally prohibitive, we only require the rolling averages to be stored and updated alongside the primary dynamics. In the discrete case, this means that we store the length $N$ arrays $\overline{\mathscr{X}}, \overline{\mathscr{Y}}$ alongside $\mathscr{X}_k, \mathscr{Y}_k$ and perform
\begin{align*}
&\textstyle X_{k+1}^i \leftarrow X_k^i - \frac{\eta}{N}\sum_{m=1}^N \nabla_x Q(X_k^i, \overline{Y}^m) -\lambda\eta \nabla_x U^\mu(X_k^i) + \sqrt{2\lambda\eta} \xi_k^{\mu,i},\\
&\textstyle Y_{k+1}^i\leftarrow Y_k^i + \frac{\eta}{N}\sum_{m=1}^N \nabla_y Q(\overline{X}^m, Y_k^i)-\lambda\eta \nabla_y U^\nu(Y_k^i) + \sqrt{2\lambda\eta} \xi_k^{\nu,i}.
\end{align*}
We then discard $\lfloor \beta_{k+1} N/B_{k+1}\rfloor$ particles from $\overline{\mathscr{X}}, \overline{\mathscr{Y}}$ and replace with random samples drawn from $\mathscr{X}_{k+1}, \mathscr{Y}_{k+1}$, respectively. After $K$ steps, the arrays $\overline{\mathscr{X}}, \overline{\mathscr{Y}}$ are returned.

Thus, both algorithms only require 4 arrays to be stored and updated (the inner and outer states for MFL-ABR), incurring no significant computational cost compared to MFL-DA (2 arrays).

\subsection{Zero-Sum Markov Games}

In this section we outline an application to policy optimization in Markov games. We consider the two-player zero-sum discounted Markov game defined by the tuple $\mathfrak{M}= (\mathcal{S},\XX,\YY,P,r,\gamma)$ with continuous action spaces $\XX=\RR^{d_{\XX}},\YY=\RR^{d_{\YY}}$, finite state space $\mathcal{S}$, reward $r:\mathcal{S}\times\XX\times\YY\to \RR$, transition kernel $P: \mathcal{S}\times\XX\times\YY\to \mathcal{P}(\mathcal{S})$ and discount factor $\gamma\in [0,1)$. The strategies of the min and max players are represented by $\mu = \mu(s) = \mu(\cdot|s):\mathcal{S}\to\PP(\XX)$ and $\nu:\mathcal{S}\to\PP(\YY)$.

The regularized value and $Q$-functions are defined for all $s\in\mathcal{S}$ as
\begin{align*}
&V_\lambda^{\mu,\nu}(s) = \Ebig{\sum_{t=0}^\infty \gamma^t \left( r(s_t,x_t,y_t) + \lambda\log\frac{\mu(x_t|s_t)}{\rho^\mu(x_t)}-\lambda\log\frac{\nu(y_t|s_t)}{\rho^\nu(y_t)}\right)\bigg| s_0=s},\\
&Q_\lambda^{\mu,\nu}(x,y|s) = r(s,x,y) +\gamma\EE{s'\sim P(\cdot|s,x,y)}{V_\lambda^{\mu,\nu}(s')},
\end{align*}
where the expectation is taken over all trajectories $s_0,x_0,y_0,s_1,\cdots$ generated by $x_k\sim\mu(\cdot|s_k)$, $y_k\sim\nu(\cdot|s_k)$ and $s_{k+1}\sim P(\cdot|s_k,x_k,y_k)$. Our goal is to find the MNE  which solves the distributional minimax problem $\min_{\mu:\mathcal{S}\to\PP(\XX)}\max_{\nu:\mathcal{S}\to\PP(\YY)} V_\lambda^{\mu,\nu}(s)$ for all states simultaneously; a detailed introduction to the topic can be found in e.g. \citet{Sutton18, Cen23}. For zero-sum Markov games, the MNE is also called the regularized Markov perfect equilibrium. 

To this end, we propose the following two-step iterative scheme. For simplicity, we only consider the continuous-time MFLD and assume full knowledge of game quantities as well as the existence and uniqueness of the MNE $(\mu^*,\nu^*)$ which is known for finite Markov games \citep{Shapley53}.

\textbf{Step 1} (Minimax dynamics). Given $Q^{(k)}$, run MFL-AG or MFL-ABR for each state $s\in\mathcal{S}$ for sufficient time to obtain an $\epsilon_{\LL}$-MNE $(\mu^{(k)}(s),\nu^{(k)}(s))$ for the regularized minimax problem
\begin{equation*}
\textstyle \LL_\lambda(\mu,\nu;Q^{(k)}(s)) := \iint_{\XX\times\YY} Q^{(k)}(x,y|s) \mu(\rd x)\nu(\rd y) +\lambda\KL(\mu\Vert\rho^\mu) -\lambda\KL(\nu\Vert\rho^\nu).
\end{equation*}

\textbf{Step 2} (Approximate value iteration). For each $s$, set $V^{(k+1)}(s) = \LL_\lambda(\mu^{(k)}(s),\nu^{(k)}(s); Q^{(k)}(s))$ and update the $Q$-function by letting $Q^{(k+1)}=Q(\cdot, \cdot |s)$ satisfying 
\begin{equation*}
     \big\lvert Q(x,y|s)- r(s,x,y) - \gamma\EE{s'\sim P(\cdot|s,x,y)}{V^{(k+1)}(s')}\big\rvert\leq \epsilon_Q,
\end{equation*}
where  $\epsilon_Q>0$ quantifies a model error. In practice, $Q^{(k+1)}$ can be obtained by any offline RL algorithm with function approximation, e.g. a deep neural network, as long as  the sup norm of Bellman error to the update is bounded. Moreover, we assume the gradients $\nabla_x Q, \nabla_y Q$ are bounded and Lipschitz and can be queried freely.

With this scheme, we are guaranteed convergence to the MNE. The proof is identical to the discrete strategy case \citep[Theorem 3]{Cen21} and is included in Appendix \ref{appendix-marl} for completeness.

\begin{prop}\label{thm-marl}
The above scheme linearly converges to the optimal value function as
\begin{equation*}
\lVert V^{(k)}-V^*\rVert_\infty \leq \gamma^k \lVert V^{(0)}-V^*\rVert_\infty + \frac{\epsilon_{\LL}+\epsilon_Q}{1-\gamma}.
\end{equation*}
\end{prop}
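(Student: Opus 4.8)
The plan is to recognize the two-step scheme as an inexact form of regularized value iteration for zero-sum Markov games and to invoke a contraction argument. First I would define, for any value function $V:\mathcal{S}\to\RR$, the induced action-value $Q_V(x,y|s) = r(s,x,y)+\gamma\,\EE{s'\sim P(\cdot|s,x,y)}{V(s')}$ and the regularized minimax Bellman operator $(\mathcal{T}V)(s) := \min_{\mu\in\PP(\XX)}\max_{\nu\in\PP(\YY)} \LL_\lambda(\mu,\nu;Q_V(s))$, where $\LL_\lambda(\mu,\nu;Q(s))$ is the per-state regularized objective of Step 1. The regularized Markov perfect equilibrium value $V^*$ is characterized as the unique fixed point $\mathcal{T}V^* = V^*$; well-posedness of this Bellman optimality equation follows from Sion's minimax theorem together with the per-state existence and uniqueness of the MNE granted by Proposition \ref{thm-mneexistence}.

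Next I would establish the two ingredients of the recursion. \textbf{(a) $\mathcal{T}$ is a $\gamma$-contraction in $\norm{\cdot}_\infty$.} The elementary fact here is that a uniform perturbation of the bilinear reward by $\delta$ in sup norm shifts $\min_\mu\max_\nu\LL_\lambda$ by at most $\delta$, since $\iint \delta\,\mu(\rd x)\nu(\rd y) = \delta$ for probability measures and since $\min\max$ is monotone; applying this with $\delta = \norm{Q_V - Q_{V'}}_\infty \le \gamma\norm{V-V'}_\infty$ yields $\norm{\mathcal{T}V - \mathcal{T}V'}_\infty \le \gamma\norm{V-V'}_\infty$. \textbf{(b) One-step error bound} $\norm{V^{(k+1)} - \mathcal{T}V^{(k)}}_\infty \le \epsilon_\LL + \epsilon_Q$. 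I would split this into: (i) because $(\mu^{(k)}(s),\nu^{(k)}(s))$ is an $\epsilon_\LL$-MNE for $\LL_\lambda(\cdot,\cdot;Q^{(k)}(s))$, the NI bound sandwiches $V^{(k+1)}(s)=\LL_\lambda(\mu^{(k)}(s),\nu^{(k)}(s);Q^{(k)}(s))$ between $\min_{\mu'}\LL_\lambda(\mu',\nu^{(k)}(s);Q^{(k)}(s))$ and $\max_{\nu'}\LL_\lambda(\mu^{(k)}(s),\nu';Q^{(k)}(s))$, which themselves sandwich the saddle value $\min_\mu\max_\nu\LL_\lambda(\mu,\nu;Q^{(k)}(s))$, so $V^{(k+1)}(s)$ is within $\epsilon_\LL$ of it; and (ii) since Step 2 of the previous round produces $Q^{(k)}$ with $\norm{Q^{(k)}(s) - Q_{V^{(k)}}(s)}_\infty \le \epsilon_Q$, the $1$-Lipschitz estimate from (a) gives $\bigl|\min_\mu\max_\nu\LL_\lambda(\mu,\nu;Q^{(k)}(s)) - (\mathcal{T}V^{(k)})(s)\bigr|\le \epsilon_Q$. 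Adding these and taking the supremum over $s$ gives (b).

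Combining, the triangle inequality and the contraction yield
\begin{equation*}
\norm{V^{(k+1)}-V^*}_\infty \le \norm{V^{(k+1)}-\mathcal{T}V^{(k)}}_\infty + \norm{\mathcal{T}V^{(k)}-\mathcal{T}V^*}_\infty \le (\epsilon_\LL+\epsilon_Q) + \gamma\norm{V^{(k)}-V^*}_\infty,
\end{equation*}
and unrolling this recursion together with $\sum_{j\ge 0}\gamma^j = (1-\gamma)^{-1}$ gives the claimed bound. The main obstacle is purely the bookkeeping in step (b): ensuring that the $\epsilon_\LL$-MNE guarantee from Step 1 (phrased through the NI error) and the $\epsilon_Q$ model error from Step 2 compose correctly into a single sup-norm estimate, and verifying that the saddle value is genuinely $1$-Lipschitz in $Q$ under the sup norm. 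Everything else is the standard $\gamma$-contraction argument, and as noted the overall structure mirrors the finite-action proof of \citet[Theorem 3]{Cen21}.
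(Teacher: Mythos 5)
Your proof is correct and takes essentially the same approach as the paper: both bound $|V^{(k+1)}(s) - \widetilde{V}^{(k+1)}(s)|\le\NI\le\epsilon_\LL$ using the sandwich $|\LL_\lambda(\mu,\nu)-\LL_\lambda(\mu^*,\nu^*)|\le\NI(\mu,\nu)$, then exploit the $1$-Lipschitz dependence of the saddle value on $Q$ together with $\|Q^{(k)}-\widetilde{Q}^{(k)}\|_\infty\le\epsilon_Q$ and $\|\widetilde{Q}^{(k)}-Q^*\|_\infty\le\gamma\|V^{(k)}-V^*\|_\infty$ to close the recursion. The only cosmetic difference is that you package the inequalities through an explicit Bellman operator $\mathcal{T}$ and its $\gamma$-contraction, whereas the paper chains the same estimates directly without naming the operator.
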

This proposition shows that our two-step algorithm finds the Markov perfect equilibrium at a linear rate of convergence up to a sum of the optimization error for learning the MNE of the inner problem, and the Bellman error for estimating the $Q$-functions. 

\section{Numerical Experiments}

\begin{figure}[t]
\setlength{\belowcaptionskip}{-5pt}
    \centering
    \includegraphics[width=0.85\textwidth]{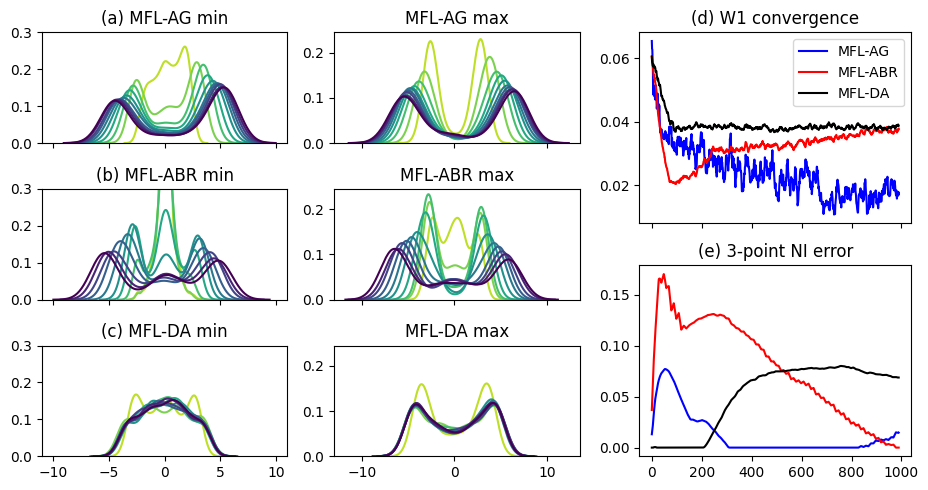}
    \caption{Density evolution of (a) MFL-AG, (b) MFL-ABR, and (c) MFL-DA every 100 epochs. (d) Convergence speed measured in $W_1$ distance. (e) Optimality comparison via 3-point NI error.}
    \label{fig-1}
\end{figure}

We test our proposed algorithms and compare against ordinary descent ascent dynamics in a simulated setting. We consider $d_{\XX}=d_{\YY}=1$ and optimize the bilinear objective
\begin{equation*}
\textstyle \LL(\mu,\nu) = \iint Q(x,y) \mu(\rd x)\nu(\rd y), \quad Q(x,y) = (1+e^{-(x-y)^2})^{-1}.
\end{equation*}
The sigmoid nonlinearity introduces nontrivial interactions between the min and max policies. We also take regularizers $\rho^\mu = \rho^\nu = \mathcal{N}(0,1)$ and $\lambda = 0.01$. Both MFL-AG with $r=1$ and MFL-DA are run with 1,000 particles for 1,000 epochs with learning rate $\eta=0.3$. MFL-ABR is run with 1,000 particles for 50 outer loop iterations with 20 inner iterations per loop and $\eta=0.3, \beta=0.15$. We implement the rolling average update for MFL-AG in Section \ref{section-bilinear} and a `warm start' scheme for MFL-ABR where the inner loop is not re-initialized for stability. We report the results in Figure \ref{fig-1}.

Figure \ref{fig-1}(a)-(c) show kernel density plots of the evolving min and max policies $\mu_{\mathscr{X}_k}, \nu_{\mathscr{Y}_k}$ for each algorithm per every 100 epochs. MFL-AG and MFL-ABR converge to similar solutions while MFL-DA converges to a different distribution much more rapidly. Figure \ref{fig-1}(d) plots convergence speed by computing the sum of the empirical Wasserstein distances $W_1(\mu_{\mathscr{X}_k}, \mu_{\mathscr{X}_{k+1}})+W_1(\nu_{\mathscr{Y}_k}, \nu_{\mathscr{Y}_{k+1}})$.

To compare the optimality of the outputs $(\mathscr{X}^i,\mathscr{Y}^i)$ ($i=0,1,2$) of the three algorithms, we use the \emph{3-point NI error} $\NI^i := \max_j \LL_\lambda (\mu_{\mathscr{X}^i},\nu_{\mathscr{Y}^j}) - \min_j \LL_\lambda (\mu_{\mathscr{X}^j},\nu_{\mathscr{Y}^i})$ which measures relative optimality analogous to a $3\times 3$ payoff matrix. The values are reported in Figure \ref{fig-1}(e). While the MFL-DA output is initially the desirable strategy due to its rapid convergence, MFL-AG gradually optimizes and soon dominates MFL-DA with zero error, which is later followed by MFL-ABR. We therefore conclude MFL-AG and MFL-ABR can substantially outperform ordinary descent ascent despite the slower convergence rates.

\section{Conclusion}

In this paper, we developed the first symmetric mean-field Langevin dynamics for entropy-regularized minimax problems with global convergence guarantees. We proposed the single-loop MFL-AG algorithm and proved average-iterate convergence to the MNE. We also established a new uniform-in-time analysis of propagation of chaos that accounts for dependence on history using novel perturbative techniques. Furthermore, we proposed the double-loop MFL-ABR algorithm and proved time-discretized linear convergence of the outer loop.

Our work represents early steps towards an understanding of mean-field dynamics for multiple learning agents and opens up further avenues of investigation. Some interesting directions are developing a single-loop symmetric algorithm with last-iterate convergence, studying nonconvex-nonconcave parametrizations or applications to multi-agent reinforcement learning.


\newpage

\section*{Acknowledgments}

JK was partially supported by JST CREST (JPMJCR2015) and Toshiba Corporation. KO was partially supported by JST ACT-X (JPMJAX23C4). TS was partially supported by JSPS KAKENHI (20H00576) and JST CREST (JPMJCR2115).

\bibliography{iclr2024_conference}
\bibliographystyle{iclr2024_conference}

\newpage
\renewcommand{\contentsname}{Table of Contents}
\renewcommand{\baselinestretch}{0.8}\normalsize
\tableofcontents
\renewcommand{\baselinestretch}{1.0}\normalsize
\newpage
\appendix
\section*{\Large Appendix}

\begin{algorithm}[t]
\caption{Mean-field Langevin Anchored Best Response}\label{algo-abr}
\begin{algorithmic}
\Require temperature $\lambda$, outer loop iteration $K$, inner loop iteration $L$, learning rate $\eta$, number of particles $N$, exponent $r$
\Initialization $\mathscr{X}_0\sim\rho^\mu$, $\mathscr{Y}_0 \sim\rho^\nu$
\For{$k=0,\cdots,K-1$}
\State Sample $\mathscr{X}_0^\dagger\sim\rho^\mu$, $\mathscr{Y}_0^\dagger \sim\rho^\nu$
\For{$\ell=0,\cdots,L-1$}

For all particles $i=1,\cdots,N$ sample $\xi_\ell^{\mu,i}\sim\mathcal{N}(0,\I_{d_{\XX}})$, $\xi_\ell^{\nu,i}\sim\mathcal{N}(0,\I_{d_{\YY}})$ and update

\State $X_{\ell+1}^{\dagger i}\gets X_\ell^{\dagger i} -\eta\nabla_x \frac{\delta \!\LL}{\delta \mu}(\mu_{\mathscr{X}_k},\nu_{\mathscr{Y}_k})(X_\ell^{\dagger i}) -\lambda\eta \nabla_x U^\mu(X_\ell^{\dagger i}) + \sqrt{2\lambda\eta} \xi_\ell^{\mu,i}$

\State $Y_{\ell+1}^{\dagger i}\gets Y_\ell^{\dagger i} + \eta\nabla_y \frac{\delta \!\LL}{\delta \nu}(\mu_{\mathscr{X}_k},\nu_{\mathscr{Y}_k})(Y_\ell^{\dagger i})-\lambda\eta \nabla_y U^\nu(Y_\ell^{\dagger i}) + \sqrt{2\lambda\eta} \xi_\ell^{\nu,i}$
\EndFor

\State Discard $\lfloor \beta N \rfloor$ particles from $\mathscr{X}_k, \mathscr{Y}_k$ and replace with random samples from $\mathscr{X}_L^\dagger, \mathscr{Y}_L^\dagger$, resp.
\EndFor

\State \Return $\mathscr{X}_K, \mathscr{Y}_K$
\end{algorithmic}
\end{algorithm}

\section{Preliminaries}

\subsection{Optimal Transport}\label{appendix-ot}

We begin by introducing basic concepts and inequalities from optimal transport theory that will be useful in analyzing the behavior of Langevin dynamics.

\begin{defn}[$p$-Wasserstein metric]
For $p\in [1,\infty)$, let $\mathcal{P}_p(\RR^d)$ be the space of probability measures on $\RR^d$ with finite $p$th moment. The $p$-Wasserstein distance between $\mu,\nu\in \mathcal{P}_p(\RR^d)$ is defined as
\begin{equation*}
W_p(\mu,\nu) = \left(\inf_{\gamma\in\Pi(\mu,\nu)} \int_{\RR^d} \norm{x-y}^p \rd \gamma(x,y)\right)^\frac{1}{p}
\end{equation*}
where $\Pi(\mu,\nu)$ denotes the set of joint distributions on $\RR^d\times \RR^d$ with marginal laws $\mu$ and $\nu$ on the first and second factors, respectively. By Kantorovich-Rubinstein duality, the metric $W_1$ can also be written as
\begin{equation*}
W_1(\mu,\nu) = \sup_{\norm{f}_\text{Lip} \leq 1} \int_{\RR^d} f\rd\mu-\int_{\RR^d} f\rd\nu.
\end{equation*}
\end{defn}

\begin{defn}[Log-Sobolev inequality]\label{def-lsi}
A probability measure $\nu\in\PP(\RR^d)$ is said to satisfy the logarithmic Sobolev inequality (LSI) with constant $\alpha>0$ if for any smooth function $f: \RR^d\to\RR$,
\begin{equation*}
\Ent_\nu(f^2):=\EE{\nu}{f^2\log f^2} - \EE{\nu}{f^2}\log\EE{\nu}{f^2} \leq\frac{2}{\alpha} \EE{\nu}{\norm{\nabla f}_2^2}.
\end{equation*}
For any measure $\mu\in\PP(\RR^d)$ absolutely continuous with respect to $\nu$, the LSI implies that KL divergence is upper bounded by the relative Fisher information, 
\begin{equation*}
\KL(\mu\Vert\nu)\leq \frac{1}{2\alpha} \EEbig{\mu}{\norm{\nabla\log\frac{\rd\mu}{\rd\nu}}_2^2}.
\end{equation*}
\end{defn}

\begin{prop}[\citealp{Bakry85}]\label{thm-bakry}
If $f:\RR^d\to\RR$ is a function such that $\nabla^2 f \succeq \alpha I_d$, the probability density $p\propto\exp(-f)$ satisfies the LSI with constant $\alpha$.
\end{prop}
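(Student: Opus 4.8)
The plan is to prove the logarithmic Sobolev inequality for $p \propto e^{-f}$ with constant $\alpha$ whenever $\nabla^2 f \succeq \alpha I_d$, by way of the Bakry–Émery $\Gamma$-calculus. First I would set up the Langevin (overdamped) semigroup $(P_t)_{t\geq 0}$ associated to the diffusion with invariant measure $p$, i.e. the Markov semigroup generated by $\mathcal{L} = \Delta - \langle \nabla f, \nabla \cdot\rangle$, which is reversible and ergodic with respect to $p$. Define the carré du champ $\Gamma(g) = \tfrac12\big(\mathcal{L}(g^2) - 2g\mathcal{L}g\big) = \norm{\nabla g}_2^2$ and the iterated operator $\Gamma_2(g) = \tfrac12\big(\mathcal{L}\Gamma(g) - 2\Gamma(g,\mathcal{L}g)\big)$. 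The key computation, the Bochner-type identity, gives $\Gamma_2(g) = \norm{\nabla^2 g}_{\mathrm{HS}}^2 + \langle \nabla g, \nabla^2 f\, \nabla g\rangle$, so the hypothesis $\nabla^2 f \succeq \alpha I_d$ immediately yields the curvature-dimension bound $\Gamma_2(g) \geq \alpha\,\Gamma(g)$ pointwise.

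Next I would run the standard interpolation argument along the semigroup. Fix a nice positive function $g$ (say bounded, bounded away from $0$, smooth — then extend by density/truncation) and consider the entropy $\Ent_p(P_t g)$ as a function of $t$. Using $\frac{d}{dt}\EE{p}{P_t g} = 0$ and the chain rule, one computes $\frac{d}{dt}\Ent_p(P_t g) = -\EE{p}{\frac{\Gamma(P_t g)}{P_t g}}$, and a second differentiation combined with the $\Gamma_2 \geq \alpha\,\Gamma$ inequality shows that $t \mapsto e^{2\alpha t}\,\EE{p}{\frac{\Gamma(P_t g)}{P_t g}}$ is non-increasing (the commutation/gradient estimate $\Gamma(P_t g) \leq e^{-2\alpha t} P_t(\Gamma g)$ packaged for the entropy production). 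Integrating the entropy derivative from $0$ to $\infty$, using $P_t g \to \EE{p}{g}$ and $\Ent_p(P_t g) \to 0$ as $t\to\infty$ by ergodicity, telescopes to
\begin{equation*}
\Ent_p(g) = \int_0^\infty \EE{p}{\frac{\Gamma(P_t g)}{P_t g}}\rd t \leq \int_0^\infty e^{-2\alpha t}\rd t \cdot \EE{p}{\frac{\Gamma(g)}{g}} = \frac{1}{2\alpha}\EE{p}{\frac{\norm{\nabla g}_2^2}{g}}.
\end{equation*}
Substituting $g = h^2$ converts this into the form $\Ent_p(h^2) \leq \frac{2}{\alpha}\EE{p}{\norm{\nabla h}_2^2}$, which is exactly Definition \ref{def-lsi}.

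I expect the main obstacle to be analytic rather than conceptual: justifying that the semigroup manipulations are legitimate — existence, smoothness and good decay of $P_t g$ (so that differentiation under the integral, integration by parts with no boundary terms, and the limits $t\to 0,\infty$ are all valid) — since $\XX = \RR^d$ is noncompact and $f$ need only be $\alpha$-convex (not uniformly so beyond the quadratic lower bound). The clean way around this is to first establish the result for smooth, compactly-supported-gradient perturbations or to invoke the now-classical fact that $\alpha$-convexity of $f$ makes the diffusion non-explosive with a regularizing semigroup, then extend to general $h$ with finite Fisher information by truncation and monotone convergence. Since the paper only needs this as a black-box tool, I would cite the standard reference (\citealp{Bakry85}, or Bakry–Gentil–Ledoux) for the technical semigroup regularity and present the $\Gamma_2$-computation and interpolation as the substance of the argument.
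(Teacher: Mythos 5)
The paper does not supply a proof of Proposition~\ref{thm-bakry}; it is stated as a citation to \citealp{Bakry85} and used as a black-box tool. Your proposal reproduces the canonical Bakry--\'{E}mery argument correctly: the $\Gamma_2$ computation via the Bochner identity gives the curvature condition $\Gamma_2 \geq \alpha\,\Gamma$, and the semigroup interpolation of entropy with the exponential decay of entropy production yields the LSI; the substitution $g = h^2$ correctly recovers the constant $2/\alpha$ in Definition~\ref{def-lsi}. You also flag the right technical caveat (justifying the semigroup manipulations and boundary/integrability issues on the noncompact $\RR^d$), which is exactly the reason the paper, like most of the literature, simply cites the result rather than reproving it.
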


\begin{prop}[\citealp{Holley87}]\label{thm-holley}
Let $p$ be a density on $\RR^d$ satisfying the LSI with constant $\alpha$. For a bounded function $B:\RR^d\to\RR$, the perturbed distribution
\begin{equation*}
p_B(x)dx = \frac{\exp(B(x)) p(x)}{\EE{p}{\exp(B(x))}}\rd x
\end{equation*}
also satisfies the LSI with constant $\alpha/\exp(4\norm{B}_\infty)$.
\end{prop}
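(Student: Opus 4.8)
The plan is to deduce the perturbed log-Sobolev inequality from the unperturbed one via the variational (Bregman-divergence) representation of entropy, which makes the change of measure $p\mapsto p_B$ completely transparent. For a nonnegative function $g$ and a parameter $t>0$ introduce $\phi_t(g):=g\log g-g\log t-g+t$; writing $\phi_t(g)=t\,\psi(g/t)$ with $\psi(u)=u\log u-u+1$ and using convexity of $\psi$ (so $\psi\geq 0$, with a unique zero at $u=1$) shows $\phi_t(g)\geq 0$ pointwise. A one-line optimization over $t$, with optimum $t=\EE{\mu}{g}$, then gives the identity $\Ent_\mu(g)=\inf_{t>0}\EE{\mu}{\phi_t(g)}$ for any probability measure $\mu$. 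Crucially I only need the easy inequality $\Ent_\mu(g)\leq\EE{\mu}{\phi_t(g)}$, valid for every fixed $t$, so no attainment argument is required.

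Next I would apply this with $g=f^2$ and $\mu=p_B$. Set $Z_B:=\EE{p}{e^{B}}$, so $\rd p_B=(e^{B}/Z_B)\rd p$, and record the four uniform bounds $e^{-\norm{B}_\infty}\leq e^{B}\leq e^{\norm{B}_\infty}$ and $e^{-\norm{B}_\infty}\leq Z_B\leq e^{\norm{B}_\infty}$, all of which use only boundedness of $B$. Since $\phi_t(f^2)\geq 0$, for every $t>0$
\begin{equation*}
\EE{p_B}{\phi_t(f^2)}=\frac{1}{Z_B}\EE{p}{e^{B}\phi_t(f^2)}\leq e^{2\norm{B}_\infty}\EE{p}{\phi_t(f^2)}.
\end{equation*}
Taking the infimum over $t$ on the right and invoking the LSI for $p$ yields
\begin{equation*}
\Ent_{p_B}(f^2)\leq e^{2\norm{B}_\infty}\Ent_p(f^2)\leq \frac{2\,e^{2\norm{B}_\infty}}{\alpha}\,\EE{p}{\norm{\nabla f}_2^2}.
\end{equation*}

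Finally I would transfer the Dirichlet energy back to $p_B$ using the same estimates: $\EE{p}{\norm{\nabla f}_2^2}=Z_B\,\EE{p_B}{e^{-B}\norm{\nabla f}_2^2}\leq e^{2\norm{B}_\infty}\EE{p_B}{\norm{\nabla f}_2^2}$. Chaining the last two displays gives $\Ent_{p_B}(f^2)\leq \frac{2}{\alpha e^{-4\norm{B}_\infty}}\EE{p_B}{\norm{\nabla f}_2^2}$, which is exactly the LSI for $p_B$ with constant $\alpha/\exp(4\norm{B}_\infty)$.

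I do not expect a substantive obstacle here; the only care needed is (i) checking the elementary variational identity and the pointwise nonnegativity of $\phi_t$, and (ii) the usual technical preliminaries of reducing to smooth, compactly supported (or bounded Lipschitz) $f$ so all integrals are finite, then passing to general $f$ by truncation and approximation exactly as for the unperturbed inequality. The boundedness hypothesis on $B$ enters only through the uniform bounds on $e^{\pm B}$ and $Z_B$, and is precisely what keeps the resulting constant free of any dimension dependence.
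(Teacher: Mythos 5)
Your proof is correct. The paper does not prove this result; it is a standard lemma cited directly from Holley and Stroock (1987), and your argument is precisely the classical proof: the variational (Rothaus/Bregman) representation $\Ent_\mu(g)=\inf_{t>0}\EE{\mu}{\phi_t(g)}$ together with the pointwise nonnegativity of $\phi_t$, which is exactly what makes the change-of-measure step $\EE{p_B}{\phi_t(f^2)}\leq e^{2\norm{B}_\infty}\EE{p}{\phi_t(f^2)}$ one-sided, and a symmetric transfer of the Dirichlet form costing another factor $e^{2\norm{B}_\infty}$. All the constants check out and the final bound matches the stated $\alpha/\exp(4\norm{B}_\infty)$.
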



\begin{defn}[Poincar\'{e} and Talagrand's inequalities]\label{def-tala}
A probability measure $\nu\in\PP(\RR^d)$ is said to satisfy the Poincar\'{e} inequality with constant $\alpha>0$ if for any smooth function $f: \RR^d\to\RR$,
\begin{equation*}
\Var_\nu(f):=\EE{\nu}{f^2}-(\EE{\nu}{f})^2 \leq\frac{1}{\alpha} \EE{\nu}{\norm{\nabla f}_2^2}.
\end{equation*}
Moreover, $\nu$ is said to satisfy Talagrand's inequality with constant $\alpha>0$ if for any measure $\mu\in\PP(\RR^d)$ absolutely continuous with respect to $\nu$, the 2-Wasserstein distance is upper bounded as
\begin{equation*}
\frac{\alpha}{2} W_2^2(\mu,\nu) \leq\KL(\mu\Vert\nu).
\end{equation*}
\end{defn}

If $\nu$ satisfies the LSI with constant $\alpha$, then it satisfies the Poincar\'{e} inequality with the same constant. We also have the following implication.

\begin{thm}[\citealp{Otto00}]\label{thm-ottovillani}
If a probability measure $\nu\in\PP(\RR^d)$ satisfies the LSI with constant $\alpha$, then it satisfies Talagrand's inequality with the same constant.
\end{thm}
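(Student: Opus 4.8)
The plan is to prove the implication directly via the Hamilton--Jacobi / inf-convolution method of Bobkov--Gentil--Ledoux, which uses the LSI only in the form given in Definition~\ref{def-lsi} and is self-contained. Fix a bounded Lipschitz $f:\RR^d\to\RR$ and introduce the Hopf--Lax (inf-convolution) semigroup $Q_t f(x)=\inf_{y\in\RR^d}\{f(y)+\tfrac1{2t}\norm{x-y}^2\}$ for $t>0$, with $Q_0 f=f$. I will use two standard properties: $Q_t f$ is bounded and Lipschitz uniformly in $t\in[0,1]$, and it solves the Hamilton--Jacobi equation $\partial_t Q_t f=-\tfrac12\norm{\nabla Q_t f}^2$ at every point of differentiability (which is Lebesgue-a.e., and this suffices under $\rd\nu$).

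First I would reduce Talagrand's inequality $\tfrac\alpha2 W_2^2(\mu,\nu)\le\KL(\mu\Vert\nu)$ (Definition~\ref{def-tala}) to a one-sided estimate. If $\KL(\mu\Vert\nu)=\infty$ there is nothing to prove, so assume $\mu\ll\nu$. By Kantorovich duality for the quadratic cost, $\tfrac12 W_2^2(\mu,\nu)=\sup_f\big(\int Q_1 f\,\rd\mu-\int f\,\rd\nu\big)$ over bounded continuous $f$ (the optimal Kantorovich potential is the $c$-transform, which for $c=\tfrac12\norm{\cdot}^2$ equals $Q_1$). Hence it suffices to show $\int Q_1 f\,\rd\mu-\int f\,\rd\nu\le\tfrac1\alpha\KL(\mu\Vert\nu)$ for every such $f$; by the Donsker--Varadhan formula $\int g\,\rd\mu\le\KL(\mu\Vert\nu)+\log\int e^g\,\rd\nu$ applied with $g=\alpha Q_1 f$, this follows once we establish the scalar estimate $\tfrac1\alpha\log\int e^{\alpha Q_1 f}\,\rd\nu\le\int f\,\rd\nu$.

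The core of the argument is this last estimate, proved by monotonicity of $\Lambda(t):=\tfrac1t\log\int e^{\alpha t\,Q_t f}\,\rd\nu$ on $(0,1]$: one checks $\Lambda(0^+)=\alpha\int f\,\rd\nu$ and $\Lambda(1)=\log\int e^{\alpha Q_1 f}\,\rd\nu$, so it is enough to verify $\Lambda'(t)\le0$. Writing $Z(t)=\int e^{\alpha t\,Q_t f}\,\rd\nu$, the inequality $\Lambda'(t)\le0$ is equivalent to $t\,Z'(t)\le Z(t)\log Z(t)$, and differentiating under the integral together with the Hamilton--Jacobi equation gives $t\,Z'(t)=\alpha t\int Q_t f\,e^{\alpha t\,Q_t f}\,\rd\nu-\tfrac{\alpha t^2}2\int\norm{\nabla Q_t f}^2 e^{\alpha t\,Q_t f}\,\rd\nu$. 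Now apply the LSI of Definition~\ref{def-lsi} to $g=e^{\alpha t\,Q_t f/2}$: since $\Ent_\nu(g^2)=\alpha t\int Q_t f\,e^{\alpha t\,Q_t f}\,\rd\nu-Z(t)\log Z(t)$ and $\norm{\nabla g}^2=\tfrac{\alpha^2 t^2}4\norm{\nabla Q_t f}^2 e^{\alpha t\,Q_t f}$, the LSI reads $\alpha t\int Q_t f\,e^{\alpha t\,Q_t f}\,\rd\nu-Z(t)\log Z(t)\le\tfrac{\alpha t^2}2\int\norm{\nabla Q_t f}^2 e^{\alpha t\,Q_t f}\,\rd\nu$; substituting this into the formula for $t\,Z'(t)$ cancels the gradient term exactly and yields $t\,Z'(t)\le Z(t)\log Z(t)$, hence $\Lambda'(t)\le0$. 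Chaining $\Lambda(1)\le\Lambda(0^+)$ with the Donsker--Varadhan bound and Kantorovich duality, then taking the supremum over $f$, gives $\tfrac\alpha2 W_2^2(\mu,\nu)\le\KL(\mu\Vert\nu)$.

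The main obstacle is purely technical: rigorously justifying the differentiation of $Z(t)$ and the pointwise use of the Hamilton--Jacobi equation, given that $t\mapsto Q_t f(x)$ is only locally Lipschitz and $x\mapsto Q_t f(x)$ only a.e.\ differentiable. I would handle this in the standard way --- prove the estimate first for bounded Lipschitz $f$, use the semiconcavity of $Q_t f$ to obtain a uniform pointwise bound on $\norm{\nabla Q_t f}$ that licenses dominated convergence and differentiation under $\int\rd\nu$, and then remove the regularity hypothesis on $f$ by approximation, since both sides of Talagrand's inequality are already determined by testing against bounded Lipschitz functions. Alternatively, the result can simply be quoted from \citet{Otto00}, whose original proof instead runs the Langevin/Fokker--Planck gradient flow from $\mu$ toward $\nu$, bounds the $W_2$ length of that flow by $\int_0^\infty\sqrt{I(\rho_s\Vert\nu)}\,\rd s$, and closes the argument using the LSI in the Fisher-information form $2\alpha\,\KL(\rho_s\Vert\nu)\le I(\rho_s\Vert\nu)$.
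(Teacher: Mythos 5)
The paper does not prove this statement: Theorem~\ref{thm-ottovillani} is quoted as a known result from \citet{Otto00}, so there is no internal proof to compare against. Your proposal supplies a genuine proof, namely the Hamilton--Jacobi / inf-convolution argument of Bobkov, Gentil, and Ledoux, which you correctly distinguish from the original Otto--Villani gradient-flow proof (and you also sketch the latter at the end).

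Your argument is correct as written. The reduction is sound: Kantorovich duality for the quadratic cost, with the $c$-transform of $-f$ identified as $Q_1 f$, turns Talagrand's inequality into the statement that $\int Q_1 f\,\rd\mu-\int f\,\rd\nu\le\alpha^{-1}\KL(\mu\Vert\nu)$ for all bounded Lipschitz $f$, and Donsker--Varadhan then reduces this to the moment estimate $\alpha^{-1}\log\int e^{\alpha Q_1 f}\rd\nu\le\int f\,\rd\nu$. The monotonicity computation is the heart of the matter and you execute it correctly: with $g=e^{\alpha t Q_t f/2}$ one has $\Ent_\nu(g^2)=\alpha t\int Q_t f\,e^{\alpha t Q_t f}\rd\nu-Z(t)\log Z(t)$ and $\norm{\nabla g}^2=\tfrac{\alpha^2 t^2}{4}\norm{\nabla Q_t f}^2 e^{\alpha t Q_t f}$, so the LSI with constant $\alpha$ gives exactly $tZ'(t)\le Z(t)\log Z(t)$ once the Hamilton--Jacobi identity $\partial_t Q_t f=-\tfrac12\norm{\nabla Q_t f}^2$ is substituted; the factors of $\alpha$ cancel precisely, which is why LSI constant $\alpha$ yields Talagrand constant $\alpha$. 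The boundary value $\Lambda(0^+)=\alpha\int f\,\rd\nu$ follows from $Q_t f\to f$ uniformly for Lipschitz $f$. You also correctly identify the only real technical obstacle (a.e.\ differentiability of $Q_t f$, justification of differentiation under $\int\rd\nu$) and the standard fix via the uniform Lipschitz bound $\norm{Q_t f}_{\Lip}\le\norm{f}_{\Lip}$, semiconcavity, and approximation. Compared to the cited Otto--Villani proof, this route avoids any Wasserstein-geometric machinery (geodesics, Benamou--Brenier, Fisher-information length of the Fokker--Planck flow) and uses only the Hopf--Lax semigroup and the two variational formulas; the Otto--Villani proof in turn generalizes more readily to Riemannian manifolds and yields the HWI inequality as a byproduct. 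Either would serve the paper equally well as a reference, and your self-contained version is a valid substitute for the citation.
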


\textbf{Proof of Proposition \ref{thm-aglsi}.} We take the stronger of the two bounds in Lemma 2.1 of \citet{Bardet18} and Theorem 2.7 of \citet{Cattiaux22}; the latter removes the exponential dependency on $d_{\XX}$ in exchange for more complicated polynomial terms. See Lemma 6 of \citet{Suzuki23} for more details.

\subsection{Mixed Nash Equilibrium}\label{appendix-mne}

\begin{defn}[functional derivative]\label{def-funcderiv}
Let $F$ be a functional on $\PP(\RR^d)$. The functional derivative $\frac{\delta F}{\delta\mu}$ at $\mu\in\PP(\RR^d)$ is defined as a functional $\PP(\RR^d)\times\RR^d \to\RR$ satisfying for all $\nu\in\PP(\RR^d)$,
\begin{equation*}
\frac{\rd}{\rd\epsilon} F(\mu+\epsilon(\nu-\mu)) \bigg\vert_{\epsilon=0} = \int_{\RR^d} \frac{\delta F}{\delta\mu}(\mu)(x) (\nu-\mu)(\rd x).
\end{equation*}
As the functional derivative is defined up to additive constants, we impose the additional condition $\int_{\RR^d}\frac{\delta F}{\delta\mu}(\mu)\rd\mu=0$. Furthermore, $F$ is defined to be convex if its satisfies the convexity condition for all $\nu\in\PP(\RR^d)$:
\begin{equation*}
F(\nu)\geq F(\mu)+ \int_{\RR^d} \frac{\delta F}{\delta\mu}(\mu)(x) (\nu-\mu)(\rd x).
\end{equation*}
Finally, $F$ is defined to be concave if $-F$ is convex.
\end{defn}

\textbf{Proof of Proposition \ref{thm-mneexistence}.} Recall that the 2-Wasserstein distance is finite and metrizes weak convergence on $\PP(\RR^d)$ \citep[Theorem 6.9]{Villani09}. Also, the divergence $\mu\mapsto \KL(\mu\Vert\rho^\mu)$ is proper and lower semi-continuous with respect to the weak topology \citep{Lanzetti22}. Furthermore, $\rho^\mu$ satisfies Talagrand's inequality with constant $r_\mu$ by Theorem \ref{thm-ottovillani} so that the map $\mu\mapsto \LL_\lambda(\mu,\nu)$ is strongly convex. Hence the minimizer of $\mu\mapsto \LL_\lambda(\mu,\nu)$ is unique, and similarly the maximizer of $\nu\mapsto \LL_\lambda(\mu,\nu)$ is unique. Existence of the MNE is now guaranteed by Theorem 3.6 in \citet{Conforti20} by verifying Assumption 2.1 and conditions (i)-(iii).

For uniqueness, suppose to the contrary that $(\mu^*,\nu^*)$, $(\tilde{\mu}^*,\tilde{\nu}^*)$ are two distinct solutions of \eqref{eq-problemstatement}. The optimality conditions yield the chain of strict inequalities
\begin{equation*}
\LL_\lambda(\mu^*,\nu^*)> \LL_\lambda(\mu^*,\tilde{\nu}^*)> \LL_\lambda(\tilde{\mu}^*,\tilde{\nu}^*)> \LL_\lambda(\tilde{\mu}^*,\nu^*)> \LL_\lambda(\mu^*,\nu^*),
\end{equation*}
a contradiction. Finally, the first-order conditions follow from Corollary 3.3 in \citet{Conforti20}, adjusting the base measures as to be different for $\mu,\nu$. \qed

\textbf{Proof of Lemma \ref{thm-sandwichlu}.} By convex-concavity of $\LL$ and the first-order condition \eqref{eq-mnedef},
\begin{align*}
&\NI(\mu,\nu) \geq \LL_\lambda(\mu,\nu^*) - \LL_\lambda(\mu^*,\nu)\\
& \geq \int_{\XX}\frac{\delta \!\LL}{\delta\mu}(\mu^*,\nu^*)(\mu-\mu^*)(\rd x) +\lambda\KL(\mu\Vert\rho^\mu) - \lambda\KL(\nu^*\Vert\rho^\nu)\\
&\qquad - \int_{\YY}\frac{\delta \!\LL}{\delta\nu}(\mu^*,\nu^*)(\nu-\nu^*)(\rd y) -\lambda\KL(\mu^*\Vert\rho^\mu) + \lambda\KL(\nu\Vert\rho^\nu)\\
& = -\int_{\XX} \lambda\log\frac{\mu^*}{\rho^\mu}(\mu-\mu^*)(\rd x) +\lambda\KL(\mu\Vert\rho^\mu) - \lambda\KL(\nu^*\Vert\rho^\nu)\\
&\qquad - \int_{\YY} \lambda \log\frac{\nu^*}{\rho^\nu} (\nu-\nu^*)(\rd y) -\lambda\KL(\mu^*\Vert\rho^\mu) + \lambda\KL(\nu\Vert\rho^\nu)\\
& = \lambda\KL(\mu\Vert\mu^*) + \lambda\KL(\nu\Vert\nu^*).
\end{align*}
\qed

\subsection{Proof of Proposition \ref{thm-marl}}\label{appendix-marl}

We use the bound $|\LL_\lambda(\mu,\nu) - \LL_\lambda(\mu^*,\nu^*)| \leq \NI(\mu,\nu)$ which can be shown by the following string of inequalities,
\begin{align*}
& \LL_\lambda(\mu,\nu)-\LL_\lambda(\mu^*,\nu^*) \leq \max_{\nu'} \LL_\lambda(\mu,\nu')-\LL_\lambda(\mu^*,\nu) \leq \max_{\nu'} \LL_\lambda(\mu,\nu')-\min_{\mu'}\LL_\lambda(\mu',\nu),\\
& \LL_\lambda(\mu,\nu)-\LL_\lambda(\mu^*,\nu^*) \geq \min_{\mu'}\LL_\lambda(\mu',\nu) - \LL_\lambda(\mu,\nu^*) \geq \min_{\mu'}\LL_\lambda(\mu',\nu)- \max_{\nu'} \LL_\lambda(\mu,\nu').
\end{align*}
Denoting the ideal minimax update in Step 1 as
\begin{equation*}
    \widetilde{V}^{(k+1)}(s) = \min_{\mu\in\PP(\XX)}\max_{\nu\in\PP(\YY)} \LL_\lambda(\mu,\nu;Q^{(k)}(s)),
\end{equation*}
this implies
\begin{equation*}
|V^{(k+1)}(s) - \widetilde{V}^{(k+1)}(s)| \leq \NI(\mu^{(k)}(s), \nu^{(k)}(s)) \leq \epsilon_{\LL}.
\end{equation*}
Now denote the ideal value iteration in Step 2 as
\begin{equation*}
    \widetilde{Q}^{(k)}(s) = r(s,x,y) + \gamma\EE{s'\sim P(\cdot|s,x,y)}{V^{(k)}(s)}
\end{equation*}
and note that the optimal value and $Q$-functions $V^* = V_\lambda^{\mu^*,\nu^*}$,  $Q^* = Q_\lambda^{\mu^*,\nu^*}$ satisfy the Bellman equation
\begin{equation*}
Q^*(x,y|s) = r(s,x,y) +\gamma\EE{s'\sim P(\cdot|s,x,y)}{V^*(s')}.
\end{equation*}
Hence we bound
\begin{align*}
\lVert V^{(k+1)}-V^*\rVert_\infty &\leq \epsilon_{\LL} + \lVert \widetilde{V}^{(k+1)}-V^*\rVert_\infty\\
&\leq \epsilon_{\LL} + \sup_{\mu,\nu,s} \big\lvert \LL_\lambda(\mu,\nu;Q^{(k)}(s)) - \LL_\lambda(\mu,\nu;Q^*(s))\big\rvert \\
&\leq \epsilon_{\LL} + \lVert Q^{(k)}-Q^*\rVert_\infty\\
&\leq \epsilon_{\LL} + \lVert Q^{(k)}-\widetilde{Q}^{(k)}\rVert_\infty + \lVert \widetilde{Q}^{(k)}-Q^*\rVert_\infty\\
&\leq \epsilon_{\LL}+\epsilon_Q + \gamma \lVert V^{(k)}-V^*\rVert_\infty.
\end{align*}
Therefore by Gronwall's lemma we conclude that
\begin{equation*}
\lVert V^{(k)}-V^*\rVert_\infty \leq \gamma^k \lVert V^{(0)}-V^*\rVert_\infty + \frac{\epsilon_{\LL}+\epsilon_Q}{1-\gamma}.
\end{equation*}
\qed

\section{Convergence Analysis of MFL-AG}

\subsection{Proof of Proposition \ref{thm-agflowdef}}\label{appendix-welldef}

Some definitions are in order. Denote by $C_{\XX,T} = C([0,T],\XX)$ the space of continuous sample paths on $\XX$ and by $\mathcal{M}(C_{\XX,T})$ the space of probability measures on $C_{\XX,T}$. We define two versions of the \emph{lifted} 1-Wasserstein distance on $\mathcal{M}(C_{\XX,T})$ as
\begin{align*}
&\widetilde{W}_{1,T}(\mu,\mu') = \inf_\gamma \int \sup_{t\leq T} \norm{\omega(t)-\omega'(t)}\rd\gamma(\omega,\omega') \wedge 1,\\
&W_{1,T}(\mu,\mu') = \inf_\gamma \int \sup_{t\leq T} \norm{\omega(t)-\omega'(t)}\wedge 1 \rd\gamma(\omega,\omega')
\end{align*}
where the infimum runs over all couplings $\gamma\in\mathcal{M}(C_{\XX,T} \times C_{\XX,T})$ with marginal laws $\mu,\mu'$. The inner truncated metric $W_{1,T}$ is complete, nondecreasing in $T$ and metrizes the weak topology on $\mathcal{M}(C_{\XX,T})$ \citep{Dobrushin70}; the outer truncation $\widetilde{W}_{1,T}$ serves to upper bound $W_{1,T}$. We repeat the construction for $\YY$ and extend $W_{1,T}, \widetilde{W}_{1,T}$ to the product space $\mathcal{M}(C_{\XX,T}) \times \mathcal{M}(C_{\YY,T})$ as
\begin{equation*}
W_{1,T}((\mu,\nu),(\mu',\nu')) = W_{1,T}(\mu,\mu') + W_{1,T}(\nu,\nu'),
\end{equation*}
etc. Now define $\Phi: \mathcal{M}(C_{\XX,T}) \times \mathcal{M}(C_{\YY,T})\to \mathcal{M}(C_{\XX,T}) \times \mathcal{M}(C_{\YY,T})$ as the map which associates to the pair $(\mu,\nu)$ the laws of the stochastic processes $(X_t)_{t\leq T}, (Y_t)_{t\leq T}$,
\begin{equation*}
\begin{aligned}
    X_t&= X_0-\int_0^t\frac{1}{B_s}\int_0^s \beta_r\nabla_x \frac{\delta \!\LL}{\delta \mu}(\mu_r,\nu_r)(X_s) \rd r+\lambda \nabla_x U^\mu(X_s) \rd s + \sqrt{2\lambda} W_t^\mu,\\
    Y_t&= Y_0+\int_0^t \frac{1}{B_s}\int_0^s \beta_r\nabla_y \frac{\delta \!\LL}{\delta \nu}(\mu_r,\nu_r)(Y_s) \rd r -\lambda\nabla_y U^\nu(Y_s) \rd s + \sqrt{2\lambda} W_t^\nu
\end{aligned}
\end{equation*}
for $0\leq t\leq T$, where $\mu_t,\nu_t$ denote the marginal distributions of $\mu,\nu$ at time $t$ and in particular $\mu_0,\nu_0$ are the prescribed initial distributions. A solution to \eqref{eq-agflow} then corresponds precisely to a fixed point of $\Phi$.

\begin{lemma}\label{thm-contraction}
There exists a constant $C_T>0$ so that for any $0\leq t\leq T$,
\begin{equation*}
\widetilde{W}_{1,t}(\Phi(\mu,\nu),\Phi(\mu',\nu')) \leq C_T \int_0^t \widetilde{W}_{1,s}((\mu,\nu),(\mu',\nu')) \rd s.
\end{equation*}
\end{lemma}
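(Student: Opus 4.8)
The plan is to establish the stated contraction estimate by a direct Gr\"onwall-type argument, decomposing the difference $X_t - X_t'$ of the two driven processes (coupled through the same Brownian motion $W^\mu$ and the same initial condition) into the contributions of the three terms in the drift. First I would write, for a coupling $\gamma$ of $(\mu,\nu)$ and $(\mu',\nu')$ that is (nearly) optimal for $\widetilde W_{1,s}$ at every $s \le t$,
\begin{equation*}
X_s - X_s' = -\int_0^s \left( \frac{1}{B_u}\int_0^u \beta_r \left[ \nabla_x \tfrac{\delta\!\LL}{\delta\mu}(\mu_r,\nu_r)(X_u) - \nabla_x \tfrac{\delta\!\LL}{\delta\mu}(\mu_r',\nu_r')(X_u') \right] \rd r + \lambda\big(\nabla_x U^\mu(X_u) - \nabla_x U^\mu(X_u')\big) \right) \rd u,
\end{equation*}
noting the Brownian terms cancel. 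Then I would bound $\norm{X_s - X_s'}$ by splitting the bracketed integrand as $\nabla_x\tfrac{\delta\!\LL}{\delta\mu}(\mu_r,\nu_r)(X_u) - \nabla_x\tfrac{\delta\!\LL}{\delta\mu}(\mu_r,\nu_r)(X_u')$ plus $\nabla_x\tfrac{\delta\!\LL}{\delta\mu}(\mu_r,\nu_r)(X_u') - \nabla_x\tfrac{\delta\!\LL}{\delta\mu}(\mu_r',\nu_r')(X_u')$, and applying Assumption \ref{ass-Lreg}: the first piece is $\le K_\mu \norm{X_u - X_u'}$ and the second is $\le L_\mu(W_1(\mu_r,\mu_r') + W_1(\nu_r,\nu_r'))$. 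The $U^\mu$ term is handled by $R_\mu$-Lipschitzness of $\nabla_x U^\mu$ (Assumption \ref{ass-rhoreg}).

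Next I would take the supremum over $s \le t$ inside the time integral (using that $\sup_s \int_0^s \le \int_0^t \sup$ for nonnegative integrands), then take expectations under $\gamma$. The key points are: (i) $W_1(\mu_r,\mu_r') \le \mathbb{E}_\gamma \norm{\omega(r) - \omega'(r)} \le \mathbb{E}_\gamma \sup_{u\le r}\norm{\omega(u)-\omega'(u)}$, which is controlled by $\widetilde W_{1,r}$ up to the truncation at $1$ (and since everything can be truncated at $1$ throughout, this costs nothing); (ii) the weight factor $\frac{1}{B_u}\int_0^u \beta_r \rd r = 1$, so the time-averaging is a probability average and simply yields $\sup_{r\le u}$ of the integrand bounds, in particular $\frac{1}{B_u}\int_0^u \beta_r (W_1(\mu_r,\mu_r')+W_1(\nu_r,\nu_r')) \rd r \le \sup_{r\le u}(\cdots) \le \widetilde W_{1,u}((\mu,\nu),(\mu',\nu'))$ after truncation, using monotonicity of $r \mapsto \widetilde W_{1,r}$. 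Collecting terms gives
\begin{equation*}
\mathbb{E}_\gamma \sup_{s\le t}\norm{X_s - X_s'}\wedge 1 \le (K_\mu + \lambda R_\mu)\int_0^t \mathbb{E}_\gamma \sup_{u\le s}\norm{X_u - X_u'}\wedge 1\, \rd s + (L_\mu + L_\nu)\int_0^t \widetilde W_{1,s}((\mu,\nu),(\mu',\nu'))\,\rd s,
\end{equation*}
with the symmetric estimate for $Y$. Since the left side dominates $\widetilde W_{1,t}(\Phi(\mu,\nu)_\mu, \Phi(\mu',\nu')_\mu)$ (the marginal on the $\XX$-component), summing the $\XX$ and $\YY$ estimates and applying Gr\"onwall's inequality to absorb the self-referential $\int_0^t \widetilde W_{1,s}(\Phi(\cdot),\Phi(\cdot))$ term into the constant produces the claimed bound with some $C_T$ depending on $K_\mu, K_\nu, L_\mu, L_\nu, R_\mu, R_\nu, \lambda, T$.

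I expect the main obstacle to be purely bookkeeping rather than conceptual: one must be careful that the outer-truncated metric $\widetilde W_{1,\cdot}$ behaves well under the two operations used — passing $W_1$ of time-marginals through $\widetilde W_{1,s}$, and pulling the $\sup_{s \le t}$ inside the $\rd s$ integral — and that truncating all quantities at $1$ is consistent throughout so that the Lipschitz-in-$W_1$ bounds (which are for untruncated $W_1$) still apply; here one uses that $a\wedge 1 \le b$ whenever $a \le b$, so truncating only improves the inequalities. A secondary subtlety is that $\Phi$ is only well-defined if the driving integrals make sense, which follows from the boundedness and Lipschitz hypotheses of Assumption \ref{ass-Lreg}; and one should note $\widetilde W_{1,s} \ge W_{1,s}$ so the lemma, combined with iteration of $\Phi$ and the standard Picard argument of \citet{Sznitman91}, yields a unique fixed point on each $[0,T]$ and hence on $[0,\infty)$, giving Proposition \ref{thm-agflowdef}.
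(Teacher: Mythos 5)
Your synchronous-coupling Gr\"onwall strategy is the right skeleton and matches the paper's, but there is a genuine gap in how you handle the truncation at $1$, and it breaks the step where you convert $W_1(\mu_r,\mu_r')+W_1(\nu_r,\nu_r')$ into $\widetilde W_{1,r}((\mu,\nu),(\mu',\nu'))$.

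Concretely, from the pathwise estimate
\begin{equation*}
\sup_{s\le t}\norm{X_s - X_s'} \;\le\; (K_\mu + \lambda R_\mu)\!\int_0^t\! \sup_{u\le s}\norm{X_u - X_u'}\, \rd s \;+\; L_\mu\!\int_0^t\! \sup_{r\le s}\bigl(W_1(\mu_r,\mu_r')+W_1(\nu_r,\nu_r')\bigr)\,\rd s,
\end{equation*}
you claim that ``truncating only improves the inequalities'' and that the $W_1$ terms on the right are controlled by $\widetilde W_{1,r}$. But $\widetilde W_{1,r}$ only dominates $W_1(\mu_r,\mu_r')\wedge 1$, not $W_1(\mu_r,\mu_r')$ itself, and replacing $W_1(\mu_r,\mu_r')$ by its truncation \emph{inside} the time integral on the right decreases the right-hand side, which is the wrong direction; in general $\bigl(\int_0^t f\bigr)\wedge 1 \le \int_0^t (f\wedge 1)$ is false. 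Your displayed inequality with $\norm{X_u-X_u'}\wedge 1$ and $\widetilde W_{1,s}$ on the right therefore does not follow from the untruncated bound. A secondary symptom of the same problem: $\mathbb{E}_\gamma\bigl[\sup_{s\le t}\norm{X_s-X_s'}\wedge 1\bigr]$ only dominates the inner-truncated $W_{1,t}$ (and is, by Jensen, $\le (\mathbb{E}_\gamma\sup)\wedge 1$), so it does not upper-bound the outer-truncated $\widetilde W_{1,t}(\Phi(\mu,\nu),\Phi(\mu',\nu'))$ as you assert.

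The missing ingredient is the uniform bound $\bigl\lVert\nabla_x\frac{\delta\LL}{\delta\mu}(\mu,\nu)\bigr\rVert\le M_\mu$ from Assumption~\ref{ass-Lreg}. The paper uses it to cap the drift-difference term \emph{a priori} (not by truncation): the measure-Lipschitz piece is bounded pointwise by $L_\mu(W_1(\mu_r,\mu_r')+W_1(\nu_r,\nu_r'))\wedge 2M_\mu$, and then one checks the elementary inequality $(L(a+b))\wedge C \le (L\vee C)\,(a\wedge 1 + b\wedge 1)$ for $a,b\ge0$. This turns the right-hand side into $(L_\mu\vee 2M_\mu)\int_0^t \sup_{r\le s}\bigl(W_1(\mu_r,\mu_r')\wedge 1 + W_1(\nu_r,\nu_r')\wedge 1\bigr)\rd s$, which is now legitimately bounded by $(L_\mu\vee 2M_\mu)\int_0^t \widetilde W_{1,s}((\mu,\nu),(\mu',\nu'))\rd s$ using $W_1(\mu_r,\mu_r')\wedge 1\le \widetilde W_{1,r}(\mu,\mu')$ and monotonicity in $r$. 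Gr\"onwall is then applied to the \emph{untruncated} $\sup_{s\le t}\norm{X_s-X_s'}$, producing a deterministic upper bound; at that point $\widetilde W_{1,t}(\Phi(\mu,\nu),\Phi(\mu',\nu'))\le\bigl(\mathbb{E}\sup_{s\le t}\norm{X_s-X_s'}\bigr)\wedge 1$ follows trivially. Inserting the $2M_\mu$ cap early, and postponing the single $\wedge 1$ to the very end, is what makes the argument close; your version, which tries to carry $\wedge 1$ through the Gr\"onwall iteration, does not.
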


\begin{proof}
First note that for any $0\leq s\leq t\leq T$,
\begin{equation*}
\widetilde{W}_{1,t}(\mu,\mu')\geq \inf_\gamma \int \norm{\omega(s)-\omega'(s)} \rd\gamma(\omega,\omega') \wedge 1 \geq W_1(\mu_s,\mu_s')\wedge 1.
\end{equation*}
Let $(X_t')_{t\leq T}, (Y_t')_{t\leq T}$ denote the synchronous processes
\begin{equation*}
\begin{aligned}
    X_t'&= X_0-\int_0^t\frac{1}{B_s}\int_0^s \beta_r\nabla_x \frac{\delta \!\LL}{\delta \mu}(\mu_r',\nu_r')(X_s') \rd r+\lambda \nabla_x U^\mu(X_s') \rd s + \sqrt{2\lambda} W_t^\mu,\\
    Y_t'&= Y_0+\int_0^t \frac{1}{B_s}\int_0^s \beta_r\nabla_y \frac{\delta \!\LL}{\delta \nu}(\mu_r',\nu_r')(Y_s') \rd r -\lambda\nabla_y U^\nu(Y_s') \rd s + \sqrt{2\lambda} W_t^\nu
\end{aligned}
\end{equation*}
corresponding to another pair of distributions $(\mu',\nu')$. Then by Assumption \ref{ass-Lreg},
\begin{align*}
&\sup_{s\leq t}\norm{X_s-X_s'}\\ &\leq\int_0^t \sup_{r\leq s}\norm{\nabla_x\frac{\delta \!\LL}{\delta\mu}(\mu_r,\nu_r)(X_s) - \nabla_x\frac{\delta \!\LL}{\delta\mu}(\mu_r',\nu_r')(X_s')} + \lambda \norm{\nabla_x U^\mu(X_s)- \nabla_x U^\mu(X_s')} \rd s\\
&\leq \int_0^t (K_\mu+\lambda R_\mu) \norm{X_s-X_s'} + \sup_{r\leq s} L_\mu(W_1(\mu_r,\mu_r')+W_1(\nu_r,\nu_r')) \wedge 2M_\mu \rd s\\
&\leq (K_\mu+\lambda R_\mu) \int_0^t \norm{X_s-X_s'}\rd s + (L_\mu\vee 2M_\mu)\int_0^t \sup_{r\leq s} W_1(\mu_r,\mu_r')\wedge 1 +W_1(\nu_r,\nu_r')\wedge 1 \rd s.
\end{align*}
Thus by Gronwall's lemma we obtain
\begin{equation*}
\sup_{s\leq t}\norm{X_s-X_s'}\leq (L_\mu\vee 2M_\mu)e^{(K_\mu+\lambda R_\mu)T} \int_0^t \sup_{r\leq s} W_1(\mu_r,\mu_r')\wedge 1 +W_1(\nu_r,\nu_r')\wedge 1 \rd s.
\end{equation*}
Then defining the constant $C_T= (L_\mu\vee 2M_\mu)e^{(K_\mu+\lambda R_\mu)T} + (L_\nu\vee 2M_\nu)e^{(K_\nu+\lambda R_\nu)T}$, by taking the joint distribution coupling of $(X_t)_{t\leq T}$ and $(X_t')_{t\leq T}$ we have
\begin{equation*}
\widetilde{W}_{1,t}(\Phi(\mu,\nu),\Phi(\mu',\nu'))\leq C_T \int_0^t \sup_{r\leq s} W_1(\mu_r,\mu_r')\wedge 1 +W_1(\nu_r,\nu_r')\wedge 1 \rd s,
\end{equation*}
which proves the lemma.
\end{proof}

We now use the contraction property to prove Proposition \ref{thm-agflowdef}. Starting at any $(\mu,\nu)$ and recursively applying Lemma \ref{thm-contraction}, we have
\begin{align*}
\widetilde{W}_{1,T}(\Phi^{k+1}(\mu,\nu),\Phi^k(\mu,\nu)) &\leq C_T^k \int_0^T\int_0^{t_1}\cdots \int_0^{t_{k-1}} \widetilde{W}_{1,t_k}(\Phi(\mu,\nu),(\mu,\nu)) \rd t_k\cdots\rd t_2 \rd t_1 \\
&\leq \frac{C_T^k T^k}{k!} \widetilde{W}_{1,T}(\Phi(\mu,\nu),(\mu,\nu)),
\end{align*}
so that $\widetilde{W}_{1,T}(\Phi^{k+1}(\mu,\nu),\Phi^k(\mu,\nu))\to 0$ as $k\to\infty$. Since $\widetilde{W}_{1,T}$ upper bounds $W_{1,T}$, the sequence $(\Phi^k(\mu,\nu))_{k\geq 0}$ is Cauchy and therefore converges to a fixed point of $\Phi$ due to the completeness of $\mathcal{M}(C_{\XX,T}) \times \mathcal{M}(C_{\YY,T})$ with respect to $W_{1,T}$. Similarly, recursively applying Lemma \ref{thm-contraction} to two fixed points $(\mu,\nu), (\mu',\nu')$ yields
\begin{equation*}
W_{1,T}((\mu,\nu),(\mu',\nu')) \leq \widetilde{W}_{1,T}((\mu,\nu),(\mu',\nu')) \leq \frac{C_T^k T^k}{k!} \widetilde{W}_{1,T}((\mu,\nu),(\mu',\nu')) \to 0,
\end{equation*}
hence the fixed point is unique. Finally, truncating the obtained flows $((\mu_t)_{t\leq T},(\nu_t)_{t\leq T})$ at time $T'<T$ must again yield the fixed point in $\mathcal{M}(C_{\XX,T'}) \times \mathcal{M}(C_{\YY,T'})$ so that we may consistently extend the flows to all time $t\in [0,\infty)$.

\subsection{Proof of Proposition \ref{thm-agprox}}\label{appendix-agprox}

Write the normalization factor for $\pmu_t$ as
\begin{equation*}
Z_t^\mu = \int_{\XX} \exp\left(-\frac{1}{\lambda B_t} \int_0^t \beta_s \frac{\delta \!\LL}{\delta\mu}(\mu_s,\nu_s) \rd s\right) \rho^\mu(\rd x).
\end{equation*}
We first compute the time derivative of the proximal distribution,
\begin{align*}
\partial_t\log\pmu_t &= -\partial_t\log Z_t^\mu - \frac{\beta_t}{\lambda B_t} \frac{\delta \!\LL}{\delta\mu}(\mu_t,\nu_t)+\frac{\beta_t}{\lambda B_t^2} \int_0^t\beta_s \frac{\delta \!\LL}{\delta\mu}(\mu_s,\nu_s) \rd s\\
&= \int_{\XX} \left(\frac{\beta_t}{\lambda B_t} \frac{\delta \!\LL}{\delta\mu}(\mu_t,\nu_t) -\frac{\beta_t}{\lambda B_t^2} \int_0^t \beta_s\frac{\delta \!\LL}{\delta\mu}(\mu_s,\nu_s) \rd s\right) \pmu_t(\rd \tilde{x})\\
&\qquad- \frac{\beta_t}{\lambda B_t} \frac{\delta \!\LL}{\delta\mu}(\mu_t,\nu_t)+\frac{\beta_t}{\lambda B_t^2} \int_0^t \beta_s\frac{\delta \!\LL}{\delta\mu}(\mu_s,\nu_s) \rd s.
\end{align*}
Roughly speaking, the proximal evolution speed is $O(\beta_t/B_t)$ which converges to zero as new information is continually downscaled. However, the maximum total displacement is $O(\log B_t)\to\infty$, ensuring that the algorithm does not prematurely stop before reaching equilibrium.

The time derivative of the KL gap can then be controlled by translating back into KL distance as
\begin{align*}
\partial_t \KL(\mu_t\Vert\pmu_t) &= \int_{\XX} \left(\log\frac{\mu_t}{\pmu_t} \right)\partial_t\mu_t(\rd x) - \int_{\XX} \left(\partial_t\log \pmu_t\right) \mu_t(\rd x)\\
&=-\lambda\int_{\XX} \bigg\lVert \nabla_x\log\frac{\mu_t}{\pmu_t} \bigg\rVert_2^2 \mu_t(\rd x)\\
&\qquad+\frac{\beta_t}{\lambda B_t} \int_{\XX} \left(\frac{\delta \!\LL}{\delta\mu}(\mu_t,\nu_t) -\frac{1}{B_t} \int_0^t \beta_s\frac{\delta \!\LL}{\delta\mu}(\mu_s,\nu_s) \rd s\right) (\mu_t-\pmu_t)(\rd x)\\
&\leq -2\alpha\lambda\cdot \KL(\mu_t\Vert\pmu_t)+ \frac{2M_\mu \beta_t}{\lambda B_t} W_1(\mu_t,\pmu_t)
\end{align*}
by Proposition \ref{thm-aglsi}. The Wasserstein term is further bounded via Talagrand's inequality as
\begin{equation*}
W_1(\mu_t,\pmu_t) \leq W_2(\mu_t,\pmu_t) \leq \sqrt{\frac{2}{\smash[b]{\alpha_\mu}}\KL(\mu_t,\pmu_t)}.
\end{equation*}
Hence
\begin{equation*}
\partial_t \sqrt{\KL(\mu_t\Vert\pmu_t)} \leq -\alpha_\mu\lambda\sqrt{\KL(\mu_t\Vert\pmu_t)} + \frac{M_\mu \beta_t}{\lambda B_t} \sqrt{\frac{2}{\smash[b]{\alpha_\mu}}}
\end{equation*}
and using an integrating factor, we conclude (starting from an arbitrary small but positive time $t_0$ to avoid potential singularities at $t=0$)
\begin{equation*}
\exp(\alpha_\mu\lambda t) \sqrt{\KL(\mu_t\Vert\pmu_t)} \leq \frac{M_\mu}{\lambda}\sqrt{\frac{2}{\smash[b]{\alpha_\mu}}} \int_{t_0}^t \frac{\beta_s}{B_s}\exp(\alpha_\mu\lambda s) \rd s + \exp(\alpha_\mu\lambda t_0) \sqrt{\KL(\mu_{t_0}\Vert\pmu_{t_0})}.
\end{equation*}
In particular, for the weight scheme $\beta_t=t^r$ with $r>-1$, by employing the asymptotic expansion of the exponential integral \citep[Section I.4]{Wong89}
\begin{equation*}
\Ei(z) = \int_{-\infty}^z\frac{\exp(t)}{t}\rd t = \frac{\exp(z)}{z} \left(\sum_{k=0}^n \frac{k!}{z^k} +O(|z|^{-(n+1)})\right)
\end{equation*}
we conclude that
\begin{align*}
\KL(\mu_t\Vert\pmu_t) &\leq \exp(-2\alpha_\mu\lambda t) \left(\frac{(r+1)M_\mu}{\lambda} \sqrt{\frac{2}{\smash[b]{\alpha_\mu}}} \Ei(\alpha_\mu\lambda t) +\text{const.}\right)^2\\
&\leq \frac{2(r+1)^2M_\mu^2}{\alpha_\mu^3\lambda^4 t^2}+ O(t^{-3}).
\end{align*}

We also show a boundedness result which guarantees that the flow is in a sense well-behaved.

\begin{lemma}\label{thm-klbddag}
The MFL-AG flow $(\mu_t,\nu_t)$ satisfies for all $t\geq 0$,
\begin{equation*}
\KL(\mu_t\Vert\rho^\mu) \leq \KL(\mu_0\Vert\rho^\mu)\vee \frac{M_\mu^2}{2r_\mu\lambda^2} \quad\text{and}\quad \KL(\nu_t\Vert\rho^\nu) \leq \KL(\nu_0\Vert\rho^\nu)\vee \frac{M_\nu^2}{2r_\nu\lambda^2}.
\end{equation*}
\end{lemma}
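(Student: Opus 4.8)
The plan is to show that $\KL(\mu_t\Vert\rho^\mu)$ cannot increase past the stated threshold by computing its time derivative along the flow and checking that the derivative is negative whenever the divergence is large. Recall from the Fokker–Planck equation in the main text that $\partial_t\mu_t = \lambda\nabla_x\cdot(\mu_t\nabla_x\log\frac{\mu_t}{\pmu_t})$. Differentiating $\KL(\mu_t\Vert\rho^\mu) = \int \mu_t\log\frac{\mu_t}{\rho^\mu}$ and integrating by parts gives
\begin{equation*}
\partial_t\KL(\mu_t\Vert\rho^\mu) = \int_{\XX}\Big(\log\frac{\mu_t}{\rho^\mu}+1\Big)\partial_t\mu_t(\rd x) = -\lambda\int_{\XX}\Big\langle\nabla_x\log\frac{\mu_t}{\rho^\mu},\,\nabla_x\log\frac{\mu_t}{\pmu_t}\Big\rangle\mu_t(\rd x).
\end{equation*}
The key observation is that $\nabla_x\log\frac{\mu_t}{\pmu_t} = \nabla_x\log\frac{\mu_t}{\rho^\mu} - \nabla_x\log\frac{\pmu_t}{\rho^\mu}$, and from \eqref{eq-agprox} the second term equals $-\frac{1}{\lambda B_t}\int_0^t\beta_s\nabla_x\frac{\delta\!\LL}{\delta\mu}(\mu_s,\nu_s)\rd s$, whose norm is at most $M_\mu/\lambda$ by Assumption \ref{ass-Lreg}. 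Writing $g_t := \nabla_x\log\frac{\mu_t}{\rho^\mu}$, this yields
\begin{equation*}
\partial_t\KL(\mu_t\Vert\rho^\mu) \leq -\lambda\int_{\XX}\norm{g_t}_2^2\mu_t(\rd x) + M_\mu\int_{\XX}\norm{g_t}_2\,\mu_t(\rd x) \leq -\lambda\int_{\XX}\norm{g_t}_2^2\mu_t(\rd x) + M_\mu\Big(\int_{\XX}\norm{g_t}_2^2\mu_t(\rd x)\Big)^{1/2}
\end{equation*}
by Cauchy–Schwarz. Setting $I_t := \int_{\XX}\norm{g_t}_2^2\mu_t(\rd x)$ (the relative Fisher information of $\mu_t$ with respect to $\rho^\mu$), the derivative is bounded by $-\lambda I_t + M_\mu\sqrt{I_t}$, which is negative whenever $I_t > M_\mu^2/\lambda^2$.

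Next I would convert this Fisher information threshold into a KL threshold using that $\rho^\mu$ satisfies the LSI with constant $r_\mu$ (by Assumption \ref{ass-rhoreg} and Proposition \ref{thm-bakry}, since $U^\mu$ is $r_\mu$-strongly convex). The LSI gives $\KL(\mu_t\Vert\rho^\mu)\leq\frac{1}{2r_\mu}I_t$, so whenever $\KL(\mu_t\Vert\rho^\mu) > \frac{M_\mu^2}{2r_\mu\lambda^2}$ we have $I_t > M_\mu^2/\lambda^2$ and hence $\partial_t\KL(\mu_t\Vert\rho^\mu) < 0$. This shows the sublevel set $\{\KL(\cdot\Vert\rho^\mu)\leq M_\mu^2/(2r_\mu\lambda^2)\}$ is forward-invariant, and more precisely that $t\mapsto\KL(\mu_t\Vert\rho^\mu)$ can never exceed $\max\{\KL(\mu_0\Vert\rho^\mu),\,M_\mu^2/(2r_\mu\lambda^2)\}$: if it starts below the threshold it stays below, and if it starts above, it is strictly decreasing until it reaches the threshold. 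The argument for $\nu_t$ is identical with the signs reversed (the drift for $\nu$ maximizes, but $\norm{\nabla_y\log\frac{\pnu_t}{\rho^\nu}}\leq M_\nu/\lambda$ still holds), giving the second bound.

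The main technical point to be careful about is the rigor of the differentiation-under-the-integral and integration-by-parts step, including justifying that the boundary terms vanish and that $\KL(\mu_t\Vert\rho^\mu)$ is differentiable (or at least absolutely continuous) in $t$; this is standard in the MFLD literature but relies on regularity of the flow established in Proposition \ref{thm-agflowdef} and on $\mu_t$ having a smooth positive density for $t>0$ (parabolic smoothing). A clean way to sidestep delicate pointwise issues is to run the monotonicity argument from an arbitrarily small positive time $t_0$, as was done in the proof of Proposition \ref{thm-agprox}, and then let $t_0\to 0$ using lower semicontinuity of KL. I expect this regularity bookkeeping to be the only real obstacle; the energy estimate itself is short once the Fisher information appears.
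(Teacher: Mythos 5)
Your argument is correct and shares the essential structure with the paper's proof: both differentiate $\KL(\mu_t\Vert\rho^\mu)$, split off the cross term $\lambda\int\langle\nabla\log\frac{\mu_t}{\rho^\mu},\nabla\log\frac{\pmu_t}{\rho^\mu}\rangle\mu_t$, exploit $\norm{\nabla_x\log\frac{\pmu_t}{\rho^\mu}}_\infty\leq M_\mu/\lambda$, and invoke the LSI for $\rho^\mu$ to pass from Fisher information to relative entropy. Where you diverge is the final estimate: the paper applies Young's inequality to the cross term, arriving at the \emph{linear} differential inequality $\partial_t\KL \leq -r_\mu\lambda\KL + M_\mu^2/(2\lambda)$ and then closes with Gronwall's lemma, whereas you bound the cross term by Cauchy--Schwarz to obtain $\partial_t\KL\leq -\lambda I_t + M_\mu\sqrt{I_t}$ and argue forward-invariance of the sublevel set $\{\KL\leq M_\mu^2/(2r_\mu\lambda^2)\}$. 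Since the map $I\mapsto -\lambda I + M_\mu\sqrt{I}$ is not monotone, your decomposition does not directly admit a Gronwall closure after substituting $I_t\geq 2r_\mu\KL$, which is precisely why you reach for the phase-line argument instead; the paper's Young-inequality symmetrization sidesteps this by producing a genuinely linear bound. Both routes yield the identical threshold $M_\mu^2/(2r_\mu\lambda^2)$, and both require the same mild regularity (differentiability of $t\mapsto\KL(\mu_t\Vert\rho^\mu)$), which you acknowledge; neither proof gains anything over the other in generality, but the paper's Gronwall route is more mechanical while yours is marginally more elementary in avoiding Young's inequality.
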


\begin{proof}
The density $\rho^\mu$ satisfies the LSI with constant $r_\mu$ by Proposition \ref{thm-bakry} so that we may derive
\begin{align*}
\partial_t \KL(\mu_t\Vert\rho^\mu) &= \int_{\XX} \left(\log\frac{\mu_t}{\rho^\mu} \right)\partial_t\mu_t(\rd x)\\
&=-\lambda\int_{\XX} \nabla_x\log\frac{\mu_t}{\rho^\mu}\cdot \nabla_x\log\frac{\mu_t}{\pmu_t} \mu_t(\rd x)\\
&=-\lambda\int_{\XX} \bigg\lVert \nabla_x\log\frac{\mu_t}{\rho^\mu} \bigg\rVert_2^2 \mu_t(\rd x) + \lambda \int_{\XX} \nabla_x\log\frac{\mu_t}{\rho^\mu}\cdot \nabla_x\log\frac{\pmu_t}{\rho^\mu} \mu_t(\rd x)\\
&\leq -\frac{\lambda}{2} \int_{\XX} \bigg\lVert \nabla_x\log\frac{\mu_t}{\rho^\mu} \bigg\rVert_2^2 \mu_t(\rd x) +\frac{\lambda}{2} \int_{\XX} \bigg\lVert \nabla_x\log\frac{\pmu_t}{\rho^\mu} \bigg\rVert_2^2 \mu_t(\rd x)\\
&\leq -r_\mu\lambda\cdot \KL(\mu_t\Vert\rho^\mu) +\frac{M_\mu^2}{2\lambda}.
\end{align*}
The assertion is then proved by Gronwall's inequality.
\end{proof}

\subsection{Proof of Theorem \ref{thm-agavg}}\label{appendix-agavg}

We first introduce two conjugate-type auxiliary functionals and state some properties.

\begin{lemma}\label{thm-Jdef}
Given Lipschitz functions $\zeta_\mu:\XX\to\RR$, $\zeta_\nu:\YY\to\RR$, for the pair of probability measures $\mu\in\PP(\XX)$, $\nu\in\PP(\YY)$ define the time-dependent functional
\begin{equation*}
J_t(\mu,\nu|\zeta^\mu,\zeta^\nu) = -\int_{\XX} \zeta^\mu(\mu-\rho^\mu)(\rd x)+\int_{\YY} \zeta^\nu(\nu-\rho^\nu)(\rd y) -\lambda B_t (\KL(\mu\Vert\rho^\mu)+ \KL(\nu\Vert\rho^\nu)).
\end{equation*}
Then the maximum
\begin{equation*}
\widehat{J}_t(\zeta^\mu,\zeta^\nu) = \max_{\mu\in\PP(\XX)} \max_{\nu\in\PP(\YY)} J_t(\mu,\nu|\zeta^\mu,\zeta^\nu)
\end{equation*}
exists for all $t>0$ and is uniquely attained by the pair of probability distributions defined as $\pmu_t(\zeta^\mu) \propto \exp(-(\lambda B_t)^{-1}\zeta^\mu - U^\mu)$ and $\pnu_t(\zeta^\nu) \propto \exp((\lambda B_t)^{-1}\zeta^\nu - U^\nu)$.
\end{lemma}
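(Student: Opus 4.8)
The plan is to recognize $J_t(\cdot,\cdot\mid\zeta^\mu,\zeta^\nu)$ as a concave functional on $\PP(\XX)\times\PP(\YY)$ that decouples completely into a sum of two independent concave problems, one over $\mu$ and one over $\nu$, each of which is a standard entropy-regularized linear functional whose maximizer is a Gibbs measure. First I would note that the three terms of $J_t$ split as $J_t(\mu,\nu\mid\zeta^\mu,\zeta^\nu) = \big(-\int_\XX \zeta^\mu(\mu-\rho^\mu)(\rd x) - \lambda B_t\,\KL(\mu\Vert\rho^\mu)\big) + \big(\int_\YY \zeta^\nu(\nu-\rho^\nu)(\rd y) - \lambda B_t\,\KL(\nu\Vert\rho^\nu)\big)$, so the joint maximization is the sum of two separate maximizations. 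It therefore suffices to treat the $\mu$-problem; the $\nu$-problem is identical up to sign.

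For the $\mu$-piece, write $G_t(\mu) := -\int_\XX \zeta^\mu\,\rd\mu - \lambda B_t\,\KL(\mu\Vert\rho^\mu)$ (dropping the $\mu$-independent constant $\int_\XX\zeta^\mu\,\rd\rho^\mu$). Since $\zeta^\mu$ is Lipschitz and $U^\mu$ is strongly convex with Lipschitz gradient (Assumption \ref{ass-rhoreg}), the measure $\pmu_t(\zeta^\mu)\propto\exp(-(\lambda B_t)^{-1}\zeta^\mu - U^\mu)$ has a well-defined, finite normalizing constant and lies in $\PP(\XX)$ (finite second moment follows from the strong convexity of the exponent); in particular $B_t>0$ for $t>0$ is used here. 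The key algebraic identity is that for any $\mu\in\PP(\XX)$ absolutely continuous with respect to $\rho^\mu$,
\begin{equation*}
G_t(\mu) = -\lambda B_t\,\KL\!\big(\mu\,\big\Vert\,\pmu_t(\zeta^\mu)\big) + G_t\!\big(\pmu_t(\zeta^\mu)\big),
\end{equation*}
which one verifies by expanding $\KL(\mu\Vert\pmu_t(\zeta^\mu)) = \int\log\frac{\mu}{\rho^\mu}\,\rd\mu + (\lambda B_t)^{-1}\int\zeta^\mu\,\rd\mu + \log Z$ and collecting terms; the measures not absolutely continuous with respect to $\rho^\mu$ have $\KL(\mu\Vert\rho^\mu)=+\infty$ and can be discarded. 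Since $\KL(\mu\Vert\pmu_t(\zeta^\mu))\geq 0$ with equality iff $\mu=\pmu_t(\zeta^\mu)$, this immediately gives both existence and uniqueness of the maximizer of $G_t$, hence of the $\mu$-marginal of the joint maximizer; applying the same argument with reversed sign to the $\nu$-piece finishes the proof.

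I do not expect a serious obstacle here: the statement is essentially the Gibbs variational principle applied twice, and the only points requiring a little care are (i) checking that $\pmu_t(\zeta^\mu),\pnu_t(\zeta^\nu)$ genuinely lie in $\PP(\XX),\PP(\YY)$ — i.e. the normalizing integrals converge and second moments are finite, which follows from $r_\mu,r_\nu$-strong convexity of $U^\mu,U^\nu$ dominating the Lipschitz perturbations $(\lambda B_t)^{-1}\zeta^\mu$ — and (ii) handling the extended-real-valued nature of $\KL$ so that the "completion of the square" identity is valid on the effective domain. If anything is mildly delicate it is making (i) fully rigorous, but it is routine given Assumption \ref{ass-rhoreg}.
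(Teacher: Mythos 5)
Your proof is correct and takes a genuinely different (and arguably cleaner) route than the paper's. Both arguments begin the same way, by observing that $J_t$ decouples additively into a $\mu$-problem and a $\nu$-problem. From there the paper invokes abstract convex analysis: it notes that $\mu\mapsto\KL(\mu\Vert\rho^\mu)$ is weakly lower semi-continuous and strongly convex in $W_2$ (via Talagrand's inequality for $\rho^\mu$), so that the regularized linear functional attains a unique maximizer, and then reads off the explicit Gibbs form from the first-order stationarity condition $\frac{\delta G_t}{\delta\mu}=\text{const}$. You instead prove the Gibbs variational identity
\begin{equation*}
G_t(\mu) = -\lambda B_t\,\KL\big(\mu\,\Vert\,\pmu_t(\zeta^\mu)\big) + G_t\big(\pmu_t(\zeta^\mu)\big),
\end{equation*}
which, once one checks that $\pmu_t(\zeta^\mu)\in\PP(\XX)$, gives existence, uniqueness, and the explicit form all at once from $\KL\geq 0$ with equality iff equal. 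Your route is more constructive and avoids any appeal to Weierstrass-type existence or to the notion of displacement convexity, at the price of having to verify integrability of $\exp(-(\lambda B_t)^{-1}\zeta^\mu - U^\mu)$ directly; the paper's route defers the explicit form to a first-order condition and leans on the abstract machinery already set up for Proposition \ref{thm-mneexistence}. Both are sound; your "complete the square in KL" argument is the one a reader would more likely reconstruct unaided, and it makes the role of $B_t>0$ (needed for the normalizing constant to be finite) transparent.
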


\begin{proof}
Since $J_t(\mu,\nu|\zeta^\mu,\zeta^\nu)$ decomposes into terms depending only on $\mu$ and $\nu$, respectively, the proof is similar to that of Proposition \ref{thm-mneexistence}. That is, $\mu\mapsto \KL(\mu\Vert\rho^\mu)$ is lower semi-continuous and strongly convex with respect to the 2-Wasserstein metric by Talagrand's inequality for $\rho^\mu$ so that combined with any linear functional,
\begin{equation*}
\argmax_{\mu\in\PP(\XX)} \zeta^\mu(\mu-\rho^\mu)(\rd x) -\lambda B_t \cdot\KL(\mu\Vert\rho^\mu)
\end{equation*}
has a unique maximizer $\pmu_t(\zeta^\mu)$ which moreover is given by the stated first-order condition.
\end{proof}

The following properties are direct extensions of standard conjugacy results in convex analysis, see e.g. \citet{Hiriart04}, Section E.

\begin{lemma}\label{thm-Jhat}
The functional $\widehat{J}_t(\zeta^\mu,\zeta^\nu)$ satisfies the following properties.
\begin{enumerate}[label=(\roman*), leftmargin=*]
\item\label{thm-Jhat-convex} $\widehat{J}_t$ is nonnegative and convex in both arguments.

\item\label{thm-Jhat-flat} $\widehat{J}_t$ admits functional derivatives at any $(\zeta^\mu,\zeta^\nu)$ which are given as
\begin{equation*}
\frac{\delta\widehat{J}_t}{\delta \zeta^\mu}(\zeta^\mu,\zeta^\nu) = -\pmu_t(\zeta^\mu)+\rho^\mu,\quad \frac{\delta\widehat{J}_t}{\delta \zeta^\nu}(\zeta^\mu,\zeta^\nu) = \pnu_t(\zeta^\nu)-\rho^\nu.
\end{equation*}

\item\label{thm-Jhat-time} The derivative with respect to time is bounded as 
\begin{equation*}
\partial_t\widehat{J}_t(\zeta^\mu, \zeta^\nu) \leq -\lambda\beta_t (\KL(\pmu_t(\zeta^\mu)\Vert \rho^\mu) + \KL(\pnu_t(\zeta^\nu)\Vert \rho^\nu)).
\end{equation*}
\end{enumerate}
\end{lemma}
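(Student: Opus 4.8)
The plan is to treat each of the three claims as a direct consequence of standard convex-analytic facts applied to the partial maximization defining $\widehat J_t$. Since $J_t(\mu,\nu\mid\zeta^\mu,\zeta^\nu)$ splits additively as $\big[-\int\zeta^\mu(\mu-\rho^\mu) -\lambda B_t\KL(\mu\Vert\rho^\mu)\big] + \big[\int\zeta^\nu(\nu-\rho^\nu) -\lambda B_t\KL(\nu\Vert\rho^\nu)\big]$, the maximum $\widehat J_t$ also decomposes into $\widehat J_t^\mu(\zeta^\mu) + \widehat J_t^\nu(\zeta^\nu)$, and it suffices to argue each part separately; I will only write out the $\mu$-side. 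For \ref{thm-Jhat-convex}: convexity follows since $\zeta^\mu\mapsto J_t(\mu,\nu\mid\zeta^\mu,\zeta^\nu)$ is affine (linear plus a constant) for each fixed $\mu$, and a pointwise supremum of affine functions is convex. Nonnegativity follows by testing with $\mu=\rho^\mu$, $\nu=\rho^\nu$: then the linear terms vanish and the KL terms vanish, so $J_t(\rho^\mu,\rho^\nu\mid\zeta^\mu,\zeta^\nu)=0\le\widehat J_t(\zeta^\mu,\zeta^\nu)$.

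For \ref{thm-Jhat-flat}, I would invoke the envelope theorem / Danskin-type argument: because the maximizer $\pmu_t(\zeta^\mu)$ is \emph{unique} (Lemma \ref{thm-Jdef}) and the dependence on $\zeta^\mu$ enters only through the linear term $-\int\zeta^\mu(\mu-\rho^\mu)$, the functional derivative of $\widehat J_t$ equals the derivative of the integrand evaluated at the optimizer, namely $-\pmu_t(\zeta^\mu)+\rho^\mu$. Concretely, one perturbs $\zeta^\mu \rightsquigarrow \zeta^\mu + \epsilon(\xi-\zeta^\mu)$ (or more simply adds $\epsilon\xi$), writes the difference quotient of $\widehat J_t$, sandwiches it between the values obtained by plugging in the optimizer of the perturbed and unperturbed problems, and lets $\epsilon\to0$ using continuity of $\epsilon\mapsto\pmu_t(\zeta^\mu+\epsilon\xi)$ (which holds by the explicit Gibbs form). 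This gives $\frac{\rd}{\rd\epsilon}\widehat J_t|_{\epsilon=0} = -\int \xi\,(\pmu_t(\zeta^\mu)-\rho^\mu)$, i.e. the claimed derivative; the normalization convention for functional derivatives is automatically consistent since $\pmu_t(\zeta^\mu)$ and $\rho^\mu$ are both probability measures.

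For \ref{thm-Jhat-time}, the only $t$-dependence of $\widehat J_t$ is through the coefficient $\lambda B_t$ multiplying the entropy terms, and $B_t' = \beta_t$. Again by the envelope principle (the maximizer is $t$-dependent but, being optimal, contributes nothing to first order), $\partial_t\widehat J_t(\zeta^\mu,\zeta^\nu) = \partial_t J_t(\mu,\nu\mid\zeta^\mu,\zeta^\nu)\big|_{\mu=\pmu_t(\zeta^\mu),\,\nu=\pnu_t(\zeta^\nu)} = -\lambda\beta_t\big(\KL(\pmu_t(\zeta^\mu)\Vert\rho^\mu)+\KL(\pnu_t(\zeta^\nu)\Vert\rho^\nu)\big)$, which is exactly the stated bound (in fact an equality; the inequality in the statement is all we need). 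The one technical point to be careful about — and the main obstacle — is justifying the envelope/differentiation-under-the-max steps rigorously in this infinite-dimensional measure setting: one needs that $\epsilon\mapsto\widehat J_t^\mu(\zeta^\mu+\epsilon\xi)$ is genuinely differentiable, which I would get from uniqueness of the optimizer plus equicontinuity of the family $\{\pmu_t(\zeta^\mu+\epsilon\xi)\}_{\epsilon}$ in $W_1$ (following from the log-Sobolev/Talagrand estimates in Assumption \ref{ass-rhoreg}), together with the uniform Lipschitz control on $\zeta^\mu$. Everything else is bookkeeping. I would also note that the same reasoning, applied with $\zeta^\mu = B_t^{-1}\int_0^t\beta_s\frac{\delta\LL}{\delta\mu}(\mu_s,\nu_s)\rd s$, is what makes $\widehat J_t$ the right object for the dual-averaging argument in the proof of Theorem \ref{thm-agavg}.
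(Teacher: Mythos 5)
Your proposal is correct and follows essentially the same conjugacy/envelope route as the paper: for (i) both use nonnegativity via $(\rho^\mu,\rho^\nu)$ and convexity as a pointwise supremum of affine maps, and for (ii) the paper's explicit chain-rule computation (where the indirect term vanishes because the first-order condition makes $\zeta^\mu + \lambda B_t\log(\pmu_t(\zeta^\mu)/\rho^\mu)$ constant and $\frac{\delta}{\delta\zeta^\mu}\pmu_t(\zeta^\mu)$ integrates to zero) is exactly the computational realization of your Danskin argument.

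One technical point worth flagging in (iii): you claim the envelope bound is in fact an equality, which requires the extra regularity you acknowledge would need checking (differentiability of $\epsilon\mapsto\pmu_t$, equicontinuity, and so on). The paper sidesteps this entirely by using a one-sided maximality argument: for $t'>t$, since $\widehat J_t$ is a maximum, $\widehat J_{t'}(\zeta^\mu,\zeta^\nu) = J_t(\pmu_{t'}(\zeta^\mu),\pnu_{t'}(\zeta^\nu)\mid\zeta^\mu,\zeta^\nu) - \lambda(B_{t'}-B_t)\big(\KL(\pmu_{t'}(\zeta^\mu)\Vert\rho^\mu)+\KL(\pnu_{t'}(\zeta^\nu)\Vert\rho^\nu)\big) \leq \widehat J_t(\zeta^\mu,\zeta^\nu) - \lambda(B_{t'}-B_t)\big(\cdots\big)$, and then $t'\downarrow t$ yields the stated inequality directly, with no envelope-theorem hypotheses to verify. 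This is slightly cleaner and is all the lemma actually needs; you may want to adopt it to avoid the equicontinuity bookkeeping you flagged as the main obstacle.
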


\begin{proof}
\ref{thm-Jhat-convex} Note that $\widehat{J}_t\geq 0$ by taking $\mu=\rho^\mu, \nu=\rho^\nu$, and $\widehat{J}_t$ is convex in both $\zeta^\mu,\zeta^\nu$ as it is a pointwise maximum of affine functionals.

\ref{thm-Jhat-flat} Due to the explicit dependency of $\pmu_t(\zeta^\mu)$ on $\zeta^\mu$, $\widehat{J}_t(\zeta^\mu, \zeta^\nu) = J_t(\pmu_t(\zeta^\mu), \pmu_t(\zeta^\nu)| \zeta^\mu, \zeta^\nu)$ admits a functional derivative with respect to $\zeta^\mu$ and
\begin{equation*}
\frac{\delta\widehat{J}_t}{\delta \zeta^\mu}(\zeta^\mu,\zeta^\nu) = -\pmu_t(\zeta^\mu)+\rho^\mu-\int_{\XX}\left(\zeta^\mu +\lambda B_t \log\frac{\pmu(\zeta^\mu)}{\rho^\mu}\right) \frac{\delta}{\delta\zeta^\mu}\pmu_t(\zeta^\mu)(\rd x) = -\pmu_t(\zeta^\mu)+\rho^\mu.
\end{equation*}

\ref{thm-Jhat-time} The time derivative of $\widehat{J}_t$ exists due to the differentiability of $(B_t)_{t\geq 0}$. For any $t'>t$,
\begin{align*}
\widehat{J}_{t'}(\zeta^\mu,\zeta^\nu) &= J_{t'}(\pmu_{t'}(\zeta^\mu), \pnu_{t'}(\zeta^\nu) | \zeta^\mu,\zeta^\nu) \\
&= J_t(\pmu_{t'}(\zeta^\mu), \pnu_{t'}(\zeta^\nu) | \zeta^\mu,\zeta^\nu) - \lambda(B_{t'} - B_t)(\KL(\pmu_{t'}(\zeta^\mu)\Vert \rho^\mu) + \KL(\pnu_{t'}(\zeta^\nu)\Vert \rho^\nu))\\
&\leq \widehat{J}_t(\zeta^\mu,\zeta^\nu) - \lambda(B_{t'} - B_t)(\KL(\pmu_{t'}(\zeta^\mu)\Vert \rho^\mu) + \KL(\pnu_{t'}(\zeta^\nu)\Vert \rho^\nu))
\end{align*}
by the maximality of $\widehat{J}_t$, thus taking the limit $t'\downarrow t$ yields the stated inequality.
\end{proof}

We proceed to the proof of Theorem \ref{thm-agavg}. Denote the unnormalized aggregate derivatives as
\begin{equation*}
\delta_t^\mu = \int_0^t\beta_s \frac{\delta \!\LL}{\delta\mu}(\mu_s,\nu_s) \rd s, \quad\delta_t^\nu = \int_0^t \beta_s\frac{\delta \!\LL}{\delta\nu}(\mu_s,\nu_s) \rd s
\end{equation*}
which are Lipschitz due to Assumption \ref{ass-Lreg}. Then by Lemma \ref{thm-Jhat},
\begin{align*}
&\frac{\rd}{\rd t}\widehat{J}_t(\delta_t^\mu,\delta_t^\nu)\\
& =  \int_{\XX}\partial_t\delta_t^\mu \frac{\delta \widehat{J}_t}{\delta\zeta^\mu} (\delta_t^\mu,\delta_t^\nu)(\rd x) + \int_{\YY}\partial_t\delta_t^\nu \frac{\delta \widehat{J}_t}{\delta\zeta^\nu} (\delta_t^\mu,\delta_t^\nu)(\rd y)  +(\partial_t\widehat{J}_t)(\delta_t^\mu,\delta_t^\nu)\\
&\leq \beta_t \int_{\XX}\frac{\delta \!\LL}{\delta\mu}(\mu_t,\nu_t) (-\pmu_t(\delta_t^\mu)+\rho^\mu) (\rd x) +\beta_t \int_{\YY}\frac{\delta \!\LL}{\delta\nu}(\mu_t,\nu_t) (\pnu_t(\delta_t^\nu)-\rho^\nu) (\rd y)\\
&\qquad -\lambda\beta_t (\KL(\pmu_t(\delta_t^\mu)\Vert \rho^\mu)+ \KL(\pnu_t(\delta_t^\nu)\Vert \rho^\nu)).
\end{align*}

The NI error of the averaged distributions can now be bounded,
\begin{align*}
&\NI(\bar{\mu}_t, \bar{\nu}_t) \\
&=\max_{\mu,\nu} \LL_\lambda(\bar{\mu}_t,\nu) - \LL_\lambda(\mu,\bar{\nu}_t)\\
&\leq \max_{\mu,\nu} \frac{1}{B_t} \int_0^t \beta_s (\LL_\lambda(\mu_s,\nu) -\LL_\lambda(\mu,\nu_s)) \rd s\\
&\leq \max_{\mu,\nu} \frac{1}{B_t} \int_0^t \beta_s \bigg( \int_{\YY}\frac{\delta \!\LL}{\delta\nu}(\mu_s,\nu_s)(\nu-\nu_s)(\rd y) - \int_{\XX}\frac{\delta \!\LL}{\delta\mu}(\mu_s,\nu_s)(\mu-\mu_s)(\rd x)\\
&\qquad +\lambda(\KL(\mu_s\Vert \rho^\mu)-\KL(\nu\Vert \rho^\nu)-\KL(\mu\Vert \rho^\mu) +\KL(\nu_s\Vert \rho^\nu)) \bigg) \rd s\\
&= \frac{1}{B_t} \max_{\mu,\nu} \left(\int_{\YY} \delta_t^\nu (\nu-\rho^\nu)(\rd y) - \int_{\XX} \delta_t^\mu (\mu-\rho^\mu)(\rd x) - \lambda B_t(\KL(\mu\Vert \rho^\mu) +\KL(\nu\Vert \rho^\nu)) \right)\\
&\qquad + \frac{1}{B_t} \int_0^t \beta_s \bigg( \int_{\YY}\frac{\delta \!\LL}{\delta\nu}(\mu_s,\nu_s)(\rho^\nu-\nu_s)(\rd y) - \int_{\XX}\frac{\delta \!\LL}{\delta\mu}(\mu_s,\nu_s)(\rho^\mu-\mu_s)(\rd x)\\
&\qquad +\lambda(\KL(\mu_s\Vert\rho^\mu)+ \KL(\nu_s\Vert\rho^\nu)) \bigg) \rd s,
\end{align*}
where we have used the convex-concavity of $\LL_\lambda$ and $\LL$ in succession. By extracting the terms corresponding to the auxiliary functional $\widehat{J}_t$, we are able to apply Lemma \ref{thm-Jhat}\ref{thm-Jhat-time} and obtain that
\begin{align*}
&\frac{1}{B_t}\bigg[ \widehat{J}_t(\delta_t^\mu,\delta_t^\nu)+ \int_0^t \beta_s \bigg( \int_{\YY}\frac{\delta \!\LL}{\delta\nu}(\mu_s,\nu_s)(\rho^\nu-\nu_s)(\rd y) - \int_{\XX}\frac{\delta \!\LL}{\delta\mu}(\mu_s,\nu_s)(\rho^\mu-\mu_s)(\rd x)\\
&\qquad +\lambda(\KL(\mu_s\Vert\rho^\mu)+ \KL(\nu_s\Vert\rho^\nu)) \bigg) \rd s \bigg]\\
&\leq \frac{1}{B_t} \bigg[\int_0^t \bigg( -\lambda\beta_s (\KL(\pmu_s(\delta_t^\mu)\Vert \rho^\mu)+ \KL(\pnu_s(\delta_t^\nu)\Vert \rho^\nu))\\
&\qquad +\beta_s \int_{\XX}\frac{\delta \!\LL}{\delta\mu}(\mu_s,\nu_s) (-\pmu_s(\delta_s^\mu)+\rho^\mu) (\rd x) +\beta_s \int_{\YY}\frac{\delta \!\LL}{\delta\nu}(\mu_s,\nu_s) (\pnu_s(\delta_s^\nu)-\rho^\nu) (\rd y) \bigg) \rd s\\
&\qquad + \int_0^t \beta_s \bigg( \int_{\YY}\frac{\delta \!\LL}{\delta\nu}(\mu_s,\nu_s)(\rho^\nu-\nu_s)(\rd y) - \int_{\XX}\frac{\delta \!\LL}{\delta\mu}(\mu_s,\nu_s)(\rho^\mu-\mu_s)(\rd x)\\
&\qquad +\lambda(\KL(\mu_s\Vert\rho^\mu)+ \KL(\nu_s\Vert\rho^\nu)) \bigg) \rd s \bigg]\\
&= \frac{1}{B_t} \int_0^t\beta_s \bigg(\lambda (\KL(\mu_s\Vert\rho^\mu)- \KL(\pmu_s\!\Vert\rho^\mu)+ \KL(\nu_s\Vert\rho^\mu)- \KL(\pnu_s\!\Vert\rho^\nu))\\
&\qquad +\int_{\XX}\frac{\delta \!\LL}{\delta\mu}(\mu_s,\nu_s) (\mu_s-\pmu_s) (\rd x) -\int_{\YY}\frac{\delta \!\LL}{\delta\nu}(\mu_s,\nu_s) (\nu_s-\pnu_s) (\rd y) \bigg) \rd s\\
&= \frac{1}{B_t}\int_0^t\beta_s \bigg(\lambda\int_{\XX}\log\frac{\pmu_s}{\rho^\mu}(\mu_s-\pmu_s)(\rd x)+ \lambda\int_{\XX} \log\frac{\mu_s}{\pmu_s}\mu_s(\rd x)\\
&\qquad +\lambda\int_{\YY}\log\frac{\pnu_s}{\rho^\nu}(\nu_s-\pnu_s)(\rd y)+ \lambda\int_{\YY} \log\frac{\nu_s}{\pnu_s}\nu_s(\rd y)\\
&\qquad +\int_{\XX}\frac{\delta \!\LL}{\delta\mu}(\mu_s,\nu_s) (\mu_s-\pmu_s) (\rd x) -\int_{\YY}\frac{\delta \!\LL}{\delta\nu}(\mu_s,\nu_s) (\nu_s-\pnu_s) (\rd y) \bigg) \rd s\\
&= \frac{1}{B_t}\int_0^t\beta_s \bigg[\int_{\XX} \left(\frac{\delta \!\LL}{\delta\mu}(\mu_s,\nu_s)-\frac{1}{B_s} \int_0^s \beta_r \frac{\delta \!\LL}{\delta\mu}(\mu_r,\nu_r) \rd r\right) (\mu_s-\pmu_s)(\rd x)\\
&\qquad - \int_{\YY} \left(\frac{\delta \!\LL}{\delta\nu}(\mu_s,\nu_s)- \frac{1}{B_s} \int_0^s \beta_r \frac{\delta \!\LL}{\delta\mu}(\mu_r,\nu_r) \rd r\right) (\nu_s-\pnu_s)(\rd y)\\
&\qquad + \lambda\int_{\XX} \log\frac{\mu_s}{\pmu_s}\mu_s(\rd x)+ \lambda\int_{\YY} \log\frac{\nu_s}{\pnu_s}\nu_s(\rd y) \bigg] \rd s.
\end{align*}
By Proposition \ref{thm-agprox} and Talagrand's inequality, we can therefore bound
\begin{align*}
&\NI(\bar{\mu}_t, \bar{\nu}_t) \\
&\leq\frac{1}{B_t}\int_0^t \beta_s(2M_\mu W_1(\mu_s,\pmu_s) + 2M_\nu W_1(\nu_s,\pnu_s) +\lambda\KL(\mu_s\Vert\pmu_s) +\lambda\KL(\nu_s\Vert\pnu_s)) \rd s\\
&\leq \frac{2}{B_t} \int_0^t \beta_s \left(M_\mu \sqrt{\frac{2}{\smash[b]{\alpha_\mu}}\KL(\mu_s\Vert\pmu_s)} + M_\nu \sqrt{\frac{2}{\smash[b]{\alpha_\mu}}\KL(\nu_s\Vert\pnu_s)}\right) \rd s\\
&\qquad +\frac{\lambda}{B_t} \int_0^t \beta_s(\KL(\mu_s\Vert\pmu_s) +\KL(\nu_s\Vert\pnu_s)) \rd s\\
&\leq \bigg(\frac{M_\mu^2}{\alpha_\mu^2}+\frac{M_\nu^2}{\alpha_\nu^2}\bigg) \frac{4(r+1)}{\lambda^2 B_t} \int_{t_0}^t\frac{\beta_s}{s} \left(1+O(s^{-1})\right)\rd s.
\end{align*}
In particular, for $\beta_t= t^r$ with $r>0$, we obtain the convergence rate
\begin{equation*}
\NI(\bar{\mu}_t, \bar{\nu}_t) \leq \bigg(\frac{M_\mu^2}{\alpha_\mu^2}+\frac{M_\nu^2}{\alpha_\nu^2}\bigg)\frac{4(r+1)^2}{r\lambda^2 t} +O(t^{-2})
\end{equation*}
whose leading term is optimized when $r=1$. For $\beta_t=1$, we obtain the slightly slower rate
\begin{equation*}
\NI(\bar{\mu}_t, \bar{\nu}_t) \leq \bigg(\frac{M_\mu^2}{\alpha_\mu^2}+\frac{M_\nu^2}{\alpha_\nu^2}\bigg) \frac{4\log t}{\lambda^2 t} +O(t^{-1}).
\end{equation*}
We remark that for decreasing $\beta_t$, the integral tends to converge so that the normalizing $B_t^{-1}$ term dominates, leading to significantly slower convergence. For example, if $\beta_t\sim t^r$ for $-1<r<0$ the rate is $O(t^{-1-r})$; if $\beta_t\sim t^{-1}$, the rate is $O(\frac{1}{\log t})$. \qed

\section{Time and Space Discretization}\label{appendix-disc}

\subsection{Gradient Stopped Process}

Denote $\mathscr{X}_k=(X_k^i)_{i=1}^N, \mathscr{Y}_k=(Y_k^i)_{i=1}^N$ and $\mu_{\mathscr{X}_k} = \frac{1}{N}\sum_{i=1}^N \delta_{X_k^i}, \nu_{\mathscr{Y}_k} = \frac{1}{N}\sum_{i=1}^N \delta_{Y_k^i}$. That is, the subscript $k$ denotes the number of steps while superscript $i$ denotes the $i$th particle. We also write $(\mathscr{X},\mathscr{Y})_{1:k}:=(\mathscr{X}_{1:k},\mathscr{Y}_{1:k})$ for notational simplicity.

We analyze the following MFL-AG $N$-particle update for all $i=1,\cdots,N$,
\begin{equation}\label{eq-agparticlealgo}
\begin{aligned}
&X_{k+1}^i=X_k^i - \frac{\eta}{B_k}\sum_{j=1}^k \beta_j\nabla_x \frac{\delta \!\LL}{\delta \mu}(\mu_{\mathscr{X}_j},\nu_{\mathscr{Y}_j})(X_k^i) -\lambda\eta \nabla_x U^\mu(X_k^i) + \sqrt{2\lambda\eta} \xi_k^{\mu,i},\\
&Y_{k+1}^i=Y_k^i + \frac{\eta}{B_k}\sum_{j=1}^k \beta_j\nabla_y \frac{\delta \!\LL}{\delta \nu}(\mu_{\mathscr{X}_j},\nu_{\mathscr{Y}_j})(Y_k^i)-\lambda\eta \nabla_y U^\nu(Y_k^i) + \sqrt{2\lambda\eta} \xi_k^{\nu,i},
\end{aligned}
\end{equation}
where $\xi_k^{\mu,i}, \xi_k^{\nu,i}$ are i.i.d. standard Gaussian and the initial values $\mathscr{X}_1$, $\mathscr{Y}_1$ are sampled from initial distributions $\mu_0\in\PP(\XX)$, $\nu_0\in\PP(\YY)$. We write the history-dependent averaged drift function as
\begin{equation*}
    \bb_k^\mu = \bb_k^\mu(\cdot | (\mathscr{X},\mathscr{Y})_{1:k}) = -\frac{1}{B_k}\sum_{j=1}^k \beta_j\nabla_x\frac{\delta \!\LL}{\delta\mu} (\mu_{\mathscr{X}_j},\nu_{\mathscr{Y}_j}) - \lambda \nabla_x U^\mu
\end{equation*}
and similarly for $\bb_k^\nu$.
The history-dependent $N$-particle proximal distributions are defined on the configuration spaces $\XX^N, \YY^N$ as the product distributions
\begin{align*}
&\pmu_k^{(N)}(\mathscr{X}) \propto \rho^{\mu\otimes N}(\mathscr{X}) \exp\Bigg(-\frac{N}{\lambda B_k} \int_{\XX}\sum_{j=1}^k \beta_j \frac{\delta \!\LL}{\delta\mu}(\mu_{\mathscr{X}_j},\nu_{\mathscr{Y}_j}) \mu_\mathscr{X}(\rd x)\Bigg),\\
&\pnu_k^{(N)}(\mathscr{Y}) \propto \rho^{\nu\otimes N}(\mathscr{Y}) \exp\Bigg(\frac{N}{\lambda B_k} \int_{\YY}\sum_{j=1}^k \beta_j \frac{\delta \!\LL}{\delta\nu}(\mu_{\mathscr{X}_j},\nu_{\mathscr{Y}_j}) \nu_\mathscr{Y}(\rd y)\Bigg).
\end{align*}

We substitute $\beta_k=k^r$ with $r\in\RR_{\geq 0}$ whenever necessary to simplify the calculations, although similar results may be derived for any well-behaved sequence of weights.

The following lemma quantifies the sequential evolution of the averaged drift.
\begin{lemma}
For any pair of integers $k>\ell$ we have $\norm{\bb_k^\mu-\bb_\ell^\mu}_\infty \leq 2\left(1-\frac{B_\ell}{B_k}\right) M_\mu$.
\end{lemma}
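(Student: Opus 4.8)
The key observation is that the Langevin drift term $-\lambda\nabla_x U^\mu$ is common to both $\bb_k^\mu$ and $\bb_\ell^\mu$ and hence cancels in the difference, so only the averaged functional-derivative part matters. The plan is to write, abbreviating $g_j := \nabla_x\frac{\delta\!\LL}{\delta\mu}(\mu_{\mathscr{X}_j},\nu_{\mathscr{Y}_j})$,
\begin{equation*}
\bb_k^\mu - \bb_\ell^\mu = -\frac{1}{B_k}\sum_{j=1}^k \beta_j g_j + \frac{1}{B_\ell}\sum_{j=1}^\ell \beta_j g_j = \Big(\tfrac{1}{B_\ell}-\tfrac{1}{B_k}\Big)\sum_{j=1}^\ell \beta_j g_j \;-\; \frac{1}{B_k}\sum_{j=\ell+1}^k \beta_j g_j,
\end{equation*}
i.e. decompose the difference of the two rolling averages into the "reweighting" contribution from the first $\ell$ terms and the "new information" contribution from terms $\ell+1,\dots,k$.

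Next I would invoke Assumption \ref{ass-Lreg}, which gives the uniform bound $\norm{g_j}_\infty = \lVert\nabla_x\frac{\delta\!\LL}{\delta\mu}(\mu_{\mathscr{X}_j},\nu_{\mathscr{Y}_j})\rVert_\infty \leq M_\mu$ for every $j$ and every realization of the particle configurations. Applying the triangle inequality term by term and using $\sum_{j=1}^\ell\beta_j = B_\ell$ and $\sum_{j=\ell+1}^k\beta_j = B_k - B_\ell$, the first contribution is bounded by $\big(\tfrac{1}{B_\ell}-\tfrac{1}{B_k}\big)B_\ell\, M_\mu = \big(1-\tfrac{B_\ell}{B_k}\big)M_\mu$ and the second by $\tfrac{1}{B_k}(B_k-B_\ell)M_\mu = \big(1-\tfrac{B_\ell}{B_k}\big)M_\mu$. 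Summing the two gives exactly $2\big(1-\tfrac{B_\ell}{B_k}\big)M_\mu$, as claimed; since the bound holds pointwise in $x$ it holds in $\norm{\cdot}_\infty$.

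There is no real obstacle here — the statement is a direct triangle-inequality estimate once one notices the cancellation of the $U^\mu$ term and the telescoping of the weight sums. The only mild care needed is to keep $B_k > B_\ell$ (which holds since $\beta_j>0$ and $k>\ell$) so that $\tfrac{1}{B_\ell}-\tfrac{1}{B_k}>0$ and the signs in the triangle inequality come out as written; the identical argument applies verbatim to $\bb_k^\nu$.
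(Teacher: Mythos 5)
Your proof is correct and follows essentially the same route as the paper: cancel the $\lambda\nabla_x U^\mu$ term, split the difference of rolling averages into the reweighted first-$\ell$ block and the new block $j=\ell+1,\dots,k$, and bound each by $(1-B_\ell/B_k)M_\mu$ using $\norm{\nabla_x\frac{\delta\!\LL}{\delta\mu}}\leq M_\mu$. Your coefficient $\frac{1}{B_\ell}-\frac{1}{B_k}$ is algebraically the same as the paper's $\frac{B_k-B_\ell}{B_\ell B_k}$, so the two arguments coincide.
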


\begin{proof}
For any $x\in\XX$,
\begin{align*}
&\norm{\bb_k^\mu(x)-\bb_\ell^\mu(x)} = \bigg\lVert -\frac{1}{B_k}\sum_{j=1}^k \beta_j\nabla_x\frac{\delta \!\LL}{\delta\mu} (\mu_{\mathscr{X}_j},\nu_{\mathscr{Y}_j})(x) + \frac{1}{B_\ell}\sum_{j=1}^\ell \beta_j\nabla_x\frac{\delta \!\LL}{\delta\mu} (\mu_{\mathscr{X}_j},\nu_{\mathscr{Y}_j})(x)\bigg\rVert\\
&= \bigg\lVert \frac{B_k-B_\ell}{B_\ell B_k}\sum_{j=1}^\ell \beta_j\nabla_x\frac{\delta \!\LL}{\delta\mu} (\mu_{\mathscr{X}_j},\nu_{\mathscr{Y}_j})(x) -\frac{1}{B_k}\sum_{j=\ell+1}^k \beta_j\nabla_x\frac{\delta \!\LL}{\delta\mu} (\mu_{\mathscr{X}_j},\nu_{\mathscr{Y}_j})(x) \bigg\rVert\\
&\leq 2\left(1-\frac{B_\ell}{B_k}\right) M_\mu,
\end{align*}
yielding the assertion.
\end{proof}

\emph{The gradient-stopped process.} For given integers $k>\ell$, consider the following synchronous modification of the MFL-AG update with the drift stopped at time $k-\ell$,
\begin{align*}
&\widetilde{X}_{j+1}^i=\widetilde{X}_j^i +\eta\bb_{j\wedge(k-\ell)}^\mu(\widetilde{X}_j^i) + \sqrt{2\lambda\eta} \xi_j^{\mu,i},\quad \widetilde{Y}_{j+1}^i=\widetilde{Y}_j^i + \eta \bb_{j\wedge(k-\ell)}^\nu(\widetilde{Y}_j^i) + \sqrt{2\lambda\eta} \xi_j^{\nu,i}.
\end{align*}
The initializations $\widetilde{\mathscr{X}}_1,\widetilde{\mathscr{Y}}_1$ and the random vectors $\xi_j^{\mu,i}, \xi_j^{\nu,i}$ are to be shared with the original process so that $(\widetilde{\mathscr{X}},\widetilde{\mathscr{Y}})_{1: k-\ell+1} = (\mathscr{X},\mathscr{Y})_{1: k-\ell+1}$. We will study this process alongside the original in order to facilitate short-term perturbation analyses.

\begin{lemma}\label{thm-secondmoment}
If $\eta\leq\frac{r_\mu}{4\lambda R_\mu^2}$, the second moments of the particles $X_k^i$ and $\widetilde{X}_k^i$ are uniformly bounded for all $k\geq 1$ as
\begin{equation*}
    \E{\lVert X_k^i\rVert^2}, \;\E{\lVert \widetilde{X}_k^i\rVert^2} \leq\E{\lVert X_1^i\rVert^2} + \mathfrak{s}^\mu, \quad \mathfrak{s}^\mu:= \frac{2}{r_\mu}\bigg(\frac{M_\mu^2}{r_\mu\lambda^2} +\lambda\eta M_\mu^2 + d_{\XX}\bigg).
\end{equation*}
\end{lemma}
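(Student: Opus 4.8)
The plan is to run the standard one-step second-moment recursion for a discretized strongly-log-concave Langevin chain; the only new ingredient is the observation that the history-averaged gradient appearing in $\bb_k^\mu$ is uniformly bounded by $M_\mu$, since $\frac{1}{B_k}\sum_{j=1}^k\beta_j\nabla_x\frac{\delta\!\LL}{\delta\mu}(\mu_{\mathscr{X}_j},\nu_{\mathscr{Y}_j})$ is a convex combination (weights $\beta_j/B_k$) of gradients each bounded by $M_\mu$ under Assumption \ref{ass-Lreg}. I would prove the bound for $X_k^i$ and then observe that $\widetilde X_k^i$ is treated identically.

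First, conditioning on $\mathcal{F}_k=\sigma((\mathscr{X},\mathscr{Y})_{1:k})$ and using that $\xi_k^{\mu,i}$ is independent, mean zero, with $\E{\lVert\xi_k^{\mu,i}\rVert^2}=d_{\XX}$,
\begin{equation*}
\Ebig{\lVert X_{k+1}^i\rVert^2 \,\big|\, \mathcal{F}_k} = \bigl\lVert X_k^i + \eta\,\bb_k^\mu(X_k^i)\bigr\rVert^2 + 2\lambda\eta\, d_{\XX}.
\end{equation*}
Writing $\bb_k^\mu=-g_k-\lambda\nabla_x U^\mu$ with $\lVert g_k\rVert_\infty\le M_\mu$, I would expand the square and invoke Assumption \ref{ass-rhoreg} in the two familiar forms, $\langle x,\nabla_x U^\mu(x)\rangle\ge r_\mu\lVert x\rVert^2$ (strong convexity together with $\nabla_x U^\mu(0)=0$) and $\lVert\nabla_x U^\mu(x)\rVert\le R_\mu\lVert x\rVert$, to reach a bound of the form $\lVert X_k^i\rVert^2\bigl(1-2\eta\lambda r_\mu+2\eta^2\lambda^2 R_\mu^2\bigr)+2\eta M_\mu\lVert X_k^i\rVert + 2\eta^2 M_\mu^2$. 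The hypothesis $\eta\le\frac{r_\mu}{4\lambda R_\mu^2}$ is precisely what forces $2\eta^2\lambda^2 R_\mu^2\le\frac12\eta\lambda r_\mu$, so the quadratic remainder is swallowed by half the contractive term, and Young's inequality against the remaining $\frac12\eta\lambda r_\mu\lVert X_k^i\rVert^2$ disposes of the cross term. Collecting the constants — the $O(M_\mu^2/(\lambda^2 r_\mu))$ from Young, the drift-square term $O(\eta M_\mu^2)$ (which, using $\eta\le\bar\eta$ again, is dominated by the first constant), and $O(d_{\XX}/r_\mu)$ from the noise — should yield the one-step contraction $\Ebig{\lVert X_{k+1}^i\rVert^2\mid\mathcal{F}_k}\le(1-\eta\lambda r_\mu)\lVert X_k^i\rVert^2+\eta\lambda r_\mu\,\mathfrak{s}^\mu$.

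Then I would take total expectations, note $0<1-\eta\lambda r_\mu<1$, and unroll this geometric recursion from $k=1$ to obtain $\E{\lVert X_k^i\rVert^2}\le(1-\eta\lambda r_\mu)^{k-1}\E{\lVert X_1^i\rVert^2}+\mathfrak{s}^\mu\le\E{\lVert X_1^i\rVert^2}+\mathfrak{s}^\mu$. For the gradient-stopped process the drift at step $j$ is $\bb^\mu_{j\wedge(k-\ell)}$, which is again an average of $M_\mu$-bounded gradients minus $\lambda\nabla_x U^\mu$, so every estimate above holds verbatim with $\bb_k^\mu$ replaced by $\bb^\mu_{j\wedge(k-\ell)}$; since $\widetilde{\mathscr{X}}_1=\mathscr{X}_1$ the same bound follows for $\widetilde X_k^i$. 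I do not expect any conceptual obstacle here; the only fiddly point is the constant-chasing in the Young/step-size step needed to land exactly on the stated $\mathfrak{s}^\mu$.
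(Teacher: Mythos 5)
Your proposal is correct and follows essentially the same route as the paper's proof: expand $\lVert X_k^i + \eta\,\bb_k^\mu(X_k^i)\rVert^2$, use $\lVert g_k\rVert\le M_\mu$ (convex combination of bounded gradients), strong convexity $\langle x,\nabla_x U^\mu(x)\rangle\ge r_\mu\lVert x\rVert^2$, Lipschitzness $\lVert\nabla_x U^\mu(x)\rVert\le R_\mu\lVert x\rVert$, Young's inequality on the cross term, the step-size restriction to absorb the $\eta^2\lambda^2 R_\mu^2$ quadratic into the contraction, then unroll; the gradient-stopped process is handled verbatim with $\bb^\mu_{j\wedge(k-\ell)}$. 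One small remark: you should not try to absorb the drift-square $O(\eta^2 M_\mu^2)$ remainder into the Young constant — in the paper's $\mathfrak{s}^\mu$ that contribution is carried as its own $\lambda\eta M_\mu^2$ summand (and in fact it is not dominated by $M_\mu^2/(r_\mu\lambda^2)$ for all parameter regimes), so just divide all three residual constants by the contraction rate $\eta\lambda r_\mu$ and you land exactly on $\mathfrak{s}^\mu$.
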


\begin{proof}
From the update rule \eqref{eq-agparticlealgo},
\begin{align*}
&\E{\lVert X_{k+1}^i\rVert^2}\\
&=\E{\lVert X_k^i\rVert^2} - 2\eta\bigg\langle X_k^i, \frac{1}{B_k}\sum_{j=1}^k \beta_j\nabla_x\frac{\delta \!\LL}{\delta\mu}(\mu_{\mathscr{X}_j}, \nu_{\mathscr{Y}_j})(X_k^i) +\lambda\nabla_x U^\mu(X_k^i)\bigg\rangle\\
&\qquad + \eta^2 \bigg\lVert \frac{1}{B_k}\sum_{j=1}^k \beta_j\nabla_x\frac{\delta \!\LL}{\delta\mu}(\mu_{\mathscr{X}_j}, \nu_{\mathscr{Y}_j})(X_k^i) +\lambda\nabla_x U^\mu(X_k^i)\bigg\rVert^2+ 2\lambda\eta d_{\XX}\\
&\leq \E{\lVert X_k^i\rVert^2} + 2\eta M_\mu \E{\lVert X_k^i\rVert} -2\lambda\eta r_\mu \E{\lVert X_k^i\rVert^2}\\
&\qquad + 2\lambda^2\eta^2 M_\mu^2 + 2\lambda^2\eta^2 R_\mu^2\E{\lVert X_k^i\rVert^2} +2\lambda\eta d_{\XX}\\
&\leq (1-\lambda\eta r_\mu)\E{\lVert X_k^i\rVert^2} + \frac{2\eta M_\mu^2}{r_\mu\lambda} +2\lambda^2\eta^2 M_\mu^2 +2\lambda\eta d_{\XX},
\end{align*}
where we have used $\E{\lVert X_k^i\rVert} \leq \frac{r_\mu\lambda}{4M_\mu} \E{\lVert X_k^i\rVert^2} +\frac{M_\mu}{r_\mu\lambda}$ and $\eta\leq\frac{r_\mu}{4\lambda R_\mu^2}$. The statement now follows from induction. The same logic can be applied to $\E{\lVert \widetilde{X}_k^i\rVert^2}$.
\end{proof}

\begin{lemma}\label{thm-modifiederr}
If $\eta\leq \frac{r_\mu\lambda}{2(L_\mu+\lambda R_\mu)^2}$, the Wasserstein error between the original and gradient-stopped process at time $k>\ell$ is bounded as
\begin{equation*}
W_2(\mu_{\mathscr{X}_k}, \mu_{\widetilde{\mathscr{X}}_k}) \leq \frac{r+1}{k-\ell+1} \mathfrak{w}_\ell^\mu,
\end{equation*}
where
\begin{equation*}
(\mathfrak{w}_\ell^\mu)^2:= \bigg(2\eta +\frac{1}{r_\mu\lambda}\vee\frac{1}{2L_\mu}\bigg) \frac{M_\mu^2(1+2\eta L_\mu)^2((1+2\eta L_\mu)^\ell-1)}{\eta^2 L_\mu^3}.
\end{equation*}
\end{lemma}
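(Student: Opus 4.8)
\emph{Proof sketch.} The plan is to couple the original $N$-particle system \eqref{eq-agparticlealgo} with the gradient-stopped system \emph{synchronously}, sharing the initialization $\widetilde{\mathscr{X}}_1=\mathscr{X}_1$ and every Gaussian vector $\xi_j^{\mu,i}$. The two systems then agree through step $k-\ell+1$ by construction, and since the noise cancels in the difference, the particlewise deviations $D_j^i:=X_j^i-\widetilde X_j^i$ obey the noiseless recursion $D_{j+1}^i = D_j^i + \eta\big(\bb_j^\mu(X_j^i)-\bb_{k-\ell}^\mu(\widetilde X_j^i)\big)$ for $j\ge k-\ell$, started from $D_{k-\ell+1}^i=0$. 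It suffices to control $\frac1N\sum_i\norm{D_k^i}^2$, which dominates $W_2^2(\mu_{\mathscr{X}_k},\mu_{\widetilde{\mathscr{X}}_k})$.

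First I would split the drift discrepancy as $\bb_j^\mu(X_j^i)-\bb_{k-\ell}^\mu(\widetilde X_j^i) = \big(\bb_{k-\ell}^\mu(X_j^i)-\bb_{k-\ell}^\mu(\widetilde X_j^i)\big) + \big(\bb_j^\mu(X_j^i)-\bb_{k-\ell}^\mu(X_j^i)\big)$. The first, ``same-drift'' term is handled by combining the contractivity of $z\mapsto z-\eta\lambda\nabla_x U^\mu(z)$ ($r_\mu$-strong convexity, $R_\mu$-smoothness, $\eta\le\bar\eta$) with the spatial Lipschitz continuity of the averaged gradient, producing a per-step factor bounded by $1+2\eta L_\mu$; the second, ``frozen-gradient mismatch'' term is uniformly bounded by $\gamma_j:=2\big(1-\tfrac{B_{k-\ell}}{B_j}\big)M_\mu$ via the preceding lemma, and this bound being deterministic is why the final estimate carries no expectation. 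Squaring $\norm{D_{j+1}^i}$ and applying Young's inequality with a budget equal to the larger of the contraction rate $\eta r_\mu\lambda$ and $2\eta L_\mu$ produces the one-step inequality
\begin{equation*}
\norm{D_{j+1}^i}^2 \le (1+2\eta L_\mu)\norm{D_j^i}^2 + \eta\Big(2\eta+\tfrac{1}{r_\mu\lambda}\vee\tfrac{1}{2L_\mu}\Big)\gamma_j^2 .
\end{equation*}

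Then I would unroll this recursion from $D_{k-\ell+1}^i=0$ and substitute the elementary estimate $\gamma_j\le 2(r+1)\tfrac{j-k+\ell}{k-\ell+1}M_\mu$ (from convexity of $m\mapsto m^{r+1}$ together with $j\ge k-\ell+1$), which reduces the remaining work to bounding a sum of the type $\sum_{t\ge1}t^2(1+2\eta L_\mu)^{\ell-t}$ by a quantity of order $(1+2\eta L_\mu)^{\ell}/(\eta L_\mu)^{3}$; collecting the constants reproduces exactly $\frac1N\sum_i\norm{D_k^i}^2\le\frac{(r+1)^2}{(k-\ell+1)^2}(\mathfrak{w}_\ell^\mu)^2$, and a square root finishes the proof (the $\YY$-side being identical).

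The main obstacle is making the two scales interlock: one must extract the sharp $1/(k-\ell+1)^2$ decay from the slow relaxation of the averaged gradient — a naive bound only yields $\ell^2/(k-\ell)^2$ — while ensuring the seeded deviation grows only like $(1+2\eta L_\mu)^{\ell}$ and not $(1+2\eta L_\mu)^{2\ell}$, which forces one to track $\norm{D_j^i}^2$ directly and to size the Young budget by the contraction rate; this is precisely what pins down the form of $\mathfrak{w}_\ell^\mu$, and in particular its $\big(2\eta+\tfrac1{r_\mu\lambda}\vee\tfrac1{2L_\mu}\big)\eta^{-2}L_\mu^{-3}$ prefactor.
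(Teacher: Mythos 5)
Your proof takes essentially the same route as the paper: a synchronous coupling that cancels the noise, a one-step squared-distance recursion with per-step factor $(1+2\eta L_\mu)$ driven by the drift-freezing mismatch $\gamma_j$, unrolled from zero via the bound $1-B_{k-\ell}/B_j \le (r+1)(j-k+\ell)/(k-\ell+1)$ and a geometric-type sum, then averaged over particles to bound $W_2^2$. One small wording slip: the Young budget is $\eta r_\mu\lambda$ (with the $\vee\,\tfrac{1}{2L_\mu}$ inserted afterward as slack for later use); taking the \emph{larger} of $\eta r_\mu\lambda$ and $2\eta L_\mu$ as the budget would both invert the reciprocal that appears in $\mathfrak{w}_\ell^\mu$ and break the absorption into the $(1+2\eta L_\mu)$ factor.
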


\begin{proof}
Decomposing the difference at each step $j>k-\ell$ as
\begin{equation*}
X_{j+1}^i-\widetilde{X}_{j+1}^i = X_j^i-\widetilde{X}_j^i +\eta(\bb_j^\mu(X_j^i) -\bb_j^\mu(\widetilde{X}_j^i)) +\eta(\bb_j^\mu(\widetilde{X}_j^i) -\bb_{k-\ell}^\mu(\widetilde{X}_j^i)),
\end{equation*}
we expand to obtain
\begin{align*}
&\lVert X_{j+1}^i-\widetilde{X}_{j+1}^i\rVert^2\\
&\leq \lVert X_j^i-\widetilde{X}_j^i\rVert^2 +2\eta \langle X_j^i-\widetilde{X}_j^i, \bb_j^\mu(X_j^i) -\bb_j^\mu(\widetilde{X}_j^i)\rangle +2\eta \lVert X_j^i-\widetilde{X}_j^i\rVert\cdot \lVert \bb_j^\mu-\bb_{k-\ell}^\mu\rVert_\infty\\
&\qquad + 2\eta^2 \lVert \bb_j^\mu(X_j^i) -\bb_j^\mu(\widetilde{X}_j^i)\rVert^2 + 2\eta^2 \lVert\bb_j^\mu-\bb_{k-\ell}^\mu\rVert_\infty^2 \\
&\leq \lVert X_j^i-\widetilde{X}_j^i\rVert^2 + 2\eta(L_\mu- \lambda r_\mu) \lVert X_j^i-\widetilde{X}_j^i\rVert^2 + 4\eta \left(1-\frac{B_{k-\ell}}{B_j}\right) M_\mu\lVert X_j^i-\widetilde{X}_j^i\rVert\\
&\qquad + 2\eta^2(L_\mu+\lambda R_\mu)^2 \lVert X_j^i-\widetilde{X}_j^i\rVert^2 +8\eta^2 \left(1-\frac{B_{k-\ell}}{B_j}\right)^2 M_\mu^2\\
&\leq (1+2\eta L_\mu) \lVert X_j^i-\widetilde{X}_j^i\rVert^2 + \bigg(\frac{4\eta M_\mu^2}{r_\mu\lambda}+8\eta^2 M_\mu^2\bigg) \left(1-\frac{B_{k-\ell}}{B_j}\right)^2.
\end{align*}
Starting from $ X_{k-\ell}^i-\widetilde{X}_{k-\ell}^i = 0$ and iterating,
\begin{equation}\label{eq-erroriter}
\lVert X_k^i-\widetilde{X}_k^i\rVert^2\leq \bigg(\frac{4\eta M_\mu^2}{r_\mu\lambda}+8\eta^2 M_\mu^2\bigg)\sum_{j=k-\ell+1}^{k-1}(1+2\eta L_\mu)^{k-j-1} \left(1-\frac{B_{k-\ell}}{B_j}\right)^2, \quad k\geq \ell+2.
\end{equation}
Now noting that with $\beta_j=j^r$
\begin{equation*}
1-\frac{B_{k-\ell}}{B_j} \leq \frac{(j-k+\ell)j^r}{\int_0^j z^r\rd z} =(r+1)\left(1-\frac{k-\ell}{j}\right) \leq \frac{(r+1)(j-k+\ell)}{k-\ell+1},
\end{equation*}
setting $\theta = (1+2\eta L_\mu)^{-1}$ we can explicitly compute
\begin{align*}
&\sum_{j=k-\ell+1}^{k-1} (j-k+\ell)^2 (1+2\eta L_\mu)^{k-j-1} =\theta^{1-\ell}\sum_{j=1}^{\ell-1} j^2\theta^j \\
&= \frac{\theta^{2-\ell}}{(1-\theta)^3}(-(\ell-1)^2\theta^{\ell+1} +(2\ell^2-2\ell-1)\theta^\ell -\ell^2\theta^{\ell-1} +3-\theta)\\
&\leq \frac{\theta}{(1-\theta)^3}\left(\frac{3-\theta}{\theta^{\ell-1}}-2\right) \leq \frac{2\theta}{(1-\theta)^3}(\theta^{-\ell}-1).
\end{align*}
Plugging back into \eqref{eq-erroriter} gives \begin{align*}
\lVert X_k^i-\widetilde{X}_k^i\rVert^2&\leq \bigg(\frac{4 M_\mu^2}{r_\mu\lambda}+8\eta M_\mu^2\bigg) \frac{(r+1)^2(1+2\eta L_\mu)^2}{4\eta^2 L_\mu^3}\frac{(1+2\eta L_\mu)^\ell-1}{(k-\ell+1)^2}\\
&\leq (r+1)^2 M_\mu^2 \bigg(2\eta +\frac{1}{r_\mu\lambda}\vee\frac{1}{2L_\mu}\bigg) \frac{(1+2\eta L_\mu)^2}{\eta^2 L_\mu^3}\frac{(1+2\eta L_\mu)^\ell-1}{(k-\ell+1)^2}
\end{align*}
uniformly for all $i\in[N]$. Note that the $(2L_\mu)^{-1}$ term is added to simplify later analyses and is generally vacuous. Finally, taking $W_2^2(\mu_{\mathscr{X}_k}, \mu_{\widetilde{\mathscr{X}}_k}) \leq \frac{1}{N} \lVert \mathscr{X}_k-\widetilde{\mathscr{X}}_k\rVert^2$ yields the desired bound.
\end{proof}

The calculations for the two above two results are similar but the bounds are fundamentally different. In Lemma \ref{thm-secondmoment} we rely on the long-distance dissipative nature of $\bb_k^\mu$ to prove a uniform-in-time guarantee, while in Lemma \ref{thm-modifiederr} we forego the contraction to isolate the $1-\frac{B_\ell}{B_k}$ factor and obtain tight short-term error bounds.

The leave-one-out error of the modified process can also be characterized as follows. We remark that the arguments in Lemmas \ref{thm-secondmoment} and \ref{thm-leaveoneout} are identical to that in \citet{Suzuki23}.

\begin{lemma}\label{thm-leaveoneout}
Denote the set of $N-1$ particles $(\widetilde{X}_k^1,\cdots, \widetilde{X}_k^{i-1},\widetilde{X}_k^{i+1},\cdots,\widetilde{X}_k^N)$ as $\mathscr{X}_k^{-i}$. If $\eta\leq\frac{r_\mu}{4\lambda R_\mu^2}$, the $W_2$ distance between $\mu_{\widetilde{\mathscr{X}}_k}$ and $\mu_{\widetilde{\mathscr{X}}_k^{-i}}$ at time $k>\ell$ can be bounded on average as
\begin{equation*}
\EEbig{\widetilde{\mathscr{X}}_k|(\mathscr{X},\mathscr{Y})_{1:k-\ell}}{W_2^2(\mu_{\widetilde{\mathscr{X}}_k}, \mu_{\widetilde{\mathscr{X}}_k^{-i}})} \leq \frac{4\mathfrak{s}^\mu}{N} + \frac{2}{N(N-1)}\sum_{j\neq i}\lVert X_{k-\ell}^j\rVert^2 +\frac{2}{N} \lVert X_{k-\ell}^j\rVert^2.
\end{equation*}
\end{lemma}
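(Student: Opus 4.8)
The plan is to exhibit an explicit transport plan between the two empirical measures, bound the resulting squared cost by a sum of squared particle norms, and then control those norms conditionally on $(\mathscr{X},\mathscr{Y})_{1:k-\ell}$ using the dissipativity recursion from the proof of Lemma \ref{thm-secondmoment}. For the first step, take the coupling
\begin{equation*}
\gamma = \frac{1}{N}\sum_{j\neq i}\delta_{(\widetilde{X}_k^j,\widetilde{X}_k^j)} + \frac{1}{N(N-1)}\sum_{j\neq i}\delta_{(\widetilde{X}_k^i,\widetilde{X}_k^j)}.
\end{equation*}
A direct check shows its first marginal is $\frac1N\sum_{j=1}^N\delta_{\widetilde{X}_k^j} = \mu_{\widetilde{\mathscr{X}}_k}$ (each atom $\widetilde{X}_k^j$, $j\neq i$, receives mass $\frac1N$, and $\widetilde{X}_k^i$ receives $(N-1)\cdot\frac1{N(N-1)}=\frac1N$), while its second marginal is $\frac1{N-1}\sum_{j\neq i}\delta_{\widetilde{X}_k^j} = \mu_{\widetilde{\mathscr{X}}_k^{-i}}$ (each atom $\widetilde{X}_k^j$, $j\neq i$, receives $\frac1N+\frac1{N(N-1)}=\frac1{N-1}$). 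Consequently
\begin{equation*}
W_2^2(\mu_{\widetilde{\mathscr{X}}_k},\mu_{\widetilde{\mathscr{X}}_k^{-i}}) \leq \frac{1}{N(N-1)}\sum_{j\neq i}\norm{\widetilde{X}_k^i-\widetilde{X}_k^j}^2 \leq \frac{2}{N(N-1)}\sum_{j\neq i}\norm{\widetilde{X}_k^j}^2 + \frac{2}{N}\norm{\widetilde{X}_k^i}^2,
\end{equation*}
using $\norm{a-b}^2\leq 2\norm{a}^2+2\norm{b}^2$ and $\sum_{j\neq i}1=N-1$ in the last step.

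It then remains to bound $\mathbb{E}[\norm{\widetilde{X}_k^j}^2\mid(\mathscr{X},\mathscr{Y})_{1:k-\ell}]$ for each $j$. Conditionally on $(\mathscr{X},\mathscr{Y})_{1:k-\ell}$, the stopped drift $\bb_{k-\ell}^\mu$ is a fixed measurable vector field, and the pinning $(\widetilde{\mathscr{X}},\widetilde{\mathscr{Y}})_{1:k-\ell+1} = (\mathscr{X},\mathscr{Y})_{1:k-\ell+1}$ gives $\widetilde{X}_{k-\ell}^j = X_{k-\ell}^j$. The one-step expansion in the proof of Lemma \ref{thm-secondmoment} uses only $\norm{\bb_{k-\ell}^\mu+\lambda\nabla_x U^\mu}_\infty\leq M_\mu$ together with the strong convexity and gradient-Lipschitzness of $U^\mu$ from Assumption \ref{ass-rhoreg}, and hence applies verbatim with $\bb_{k-\ell}^\mu$ in place of the time-varying drift, yielding for all $j\geq k-\ell$
\begin{equation*}
\mathbb{E}\big[\norm{\widetilde{X}_{j+1}^i}^2\,\big|\,(\mathscr{X},\mathscr{Y})_{1:k-\ell}\big] \leq (1-\lambda\eta r_\mu)\,\mathbb{E}\big[\norm{\widetilde{X}_j^i}^2\,\big|\,(\mathscr{X},\mathscr{Y})_{1:k-\ell}\big] + \lambda\eta r_\mu\,\mathfrak{s}^\mu,
\end{equation*}
since $\mathfrak{s}^\mu$ is exactly the fixed point of this affine recursion (note $\lambda\eta r_\mu<1$ under $\eta\leq\bar\eta$). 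Iterating from step $k-\ell$ to step $k-1$ and using $\widetilde{X}_{k-\ell}^j = X_{k-\ell}^j$ gives $\mathbb{E}[\norm{\widetilde{X}_k^j}^2\mid(\mathscr{X},\mathscr{Y})_{1:k-\ell}]\leq(1-\lambda\eta r_\mu)^\ell\norm{X_{k-\ell}^j}^2 + (1-(1-\lambda\eta r_\mu)^\ell)\mathfrak{s}^\mu\leq\norm{X_{k-\ell}^j}^2 + \mathfrak{s}^\mu$ for every $j$.

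Combining the two displays and using $\frac{2(N-1)}{N(N-1)}+\frac{2}{N} = \frac{4}{N}$ yields the claimed bound
\begin{equation*}
\EEbig{\widetilde{\mathscr{X}}_k|(\mathscr{X},\mathscr{Y})_{1:k-\ell}}{W_2^2(\mu_{\widetilde{\mathscr{X}}_k},\mu_{\widetilde{\mathscr{X}}_k^{-i}})} \leq \frac{2}{N(N-1)}\sum_{j\neq i}\norm{X_{k-\ell}^j}^2 + \frac{2}{N}\norm{X_{k-\ell}^i}^2 + \frac{4\mathfrak{s}^\mu}{N}
\end{equation*}
(the $\norm{X_{k-\ell}^j}^2$ in the final term of the statement being a typo for $\norm{X_{k-\ell}^i}^2$). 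There is no genuine obstacle here — this is why the paper remarks the argument is identical to the one in \citet{Suzuki23} — and the only point requiring care is that conditioning on $(\mathscr{X},\mathscr{Y})_{1:k-\ell}$ simultaneously freezes the drift $\bb_{k-\ell}^\mu$ and makes $X_{k-\ell}^j$ deterministic, so that the recursion of Lemma \ref{thm-secondmoment} must be run from the constant initial value $\norm{X_{k-\ell}^j}^2$ rather than from an initial second moment.
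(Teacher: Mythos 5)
Your proof is correct and follows the same path as the paper's: the identical explicit coupling $\frac{1}{N}\sum_{j\neq i}\delta_{(\widetilde{X}_k^j,\widetilde{X}_k^j)} + \frac{1}{N(N-1)}\sum_{j\neq i}\delta_{(\widetilde{X}_k^i,\widetilde{X}_k^j)}$, the same $\norm{a-b}^2\leq 2\norm{a}^2+2\norm{b}^2$ cost bound, and the same conditional second-moment bound via the Lemma \ref{thm-secondmoment} recursion (the paper records this as $\lVert X_{k-\ell}^j\rVert^2\vee\mathfrak{s}^\mu$, you use the weaker $\lVert X_{k-\ell}^j\rVert^2 +\mathfrak{s}^\mu$, but both feed identically into the final arithmetic). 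You are also right that the dangling index in the lemma's last term is a typo for $\lVert X_{k-\ell}^i\rVert^2$.
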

\begin{proof}
Similarly to Lemma \ref{thm-secondmoment} but starting from time $k-\ell$, it can be shown that
\begin{equation*}
    \EE{\widetilde{\mathscr{X}}_k|(\mathscr{X},\mathscr{Y})_{1:k-\ell}}{\lVert \widetilde{X}_k^j\rVert^2} \leq \lVert X_{k-\ell}^j\rVert^2 \vee \mathfrak{s}^\mu, \quad j\in [N],
\end{equation*}
which will be useful in the sequel. Then taking the coupling $\sum_{j\neq i} \frac{1}{N}\delta_{(\widetilde{X}_k^j, \widetilde{X}_k^j)} + \frac{1}{N(N-1)} \delta_{(\widetilde{X}_k^i, \widetilde{X}_k^j)}$ for $\mu_{\widetilde{\mathscr{X}}_k}$, $\mu_{\widetilde{\mathscr{X}}_k^{-i}}$ gives
\begin{align*}
&\EEbig{\widetilde{\mathscr{X}}_k|(\mathscr{X},\mathscr{Y})_{1:k-\ell}}{W_2^2(\mu_{\widetilde{\mathscr{X}}_k}, \mu_{\widetilde{\mathscr{X}}_k^{-i}})} \leq \EEbig{\widetilde{\mathscr{X}}_k|(\mathscr{X},\mathscr{Y})_{1:k-\ell}}{\frac{1}{N(N-1)} \sum_{j\neq i}\lVert \widetilde{X}_k^j - \widetilde{X}_k^i\rVert^2}\\
&\leq \EEbig{\widetilde{\mathscr{X}}_k|(\mathscr{X},\mathscr{Y})_{1:k-\ell}}{\frac{2}{N(N-1)} \sum_{j\neq i}\lVert \widetilde{X}_k^j\rVert^2 +\frac{2}{N}\rVert \widetilde{X}_k^i\rVert^2}\\
&\leq \frac{4\mathfrak{s}^\mu}{N} + \frac{2}{N(N-1)}\sum_{j\neq i}\lVert X_{k-\ell}^j\rVert^2 +\frac{2}{N} \lVert X_{k-\ell}^j\rVert^2.
\end{align*}
The same bound holds for the original process.
\end{proof}


\subsection{Proximal Pushforward Bounds}

For a measure $\mu^{(N)}$ on $\XX^{(N)}$, denote by $\Pi$ the average of the pushforward operators $\Pi_\sharp^i$ along the projections $\mathscr{X}\mapsto X^i$ with the defining property
\begin{equation*}
\int_{\XX} f(x)\Pi\mu^{(N)}(\rd x) = \int_{\XX^N} \Pi^*f(\mathscr{X}) \mu^{(N)}(\rd\mathscr{X}) = \int_{\XX^N} \frac{1}{N} \sum_{i=1}^N f(X^i) \mu^{(N)}(\rd\mathscr{X})
\end{equation*}
for any integrable function $f:\XX\to\RR$. We immediately see that
\begin{equation*}
\Pi\pmu_k^{(N)} = \rho^\mu \exp\Bigg(-\frac{1}{\lambda B_k}\int_{\XX} \sum_{j=1}^k\beta_j \frac{\delta \!\LL}{\delta\mu}(\mu_{\mathscr{X}_j}, \nu_{\mathscr{Y}_j})\Bigg)
\end{equation*}
is the stationary distribution of the continuous-time It\^{o} diffusion $\rd Z_t= \bb_k^\mu(Z_t)\rd t+\sqrt{2\lambda}\rd W_t^\mu$, which entails the following uniform moment bound.
\begin{lemma}
The unnormalized second moment $\int_{\XX}\norm{x}^2\Pi\pmu_k^{(N)}(\rd x)$ is bounded above for any integer $k$ by $\mathfrak{q}^\mu := r_\mu^{-2}\lambda^{-2} M_\mu^2 +2r_\mu^{-1} d_{\XX}$.
\end{lemma}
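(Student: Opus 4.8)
The plan is to exploit the identity recorded just above the statement: $\Pi\pmu_k^{(N)}$ coincides with the single-particle density $q_k\propto\rho^\mu\exp\big(-\frac{1}{\lambda B_k}\sum_{j=1}^k\beta_j\frac{\delta \!\LL}{\delta\mu}(\mu_{\mathscr{X}_j},\nu_{\mathscr{Y}_j})\big)$ (using that $\pmu_k^{(N)}$ factorizes into $N$ identical copies of $q_k$), which is the invariant measure of the diffusion $\rd Z_t=\bb_k^\mu(Z_t)\rd t+\sqrt{2\lambda}\rd W_t^\mu$ since $\bb_k^\mu=\lambda\nabla_x\log q_k$. I would then run the classical Lyapunov computation with the quadratic test function $x\mapsto\norm{x}^2$. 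Because $\nabla_x q_k=\lambda^{-1}\bb_k^\mu\,q_k$, integration by parts gives the stationarity identity
\begin{equation*}
\int_{\XX}\langle x,\bb_k^\mu(x)\rangle\,q_k(\rd x)=\lambda\int_{\XX}\langle x,\nabla_x q_k(x)\rangle\,\rd x=-\lambda d_{\XX},
\end{equation*}
the boundary term vanishing because $q_k$ has Gaussian-type tails (it is $\rho^\mu$ times a perturbation with bounded gradient, hence strongly log-concave up to a bounded drift correction).

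Next I would lower-bound $-\langle x,\bb_k^\mu(x)\rangle$ pointwise. Decomposing $\bb_k^\mu=-g_k-\lambda\nabla_x U^\mu$ with $g_k:=B_k^{-1}\sum_{j=1}^k\beta_j\nabla_x\frac{\delta \!\LL}{\delta\mu}(\mu_{\mathscr{X}_j},\nu_{\mathscr{Y}_j})$, Assumption~\ref{ass-Lreg} gives $\norm{g_k}_\infty\le M_\mu$ for every realization of the history (so the final bound will hold deterministically, for all $k$ and $N$), while $r_\mu$-strong convexity of $U^\mu$ together with $\nabla_x U^\mu(0)=0$ from Assumption~\ref{ass-rhoreg} gives $\langle x,\nabla_x U^\mu(x)\rangle\ge r_\mu\norm{x}^2$; hence $\langle x,\bb_k^\mu(x)\rangle\le M_\mu\norm{x}-\lambda r_\mu\norm{x}^2$. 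Substituting into the stationarity identity and writing $m_k:=\int_{\XX}\norm{x}^2 q_k(\rd x)$, Jensen's inequality yields $\lambda r_\mu m_k\le M_\mu\sqrt{m_k}+\lambda d_{\XX}$, and then Young's inequality $M_\mu\sqrt{m_k}\le\frac{\lambda r_\mu}{2}m_k+\frac{M_\mu^2}{2\lambda r_\mu}$ gives $m_k\le\frac{M_\mu^2}{\lambda^2 r_\mu^2}+\frac{2d_{\XX}}{r_\mu}=\mathfrak{q}^\mu$, uniformly in $k$ and $N$.

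I do not expect a genuine obstacle here. The only mildly delicate point is justifying the vanishing boundary term in the integration by parts — equivalently, that $\frac{\rd}{\rd t}\E{\norm{Z_t}^2}=0$ under the invariant law — which is routine given the Lipschitz, strongly dissipative structure of $\bb_k^\mu$ provided by Assumptions~\ref{ass-rhoreg} and~\ref{ass-Lreg}. If one wishes to sidestep it entirely, the same second-moment recursion as in Lemma~\ref{thm-secondmoment}, specialized to the fixed drift $\bb_k^\mu$ (whose Euler--Maruyama chain has stationary law $q_k$) and sent to the $\eta\to0$ limit, reproduces the identical bound.
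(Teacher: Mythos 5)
Your argument is correct and is essentially the paper's argument: both are the Lyapunov computation with test function $\norm{x}^2$, the dissipativity bound $\langle x,\bb_k^\mu(x)\rangle\le M_\mu\norm{x}-\lambda r_\mu\norm{x}^2$, and Young's inequality, leading to the same constant $\mathfrak{q}^\mu$. The only cosmetic difference is that the paper computes $\frac{\rd}{\rd t}\E{\norm{Z_t}^2}$ for the diffusion started at $Z_0=0$ and passes to the infinite-time limit via Gronwall, whereas you read off the same identity directly from stationarity by integration by parts; as you note, the two are interchangeable.
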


We also denote $\mathfrak{p}^\mu:= \frac{1}{N}\sum_{i=1}^N \E{\lVert X_1^i\rVert^2} <\infty$.

\begin{proof}\label{thm-proxmoment}
We may compute for the initialization $Z_0=0$,
\begin{align*}
\frac{\rd}{\rd t}\E{\lVert Z_t\rVert^2} &= 2\Ebig{\langle Z_t, \bb_k^\mu(Z_t)\rangle} +2\lambda d_{\XX}\\
&\leq 2M_\mu\E{\norm{Z_t}}-2r_\mu\lambda\E{\lVert Z_t\rVert^2}+2\lambda d_{\XX}\\
&\leq -r_\mu\lambda\E{\lVert Z_t\rVert^2}+ \frac{M_\mu^2}{r_\mu\lambda} +2\lambda d_{\XX},
\end{align*}
which yields the bound in the infinite-time limit by Gronwall's lemma.
\end{proof}

In particular, $\Pi\pmu_{k-\ell}^{(N)}$ is the approximate stationary distribution of each independent particle of the gradient stopped process after time $k-\ell$ and enjoys an exponential convergence guarantee up to an $O(\eta)$ discretization error term.

\begin{prop}\label{thm-klcontraction}
Assuming $\eta\leq\frac{r_\mu}{4\lambda R_\mu^2}$, the KL gap from $\tilde{\mu}_k^i=\Law(\widetilde{X}_k^i|(\mathscr{X},\mathscr{Y})_{1:k-\ell})$ to $\Pi\pmu_{k-\ell}^{(N)}$ of the gradient stopped process satisfies
\begin{equation*}
\KL(\tilde{\mu}_k^i\Vert \Pi\pmu_{k-\ell}^{(N)}) \leq \left(1+ \frac{3\exp(-(\ell-1)\alpha_\mu\lambda\eta)}{2\eta^2(L_\mu+\lambda R_\mu)^2}\right) (\mathfrak{K}^\mu \lVert X_{k-\ell}^i\rVert^2 +\mathfrak{L}^\mu),
\end{equation*}
where
\begin{equation*}
\mathfrak{K}^\mu := \frac{\eta^2 R_\mu^2(L_\mu+\lambda R_\mu)^2}{\alpha_\mu},\quad \mathfrak{L}^\mu := \frac{\eta(L_\mu+\lambda R_\mu)^2}{\alpha_\mu\lambda^2} \left( \eta M_\mu^2+\lambda^2\eta R_\mu^2 \mathfrak{s}^\mu +\lambda d_{\XX}\right)
\end{equation*}
are both of order $O(\eta)$.
\end{prop}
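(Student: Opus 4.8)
The plan is to condition on the past and reduce the statement to a one-step analysis of discretized Langevin dynamics with a frozen drift. Fix a realization of $(\mathscr{X},\mathscr{Y})_{1:k-\ell}$. Then $\bb_{k-\ell}^\mu$ is a fixed vector field, $x_0:=X_{k-\ell}^i$ is a fixed point, and for $j\ge0$ the gradient-stopped iteration $\widetilde{X}_{k-\ell+j+1}^i=\widetilde{X}_{k-\ell+j}^i+\eta\,\bb_{k-\ell}^\mu(\widetilde{X}_{k-\ell+j}^i)+\sqrt{2\lambda\eta}\,\xi_{k-\ell+j}^{\mu,i}$ with $\widetilde{X}_{k-\ell}^i=x_0$ is exactly $\ell$ steps of the Langevin Monte Carlo scheme targeting $\pi:=\Pi\pmu_{k-\ell}^{(N)}$. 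Since $\pi\propto\rho^\mu\exp(-\lambda^{-1}h)$ with $h=\frac{1}{B_{k-\ell}}\sum_{j=1}^{k-\ell}\beta_j\frac{\delta\!\LL}{\delta\mu}(\mu_{\mathscr{X}_j},\nu_{\mathscr{Y}_j})$ a convex combination of $M_\mu$-Lipschitz functions, Proposition~\ref{thm-aglsi} shows $\pi$ satisfies the LSI with constant $\alpha_\mu$; moreover $\bb_{k-\ell}^\mu=\lambda\nabla\log\pi$ is $(L_\mu+\lambda R_\mu)$-Lipschitz (as in Lemma~\ref{thm-modifiederr}) and $\norm{\bb_{k-\ell}^\mu(x)}\le M_\mu+\lambda R_\mu\norm{x}$ since $\nabla_x U^\mu(0)=0$. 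Finally, rerunning the dissipativity computation of Lemma~\ref{thm-secondmoment} from time $k-\ell$ (as recorded in the proof of Lemma~\ref{thm-leaveoneout}) gives $\mathbb{E}[\,\norm{\widetilde{X}_{k-\ell+j}^i}^2\mid(\mathscr{X},\mathscr{Y})_{1:k-\ell}\,]\le\norm{x_0}^2\vee\mathfrak{s}^\mu$ for all $j\ge0$, hence $\mathbb{E}\norm{\bb_{k-\ell}^\mu(\widetilde{X}_{k-\ell+j}^i)}^2\le2M_\mu^2+2\lambda^2R_\mu^2(\norm{x_0}^2+\mathfrak{s}^\mu)$; this step only needs $\eta\le\frac{r_\mu}{4\lambda R_\mu^2}$.

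For each $j\ge1$ I interpolate the $j$-th step by the frozen-drift diffusion $\rd\widehat{Z}_s=\bb_{k-\ell}^\mu(\widehat{Z}_0)\,\rd s+\sqrt{2\lambda}\,\rd W_s$ on $s\in[0,\eta]$ with $\widehat{Z}_0=\widetilde{X}_{k-\ell+j}^i$, so $\Law(\widehat{Z}_\eta)=\tilde\mu_{k-\ell+j+1}^i$. Differentiating $\KL(\Law(\widehat{Z}_s)\Vert\pi)$ along its Fokker--Planck equation, using $\bb_{k-\ell}^\mu=\lambda\nabla\log\pi$, a Cauchy--Schwarz/Young bound on the cross term, and the LSI to pass from relative Fisher information to relative entropy, gives $\frac{\rd}{\rd s}\KL(\Law(\widehat{Z}_s)\Vert\pi)\le-\lambda\alpha_\mu\KL(\Law(\widehat{Z}_s)\Vert\pi)+\frac{(L_\mu+\lambda R_\mu)^2}{2\lambda}(s^2\,\mathbb{E}\norm{\bb_{k-\ell}^\mu(\widehat{Z}_0)}^2+2\lambda s\,d_{\XX})$, where I used $\widehat{Z}_s-\widehat{Z}_0=s\,\bb_{k-\ell}^\mu(\widehat{Z}_0)+\sqrt{2\lambda}(W_s-W_0)$ with the Brownian increment independent of $\widehat{Z}_0$. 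A Gr\"onwall step over $[0,\eta]$ together with the second-moment bound, and collecting constants (the resulting per-step floor $\frac{(L_\mu+\lambda R_\mu)^2}{2\lambda^2\alpha_\mu}(\eta^2(2M_\mu^2+2\lambda^2R_\mu^2(\norm{x_0}^2+\mathfrak{s}^\mu))+2\lambda\eta d_{\XX})$ coinciding with $\mathfrak{K}^\mu\norm{x_0}^2+\mathfrak{L}^\mu$ by the definitions of these constants), yields the recursion
\begin{equation*}
\KL(\tilde\mu_{k-\ell+j+1}^i\Vert\pi)\le e^{-\lambda\alpha_\mu\eta}\,\KL(\tilde\mu_{k-\ell+j}^i\Vert\pi)+\big(1-e^{-\lambda\alpha_\mu\eta}\big)\big(\mathfrak{K}^\mu\norm{x_0}^2+\mathfrak{L}^\mu\big),\qquad j\ge1.
\end{equation*}

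The recursion cannot be initialized at $j=0$, since $\KL(\delta_{x_0}\Vert\pi)=\infty$, so the first step is handled separately. Here $\tilde\mu_{k-\ell+1}^i=\mathcal{N}(x_0+\eta\,\bb_{k-\ell}^\mu(x_0),\,2\lambda\eta\,\I_{d_{\XX}})$ is Gaussian, so by the LSI $\KL(\tilde\mu_{k-\ell+1}^i\Vert\pi)\le\frac{1}{2\alpha_\mu}\mathrm{FI}(\tilde\mu_{k-\ell+1}^i\Vert\pi)$, and a Gaussian integration-by-parts identity gives $\mathrm{FI}\le\frac{d_{\XX}}{2\lambda\eta}+\frac{2(L_\mu+\lambda R_\mu)}{\lambda}d_{\XX}+\lambda^{-2}\mathbb{E}\norm{\bb_{k-\ell}^\mu(\widetilde{X}_{k-\ell+1}^i)}^2$; bounding the last term via linear growth and $\mathbb{E}\norm{\widetilde{X}_{k-\ell+1}^i}^2\le\norm{x_0}^2\vee\mathfrak{s}^\mu$ and simplifying with $\eta\le\bar\eta$ yields $\KL(\tilde\mu_{k-\ell+1}^i\Vert\pi)\le\frac{3(\mathfrak{K}^\mu\norm{x_0}^2+\mathfrak{L}^\mu)}{2\eta^2(L_\mu+\lambda R_\mu)^2}$. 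Iterating the recursion from $j=1$ to $j=\ell$ ($\ell-1$ applications) and summing the geometric series of per-step errors, which totals at most $\mathfrak{K}^\mu\norm{x_0}^2+\mathfrak{L}^\mu$, gives
\begin{equation*}
\KL(\tilde\mu_k^i\Vert\pi)\le e^{-(\ell-1)\lambda\alpha_\mu\eta}\KL(\tilde\mu_{k-\ell+1}^i\Vert\pi)+\mathfrak{K}^\mu\norm{x_0}^2+\mathfrak{L}^\mu\le\Big(1+\frac{3e^{-(\ell-1)\alpha_\mu\lambda\eta}}{2\eta^2(L_\mu+\lambda R_\mu)^2}\Big)\big(\mathfrak{K}^\mu\norm{x_0}^2+\mathfrak{L}^\mu\big),
\end{equation*}
which is the claim with $x_0=X_{k-\ell}^i$.

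I expect the first step to be the main obstacle: the KL-contraction estimate is only available starting from a law with finite relative entropy to $\pi$, so one must absorb the relative entropy created by a single Gaussian-smoothing step — genuinely of order $d_{\XX}\log(1/\eta)$ but here crudely dominated by $O(d_{\XX}/\eta)$ — and it is precisely this crude bound that forces the awkward $\eta^{-2}(L_\mu+\lambda R_\mu)^{-2}$ factor in the transient term. A secondary subtlety is that $\pi$, $\bb_{k-\ell}^\mu$ and $x_0$ are all random (functions of $(\mathscr{X},\mathscr{Y})_{1:k-\ell}$), so every estimate above is conditional; one uses the history-uniform log-Sobolev constant $\alpha_\mu$ and the second-moment envelope $\norm{X_{k-\ell}^i}^2\vee\mathfrak{s}^\mu$ to arrive at a bound depending on the past only through $\norm{X_{k-\ell}^i}^2$.
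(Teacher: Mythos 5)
Your proof is correct and follows essentially the same route as the paper's: interpolate each discrete step by a frozen-drift continuous diffusion, integrate out the conditioning to obtain a Fokker--Planck equation whose relative-entropy decay toward $\Pi\pmu_{k-\ell}^{(N)}$ is controlled via the LSI and a Young-split discretization bias, apply Gr\"onwall per step, and handle the initial atomic distribution separately through the Gaussian first iterate. The only cosmetic deviation is that you evaluate the first-step relative Fisher information by Gaussian integration by parts (exact cross term plus Hessian trace bound), whereas the paper uses the cruder $\norm{a+b+c}^2\leq 3(\norm{a}^2+\norm{b}^2+\norm{c}^2)$ split; both yield the same $\frac{3}{2\eta^2(L_\mu+\lambda R_\mu)^2}$ prefactor after simplification under $\eta\leq\bar\eta$.
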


Hence, choosing
\begin{equation}\label{eq-ell}
\ell = \ell^\mu := \frac{1}{\alpha_\mu\lambda\eta}\left\lceil\log\frac{3}{2\eta^2(L_\mu+\lambda R_\mu)^2} \right\rceil +1
\end{equation}
guarantees that
\begin{equation*}
W_2(\tilde{\mu}_k^i\Vert \Pi\pmu_{k-\ell}^{(N)}) \leq \sqrt{\frac{4}{\smash[b]{\alpha_\mu}}(\mathfrak{K}^\mu \lVert X_{k-\ell}^i\rVert^2 +\mathfrak{L}^\mu)}
\end{equation*}
for any integer $k>\ell$.

\begin{proof}
We emulate the one-step analysis in \citet{Nitanda22} whilst keeping the history $(\mathscr{X},\mathscr{Y})_{1:k-\ell}$ fixed; this dependence is omitted here for notational clarity. For $j\geq k-\ell$, denote by $\mu_t^\dagger$ the law of the process
\begin{equation*}
\rd X_t^\dagger = \bb_{k-\ell}^\mu(\widetilde{X}_j^i)\rd t +\sqrt{2\lambda}\rd W_t^\dagger, \quad 0\leq t\leq \eta
\end{equation*}
with $X_0^\dagger = \widetilde{X}_j^i$ so that $X_\eta^\dagger \overset{d}{=} \widetilde{X}_{j+1}^i$. We overload notation and denote both conditional and joint distributions involving $X_t^\dagger$ by $\mu_t^\dagger$. The evolution of $\mu_t^\dagger$ is governed by the conditional Fokker-Planck equation
\begin{equation*}
\partial_t \mu_t^\dagger(X_t^\dagger|\widetilde{X}_j^i) = -\nabla_x\cdot\left(\mu_t^\dagger(X_t^\dagger|\widetilde{X}_j^i) \bb_{k-\ell}^\mu(\widetilde{X}_j^i)\right) +\lambda\Delta_x\mu_t^\dagger(X_t^\dagger|\widetilde{X}_j^i).
\end{equation*}
Integrating out $\widetilde{X}_j^i$,
\begin{align*}
\partial_t\mu_t^\dagger(X_t^\dagger) &= \int_{\XX}-\nabla_x\cdot \left(\mu_t^\dagger(X_t^\dagger, \widetilde{X}_j^i) \bb_{k-\ell}^\mu(\widetilde{X}_j^i)\right)(\rd \widetilde{X}_j^i) +\lambda\Delta_x\mu_t^\dagger(X_t^\dagger)\\
&= \nabla_x\cdot\left(\mu_t^\dagger(X_t^\dagger) \left(-\EEbig{\widetilde{X}_j^i|X_t^\dagger}{\bb_{k-\ell}^\mu(\widetilde{X}_j^i)} +\lambda\nabla_x\log \mu_t^\dagger(X_t^\dagger)\right) \right)\\
&=\lambda\nabla_x\cdot\bigg(\mu_t^\dagger(X_t^\dagger)\nabla_x\log\frac{\mu_t^\dagger}{\Pi\pmu_{k-\ell}^{(N)}}(X_t^\dagger)\bigg)\\
&\qquad +\nabla_x\cdot\bigg(\mu_t^\dagger(X_t^\dagger)\left(\bb_{k-\ell}^\mu(X_t^\dagger) - \EEbig{\widetilde{X}_j^i|X_t^\dagger}{\bb_{k-\ell}^\mu(\widetilde{X}_j^i)} \right)\bigg).
\end{align*}
Hence the proximal KL gap from $\mu_t^\dagger$ to $\Pi\pmu_{k-\ell}^{(N)}$ satisfies
\begin{align*}
&\partial_t\KL(\mu_t^\dagger\Vert \Pi\pmu_{k-\ell}^{(N)}) = \int_{\XX} \log\frac{\mu_t^\dagger}{\Pi\pmu_{k-\ell}^{(N)}} (\partial_t\mu_t^\dagger)(\rd X_t^\dagger)\\
& = -\lambda \int_{\XX}\bigg\lVert \nabla_x \log\frac{\mu_t^\dagger}{\Pi\pmu_{k-\ell}^{(N)}}\bigg\rVert^2 \mu_t^\dagger(\rd X_t^\dagger)\\
&\qquad - \iint_{\XX\times\XX} \log\frac{\mu_t^\dagger}{\Pi\pmu_{k-\ell}^{(N)}} \cdot \left(\bb_{k-\ell}^\mu(X_t^\dagger) - \bb_{k-\ell}^\mu(\widetilde{X}_j^i)\right) \mu_t^\dagger(\rd X_t^\dagger \rd\widetilde{X}_j^i)\\
&\leq -\frac{\lambda}{2} \int_{\XX}\bigg\lVert \nabla_x \log\frac{\mu_t^\dagger}{\Pi\pmu_{k-\ell}^{(N)}}\bigg\rVert^2 \mu_t^\dagger(\rd X_t^\dagger) +\frac{(L_\mu+\lambda R_\mu)^2}{2\lambda}\iint_{\XX\times\XX} \lVert X_t^\dagger-\widetilde{X}_j^i\rVert^2 \mu_t^\dagger(\rd X_t^\dagger \rd\widetilde{X}_j^i)\\
&\leq -\alpha_\mu\lambda\cdot \KL(\mu_t^\dagger\Vert \Pi\pmu_{k-\ell}^{(N)}) +\frac{(L_\mu+\lambda R_\mu)^2}{2\lambda} \int_{\XX} \EEbig{\xi^\dagger}{\norm{\bb_{k-\ell}^\mu(\widetilde{X}_j^i) t+\sqrt{2\lambda t}\xi^\dagger}^2} \tilde{\mu}_j^i(\rd\widetilde{X}_j^i)
\end{align*}
where $\xi^\dagger\sim\mathcal{N}(0,\I_{d_{\XX}})$ and we have used the LSI for $\Pi\pmu_{k-\ell}^{(N)}$. The second term is further bounded as
\begin{align*}
&\EEbig{\xi^\dagger}{\norm{\bb_{k-\ell}^\mu(\widetilde{X}_j^i) t+\sqrt{2\lambda t}\xi^\dagger}^2} \leq \eta^2\,\EEbig{\widetilde{X}_j^i|X_{1:k-\ell}}{\norm{\bb_{k-\ell}^\mu(\widetilde{X}_j^i)}^2} +2\lambda\eta d_{\XX}\\
&\leq 2\eta^2 M_\mu^2 + 2\lambda^2\eta^2 R_\mu^2 \,\E{\lVert \widetilde{X}_j^i\rVert^2} +2\lambda\eta d_{\XX}\\
&\leq 2\eta^2 M_\mu^2 + 2\lambda^2\eta^2 R_\mu^2 \left(\lVert X_{k-\ell}^i\rVert^2 \vee \mathfrak{s}^\mu\right) +2\lambda\eta d_{\XX}
\end{align*}
by the proof of Lemma \ref{thm-leaveoneout}. Gronwall's lemma now leads to
\begin{equation*}
\KL(\tilde{\mu}_{j+1}^i\Vert \Pi\pmu_{k-\ell}^{(N)}) - (\mathfrak{K}^\mu \lVert X_{k-\ell}^i\rVert^2 +\mathfrak{L}^\mu) \leq e^{-\alpha_\mu\lambda\eta}\left(\KL(\tilde{\mu}_j^i\Vert \Pi\pmu_{k-\ell}^{(N)}) - (\mathfrak{K}^\mu \lVert X_{k-\ell}^i\rVert^2 +\mathfrak{L}^\mu)\right).
\end{equation*}
Thus, iterating the bound for $k-\ell<j<k$ gives
\begin{equation*}
\KL(\tilde{\mu}_k^i\Vert \Pi\pmu_{k-\ell}^{(N)}) \leq \exp(-(\ell-1)\alpha_\mu\lambda\eta) \KL(\tilde{\mu}_{k-\ell+1}^i\Vert \Pi\pmu_{k-\ell}^{(N)}) +\mathfrak{K}^\mu \lVert X_{k-\ell}^i\rVert^2 +\mathfrak{L}^\mu,
\end{equation*}
where we have stopped at time $k-\ell+1$ because the initial distribution $\tilde{\mu}_{k-\ell}^i = \delta_{X_{k-\ell}^i}$ is atomic. Instead, the relative entropy after the first step can be directly bounded; since $X_t^\dagger$ is a rescaled Brownian motion with constant drift, the first iteration of $\delta_{X_{k-\ell}^i}$ is distributed as
\begin{equation*}
    \tilde{\mu}_{k-\ell+1}^i \overset{d}{=} \mathcal{N}(X_{k-\ell}^i+\eta\bb_{k-\ell}^\mu(X_{k-\ell}^i), 2\lambda\eta \I_{d_{\XX}}).
\end{equation*}
The LSI then gives that
\begin{align*}
&\KL(\tilde{\mu}_{k-\ell+1}^i\Vert\Pi\pmu_{k-\ell}^{(N)}) \leq \frac{1}{2\alpha_\mu} \EEbig{\tilde{\mu}_{k-\ell+1}^i}{\bigg\lVert\nabla_x\log\frac{\tilde{\mu}_{k-\ell+1}^i}{\Pi\pmu_{k-\ell}^{(N)}}\bigg\rVert^2}\\
&\leq \frac{3}{2\alpha_\mu}\left(\frac{d_{\XX}}{2\lambda\eta}+ \frac{M_\mu^2}{\lambda^2}+ R_\mu^2 \,\EE{X_{k-\ell+1}^i|(\mathscr{X},\mathscr{Y})_{1:k-\ell}}{\lVert X_{k-\ell+1}^i\rVert^2}\right)\\
&\leq \frac{3}{2\alpha_\mu}\left(\frac{d_{\XX}}{2\lambda\eta}+ \frac{M_\mu^2}{\lambda^2}+ R_\mu^2 \left(\lVert X_{k-\ell}^i\rVert^2 \vee \mathfrak{s}^\mu\right)\right)\\
&< \frac{3}{2\eta^2(L_\mu+\lambda R_\mu)^2}(\mathfrak{K}^\mu \lVert X_{k-\ell}^i\rVert^2 +\mathfrak{L}^\mu).
\end{align*}
Hence we arrive at the desired statement,
\begin{equation*}
\KL(\tilde{\mu}_k^i\Vert \Pi\pmu_{k-\ell}^{(N)}) \leq \left(1+ \frac{3\exp(-(\ell-1)\alpha_\mu\lambda\eta)}{2\eta^2(L_\mu+\lambda R_\mu)^2}\right) (\mathfrak{K}^\mu \lVert X_{k-\ell}^i\rVert^2 +\mathfrak{L}^\mu).
\end{equation*}
\end{proof}

The subsequent lemmas provide control over the Wasserstein distance between pushforward distibutions. In particular, Lemma \ref{thm-timebound} is the discrete analogue of the $O(\beta_t/B_t)$ time derivative bound obtained in the proof of Proposition \ref{thm-agprox}.
\begin{lemma}\label{thm-Wconvert}
For any two measures $\mu^{(N)}, \tilde{\mu}^{(N)} \in\PP(\XX^N)$ it holds that
\begin{equation*}
W_2(\Pi\mu^{(N)},\Pi\tilde{\mu}^{(N)}) \leq \frac{1}{\sqrt{N}}W_2(\mu^{(N)},\tilde{\mu}^{(N)}).
\end{equation*}
\end{lemma}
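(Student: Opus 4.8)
The plan is to obtain the bound by lifting a near-optimal transport plan on the configuration space $\XX^N$ down to a coupling on $\XX$ via an independent random coordinate selection, and then reading off the factor $1/N$ from the averaging. Recall that $\XX^N = \RR^{Nd_{\XX}}$ is equipped with the Euclidean norm $\norm{\mathscr{X}-\widetilde{\mathscr{X}}}^2 = \sum_{i=1}^N\norm{X^i-\widetilde{X}^i}^2$, so that $W_2$ on $\PP(\XX^N)$ is defined with respect to precisely this coordinate-additive quadratic cost; this structure is what the whole argument hinges on.

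First I would fix an optimal coupling $\gamma$ of $\mu^{(N)}$ and $\tilde{\mu}^{(N)}$, i.e. a joint law on $\XX^N\times\XX^N$ with the correct marginals attaining $\int\norm{\mathscr{X}-\widetilde{\mathscr{X}}}^2\,\rd\gamma = W_2^2(\mu^{(N)},\tilde{\mu}^{(N)})$; such a $\gamma$ exists since both measures lie in $\PP(\XX^N)$, and if one prefers to avoid invoking the existence of optimizers it suffices to take an $\epsilon$-optimal coupling and send $\epsilon\to 0$ at the end. I would then draw $(\mathscr{X},\widetilde{\mathscr{X}})\sim\gamma$ together with an independent index $I\sim\Unif\{1,\dots,N\}$, and consider the law on $\XX\times\XX$ of the pair $(X^I,\widetilde{X}^I)$.

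The key observation is that $(X^I,\widetilde{X}^I)$ is an admissible coupling of $\Pi\mu^{(N)}$ and $\Pi\tilde{\mu}^{(N)}$: conditioning on $I$ and using the independence, for any bounded measurable $f:\XX\to\RR$ one has $\mathbb{E}[f(X^I)] = \frac{1}{N}\sum_{i=1}^N\int_{\XX^N} f(X^i)\,\mu^{(N)}(\rd\mathscr{X}) = \int_{\XX} f\,\rd(\Pi\mu^{(N)})$ directly from the defining property $\int f\,\rd(\Pi\mu^{(N)}) = \int \frac1N\sum_i f(X^i)\,\rd\mu^{(N)}$ of $\Pi$, and likewise for $\widetilde{X}^I$. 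Hence, by the variational characterization of $W_2$ as an infimum over couplings,
\begin{equation*}
W_2^2(\Pi\mu^{(N)},\Pi\tilde{\mu}^{(N)}) \leq \mathbb{E}\big[\norm{X^I-\widetilde{X}^I}^2\big] = \int_{\XX^N\times\XX^N}\frac{1}{N}\sum_{i=1}^N\norm{X^i-\widetilde{X}^i}^2\,\rd\gamma = \frac{1}{N}W_2^2(\mu^{(N)},\tilde{\mu}^{(N)}),
\end{equation*}
and taking square roots yields the claim.

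There is no genuine obstacle here; the argument is a short coupling construction. The only points that need to be checked with any care are that the randomly selected coordinate pair has exactly the $\Pi$-pushforwards as its marginals — immediate from the definition of $\Pi$ and the independence of $I$ — and that the transport cost on $\XX^N$ decomposes as the sum over coordinates, which is simply how the product Euclidean metric, and therefore $W_2$ on $\PP(\XX^N)$, is set up in the first place. The factor $1/N$ is precisely the average over $I$, and no convexity, smoothness, or structural assumption on $\mu^{(N)},\tilde{\mu}^{(N)}$ is used.
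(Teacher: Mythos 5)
Your proof is correct, and it takes a genuinely different route from the paper. You argue on the primal side of the Kantorovich problem: you take an optimal coupling $\gamma$ on $\XX^N\times\XX^N$, randomize a coordinate $I\sim\Unif\{1,\dots,N\}$ independently, and observe that $(X^I,\widetilde{X}^I)$ is a coupling of $\Pi\mu^{(N)}$ and $\Pi\tilde\mu^{(N)}$ whose expected squared cost is exactly $\frac{1}{N}W_2^2(\mu^{(N)},\tilde\mu^{(N)})$. The paper instead works on the dual side: it starts from the Kantorovich dual formulation $W_2^2(\mu,\tilde\mu)=\sup_{\phi,\psi}\{\int\phi\,\rd\mu-\int\psi\,\rd\tilde\mu:\phi(x)-\psi(y)\le\|x-y\|^2\}$, pulls back any admissible pair $(\phi,\psi)$ through $\Pi^*$ to functions on $\XX^N$, checks that $\Pi^*\phi(\mathscr{X})-\Pi^*\psi(\mathscr{Y})\le\frac1N\|\mathscr{X}-\mathscr{Y}\|^2$, and then bounds the resulting dual value by $\frac1N W_2^2(\mu^{(N)},\tilde\mu^{(N)})$. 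Both hinge on the same structural fact --- the quadratic cost on $\XX^N$ is coordinate-additive and the pushforward $\Pi$ averages over coordinates --- but yours exploits it via a constructive coupling and theirs via pullback of dual potentials. Your version is slightly more elementary (no appeal to Kantorovich duality) and makes the $1/N$ visibly arise as an average over $I$; the paper's dual version is equally short and fits the convex-analytic flavor of the surrounding conjugate-functional machinery, which is presumably why they chose it.
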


\begin{proof}
Recall the dual formulation of $W_2$,
\begin{equation*}
W_2^2(\mu,\tilde{\mu}) = \sup_{\phi,\psi} \bigg\{\int\phi\rd\mu - \int\psi\rd\tilde{\mu} \;\bigg|\; \phi,\psi:\XX\to\RR,\; \phi(x)-\psi(y)\leq\norm{x-y}^2 \bigg\}.
\end{equation*}
Then for any pair of functions $\phi,\psi$ such that $\phi(x)-\psi(y)\leq\norm{x-y}^2$, the pullback functions $\Pi^*\phi,\Pi^*\psi$ on $\XX^N$ satisfy
\begin{equation*}
\Pi^*\phi(\mathscr{X}) - \Pi^*\psi(\mathscr{Y}) = \frac{1}{N} \sum_{i=1}^N \phi(X^i)-\psi(Y^i) \leq \frac{1}{N} \sum_{i=1}^N \Vert X^i-Y^i\rVert^2 = \frac{1}{N} \norm{\mathscr{X}-\mathscr{Y}}_{L^2(\XX^N)}^2.
\end{equation*}
Therefore,
\begin{align*}
&\int_{\XX}\phi(x)\Pi\mu^{(N)}(\rd x) - \int_{\XX}\psi(x)\Pi\tilde{\mu}^{(N)}(\rd x) \\
&= \int_{\XX^N}\Pi^*\phi(\mathscr{X}) \mu^{(N)}(\rd\mathscr{X}) - \int_{\XX^N}\Pi^*\psi(\mathscr{X}) \tilde{\mu}^{(N)}(\rd\mathscr{X}) \leq\frac{1}{N} W_2^2(\mu^{(N)},\tilde{\mu}^{(N)}),
\end{align*}
which yields the assertion by taking the supremum over all permissible $\phi,\psi$.
\end{proof}

\begin{lemma}\label{thm-timebound}
The projected 2-Wasserstein distance between $\pmu_k^{(N)}$, $\pmu_{k-1}^{(N)}$ is bounded as
\begin{equation*}
W_2(\Pi\pmu_k^{(N)}, \Pi\pmu_{k-1}^{(N)}) \leq\frac{2M_\mu\beta_k}{\alpha_\mu\lambda B_k}.
\end{equation*}
\end{lemma}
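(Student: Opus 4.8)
The plan is to realize both measures as Gibbs perturbations of $\rho^\mu$ sharing the same reference, and to combine Talagrand's inequality with Kantorovich--Rubinstein duality. Write $g_j:=\frac{1}{B_j}\sum_{i=1}^{j}\beta_i\frac{\delta\!\LL}{\delta\mu}(\mu_{\mathscr{X}_i},\nu_{\mathscr{Y}_i})$, a function on $\XX$ which, conditionally on the history $(\mathscr{X},\mathscr{Y})_{1:j}$, is deterministic and $M_\mu$-Lipschitz by Assumption~\ref{ass-Lreg} (a convex combination of functions with gradient of norm $\le M_\mu$). Then $\Pi\pmu_j^{(N)}\propto\rho^\mu\exp(-\lambda^{-1}g_j)$, so by Proposition~\ref{thm-aglsi} and Theorem~\ref{thm-ottovillani} each of $\Pi\pmu_k^{(N)},\Pi\pmu_{k-1}^{(N)}$ satisfies Talagrand's inequality with constant $\alpha_\mu$.

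First I would record the one-step increment of the averaged drift at the level of $g_j$: using $B_k=B_{k-1}+\beta_k$ and $\sum_{i=1}^{k-1}\beta_i\frac{\delta\!\LL}{\delta\mu}(\mu_{\mathscr{X}_i},\nu_{\mathscr{Y}_i})=B_{k-1}g_{k-1}$, a short computation gives $g_k-g_{k-1}=\frac{\beta_k}{B_k}\bigl(\frac{\delta\!\LL}{\delta\mu}(\mu_{\mathscr{X}_k},\nu_{\mathscr{Y}_k})-g_{k-1}\bigr)$, and since both terms in the bracket have gradient of norm at most $M_\mu$ this yields $\norm{\nabla(g_k-g_{k-1})}_\infty\le\frac{2M_\mu\beta_k}{B_k}$, i.e.\ $g_k-g_{k-1}$ is $\frac{2M_\mu\beta_k}{B_k}$-Lipschitz.

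Next, apply Talagrand's inequality once with reference $\Pi\pmu_{k-1}^{(N)}$ and once with reference $\Pi\pmu_k^{(N)}$ and add the two estimates, so that $\alpha_\mu W_2^2(\Pi\pmu_k^{(N)},\Pi\pmu_{k-1}^{(N)})\le\KL(\Pi\pmu_k^{(N)}\Vert\Pi\pmu_{k-1}^{(N)})+\KL(\Pi\pmu_{k-1}^{(N)}\Vert\Pi\pmu_k^{(N)})$. On the right-hand side the log-partition functions and the common $U^\mu$ term cancel, leaving $\frac1\lambda\int_{\XX}(g_k-g_{k-1})\,(\Pi\pmu_{k-1}^{(N)}-\Pi\pmu_k^{(N)})(\rd x)$ (necessarily nonnegative), which by Kantorovich--Rubinstein duality is at most $\frac{2M_\mu\beta_k}{\lambda B_k}W_1(\Pi\pmu_k^{(N)},\Pi\pmu_{k-1}^{(N)})\le\frac{2M_\mu\beta_k}{\lambda B_k}W_2(\Pi\pmu_k^{(N)},\Pi\pmu_{k-1}^{(N)})$. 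Cancelling one factor of $W_2$ gives exactly $W_2(\Pi\pmu_k^{(N)},\Pi\pmu_{k-1}^{(N)})\le\frac{2M_\mu\beta_k}{\alpha_\mu\lambda B_k}$, and the bound is deterministic given the history. The only points needing a line of care are the finiteness of the two KL divergences (clear, since both measures have finite first moment and $g_k-g_{k-1}$ is Lipschitz) and the cancellation of the $U^\mu$ and normalizing-constant terms in the symmetrized relative entropy --- there is no real obstacle here, as this symmetrization is precisely what eliminates the unknown log-partition functions. If one prefers to avoid Talagrand, the same estimate follows from an Otto-calculus computation along the linear interpolation $g_\theta=(1-\theta)g_{k-1}+\theta g_k$, bounding the metric speed $\int\norm{\nabla\phi_\theta}^2\rd\pi_\theta$ of the curve $\pi_\theta\propto\rho^\mu\exp(-\lambda^{-1}g_\theta)$ by applying the Poincaré inequality for $\pi_\theta$ twice, but the symmetrized-Talagrand route is shorter.
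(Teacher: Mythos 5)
Your proof is correct, and the core mechanism --- symmetrized Talagrand's inequality combined with Kantorovich--Rubinstein duality on the increment of the averaged drift --- is the same as the paper's. The only structural difference is that the paper routes the argument through the configuration space $\XX^N$: it applies Proposition~\ref{thm-NJhatlip}, whose proof uses the product log-Sobolev inequality (Lemma~\ref{thm-productlsi}) for $\pmu_k^{(N)}$ on $\XX^N$ and then transfers to $\XX$ via the pushforward estimate Lemma~\ref{thm-Wconvert}. You instead observe that $\Pi\pmu_j^{(N)}\propto\rho^\mu\exp(-\lambda^{-1}g_j)$ is already a single-particle Gibbs measure to which Proposition~\ref{thm-aglsi} applies directly, so you can work entirely on $\XX$. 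This is a legitimate shortcut that buys a slightly cleaner argument for this specific lemma; the paper's detour exists because Proposition~\ref{thm-NJhatlip} is also used elsewhere (e.g.\ in Lemma~\ref{thm-onestep}) where the $N$-particle formulation is the natural one. Your normalized increment $g_k-g_{k-1}=\frac{\beta_k}{B_k}\bigl(\frac{\delta\!\LL}{\delta\mu}(\mu_{\mathscr{X}_k},\nu_{\mathscr{Y}_k})-g_{k-1}\bigr)$, being $\frac{2M_\mu\beta_k}{B_k}$-Lipschitz, matches the paper's unnormalized $\delta_k^\mu-\frac{B_k}{B_{k-1}}\delta_{k-1}^\mu$ being $2M_\mu\beta_k$-Lipschitz after dividing by $B_k$, so the constants are consistent.
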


\begin{proof}
The proof is deferred to Section \ref{appendix-conjugate}.
\end{proof}

\subsection{Proof of Proposition \ref{thm-alternative}}\label{appendix-apple}

We take $\ell=\ell^\mu=O(\eta^{-1}\log\eta^{-1})$ as defined in \eqref{eq-ell} throughout the proof and only consider the case $k\geq 2\ell$ in Steps 1 through 4.

\emph{Step 1.} We first look $\ell-1$ steps back to the past and control the displacement of the proximal $\Pi\pmu_{k-1}^{(N)}$ from the stationary state $\Pi\pmu_{k-\ell}^{(N)}$ of the modified process via Lemma \ref{thm-timebound}, conditioning on the earlier history $(\mathscr{X},\mathscr{Y})_{1:k-\ell}$.
\begin{align*}
&\EEbig{(\mathscr{X},\mathscr{Y})_{k-\ell+1:k}|(\mathscr{X},\mathscr{Y})_{1:k-\ell}}{\int_{\XX} F(\mu_{\mathscr{X}_k}, \nu_{\mathscr{Y}_k}) (\mu_{\mathscr{X}_k}-\Pi\pmu_{k-1}^{(N)})(\rd x)}\\
&\leq \EEbig{(\mathscr{X},\mathscr{Y})_k|(\mathscr{X},\mathscr{Y})_{1:k-\ell}}{\int_{\XX} F(\mu_{\mathscr{X}_k}, \nu_{\mathscr{Y}_k}) (\mu_{\mathscr{X}_k}-\Pi\pmu_{k-\ell}^{(N)})(\rd x)}\\
&\qquad + M_\mu\sum_{j=1}^{\ell-1} \EEbig{(\mathscr{X},\mathscr{Y})_{k-\ell+1:k-j}|(\mathscr{X},\mathscr{Y})_{1:k-\ell}}{W_1(\Pi\pmu_{k-j-1}^{(N)}, \Pi\pmu_{k-j}^{(N)})}\\
&\leq \EEbig{(\mathscr{X},\mathscr{Y})_k|(\mathscr{X},\mathscr{Y})_{1:k-\ell}}{\int_{\XX} F(\mu_{\mathscr{X}_k}, \nu_{\mathscr{Y}_k}) (\mu_{\mathscr{X}_k}-\Pi\pmu_{k-\ell}^{(N)})(\rd x)} + \frac{2M_\mu^2}{\alpha_\mu\lambda} \sum_{j=1}^{\ell-1} \frac{\beta_{k-j}}{B_{k-j}}.
\end{align*}
It is simple to further verify that
\begin{equation*}
    \frac{2M_\mu^2}{\alpha_\mu\lambda} \sum_{j=1}^{\ell-1} \frac{\beta_{k-j}}{B_{k-j}} \leq \frac{2M_\mu^2}{\alpha_\mu\lambda} \frac{(r+1)(\ell-1)}{k-\ell+1}.
\end{equation*}
\emph{Step 2.} Next, we look back to the future and convert the expectation with respect to $\mu_{\mathscr{X}_k}$ to the corresponding expectation for the modified process. The incurred error can be bounded by utilizing Lemmas \ref{thm-secondmoment}, \ref{thm-modifiederr} and \ref{thm-leaveoneout} as
\begin{align*}
&\EEbig{(\mathscr{X},\mathscr{Y})_k|(\mathscr{X},\mathscr{Y})_{1:k-\ell}}{\int_{\XX} F(\mu_{\mathscr{X}_k}, \nu_{\mathscr{Y}_k}) (\mu_{\mathscr{X}_k}-\Pi\pmu_{k-\ell}^{(N)})(\rd x)}\\
&\qquad -\EEbig{(\widetilde{\mathscr{X}}, \widetilde{\mathscr{Y}})_k|(\mathscr{X},\mathscr{Y})_{1:k-\ell}}{\int_{\XX} F(\mu_{\widetilde{\mathscr{X}}_k}, \nu_{\widetilde{\mathscr{Y}}_k}) (\mu_{\widetilde{\mathscr{X}}_k}-\Pi\pmu_{k-\ell}^{(N)})(\rd x)}\\
&=\mathbb{E}_{(\mathscr{X},\widetilde{\mathscr{X}},\mathscr{Y},\widetilde{\mathscr{Y}})_k|(\mathscr{X},\mathscr{Y})_{1:k-\ell}} \bigg[\int_{\XX} F(\mu_{\mathscr{X}_k}, \nu_{\mathscr{Y}_k}) (\mu_{\mathscr{X}_k}-\mu_{\widetilde{\mathscr{X}}_k})(\rd x)\\
&\qquad +\int_{\XX} \left(F(\mu_{\mathscr{X}_k}, \nu_{\mathscr{Y}_k}) - F(\mu_{\widetilde{\mathscr{X}}_k}, \nu_{\widetilde{\mathscr{Y}}_k})\right) (\mu_{\widetilde{\mathscr{X}}_k}-\Pi\pmu_{k-\ell}^{(N)})(\rd x) \bigg]\\
&\leq \mathbb{E}_{(\mathscr{X},\widetilde{\mathscr{X}},\mathscr{Y},\widetilde{\mathscr{Y}})_k|(\mathscr{X},\mathscr{Y})_{1:k-\ell}} \Bigg[ M_\mu W_1(\mu_{\mathscr{X}_k}, \mu_{\widetilde{\mathscr{X}}_k})\\
&\qquad+\frac{1}{N}\sum_{i=1}^N \left\lVert F(\mu_{\mathscr{X}_k}, \nu_{\mathscr{Y}_k}) - F(\mu_{\widetilde{\mathscr{X}}_k}, \nu_{\widetilde{\mathscr{Y}}_k}) \right\rVert_{\Lip} W_1(\delta_{\widetilde{X}_k^i}, \Pi\pmu_{k-\ell}^{(N)})\Bigg]\\
&\leq \frac{(r+1)M_\mu}{k-\ell+1}\mathfrak{w}_\ell^\mu\\
&\qquad + \frac{(r+1)L_\mu}{k-\ell+1}\left(\mathfrak{w}_\ell^\mu + \mathfrak{w}_\ell^\nu\right) \mathbb{E}_{(\widetilde{X}_k^i|(\mathscr{X},\mathscr{Y})_{1:k-\ell}}\Bigg[\bigg(\frac{2}{N}\sum_{i=1}^N \int_{\XX} \lVert \widetilde{X}_k^i - x\rVert^2\, \Pi\pmu_{k-\ell}^{(N)}(\rd x)\bigg)^\frac{1}{2}\Bigg]\\
&\leq \frac{(r+1)M_\mu}{k-\ell+1}\mathfrak{w}_\ell^\mu + \frac{(r+1)L_\mu}{k-\ell+1}\left(\mathfrak{w}_\ell^\mu + \mathfrak{w}_\ell^\nu\right) \bigg(\frac{2}{N}\sum_{i=1}^N \lVert X_{k-\ell}^i\rVert^2 +\mathfrak{q}^\mu +2\mathfrak{s}^\mu \bigg)^\frac{1}{2}.
\end{align*}

\emph{Step 3.} For the modified process, we apply a leave-one-out argument and consider the expectation with respect to each particle $\widetilde{X}_k^i$ which is independent of $\widetilde{\mathscr{X}}_k^{-i}$, $\widetilde{\mathscr{Y}}_k$ when conditioned on the stopped history $(\mathscr{X},\mathscr{Y})_{1:k-\ell}$. That is,
\begin{align*}
&\EEbig{(\widetilde{\mathscr{X}}, \widetilde{\mathscr{Y}})_k|(\mathscr{X},\mathscr{Y})_{1:k-\ell}}{\int_{\XX} F(\mu_{\widetilde{\mathscr{X}}_k}, \nu_{\widetilde{\mathscr{Y}}_k}) (\mu_{\widetilde{\mathscr{X}}_k}-\Pi\pmu_{k-\ell}^{(N)})(\rd x)}\\
&= \frac{1}{N}\sum_{i=1}^N \mathbb{E}_{\widetilde{\mathscr{X}}_k^{-i}, \widetilde{\mathscr{Y}}_k|(\mathscr{X},\mathscr{Y})_{1:k-\ell}} \EEbig{\widetilde{X}_k^i |(\mathscr{X},\mathscr{Y})_{1:k-\ell}}{\int_{\XX} F(\mu_{\widetilde{\mathscr{X}}_k}, \nu_{\widetilde{\mathscr{Y}}_k}) (\delta_{\widetilde{X}_k^i}-\Pi\pmu_{k-\ell}^{(N)})(\rd x)}\\
&\leq \frac{1}{N}\sum_{i=1}^N \mathbb{E}_{\widetilde{\mathscr{X}}_k^{-i}, \widetilde{\mathscr{Y}}_k|(\mathscr{X},\mathscr{Y})_{1:k-\ell}} \EEbig{\widetilde{X}_k^i |(\mathscr{X},\mathscr{Y})_{1:k-\ell}}{\int_{\XX} F(\mu_{\widetilde{\mathscr{X}}_k^{-i}}, \nu_{\widetilde{\mathscr{Y}}_k}) (\delta_{\widetilde{X}_k^i}-\Pi\pmu_{k-\ell}^{(N)})(\rd x)}\\
&\qquad + \frac{1}{N}\sum_{i=1}^N \EEbig{\widetilde{\mathscr{X}}_k |(\mathscr{X},\mathscr{Y})_{1:k-\ell}}{\left\lVert F(\mu_{\widetilde{\mathscr{X}}_k}, \nu_{\widetilde{\mathscr{Y}}_k}) - F(\mu_{\widetilde{\mathscr{X}}_k^{-i}}, \nu_{\widetilde{\mathscr{Y}}_k})\right\rVert_{\Lip} W_1(\delta_{\widetilde{X}_k^i}, \Pi\pmu_{k-\ell}^{(N)})}\\
&\leq \frac{1}{N}\sum_{i=1}^N \mathbb{E}_{\widetilde{\mathscr{X}}_k^{-i}, \widetilde{\mathscr{Y}}_k|(\mathscr{X},\mathscr{Y})_{1:k-\ell}} \EEbig{\widetilde{X}_k^i |(\mathscr{X},\mathscr{Y})_{1:k-\ell}}{\int_{\XX} F(\mu_{\widetilde{\mathscr{X}}_k^{-i}}, \nu_{\widetilde{\mathscr{Y}}_k}) (\delta_{\widetilde{X}_k^i}-\Pi\pmu_{k-\ell}^{(N)})(\rd x)}\\
&\qquad + \frac{L_\mu}{N}\sum_{i=1}^N \EEbig{\widetilde{\mathscr{X}}_k |(\mathscr{X},\mathscr{Y})_{1:k-\ell}}{W_1(\mu_{\widetilde{\mathscr{X}}_k}, \mu_{\widetilde{\mathscr{X}}_k^{-i}}) W_1(\delta_{\widetilde{X}_k^i}, \Pi\pmu_{k-\ell}^{(N)})}\\
& = \frac{1}{N}\sum_{i=1}^N \EEbig{\widetilde{\mathscr{X}}_k^{-i}, \widetilde{\mathscr{Y}}_k|(\mathscr{X},\mathscr{Y})_{1:k-\ell}}{\int_{\XX} F(\mu_{\widetilde{\mathscr{X}}_k^{-i}}, \nu_{\widetilde{\mathscr{Y}}_k}) (\mu_k^i(\widetilde{X}_k^i)-\Pi\pmu_{k-\ell}^{(N)})(\rd x)}\\
&\qquad + \frac{L_\mu}{N}\sum_{i=1}^N \EEbig{\widetilde{\mathscr{X}}_k |(\mathscr{X},\mathscr{Y})_{1:k-\ell}}{W_1(\mu_{\widetilde{\mathscr{X}}_k}, \mu_{\widetilde{\mathscr{X}}_k^{-i}}) W_1(\delta_{\widetilde{X}_k^i}, \Pi\pmu_{k-\ell}^{(N)})}\\
&\leq \frac{M_\mu}{N}\sum_{i=1}^N W_1(\mu_k^i, \Pi\pmu_{k-\ell}^{(N)})\\
&\qquad + \frac{L_\mu}{N}\sum_{i=1}^N \left(\EEbig{\widetilde{\mathscr{X}}_k |(\mathscr{X},\mathscr{Y})_{1:k-\ell}}{W_2^2(\mu_{\widetilde{\mathscr{X}}_k}, \mu_{\widetilde{\mathscr{X}}_k^{-i}})} \EEbig{\widetilde{\mathscr{X}}_k |(\mathscr{X},\mathscr{Y})_{1:k-\ell}}{W_2^2(\delta_{\widetilde{X}_k^i}, \Pi\pmu_{k-\ell}^{(N)})}\right)^\frac{1}{2}\\
&\leq \frac{2M_\mu}{N} \sum_{i=1}^N \sqrt{\alpha_\mu^{-1}(\mathfrak{K}^\mu \lVert X_{k-\ell}^i\rVert^2 +\mathfrak{L}^\mu)} \\
&\qquad +\frac{2L_\mu}{N}\sum_{i=1}^N \Bigg(\frac{2\mathfrak{s}^\mu}{N} +\frac{1}{N(N-1)} \sum_{j\neq i}\lVert X_{k-\ell}^j\rVert^2 +\frac{1}{N}\lVert X_{k-\ell}^i\rVert^2\Bigg)^\frac{1}{2} \!\left( \lVert X_{k-\ell}^i\rVert^2 + \mathfrak{q}^\mu+\mathfrak{s}^\mu \right)^\frac{1}{2}
\end{align*}
by applying Lemma \ref{thm-proxmoment}, Lemma \ref{thm-leaveoneout} and Proposition \ref{thm-klcontraction}.

\emph{Step 4.} Putting things together, we obtain the conditional bound
\begin{align*}
&\EEbig{(\mathscr{X},\mathscr{Y})_{k-\ell+1:k}|(\mathscr{X},\mathscr{Y})_{1:k-\ell}}{\int_{\XX} F(\mu_{\mathscr{X}_k}, \nu_{\mathscr{Y}_k}) (\mu_{\mathscr{X}_k}-\Pi\pmu_{k-1}^{(N)})(\rd x)}\\
&\leq \frac{2M_\mu^2}{\alpha_\mu\lambda} \frac{(r+1)(\ell-1)}{k-\ell+1}\\
&\qquad + \frac{(r+1)M_\mu}{k-\ell+1}\mathfrak{w}_\ell^\mu + \frac{(r+1)L_\mu}{k-\ell+1}\left(\mathfrak{w}_\ell^\mu + \mathfrak{w}_\ell^\nu\right) \bigg(\frac{2}{N}\sum_{i=1}^N \lVert X_{k-\ell}^i\rVert^2 +\mathfrak{q}^\mu +2\mathfrak{s}^\mu \bigg)^\frac{1}{2}\\
&\qquad + \frac{2M_\mu}{N} \sum_{i=1}^N \sqrt{\alpha_\mu^{-1}(\mathfrak{K}^\mu \lVert X_{k-\ell}^i\rVert^2 +\mathfrak{L}^\mu)} \\
&\qquad +\frac{2L_\mu}{N}\sum_{i=1}^N \Bigg(\frac{2\mathfrak{s}^\mu}{N} +\frac{1}{N(N-1)} \sum_{j\neq i}\lVert X_{k-\ell}^j\rVert^2 +\frac{1}{N}\lVert X_{k-\ell}^i\rVert^2\Bigg)^\frac{1}{2} \!\left( \lVert X_{k-\ell}^i\rVert^2 + \mathfrak{q}^\mu+ \mathfrak{s}^\mu \right)^\frac{1}{2}.
\end{align*}

Recalling $\E{\lVert X_{k-\ell}^i\rVert^2} \leq\E{\lVert X_1^i\rVert^2} + \mathfrak{s}^\mu$ from Lemma \ref{thm-secondmoment}, taking the expectation with respect to the history $(\mathscr{X},\mathscr{Y})_{1:k-\ell}$ finally gives
\begin{align*}
&\EEbig{(\mathscr{X},\mathscr{Y})_{1:k}}{\int_{\XX} F(\mu_{\mathscr{X}_k}, \nu_{\mathscr{Y}_k}) (\mu_{\mathscr{X}_k}-\Pi\pmu_{k-1}^{(N)})(\rd x)}\\
&\leq \frac{r+1}{k-\ell+1} \left(\frac{2M_\mu^2}{\alpha_\mu\lambda}(\ell-1) + M_\mu \mathfrak{w}_\ell^\mu + L_\mu \left(\mathfrak{w}_\ell^\mu + \mathfrak{w}_\ell^\nu\right) \left(2\mathfrak{p}^\mu+ \mathfrak{q}^\mu +4\mathfrak{s}^\mu \right)^\frac{1}{2} \right)\\
&\qquad + 2M_\mu \EEbig{(\mathscr{X},\mathscr{Y})_{1:k}}{\frac{1}{\alpha_\mu N} \sum_{i=1}^N (\mathfrak{K}^\mu \lVert X_{k-\ell}^i\rVert^2 +\mathfrak{L}^\mu)}^\frac{1}{2} \\
&\qquad +\frac{L_\mu}{N^\frac{3}{2}} \sum_{i=1}^N \EEbig{(\mathscr{X},\mathscr{Y})_{1:k}}{\frac{1}{N-1} \sum_{\smash[b]{j}=1}^N \lVert X_{k-\ell}^j\rVert^2 + \frac{2N-3}{N-1}\lVert X_{k-\ell}^i\rVert^2 + \mathfrak{q}^\mu +3\mathfrak{s}^\mu }\\
&\leq \frac{r+1}{k-\ell+1} \left(\frac{2M_\mu^2}{\alpha_\mu\lambda} (\ell-1) + M_\mu \mathfrak{w}_\ell^\mu + L_\mu \left(\mathfrak{w}_\ell^\mu + \mathfrak{w}_\ell^\nu\right) \left(2\mathfrak{p}^\mu+ \mathfrak{q}^\mu +4\mathfrak{s}^\mu \right)^\frac{1}{2} \right)\\
&\qquad +2M_\mu\left(\frac{\mathfrak{K}^\mu\mathfrak{p}^\mu +\mathfrak{L}^\mu}{\alpha_\mu}\right)^\frac{1}{2} +\frac{2L_\mu}{\sqrt{N}} \left(3\mathfrak{p}^\mu+ \mathfrak{q}^\mu +6\mathfrak{s}^\mu \right)\\
&\leq \frac{r+1}{k}C_1(\eta) + C_2\sqrt{\eta} +\frac{C_3}{\sqrt{N}},
\end{align*}
where the last bound holds if $k\geq 2\ell^\mu$. To be explicit,
\begin{align*}
    &C_1(\eta) = 2\left(\frac{2M_\mu^2}{\alpha_\mu\lambda} (\ell-1) + M_\mu \mathfrak{w}_\ell^\mu + L_\mu \left(\mathfrak{w}_\ell^\mu + \mathfrak{w}_\ell^\nu\right) \left(2\mathfrak{p}^\mu+ \mathfrak{q}^\mu +4\mathfrak{s}^\mu \right)^\frac{1}{2} \right),\\
    &C_2 = 2M_\mu\left(\frac{\bar{\eta} R_\mu^2(L_\mu+\lambda R_\mu)^2 \mathfrak{p}^\mu}{\alpha_\mu^2} + \frac{(L_\mu+\lambda R_\mu)^2}{\alpha_\mu^2\lambda^2} \left( \bar{\eta} M_\mu^2+\lambda^2\bar{\eta} R_\mu^2 \mathfrak{s}^\mu +\lambda d_{\XX}\right)\right)^\frac{1}{2},\\
    &C_3 = 2L_\mu\left(3\mathfrak{p}^\mu+ \mathfrak{q}^\mu +6\mathfrak{s}^\mu \right).
\end{align*}
The constants $C_2,C_3$ can be taken to be polynomial and independent of $\eta$ by substituting in the upper bound $\bar{\eta}=\frac{r_\mu\lambda}{2(L_\mu+\lambda R_\mu)^2} \wedge\frac{r_\mu}{4\lambda R_\mu^2}$ in the expressions for $\mathfrak{s}^\mu, \mathfrak{K}^\mu/\eta, \mathfrak{L}^\mu/\eta$, while $\ell^\mu = O(\eta^{-1}\log\eta^{-1})$. However, $C_1(\eta)$ contains the dependency
\begin{equation*}
O(\mathfrak{w}_\ell^\mu) = O\left(\eta^{-1} \exp(\ell L_\mu\eta)\right) = O\Bigg(\frac{1}{\eta}\left(\frac{3}{2\eta^2(L_\mu^2+\lambda R_\mu^2)^2}\right)^\frac{L_\mu}{\alpha_\mu\lambda}\Bigg),
\end{equation*}
which is a consequence of uniformly bounding the perturbation from the gradient stopped process over a time period of $\ell$.

\emph{Step 5.} For $k< 2\ell$, proceeding similarly without converting to the modified process gives
\begin{align*}
&\EEbig{(\mathscr{X},\mathscr{Y})_{1:k}}{\int_{\XX}F(\mu_{\mathscr{X}_k}, \nu_{\mathscr{Y}_k})(\mu_{\mathscr{X}_k} -\Pi\pmu_{k-1}^{(N)}) (\rd x)}\\
&\leq \frac{M_\mu}{N}\sum_{i=1}^N \EEbig{(\mathscr{X},\mathscr{Y})_{1:k}}{W_1(\delta_{X_k^i}, \Pi\pmu_{k-1}^{(N)})}\\
&\qquad + \frac{L_\mu}{N}\sum_{i=1}^N \left(\EEbig{(\mathscr{X},\mathscr{Y})_{1:k}}{W_2^2(\mu_{\widetilde{\mathscr{X}}_k}, \mu_{\widetilde{\mathscr{X}}_k^{-i}})} \EEbig{(\mathscr{X},\mathscr{Y})_{1:k}}{W_2^2(\delta_{\widetilde{X}_k^i}, \Pi\pmu_{k-1}^{(N)})}\right)^\frac{1}{2}\\
&\leq\frac{M_\mu}{N}\sum_{i=1}^N \left( \E{\lVert X_1^i\rVert^2} + \mathfrak{q}^\mu+\mathfrak{s}^\mu \right)^\frac{1}{2}\\
&\qquad+ \frac{2L_\mu}{N}\sum_{i=1}^N \Bigg(\frac{2\mathfrak{s}^\mu}{N} +\frac{1}{N(N-1)} \sum_{k\neq i}\E{\lVert X_1^k\rVert^2} +\frac{1}{N}\E{\lVert X_1^i\rVert^2}\Bigg)^\frac{1}{2} \!\left( \E{\lVert X_1^i\rVert^2} + \mathfrak{q}^\mu+\mathfrak{s}^\mu \right)^\frac{1}{2}\\
&\leq M_\mu\sqrt{\mathfrak{p}^\mu+\mathfrak{q}^\mu+\mathfrak{s}^\mu} + \frac{2L_\mu(3\mathfrak{p}^\mu+\mathfrak{q}^\mu+3\mathfrak{s}^\mu)}{\sqrt{N}}\\
&< \frac{C_1(\eta)}{2\ell} + \frac{C_3}{\sqrt{N}},
\end{align*}
where the final bound follows by noting $\eta <\frac{r_\mu}{4L_\mu R_\mu}\leq \frac{1}{4L_\mu}$, hence by expanding $(1+2\eta L_\mu)^\ell$
\begin{align*}
(\mathfrak{w}_\ell^\mu)^2 > \frac{1}{2L_\mu}\cdot \frac{M_\mu^2}{\eta^2 L_\mu^3} \left(2\eta L_\mu\ell + 2\eta^2 L_\mu^2\ell(\ell-1)\right) > \frac{M_\mu^2\ell^2}{L_\mu^2}
\end{align*}
and so
\begin{equation*}
C_1(\eta) > 2L_\mu\mathfrak{w}_\ell^\mu\left(2\mathfrak{p}^\mu +\mathfrak{q}^\mu +4\mathfrak{s}^\mu\right)^\frac{1}{2} > 2M_\mu \ell \left(\mathfrak{p}^\mu +\mathfrak{q}^\mu +\mathfrak{s}^\mu\right)^\frac{1}{2}.
\end{equation*}

Thus the bound holds for all integers $k$. We conclude the proof by taking the maximum with the corresponding quantities for $\nu$. \qed



\subsection{Properties of Conjugate Functionals}\label{appendix-conjugate}

We proceed to develop the $N$-particle lifted analogues $J_k^{(N)}, \widehat{J}_k^{(N)}$ of the conjugate functionals in the proof of Theorem \ref{thm-agavg}. In order to deal with time and particle discretization, we will need a more precise characterization of their perturbative properties. Many of the subsequent results do not follow from standard methods and requires a careful synthesis of the discussion thus far.

\begin{lemma}
Given Lipschitz functions $\zeta_\mu:\XX\to\RR$, $\zeta_\nu:\YY\to\RR$ and a pair of $N$-particle probability measures $\mu^{(N)}\in\PP(\XX^N)$, $\nu^{(N)}\in\PP(\YY^N)$ define the functional
\begin{align*}
&J_k^{(N)}(\mu^{(N)},\nu^{(N)}|\zeta^\mu,\zeta^\nu)\\
&= -\int_{\XX^N}\int_{\XX} \zeta^\mu (\mu_\mathscr{X}-\rho^\mu)(\rd x) \mu^{(N)} (\rd\mathscr{X})+ \int_{\YY^N}\int_{\YY} \zeta^\nu(\nu_\mathscr{Y} -\rho^\nu)(\rd y) \nu^{(N)}(\rd\mathscr{Y})\\
&\qquad- \frac{\lambda B_k}{N} \left(\KL(\mu^{(N)}\Vert\rho^{\mu\otimes N})+ \KL(\nu^{(N)}\Vert\rho^{\nu\otimes N}) \right).
\end{align*}
Then the maximum
\begin{equation*}
\widehat{J}_k^{(N)}(\zeta^\mu,\zeta^\nu) = \max_{\mu^{(N)}\in\PP(\XX^N)} \max_{\nu^{(N)}\in\PP(\YY^N)} J_k^{(N)}(\mu^{(N)},\nu^{(N)}|\zeta^\mu,\zeta^\nu)
\end{equation*}
exists for all $k\in\NN$ and is uniquely attained by the pair of distributions
\begin{equation*}
\pmu_k^{(N)}(\zeta^\mu) \propto \rho^{\mu\otimes N} \exp\left(-\frac{N}{\lambda B_k} \int_{\XX}\zeta^\mu \mu_\mathscr{X}(\rd x)\right), \; \pnu_k^{(N)}(\zeta^\nu) \propto \rho^{\nu\otimes N} \exp\left(\frac{N}{\lambda B_k} \int_{\YY}\zeta^\nu \nu_\mathscr{Y}(\rd y)\right).
\end{equation*}
\end{lemma}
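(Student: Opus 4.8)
The plan is to exploit the fact that $J_k^{(N)}(\mu^{(N)},\nu^{(N)}\,|\,\zeta^\mu,\zeta^\nu)$ splits additively into a term depending only on $\mu^{(N)}$ and one depending only on $\nu^{(N)}$, so that the double maximum factorizes and it suffices to treat the $\mu$-side (the $\nu$-side being symmetric up to a sign flip). Writing $\int_{\XX}\zeta^\mu\,\mu_{\mathscr{X}}(\rd x)=\tfrac1N\sum_{i=1}^N\zeta^\mu(X^i)=:\tfrac1N\Xi^\mu(\mathscr{X})$, the $\mu$-part equals $-\tfrac1N\int_{\XX^N}\Xi^\mu\,\rd\mu^{(N)}+\int_{\XX}\zeta^\mu\,\rd\rho^\mu-\tfrac{\lambda B_k}{N}\KL(\mu^{(N)}\Vert\rho^{\mu\otimes N})$, which, up to the additive constant $\int_{\XX}\zeta^\mu\,\rd\rho^\mu$, is exactly an entropy-regularized linear maximization of Gibbs type over $\PP(\XX^N)$.

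First I would check that the candidate maximizer $\pmu_k^{(N)}(\zeta^\mu)\propto\rho^{\mu\otimes N}\exp(-\tfrac{1}{\lambda B_k}\Xi^\mu)$ is a well-defined probability measure: since $\zeta^\mu$ is Lipschitz, $\Xi^\mu$ has at most linear growth on $\XX^N$, while $\rho^{\mu\otimes N}$ is $r_\mu$-strongly log-concave by Assumption \ref{ass-rhoreg} and hence has Gaussian tails, so the partition function $\int_{\XX^N}\exp(-\tfrac{1}{\lambda B_k}\Xi^\mu)\,\rd\rho^{\mu\otimes N}$ is finite. Then I would invoke the Gibbs (Donsker--Varadhan) variational principle: for a reference measure $\rho$ and measurable $h$ with $\int e^{-h}\,\rd\rho<\infty$ one has $\sup_{\mu\ll\rho}\{-\int h\,\rd\mu-\KL(\mu\Vert\rho)\}=\log\int e^{-h}\,\rd\rho<\infty$, attained uniquely at $\mu\propto e^{-h}\rho$. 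Applying this with $\rho=\rho^{\mu\otimes N}$, $h=\tfrac{1}{\lambda B_k}\Xi^\mu$ after factoring out $\tfrac{\lambda B_k}{N}$ shows the $\mu$-part attains its finite maximum uniquely at $\pmu_k^{(N)}(\zeta^\mu)$, and the analogous argument for the $\nu$-part (where the linear term enters with a $+$ sign) yields $\pnu_k^{(N)}(\zeta^\nu)$; summing gives the claim.

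Alternatively, mirroring the proofs of Lemma \ref{thm-Jdef} and Proposition \ref{thm-mneexistence}, one may argue that $\mu^{(N)}\mapsto\KL(\mu^{(N)}\Vert\rho^{\mu\otimes N})$ is proper, weakly lower semicontinuous and strongly convex with respect to $W_2$ on $\PP(\XX^N)$ — Talagrand's inequality for $\rho^{\mu\otimes N}$ with constant $r_\mu$, which tensorizes from the single-site bound used in Proposition \ref{thm-mneexistence} — while $\mu^{(N)}\mapsto-\tfrac1N\int_{\XX^N}\Xi^\mu\,\rd\mu^{(N)}$ is affine and, together with the coercivity of $\KL$, keeps the sum upper semicontinuous; hence the $\mu$-part is strongly concave and u.s.c. and attains its maximum at a unique point, whose form is pinned down by setting the functional derivative $-\tfrac1N\Xi^\mu-\tfrac{\lambda B_k}{N}\log\tfrac{\mu^{(N)}}{\rho^{\mu\otimes N}}$ equal to a constant.

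I expect the only genuine subtlety to be the unboundedness of $\Xi^\mu$ — the paper deliberately does not assume $\tfrac{\delta\LL}{\delta\mu}$ is bounded, so a Holley--Stroock-type shortcut is unavailable — which means both the finiteness of the partition function and the legitimacy of the variational identity (equivalently, that the supremum is finite and attained) must be obtained from the strong log-concavity of the base product measure dominating the linear growth of $\Xi^\mu$. Once this integrability is in hand, everything else is a routine application of conjugate-duality machinery already used earlier in the paper.
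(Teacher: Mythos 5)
Your proposal is correct. Your second route --- decomposing $J_k^{(N)}$ into $\mu$- and $\nu$-parts, invoking lower semi-continuity and strong convexity of $\KL(\cdot\Vert\rho^{\mu\otimes N})$ with respect to $W_2$ via Talagrand's inequality (which tensorizes from the single-site bound since $\rho^{\mu\otimes N}$ is $r_\mu$-strongly log-concave), and then pinning down the maximizer via the first-order condition --- is precisely the paper's argument, which delegates existence/uniqueness to Lemma \ref{thm-Jdef} and Proposition \ref{thm-mneexistence} and only records the Euler--Lagrange equation. Your first route, via the Donsker--Varadhan (Gibbs) variational formula, is a genuinely different and more direct path: rather than establishing existence abstractly from strong convexity plus lower semi-continuity and then identifying the maximizer by calculus of variations, it produces both the optimal value $\frac{\lambda B_k}{N}\log\int e^{-\Xi^\mu/\lambda B_k}\,\rd\rho^{\mu\otimes N}$ and the maximizer $\pmu_k^{(N)}(\zeta^\mu)$ in a single stroke, with finiteness of the partition function as the only hypothesis to verify. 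This buys an explicit closed-form value of $\widehat{J}_k^{(N)}$ that the paper's abstract argument does not supply, at the cost of needing the variational identity in a form valid for unbounded potentials; your observation that the linear growth of $\Xi^\mu$ (Lipschitz $\zeta^\mu$) against the Gaussian tails of the strongly log-concave product base measure is exactly the integrability condition that makes this work is the right diagnosis of the one subtlety the paper leaves implicit.
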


\begin{proof}
The proof is similar to Lemma \ref{thm-Jdef}; we only check the first-order condition by setting
\begin{align*}
\frac{\delta J_k^{(N)}}{\delta\mu^{(N)}}(\mu^{(N)})(\mathscr{X}) = -\int_{\XX}\zeta^\mu(\mu_\mathscr{X} -\rho^\mu)(\rd x) -\frac{\lambda B_k}{N} \log\frac{\mu^{(N)}(\mathscr{X})}{\rho^{\mu\otimes N}(\mathscr{X})} = \text{const.}
\end{align*}
\end{proof}

The $N$-particle proximal distributions $\pmu_k^{(N)}(\zeta^\mu), \pnu_k^{(N)}(\zeta^\nu)$, despite being defined over the configuration spaces $\XX^N,\YY^N$ also satisfy the log-Sobolev inequality with the same constant as before due to the tensorization property of entropy.

\begin{lemma}[product log-Sobolev inequality]\label{thm-productlsi}
Suppose that $\zeta^\mu/B_k,\zeta^\nu/B_k$ are $M_\mu,M_\nu$-Lipschitz, respectively. Then $\pmu_k^{(N)}(\zeta^\mu), \pnu_k^{(N)}(\zeta^\nu)$ satisfy the LSI on $\XX^N,\YY^N$, with the same constants $\alpha_\mu,\alpha_\nu$ as in Proposition \ref{thm-aglsi}.
\end{lemma}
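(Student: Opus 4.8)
The plan is to reduce to the single-particle case via the tensorization property of the log-Sobolev inequality. First I would observe that the $N$-particle proximal distribution is in fact a product measure: since $\int_{\XX}\zeta^\mu\,\mu_{\mathscr{X}}(\rd x)=\frac{1}{N}\sum_{i=1}^{N}\zeta^\mu(X^i)$ and $\rho^{\mu\otimes N}(\mathscr{X})=\prod_{i=1}^N\rho^\mu(X^i)$, the defining formula gives
\[
\pmu_k^{(N)}(\zeta^\mu)(\mathscr{X})\;\propto\;\prod_{i=1}^{N}\rho^\mu(X^i)\exp\!\Big(-\tfrac{1}{\lambda B_k}\zeta^\mu(X^i)\Big),
\]
so that $\pmu_k^{(N)}(\zeta^\mu)$ is the $N$-fold product of the single-particle measure proportional to $\rho^\mu\exp(-\lambda^{-1}h)$ with $h=\zeta^\mu/B_k$; the same holds for $\pnu_k^{(N)}(\zeta^\nu)$ with $h=\zeta^\nu/B_k$.

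Next I would apply Proposition \ref{thm-aglsi} to this single-particle factor. By hypothesis $\zeta^\mu/B_k$ is $M_\mu$-Lipschitz, so $h=\zeta^\mu/B_k$ satisfies $\norm{h}_{\Lip}\le M_\mu$, and Proposition \ref{thm-aglsi} (under Assumption \ref{ass-rhoreg}) shows that $\rho^\mu\exp(-\lambda^{-1}h)$ satisfies the LSI with constant $\alpha_\mu$; symmetrically the $\nu$-factor satisfies the LSI with constant $\alpha_\nu$.

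Finally I would invoke the tensorization of the log-Sobolev inequality: if each of $\pi_1,\dots,\pi_N$ satisfies the LSI with constant $\alpha$, then $\pi_1\otimes\cdots\otimes\pi_N$ satisfies the LSI with the same constant $\alpha$ on the product space. This is immediate from the subadditivity of entropy, $\Ent_{\pi_1\otimes\cdots\otimes\pi_N}(f^2)\le\sum_{i=1}^{N}\Ebig{\Ent_{\pi_i}(f^2)}$, combined with the coordinatewise one-dimensional inequalities and the pointwise identity $\norm{\nabla f}_2^2=\sum_{i=1}^N\norm{\nabla_{x^i} f}_2^2$. Taking $\pi_i$ to be the $\mu$-factor yields the LSI for $\pmu_k^{(N)}(\zeta^\mu)$ on $\XX^N$ with constant $\alpha_\mu$, and taking $\pi_i$ to be the $\nu$-factor yields the LSI for $\pnu_k^{(N)}(\zeta^\nu)$ on $\YY^N$ with constant $\alpha_\nu$. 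There is essentially no obstacle here; the only points to record carefully are that the exponent in $\pmu_k^{(N)}(\zeta^\mu)$ separates across particles so that the measure is a genuine product, and that the $1/B_k$ scaling built into the Lipschitz hypothesis is exactly what makes the single-particle factor fit the form required by Proposition \ref{thm-aglsi}.
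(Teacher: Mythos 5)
Your proof is correct and follows essentially the same route as the paper: you factor the $N$-particle proximal measure as a product over particles (the $N$ in the exponent cancels against the $1/N$ in the empirical average so each factor is $\rho^\mu\exp(-\lambda^{-1}\zeta^\mu/B_k)$), apply Proposition~\ref{thm-aglsi} to the single-particle factor using the $M_\mu$-Lipschitz hypothesis on $\zeta^\mu/B_k$, and then invoke tensorization of the LSI via subadditivity of entropy over product measures. The paper's proof is identical in structure, citing Proposition 2.2 of Ledoux for the tensorization step.
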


\begin{proof}
We can write $\mu^{(N)} = \pmu_k^{(N)}(\zeta^\mu)$ as the symmetric product distribution
\begin{equation*}
    \mu^{(N)}(\mathscr{X}) = \prod_{i=1}^N \mu^i(X^i),\quad \mu^i(X^i)= \rho^\mu(X^i) \exp\left(-\frac{\zeta^\mu(X^i)}{\lambda B_k}\right), \quad 1\leq i\leq N,
\end{equation*}
where the marginals $\mu^i(X^i)$ each satisfy the LSI with constant $\alpha_\mu$ by Proposition \ref{thm-aglsi}. Also write $\mu^{-i}(X^{-i}) = \prod_{j\neq i}\mu^i(X^i)$. For an appropriately integrable function $f$ on $\XX^N$, denote by $f^i$ for the functions $f^i(X^i) = f(X^1,\cdots,X^i,\cdots,X^N)$. Then by Proposition 2.2 of \citet{Ledoux99},
\begin{equation*}
\Ent_{\mu^{(N)}}(f^2) \leq \sum_{i=1}^N \EE{\mu^{-i}}{\Ent_{\mu^i}((f^i)^2)} \leq \sum_{i=1}^N \frac{2}{\alpha_\mu} \mathbb{E}_{\mu^{-i}}\EE{\mu^i}{\lVert\nabla f^i\rVert^2} = \frac{2}{\alpha_\mu} \EE{\mu^{(N)}}{\norm{\nabla f}^2}.
\end{equation*}
\end{proof}

\begin{lemma}\label{thm-NJhat}
The functional $\widehat{J}_k^{(N)}$ is convex in both arguments, and admits functional derivatives at any $(\zeta^\mu,\zeta^\nu)$ which are given as
\begin{equation*}
\frac{\delta\widehat{J}_k^{(N)}}{\delta \zeta^\mu}(\zeta^\mu,\zeta^\nu) = -\Pi\pmu_k^{(N)}(\zeta^\mu) +\rho^\mu, \quad \frac{\delta\widehat{J}_k^{(N)}}{\delta \zeta^\nu}(\zeta^\mu,\zeta^\nu) =\Pi\pnu_k^{(N)}(\zeta^\nu) -\rho^\nu.
\end{equation*}
\end{lemma}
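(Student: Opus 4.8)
The plan is to lift the proof of Lemma~\ref{thm-Jhat}\ref{thm-Jhat-convex}--\ref{thm-Jhat-flat} to the configuration space $\XX^N\times\YY^N$, using the explicit form of the maximizers $\pmu_k^{(N)}(\zeta^\mu),\pnu_k^{(N)}(\zeta^\nu)$ supplied by the preceding lemma.

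\emph{Convexity.} For every fixed pair $(\mu^{(N)},\nu^{(N)})$, the map $(\zeta^\mu,\zeta^\nu)\mapsto J_k^{(N)}(\mu^{(N)},\nu^{(N)}\,|\,\zeta^\mu,\zeta^\nu)$ is affine, since $\zeta^\mu,\zeta^\nu$ enter only through the linear terms $-\int_{\XX^N}\int_{\XX}\zeta^\mu(\mu_\mathscr{X}-\rho^\mu)\,\mu^{(N)}$ and $\int_{\YY^N}\int_{\YY}\zeta^\nu(\nu_\mathscr{Y}-\rho^\nu)\,\nu^{(N)}$, while the relative-entropy terms are constant in $\zeta^\mu,\zeta^\nu$. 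Hence $\widehat{J}_k^{(N)}$ is a pointwise supremum of affine functionals, therefore jointly convex, and in particular convex in each argument.

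\emph{Functional derivative.} First I would record that $\zeta^\mu\mapsto\pmu_k^{(N)}(\zeta^\mu)$ is Gateaux differentiable, differentiation under the integral sign being justified by the Lipschitz hypothesis on $\zeta^\mu/B_k$ together with dominated convergence. Writing $\widehat{J}_k^{(N)}(\zeta^\mu,\zeta^\nu)=J_k^{(N)}(\pmu_k^{(N)}(\zeta^\mu),\pnu_k^{(N)}(\zeta^\nu)\,|\,\zeta^\mu,\zeta^\nu)$ and differentiating along a perturbation $\xi$ of $\zeta^\mu$ by the chain rule yields (a) an explicit contribution $-\int_{\XX^N}\int_{\XX}\xi\,(\mu_\mathscr{X}-\rho^\mu)\,\pmu_k^{(N)}(\zeta^\mu)$, which by the projection identity $\int_{\XX^N}\int_\XX\xi\,\mu_\mathscr{X}(\rd x)\,\mu^{(N)}(\rd\mathscr{X})=\int_\XX\xi\,\Pi\mu^{(N)}(\rd x)$ equals $-\int_\XX\xi\,(\Pi\pmu_k^{(N)}(\zeta^\mu)-\rho^\mu)(\rd x)$, and (b) an implicit contribution $\int_{\XX^N}\frac{\delta J_k^{(N)}}{\delta\mu^{(N)}}(\pmu_k^{(N)}(\zeta^\mu))\,\partial_{\zeta^\mu}\pmu_k^{(N)}(\zeta^\mu)[\xi]$. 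The crux is that term (b) vanishes: the first variation $\frac{\delta J_k^{(N)}}{\delta\mu^{(N)}}(\pmu_k^{(N)}(\zeta^\mu))(\mathscr{X})=-\int_\XX\zeta^\mu\mu_\mathscr{X}(\rd x)-\frac{\lambda B_k}{N}\log\frac{\pmu_k^{(N)}(\zeta^\mu)(\mathscr{X})}{\rho^{\mu\otimes N}(\mathscr{X})}$ is constant in $\mathscr{X}$ by the first-order optimality condition from the previous lemma, whereas $\int_{\XX^N}\partial_{\zeta^\mu}\pmu_k^{(N)}(\zeta^\mu)[\xi]\,(\rd\mathscr{X})=0$ because $\pmu_k^{(N)}(\zeta^\mu+\epsilon\xi)$ is a probability measure for all $\epsilon$. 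Reading off the coefficient of $\xi$ gives $\frac{\delta\widehat{J}_k^{(N)}}{\delta\zeta^\mu}=-\Pi\pmu_k^{(N)}(\zeta^\mu)+\rho^\mu$, and the $\zeta^\nu$ identity follows by the symmetric computation.

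\emph{Main obstacle.} The computation is essentially the $N$-particle transcription of Lemma~\ref{thm-Jhat}; the only step demanding care is the envelope-type differentiation in part (b) — establishing differentiability of $\zeta^\mu\mapsto\pmu_k^{(N)}(\zeta^\mu)$ and then checking that the implicit contribution genuinely cancels. This hinges on the Gibbs/product structure of $\pmu_k^{(N)}$, so that the tensorized first-order condition applies, and on the reduction of integrals over $\XX^N$ to integrals over $\XX$ against the averaged pushforward $\Pi\pmu_k^{(N)}$.
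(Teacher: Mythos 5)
Your proof is correct and follows essentially the same route as the paper: convexity from the pointwise-supremum-of-affine-functionals argument (carried over from Lemma~\ref{thm-Jhat}\ref{thm-Jhat-convex}), and the functional derivative obtained by the envelope argument in which the implicit contribution cancels because the first variation $\frac{\delta J_k^{(N)}}{\delta\mu^{(N)}}$ is constant at the maximizer while $\frac{\delta\pmu_k^{(N)}}{\delta\zeta^\mu}[\xi]$ integrates to zero. Your version is actually a bit more explicit than the paper's (which drops the implicit terms without spelling out the zero-mean observation) and additionally records the convexity step that the paper leaves implicit, but the underlying argument is identical.
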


\begin{proof}
Substituting $\widehat{J}_k^{(N)}(\zeta^\mu, \zeta^\nu) = J_k^{(N)}(\pmu_k^{(N)}(\zeta^\mu), \pmu_k^{(N)}(\zeta^\nu)| \zeta^\mu, \zeta^\nu)$,
\begin{align*}
&\frac{\delta\widehat{J}_k^{(N)}}{\delta \zeta^\mu}(\zeta^\mu,\zeta^\nu) = -\frac{\delta}{\delta\zeta^\mu} \int_{\XX^N}\!\int_{\XX} \zeta^\mu(\mu_\mathscr{X}-\rho^\mu)(\rd x) \mu^{(N)} (\rd\mathscr{X})\bigg|_{\mu^{(N)} = \pmu_k^{(N)}(\zeta^\mu)}\\
&-\int_{\XX^N}\!\int_{\XX} \zeta^\mu (\mu_\mathscr{X}-\rho^\mu)(\rd x) \frac{\delta\pmu_k^{(N)}}{\delta\zeta^\mu}(\zeta^\mu) (\rd\mathscr{X}) - \frac{\lambda B_k}{N} \int_{\XX^N} \left(\log\frac{\pmu_k^{(N)}(\zeta^\mu)}{\rho^{\mu\otimes N}}\right) \frac{\delta\pmu_k^{(N)}}{\delta\zeta^\mu}(\zeta^\mu)(\rd\mathscr{X})\\
&= \frac{\delta}{\delta\zeta^\mu} \left(- \int_{\XX^N} \frac{1}{N}\sum_{i=1}^N \zeta^\mu(X^i) \mu^{(N)} (\rd\mathscr{X}) +\int_{\XX} \zeta^\mu\rho^\mu(\rd x)\right) \Bigg|_{\mu^{(N)} = \pmu_k^{(N)}(\zeta^\mu)}\\
&=-\Pi\pmu_k^{(N)}(\zeta^\mu) +\rho^\mu.
\end{align*}
The integral over the configuration space measure $\pmu_k^{(N)}$ therefore lifts the expectation with respect to the discrete measure $\mu_\mathscr{X}$ to a differentiable functional of $\zeta^\mu$, which in turn pushes forward $\pmu_k^{(N)}$ onto the space $\XX$.
\end{proof}

The following proposition is crucial to controlling the evolution of the conjugate functional as well as the proximal distributions over time.

\begin{prop}\label{thm-NJhatlip}
Suppose $\zeta^\mu/B_k, \tilde{\zeta}^\mu/B_k$ are $M_\mu$-Lipschitz functions such that the difference $\zeta^\mu-\tilde{\zeta^\mu}$ is $m_\mu$-Lipschitz for some $m_\mu>0$. Then the projected proximal distributions satisfy
\begin{equation*}
 W_2(\Pi\pmu_k^{(N)}(\zeta^\mu), \Pi\pmu_k^{(N)}(\tilde{\zeta}^\mu)) \leq\frac{m_\mu}{\alpha_\mu\lambda B_k}.
\end{equation*}
\end{prop}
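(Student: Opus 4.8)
The plan is to interpolate linearly between the two potentials and bound the length of the resulting curve of proximal distributions in $W_2$, then descend to $\XX$ via Lemma \ref{thm-Wconvert}. For $s\in[0,1]$ set $\zeta_s:=(1-s)\zeta^\mu+s\tilde{\zeta}^\mu$; since $\zeta_s/B_k$ is a convex combination of the two $M_\mu$-Lipschitz functions $\zeta^\mu/B_k$ and $\tilde{\zeta}^\mu/B_k$, it is again $M_\mu$-Lipschitz, so the product measure
\[
p_s:=\pmu_k^{(N)}(\zeta_s)\propto\rho^{\mu\otimes N}(\mathscr{X})\exp\!\Big(-\tfrac{1}{\lambda B_k}\sum_{i=1}^N\zeta_s(X^i)\Big)
\]
is well-defined and, by Lemma \ref{thm-productlsi}, satisfies the LSI on $\XX^N$ with constant $\alpha_\mu$, hence also the Poincar\'e inequality with constant $\alpha_\mu$. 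Write $p_s=e^{-V_s}/Z_s$ with $V_s(\mathscr{X})=\sum_i U^\mu(X^i)+\tfrac{1}{\lambda B_k}\sum_i\zeta_s(X^i)$, and put $h:=\tilde{\zeta}^\mu-\zeta^\mu$, which is $m_\mu$-Lipschitz. Then $\partial_s V_s(\mathscr{X})=\tfrac{1}{\lambda B_k}\sum_i h(X^i)$, so $\norm{\nabla\partial_s V_s(\mathscr{X})}^2=\tfrac{1}{\lambda^2B_k^2}\sum_i\norm{\nabla h(X^i)}^2\le\tfrac{Nm_\mu^2}{\lambda^2B_k^2}$ pointwise.

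Next I would realize $(p_s)$ as a solution of a continuity equation. A direct computation gives $\partial_s p_s=p_s\big(\E_{p_s}{[\partial_s V_s]}-\partial_s V_s\big)$, so if $\psi_s$ denotes the mean-zero solution of the Poisson equation $\mathcal{L}_{p_s}\psi_s=\partial_s V_s-\E_{p_s}{[\partial_s V_s]}$, where $\mathcal{L}_{p_s}f=\Delta f-\nabla V_s\cdot\nabla f$, then $\partial_s p_s+\nabla\cdot(p_s\nabla\psi_s)=0$; solvability in $H^1(p_s)$ is guaranteed by the spectral gap, and the mere Lipschitz regularity of $\zeta^\mu$ can be handled by mollification and a limiting argument. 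Since $(p_s)$ is thus a $W_2$-absolutely continuous curve with metric derivative at most $\norm{\nabla\psi_s}_{L^2(p_s)}$, the Benamou--Brenier formula yields $W_2(p_0,p_1)\le\int_0^1\norm{\nabla\psi_s}_{L^2(p_s)}\,\rd s$, so it suffices to bound the integrand.

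The key estimate is a twofold use of the Poincar\'e inequality. For the mean-zero $\psi_s$, Cauchy--Schwarz and Poincar\'e give
\[
\norm{\nabla\psi_s}_{L^2(p_s)}^2=-\int\psi_s\,\mathcal{L}_{p_s}\psi_s\,\rd p_s\le\norm{\psi_s}_{L^2(p_s)}\norm{\mathcal{L}_{p_s}\psi_s}_{L^2(p_s)}\le\alpha_\mu^{-1/2}\norm{\nabla\psi_s}_{L^2(p_s)}\norm{\mathcal{L}_{p_s}\psi_s}_{L^2(p_s)},
\]
whence $\norm{\nabla\psi_s}_{L^2(p_s)}\le\alpha_\mu^{-1/2}\big(\Var_{p_s}(\partial_s V_s)\big)^{1/2}$; applying Poincar\'e again to $\partial_s V_s$ together with the pointwise gradient bound gives $\Var_{p_s}(\partial_s V_s)\le\alpha_\mu^{-1}\norm{\nabla\partial_s V_s}_{L^2(p_s)}^2\le\alpha_\mu^{-1}Nm_\mu^2/(\lambda^2B_k^2)$, so $\norm{\nabla\psi_s}_{L^2(p_s)}\le\sqrt{N}\,m_\mu/(\alpha_\mu\lambda B_k)$ for all $s$. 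Integrating in $s$ yields $W_2(\pmu_k^{(N)}(\zeta^\mu),\pmu_k^{(N)}(\tilde{\zeta}^\mu))\le\sqrt{N}\,m_\mu/(\alpha_\mu\lambda B_k)$, and Lemma \ref{thm-Wconvert} removes the $\sqrt{N}$ factor, giving the stated bound. As a byproduct, Lemma \ref{thm-timebound} follows by taking $\zeta^\mu=\sum_{j\le k}\beta_j\frac{\delta\!\LL}{\delta\mu}(\mu_{\mathscr{X}_j},\nu_{\mathscr{Y}_j})$ and $\tilde{\zeta}^\mu=\frac{B_k}{B_{k-1}}\sum_{j\le k-1}\beta_j\frac{\delta\!\LL}{\delta\mu}(\mu_{\mathscr{X}_j},\nu_{\mathscr{Y}_j})$, for which $\zeta^\mu-\tilde{\zeta}^\mu$ is $2\beta_k M_\mu$-Lipschitz. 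The only genuinely delicate point is the analytic justification of the continuity equation and Poisson solvability, which is standard under a Poincar\'e inequality; the rest is a short computation.
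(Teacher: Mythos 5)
Your argument is correct and produces the stated bound, but it follows a genuinely different path from the paper's proof. The paper's argument is static and variational: it writes down the first-order optimality conditions characterizing $\pmu_k^{(N)}(\zeta^\mu)$ and $\pmu_k^{(N)}(\tilde{\zeta}^\mu)$, subtracts them, and integrates the difference against $\pmu_k^{(N)}(\zeta^\mu)-\pmu_k^{(N)}(\tilde{\zeta}^\mu)$. The left side is controlled by Kantorovich--Rubinstein duality and the $m_\mu$-Lipschitz hypothesis, giving $m_\mu W_1(\Pi\pmu_k^{(N)}(\zeta^\mu),\Pi\pmu_k^{(N)}(\tilde{\zeta}^\mu))$; the right side is recognized as $\tfrac{\lambda B_k}{N}$ times the symmetric Kullback--Leibler divergence on $\XX^N$, which is then lower-bounded via Talagrand's inequality (Lemma \ref{thm-productlsi} plus Theorem \ref{thm-ottovillani}) and descended to $\XX$ by Lemma \ref{thm-Wconvert}. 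In contrast, you parametrize a $W_2$-absolutely-continuous curve $p_s = \pmu_k^{(N)}(\zeta_s)$ by linearly interpolating the potentials, realize its velocity as the gradient of the solution of a Poisson equation, bound the metric speed by a double application of the Poincar\'e inequality (once for the Poisson solution, once for the source term), and integrate via Benamou--Brenier before projecting with Lemma \ref{thm-Wconvert}. Both routes deliver the same constant $1/(\alpha_\mu\lambda B_k)$ times the Lipschitz constant of the potential difference. The trade-off is that the paper's proof is shorter and entirely elementary --- no continuity equation, no Poisson solvability, no mollification --- while your dynamic argument is more geometric and would generalize to situations where the first-order condition is not available in closed form, at the cost of the analytic bookkeeping you correctly flag (existence of the Poisson solution in $H^1(p_s)$, absolute continuity of the curve, and handling the merely Lipschitz potentials by approximation). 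In the paper's intended application $\zeta^\mu$ has a $C^1$ gradient by Assumption \ref{ass-Lreg}, so these issues are genuinely mild, but the static route avoids them altogether.
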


\begin{proof}
Taking the first-order conditions
\begin{align*}
&-\int_{\XX}\zeta^\mu(\mu_\mathscr{X} -\rho^\mu)(\rd x) -\frac{\lambda B_k}{N} \log\frac{\pmu_k^{(N)}(\zeta^\mu)}{\rho^{\mu\otimes N}} = \text{const.},\\
&-\int_{\XX}\tilde{\zeta}^\mu(\mu_\mathscr{X} -\rho^\mu)(\rd x) -\frac{\lambda B_k}{N} \log\frac{\pmu_k^{(N)}(\tilde{\zeta}^\mu)}{\rho^{\mu\otimes N}} = \text{const.}
\end{align*}
Subtracting both sides and integrating over the difference $\pmu_k^{(N)}(\zeta^\mu) - \pmu_k^{(N)}(\tilde{\zeta}^\mu)$, we obtain
\begin{equation}\label{eq-NJhat-integrated}
\begin{aligned}
&-\int_{\XX^N}\int_{\XX}(\zeta^\mu-\tilde{\zeta}^\mu) \mu_\mathscr{X}(\rd x) (\pmu_k^{(N)}(\zeta^\mu) - \pmu_k^{(N)}(\tilde{\zeta}^\mu)) (\rd\mathscr{X})\\ &=\frac{\lambda B_k}{N} \int_{\XX^N} \log\frac{\pmu_k^{(N)}(\zeta^\mu)}{\pmu_k^{(N)}(\tilde{\zeta}^\mu)} (\pmu_k^{(N)}(\zeta^\mu) - \pmu_k^{(N)}(\tilde{\zeta}^\mu)) (\rd\mathscr{X}).
\end{aligned}
\end{equation}
Now the left-hand side of \eqref{eq-NJhat-integrated} can be bounded from above by
\begin{align*}
&-\int_{\XX^N}\int_{\XX}(\zeta^\mu-\tilde{\zeta}^\mu) \mu_\mathscr{X}(\rd x) (\pmu_k^{(N)}(\zeta^\mu) - \pmu_k^{(N)}(\tilde{\zeta}^\mu)) (\rd\mathscr{X})\\
&= -\int_{\XX} (\zeta^\mu-\tilde{\zeta}^\mu) (\Pi\pmu_k^{(N)}(\zeta^\mu) - \Pi\pmu_k^{(N)}(\tilde{\zeta}^\mu)) (\rd x)\\
&\leq m_\mu W_1(\Pi\pmu_k^{(N)}(\zeta^\mu), \Pi\pmu_k^{(N)}(\tilde{\zeta}^\mu)) \leq m_\mu W_2(\Pi\pmu_k^{(N)}(\zeta^\mu), \Pi\pmu_k^{(N)}(\tilde{\zeta}^\mu)),
\end{align*}
while the right-hand side of \eqref{eq-NJhat-integrated} is bounded from below by
\begin{align*}
&\frac{\lambda B_k}{N} \left(\KL(\pmu_k^{(N)}(\zeta^\mu) \Vert \pmu_k^{(N)}(\tilde{\zeta}^\mu)) + \KL(\pmu_k^{(N)}(\tilde{\zeta}^\mu)\Vert \pmu_k^{(N)}(\zeta^\mu)) \right)\\
&\geq \frac{\alpha_\mu\lambda B_k}{N} W_2^2(\pmu_k^{(N)}(\zeta^\mu), \pmu_k^{(N)}(\tilde{\zeta}^\mu))\\
&\geq \alpha_\mu\lambda B_k W_2^2(\Pi\pmu_k^{(N)}(\zeta^\mu), \Pi\pmu_k^{(N)}(\tilde{\zeta}^\mu)),
\end{align*}
where we have used Talagrand's inequality from Lemma \ref{thm-productlsi} and the $W_2$ pushforward bound from Lemma \ref{thm-Wconvert}. Combining the two results yields the desired statement.
\end{proof}

Denote the unnormalized aggregate derivatives as
\begin{equation*}
\delta_k^\mu = \sum_{j=1}^k \beta_j \frac{\delta \!\LL}{\delta\mu}(\mu_{\mathscr{X}_j},\nu_{\mathscr{Y}_j}), \quad \delta_k^\nu = \sum_{j=1}^k \beta_j \frac{\delta \!\LL}{\delta\nu}(\mu_{\mathscr{X}_j},\nu_{\mathscr{Y}_j})
\end{equation*}
so that $\pmu_k^{(N)} = \pmu_k^{(N)}(\delta_k^\mu)$, $\pnu_k^{(N)} = \pnu_k^{(N)}(\delta_k^\nu)$. The functions $\delta_k^\mu/B_k$ and $\delta_k^\nu/B_k$ are $M_\mu$- and $M_\nu$-Lipschitz, respectively, due to Assumption \ref{ass-Lreg}. Lemma \ref{thm-NJhat} and Proposition \ref{thm-NJhatlip} then allow us to quantify the change in $\widehat{J}_k^{(N)}(\delta_k^\mu,\delta_k^\nu)$ as time progresses.

\begin{lemma}\label{thm-onestep}
We have the following one-step relation for $\widehat{J}_k^{(N)}$, $k\geq 2$:
\begin{equation*}
\begin{aligned}
&\widehat{J}_k^{(N)}(\delta_k^\mu,\delta_k^\nu) - \widehat{J}_{k-1}^{(N)}(\delta_{k-1}^\mu,\delta_{k-1}^\nu)\\
&\leq\beta_k \int_{\XX} \frac{\delta \!\LL}{\delta\mu}(\mu_{\mathscr{X}_k},\nu_{\mathscr{Y}_k}) (-\Pi\pmu_{k-1}^{(N)}+\rho^\mu) (\rd x) + \beta_k\int_{\YY} \frac{\delta \!\LL}{\delta\nu}(\mu_{\mathscr{X}_k},\nu_{\mathscr{Y}_k}) (\Pi\pnu_{k-1}^{(N)}-\rho^\nu) (\rd y) \\
&\qquad - \frac{\lambda\beta_k}{N} \left(\KL(\pmu_k^{(N)}\!\Vert\rho^{\mu\otimes N})+ \KL(\pnu_k^{(N)}\!\Vert\rho^{\nu\otimes N}) \right) +\bigg(\frac{M_\mu^2}{\alpha_\mu}+\frac{M_\nu^2}{\alpha_\nu}\bigg) \frac{\beta_k^2}{2\lambda B_{k-1}}.
\end{aligned}
\end{equation*}
\end{lemma}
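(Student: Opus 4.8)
The plan is to adapt the continuous‑time chain‑rule computation from the proof of Theorem \ref{thm-agavg} to the discrete, $N$‑particle setting, with one extra ingredient: the "current‑step" proximal produced by the computation must be swapped for the "previous‑step" proximal $\Pi\pmu_{k-1}^{(N)}$ appearing in the statement. Write $\delta_k=(\delta_k^\mu,\delta_k^\nu)$. By definition of the maximum, $\widehat{J}_k^{(N)}(\delta_k)=J_k^{(N)}(\pmu_k^{(N)},\pnu_k^{(N)}\mid\delta_k)$, while by suboptimality $\widehat{J}_{k-1}^{(N)}(\delta_{k-1})\ge J_{k-1}^{(N)}(\pmu_k^{(N)},\pnu_k^{(N)}\mid\delta_{k-1})$, since $(\pmu_k^{(N)},\pnu_k^{(N)})$ is an admissible (if not maximizing) pair for $J_{k-1}^{(N)}$ and has finite relative entropy against $\rho^{\mu\otimes N},\rho^{\nu\otimes N}$. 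Subtracting,
\begin{equation*}
\widehat{J}_k^{(N)}(\delta_k)-\widehat{J}_{k-1}^{(N)}(\delta_{k-1})\le J_k^{(N)}(\pmu_k^{(N)},\pnu_k^{(N)}\mid\delta_k)-J_{k-1}^{(N)}(\pmu_k^{(N)},\pnu_k^{(N)}\mid\delta_{k-1}),
\end{equation*}
and the right‑hand side now holds the two measures $\pmu_k^{(N)},\pnu_k^{(N)}$ fixed, so only the linear and time‑dependent pieces of $J^{(N)}$ change.

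Next I would evaluate this difference explicitly. The entropy terms of $J^{(N)}$ carry the prefactor $\lambda B_k/N$ versus $\lambda B_{k-1}/N$, and $B_k-B_{k-1}=\beta_k$, contributing $-\tfrac{\lambda\beta_k}{N}(\KL(\pmu_k^{(N)}\Vert\rho^{\mu\otimes N})+\KL(\pnu_k^{(N)}\Vert\rho^{\nu\otimes N}))$, which is already the third term of the claim. The linear terms change by $\delta_k^\mu-\delta_{k-1}^\mu=\beta_k\tfrac{\delta\!\LL}{\delta\mu}(\mu_{\mathscr{X}_k},\nu_{\mathscr{Y}_k})$ and analogously for $\nu$. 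Since $\tfrac{\delta\!\LL}{\delta\mu}(\mu_{\mathscr{X}_k},\nu_{\mathscr{Y}_k})$ is a fixed function on $\XX$ (it depends only on the history, not on the dummy configuration $\mathscr{X}$), integrating it against $\mu_\mathscr{X}$ and then against $\pmu_k^{(N)}(\rd\mathscr{X})$ collapses, by the defining property of $\Pi$, to an integral against $\Pi\pmu_k^{(N)}$; likewise the $\rho^\mu$ part integrates trivially. This produces exactly the first two terms of the statement, except with $\Pi\pmu_k^{(N)},\Pi\pnu_k^{(N)}$ in place of $\Pi\pmu_{k-1}^{(N)},\Pi\pnu_{k-1}^{(N)}$.

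The final step is to replace the index‑$k$ proximals by the index‑$(k-1)$ proximals. Since $\tfrac{\delta\!\LL}{\delta\mu}(\mu_{\mathscr{X}_k},\nu_{\mathscr{Y}_k})$ is $M_\mu$‑Lipschitz by Assumption \ref{ass-Lreg}, Kantorovich--Rubinstein gives
\begin{equation*}
\beta_k\int_\XX\tfrac{\delta\!\LL}{\delta\mu}(\mu_{\mathscr{X}_k},\nu_{\mathscr{Y}_k})\,(\Pi\pmu_{k-1}^{(N)}-\Pi\pmu_k^{(N)})(\rd x)\le \beta_k M_\mu\,W_1(\Pi\pmu_k^{(N)},\Pi\pmu_{k-1}^{(N)}),
\end{equation*}
and this Wasserstein distance is controlled by the proximal‑displacement estimate (Lemma \ref{thm-timebound}, or equivalently a direct application of Proposition \ref{thm-NJhatlip} with $m_\mu=\beta_k M_\mu$, keeping $B_{k-1}$ in the denominator), which is of order $\beta_k/(\alpha_\mu\lambda B_k)$ — the discrete analogue of the $O(\beta_t/B_t)$ proximal speed from the proof of Proposition \ref{thm-agprox}. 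Summing the $\mu$ and $\nu$ contributions yields the remainder $\big(\tfrac{M_\mu^2}{\alpha_\mu}+\tfrac{M_\nu^2}{\alpha_\nu}\big)\tfrac{\beta_k^2}{2\lambda B_{k-1}}$, and adding the three contributions completes the proof.

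The main obstacle is precisely this last swap: the suboptimality/chain‑rule argument naturally hands us the proximal at the current index $k$, whereas the statement — and the subsequent analysis via Proposition \ref{thm-alternative}, where $\mu_{\mathscr{X}_k}$ is compared against $\Pi\pmu_{k-1}^{(N)}$ — requires the lagged proximal at index $k-1$. Quantifying this gap is exactly what the perturbative bounds for $\widehat{J}^{(N)}$ and the proximal distributions in Proposition \ref{thm-NJhatlip} are for; the only fiddly point is pinning down the precise constant (equivalently, handling the mild excess in the Lipschitz constant of $\delta_k^\mu/B_{k-1}$ relative to $M_\mu$, which can be absorbed into the estimate or sidestepped by comparing $\Pi\pmu_k^{(N)}$ and $\Pi\pmu_{k-1}^{(N)}$ directly, as both satisfy the LSI with constant $\alpha_\mu$).
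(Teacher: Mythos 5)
Your overall strategy is sound and close in spirit to the paper's, but it diverges in one structural way that loses a constant factor, and as a result the remainder you obtain does not match the one claimed in the lemma.

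The paper first uses the maximality of $\widehat J_k^{(N)}$ to get
$\widehat J_k^{(N)}(\delta_k)\le \widehat J_{k-1}^{(N)}(\delta_k)-\tfrac{\lambda\beta_k}{N}\big(\KL(\pmu_k^{(N)}\Vert\rho^{\mu\otimes N})+\KL(\pnu_k^{(N)}\Vert\rho^{\nu\otimes N})\big)$,
and then evaluates $\widehat J_{k-1}^{(N)}(\delta_k)-\widehat J_{k-1}^{(N)}(\delta_{k-1})$ by interpolating $\delta_k^\mu(s)=\delta_{k-1}^\mu+s\beta_k\tfrac{\delta\!\LL}{\delta\mu}(\mu_{\mathscr X_k},\nu_{\mathscr Y_k})$ and using the functional derivative from Lemma \ref{thm-NJhat}, which is $-\Pi\pmu_{k-1}^{(N)}(\delta_k^\mu(s))+\rho^\mu$. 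Crucially, \emph{all} proximals that arise in this step live at index $k-1$, so the swap to $\Pi\pmu_{k-1}^{(N)}(\delta_{k-1}^\mu)=\Pi\pmu_{k-1}^{(N)}$ uses Proposition \ref{thm-NJhatlip} at index $k-1$ with $m_\mu=sM_\mu\beta_k$, giving $W_1\le \tfrac{sM_\mu\beta_k}{\alpha_\mu\lambda B_{k-1}}$. Integrating $s$ over $[0,1]$ produces the stated $\tfrac{M_\mu^2\beta_k^2}{2\alpha_\mu\lambda B_{k-1}}$.

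You instead lower bound $\widehat J_{k-1}^{(N)}(\delta_{k-1})$ by plugging in $(\pmu_k^{(N)},\pnu_k^{(N)})$, which hands you the linear term against $\Pi\pmu_k^{(N)}$ (index $k$). The correction to $\Pi\pmu_{k-1}^{(N)}$ must then compare proximals at \emph{different} indices, so Proposition \ref{thm-NJhatlip} does not apply directly; you need the re-indexing trick of Lemma \ref{thm-timebound}, which rewrites $\pmu_{k-1}^{(N)}=\pmu_k^{(N)}\big(\tfrac{B_k}{B_{k-1}}\delta_{k-1}^\mu\big)$ and leads to Lipschitz constant $2M_\mu\beta_k$ and denominator $B_k$, i.e.\ $W_1\le \tfrac{2M_\mu\beta_k}{\alpha_\mu\lambda B_k}$. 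Multiplying by $\beta_k M_\mu$ gives a remainder of $\big(\tfrac{M_\mu^2}{\alpha_\mu}+\tfrac{M_\nu^2}{\alpha_\nu}\big)\tfrac{2\beta_k^2}{\lambda B_k}$. This is the same asymptotic order, but it is not the constant in the lemma: the ratio to the stated bound is $4B_{k-1}/B_k\to 4$ for $\beta_k$ with $r=0$ (and similar for general $r$), so your version is strictly weaker. The reason you cannot "keep $B_{k-1}$ in the denominator with $m_\mu=\beta_k M_\mu$" as you suggest is precisely that $\pmu_k^{(N)}$ is a proximal at index $k$; there is no version of Proposition \ref{thm-NJhatlip} that compares proximals at two different indices without the factor-of-two re-indexing overhead. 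To recover the exact constant, follow the paper and interpolate \emph{inside} $\widehat J_{k-1}^{(N)}$ (so everything stays at index $k-1$) rather than evaluating the subsequent step's maximizer first.
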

\begin{proof}
By the maximality of $\widehat{J}_k^{(N)}$,
\begin{align*}
&\widehat{J}_k^{(N)}(\delta_k^\mu,\delta_k^\nu) = J_k^{(N)}(\pmu_k^{(N)}(\delta_k^\mu), \pmu_k^{(N)}(\delta_k^\nu)| \delta_k^\mu, \delta_k^\nu)\\
& = J_{k-1}^{(N)}(\pmu_k^{(N)}(\delta_k^\mu), \pmu_k^{(N)}(\delta_k^\nu)| \delta_k^\mu, \delta_k^\nu) - \frac{\lambda\beta_k}{N} \left(\KL(\pmu_k^{(N)}(\delta_k^\mu) \Vert\rho^{\mu\otimes N})+ \KL(\pnu_k^{(N)}(\delta_k^\nu) \Vert\rho^{\nu\otimes N}) \right)\\
&\leq \widehat{J}_{k-1}^{(N)}(\delta_k^\mu,\delta_k^\nu)- \frac{\lambda\beta_k}{N} \left(\KL(\pmu_k^{(N)} \!\Vert\rho^{\mu\otimes N})+ \KL(\pnu_k^{(N)}\!\Vert\rho^{\nu\otimes N}) \right).
\end{align*}
Further defining the interpolations
\begin{equation*}
\delta_k^\mu(s) = \delta_{k-1}^\mu +s(\delta_k^\mu-\delta_{k-1}^\mu) = \sum_{j=1}^{k-1} \beta_j \frac{\delta \!\LL}{\delta\mu}(\mu_{\mathscr{X}_j},\nu_{\mathscr{Y}_j}) + s\beta_k \frac{\delta \!\LL}{\delta\mu}(\mu_{\mathscr{X}_k},\nu_{\mathscr{Y}_k}), \quad 0\leq s\leq 1
\end{equation*}
and similarly for $\delta_k^\nu(s)$, we have
\begin{align*}
&\widehat{J}_{k-1}^{(N)}(\delta_k^\mu,\delta_k^\nu) - \widehat{J}_{k-1}^{(N)}(\delta_{k-1}^\mu,\delta_{k-1}^\nu) = \int_0^1\frac{\rd}{\rd s} \widehat{J}_{k-1}^{(N)}(\delta_k^\mu(s), \delta_k^\nu(s))\rd s\\
&= \int_0^1 \int_{\XX} (\delta_k^\mu-\delta_{k-1}^\mu) \frac{\delta\widehat{J}_{k-1}^{(N)}}{\delta\zeta^\mu}(\delta_k^\mu(s), \delta_k^\nu(s)) (\rd x) + \int_{\YY} (\delta_k^\nu-\delta_{k-1}^\nu) \frac{\delta\widehat{J}_{k-1}^{(N)}}{\delta\zeta^\nu}(\delta_k^\mu(s), \delta_k^\nu(s)) (\rd y) \rd s\\
&= \int_0^1 \int_{\XX} -(\delta_k^\mu-\delta_{k-1}^\mu) \Pi\pmu_{k-1}^{(N)}(\delta_k^\mu(s)) (\rd x) + \int_{\YY} (\delta_k^\nu-\delta_{k-1}^\nu) \Pi\pnu_{k-1}^{(N)}(\delta_k^\nu(s)) (\rd y) \rd s\\
&\qquad +\int_{\XX} (\delta_k^\mu-\delta_{k-1}^\mu) \rho^\mu(\rd x) - \int_{\YY}(\delta_k^\nu-\delta_{k-1}^\nu) \rho^\nu(\rd y)\\
&\leq \int_0^1 \int_{\XX} -(\delta_k^\mu-\delta_{k-1}^\mu) \Pi\pmu_{k-1}^{(N)}(\delta_{k-1}^\mu) (\rd x) + \int_{\YY} (\delta_k^\nu-\delta_{k-1}^\nu) \Pi\pnu_{k-1}^{(N)}(\delta_{k-1}^\nu) (\rd y) \rd s\\
&\qquad +\int_{\XX} (\delta_k^\mu-\delta_{k-1}^\mu) \rho^\mu(\rd x) - \int_{\YY}(\delta_k^\nu-\delta_{k-1}^\nu) \rho^\nu(\rd y)\\
&\qquad + \int_0^1 M_\mu\beta_k W_1(\Pi\pmu_{k-1}^{(N)}(\delta_k^\mu(s)), \Pi\pmu_{k-1}^{(N)}(\delta_{k-1}^\mu))\rd s\\
&\qquad + \int_0^1 M_\nu\beta_k W_1(\Pi\pnu_{k-1}^{(N)}(\delta_k^\nu(s)), \Pi\pnu_{k-1}^{(N)}(\delta_{k-1}^\nu)) \rd s \\
&\leq\beta_k \int_{\XX} -\frac{\delta \!\LL}{\delta\mu}(\mu_{\mathscr{X}_k},\nu_{\mathscr{Y}_k}) \Pi\pmu_{k-1}^{(N)} (\rd x) + \beta_k\int_{\YY} \frac{\delta \!\LL}{\delta\nu}(\mu_{\mathscr{X}_k},\nu_{\mathscr{Y}_k}) \Pi\pnu_{k-1}^{(N)} (\rd y)\\
&\qquad +\beta_k\int_{\XX} \frac{\delta \!\LL}{\delta\mu}(\mu_{\mathscr{X}_k},\nu_{\mathscr{Y}_k}) \rho^\mu(\rd x) - \beta_k\int_{\YY}\frac{\delta \!\LL}{\delta\nu}(\mu_{\mathscr{X}_k},\nu_{\mathscr{Y}_k}) \rho^\nu(\rd y) + \bigg(\frac{M_\mu^2}{\alpha_\mu}+\frac{M_\nu^2}{\alpha_\nu}\bigg)\frac{\beta_k^2}{2\lambda B_{k-1}},
\end{align*}
where for the first inequality we used the fact that $\delta_k^\mu-\delta_{k-1}^\mu$ is $M_\mu\beta_k$-Lipschitz, and for the second we applied Proposition \ref{thm-NJhatlip} with $m_\mu=sM_\mu\beta_k$.
\end{proof}

We now give the promised proof of the pushforward evolution bound.

\textbf{Proof of Lemma \ref{thm-timebound}.} Note that $\pmu_{k-1}^{(N)} = \pmu_{k-1}^{(N)}(\delta_{k-1}^\mu)$ may also be written as
\begin{equation*}
\pmu_{k-1}^{(N)} = \pmu_k^{(N)} \Bigg(\frac{B_k}{B_{k-1}} \sum_{j=1}^k \beta_j \frac{\delta \!\LL}{\delta\mu}(\mu_{\mathscr{X}_j},\nu_{\mathscr{Y}_j})\Bigg) = \pmu_k^{(N)}\left(\frac{B_k}{B_{k-1}} \delta_{k-1}^\mu\right).
\end{equation*}
Since $\delta_{k-1}^\mu / B_{k-1}$ is $M_\mu$-Lipschitz and
\begin{equation*}
\delta_k^\mu - \frac{B_k}{B_{k-1}} \delta_{k-1}^\mu = -\frac{\beta_k}{B_{k-1}} \sum_{j=1}^{k-1}\beta_j \frac{\delta \!\LL}{\delta\mu}(\mu_{\mathscr{X}_j},\nu_{\mathscr{Y}_j}) + \beta_k \frac{\delta \!\LL}{\delta\mu}(\mu_{\mathscr{X}_k},\nu_{\mathscr{Y}_k})
\end{equation*}
is $2M_\mu\beta_k$-Lipschitz, by Proposition \ref{thm-NJhatlip} we obtain the bound
\begin{equation*}
W_2(\Pi\pmu_k^{(N)}, \Pi\pmu_{k-1}^{(N)}) \leq\frac{2M_\mu\beta_k}{\alpha_\mu\lambda B_k}.
\end{equation*}
\qed

\subsection{Proof of Theorem \ref{thm-agdiscrete}}\label{appendix-discretemain}

\emph{Step 1.} We first prove a convergent upper bound of the following surrogate $\mathfrak{N}(\mu_{\overline{\mathscr{X}}_k}, \nu_{\overline{\mathscr{Y}}_k})$ for the NI error of the average distributions. Note that the defining maximum is lifted to the configuration space and the discrete empirical distributions have been replaced with their proximal counterparts for measuring relative entropy. While $\mathfrak{N}$ is not exactly the desired quantity, it arises naturally from the discrete conjugate argument and helps to bound the expected error.
\begin{align*}
&\mathfrak{N}(\mu_{\overline{\mathscr{X}}_k}, \nu_{\overline{\mathscr{Y}}_k})\\
&:=\! \max_{\mu^{(N)}, \nu^{(N)}} -\frac{1}{B_k}\sum_{j=1}^k \beta_j \!\LL(\Pi\mu^{(N)}, \nu_{\mathscr{Y}_j}) - \frac{\lambda}{N} \KL(\mu^{(N)}\Vert\rho^{\mu\otimes N}) + \frac{\lambda}{NB_k} \sum_{j=1}^k \beta_j \KL(\pnu_j^{(N)}\!\Vert\rho^{\nu\otimes N})\\
&\qquad + \frac{1}{B_k}\sum_{j=1}^k \beta_j \!\LL(\mu_{\mathscr{X}_j}, \Pi\nu^{(N)}) - \frac{\lambda}{N}  \KL(\nu^{(N)}\Vert\rho^{\nu\otimes N}) +\frac{\lambda}{NB_k} \sum_{j=1}^k \beta_j \KL(\pmu_j^{(N)}\!\Vert\rho^{\mu\otimes N}) \\
&\leq \max_{\mu^{(N)}, \nu^{(N)}} - \int_{\XX^N}\int_{\XX} \frac{1}{B_k}\sum_{j=1}^k \beta_j \frac{\delta \!\LL}{\delta\mu}(\mu_{\mathscr{X}_j}, \nu_{\mathscr{Y}_j}) (\mu_\mathscr{X}-\mu_{\mathscr{X}_j})(\rd x) \mu^{(N)}(\rd\mathscr{X}) \\
&\qquad + \int_{\YY^N}\int_{\YY} \frac{1}{B_k}\sum_{j=1}^k \beta_j \frac{\delta \!\LL}{\delta\nu}(\mu_{\mathscr{X}_j}, \nu_{\mathscr{Y}_j}) (\nu_\mathscr{Y}-\nu_{\mathscr{Y}_j})(\rd y) \nu^{(N)}(\rd\mathscr{Y})\\
&\qquad - \frac{\lambda}{N} \left(\KL(\mu^{(N)}\Vert\rho^{\mu\otimes N}) + \KL(\nu^{(N)}\Vert\rho^{\nu\otimes N})\right)\\
&\qquad + \frac{\lambda}{NB_k} \sum_{j=1}^k \beta_j \left(\KL(\pmu_j^{(N)}\!\Vert\rho^{\mu\otimes N}) + \KL(\pnu_j^{(N)}\!\Vert\rho^{\nu\otimes N})\right)\\
&=\frac{1}{B_k} \Bigg[\widehat{J}_k^{(N)}(\delta_k^\nu, \delta_k^\nu) + \frac{\lambda}{N} \sum_{j=1}^k \beta_j \left(\KL(\pmu_j^{(N)}\!\Vert\rho^{\mu\otimes N}) + \KL(\pnu_j^{(N)}\!\Vert\rho^{\nu\otimes N})\right)\\
&\qquad + \int_{\XX}\sum_{j=1}^k \beta_j \frac{\delta \!\LL}{\delta\mu}(\mu_{\mathscr{X}_j}, \nu_{\mathscr{Y}_j}) (\mu_{\mathscr{X}_j}-\rho^\mu)(\rd x) - \int_{\YY} \sum_{j=1}^k \beta_j \frac{\delta \!\LL}{\delta\nu}(\mu_{\mathscr{X}_j}, \nu_{\mathscr{Y}_j}) (\nu_{\mathscr{Y}_j}-\rho^\nu)(\rd y)\Bigg],
\end{align*}
due to the convex-concavity of $\LL$. Recursively applying Lemma \ref{thm-onestep} then yields
\begin{align*}
&\mathfrak{N}(\mu_{\overline{\mathscr{X}}_k}, \nu_{\overline{\mathscr{Y}}_k})\\
&\leq \frac{1}{B_k}\Bigg[ \sum_{j=2}^k \left(\widehat{J}_j^{(N)}(\delta_j^\nu, \delta_j^\nu)- \widehat{J}_{j-1}^{(N)}(\delta_{j-1}^\nu, \delta_{j-1}^\nu)\right) + \frac{1}{B_k} \widehat{J}_1^{(N)}(\delta_1^\nu, \delta_1^\nu)\\
&\qquad + \int_{\XX}\sum_{j=1}^k \beta_j \frac{\delta \!\LL}{\delta\mu}(\mu_{\mathscr{X}_j}, \nu_{\mathscr{Y}_j}) (\mu_{\mathscr{X}_j}-\rho^\mu)(\rd x) - \int_{\YY} \sum_{j=1}^k \beta_j \frac{\delta \!\LL}{\delta\nu}(\mu_{\mathscr{X}_j}, \nu_{\mathscr{Y}_j}) (\nu_{\mathscr{Y}_j}-\rho^\nu)(\rd y) \\
&\qquad + \frac{\lambda}{N} \sum_{j=1}^k \beta_j \left(\KL(\pmu_j^{(N)}\!\Vert\rho^{\mu\otimes N}) + \KL(\pnu_j^{(N)}\!\Vert\rho^{\nu\otimes N})\right)\Bigg] \\
&\leq \frac{1}{B_k}\Bigg[\sum_{j=1}^k \beta_j \int_{\XX} \frac{\delta \!\LL}{\delta\mu}(\mu_{\mathscr{X}_j}, \nu_{\mathscr{Y}_j}) (\mu_{\mathscr{X}_j}-\Pi\pmu_{j-1}^{(N)})(\rd x)\\
&\qquad- \sum_{j=1}^k \beta_j\int_{\YY} \frac{\delta \!\LL}{\delta\nu}(\mu_{\mathscr{X}_j}, \nu_{\mathscr{Y}_j}) (\nu_{\mathscr{Y}_j}-\Pi\pnu_{j-1}^{(N)})(\rd y) +\frac{1}{2\lambda}\bigg(\frac{M_\mu^2}{\alpha_\mu}+\frac{M_\nu^2}{\alpha_\nu}\bigg) \sum_{j=2}^k \frac{\beta_j^2}{B_{j-1}}\Bigg],
\end{align*}
where the initial term is substituted as $\widehat{J}_1^{(N)}(\delta_1^\nu, \delta_1^\nu) = J_1^{(N)}(\pmu_1^{(N)}, \pnu_1^{(N)}\! | \delta_1^\nu, \delta_1^\nu)$ with the convention that $\pmu_0^{(N)}=\pmu_1^{(N)}, \pnu_0^{(N)}=\pnu_1^{(N)}$.
Now taking the expectation over the full history and applying Proposition \ref{thm-alternative}, we arrive at
\begin{align*}
&\EEbig{(\mathscr{X},\mathscr{Y})_{1:k}}{\mathfrak{N}(\mu_{\overline{\mathscr{X}}_k}, \nu_{\overline{\mathscr{Y}}_k})} \\
&\leq \frac{1}{B_k}\mathbb{E}_{(\mathscr{X},\mathscr{Y})_{1:k}} \Bigg[\sum_{j=1}^k \beta_j \int_{\XX} \frac{\delta \!\LL}{\delta\mu}(\mu_{\mathscr{X}_j}, \nu_{\mathscr{Y}_j}) (\mu_{\mathscr{X}_j}-\Pi\pmu_{j-1}^{(N)})(\rd x)\\
&\qquad- \sum_{j=1}^k \beta_j\int_{\YY} \frac{\delta \!\LL}{\delta\nu}(\mu_{\mathscr{X}_j}, \nu_{\mathscr{Y}_j}) (\nu_{\mathscr{Y}_j}-\Pi\pnu_{j-1}^{(N)})(\rd y) +\frac{1}{2\lambda}\bigg(\frac{M_\mu^2}{\alpha_\mu}+\frac{M_\nu^2}{\alpha_\nu}\bigg) \sum_{j=2}^k \frac{\beta_j^2}{B_{j-1}}\Bigg]\\
&\leq \frac{1}{B_k} \Bigg[ 2\sum_{j=1}^k \beta_j\left(\frac{r+1}{j}C_1(\eta) + C_2\sqrt{\eta} +\frac{C_3}{\sqrt{N}}\right) +\frac{1}{2\lambda}\bigg(\frac{M_\mu^2}{\alpha_\mu}+\frac{M_\nu^2}{\alpha_\nu}\bigg) \sum_{j=2}^k \frac{\beta_j^2}{B_{j-1}}\Bigg]\\
& \leq \left(\frac{(r+1)^2}{rk}+O(k^{-2})\right)\left(2C_1(\eta)+\frac{1}{2\lambda}\bigg(\frac{M_\mu^2}{\alpha_\mu}+\frac{M_\nu^2}{\alpha_\nu}\bigg)\right) +2C_2\sqrt{\eta} +\frac{2C_3}{\sqrt{N}}\\
&\leq \left(\frac{(r+1)^2}{rk}+O(k^{-2})\right)\cdot\frac{9}{4}C_1(\eta) +2C_2\sqrt{\eta} +\frac{2C_3}{\sqrt{N}}
\end{align*}
by simply using $\ell>1$. For $r=0$, the last expression is replaced by the exact bound
\begin{equation*}
\frac{1+\log k}{k} \cdot\frac{9}{4}C_1(\eta) +2C_2\sqrt{\eta} +\frac{2C_3}{\sqrt{N}}.
\end{equation*}

\emph{Step 2.}  We now control the NI error of the averaged pushforward proximal distributions using $\mathfrak{N}$. In the defining maximum over $\mu^{(N)}\in\PP(\XX^N), \nu^{(N)}\in\PP(\YY^N)$, we may restrict to product distributions $\mu^{(N)} = \mu^{\otimes N}, \nu^{(N)} = \nu^{\otimes N}$ so that
\begin{align*}
& \EEbig{(\mathscr{X},\mathscr{Y})_{1:k}}{\mathfrak{N}(\mu_{\overline{\mathscr{X}}_k}, \nu_{\overline{\mathscr{Y}}_k})}\\
&\geq \mathbb{E}_{(\mathscr{X},\mathscr{Y})_{1:k}}\Bigg[\max_{\mu,\nu} -\frac{1}{B_k}\sum_{j=1}^k \beta_j \!\LL(\mu, \nu_{\mathscr{Y}_j}) - \lambda \KL(\mu\Vert\rho^\mu) + \frac{\lambda}{B_k} \sum_{j=1}^k \beta_j \KL(\Pi\pnu_j^{(N)}\!\Vert\rho^\nu)\\
&\qquad + \frac{1}{B_k}\sum_{j=1}^k \beta_j \!\LL(\mu_{\mathscr{X}_j}, \nu) - \lambda  \KL(\nu\Vert\rho^\nu) +\frac{\lambda}{B_k} \sum_{j=1}^k \beta_j \KL(\Pi\pmu_j^{(N)}\!\Vert\rho^\mu)\Bigg] \\
&\geq \max_{\mu,\nu} \mathbb{E}_{(\mathscr{X},\mathscr{Y})_{1:k}}\Bigg[-\frac{1}{B_k}\sum_{j=1}^k \beta_j \!\LL(\mu, \nu_{\mathscr{Y}_j}) - \lambda \KL(\mu\Vert\rho^\mu) + \frac{\lambda}{B_k} \sum_{j=1}^k \beta_j \KL(\Pi\pnu_j^{(N)}\!\Vert\rho^\nu)\\
&\qquad + \frac{1}{B_k}\sum_{j=1}^k \beta_j \!\LL(\mu_{\mathscr{X}_j}, \nu) - \lambda  \KL(\nu\Vert\rho^\nu) +\frac{\lambda}{B_k} \sum_{j=1}^k \beta_j \KL(\Pi\pmu_j^{(N)}\!\Vert\rho^\mu)\Bigg] \\
&\geq \max_{\mu,\nu} -\LL(\mu, \E{\nu_{\overline{\mathscr{Y}}_k}}) - \lambda \KL(\mu\Vert\rho^\mu) + \lambda \KL(\E{\overline{\Pi}\pnu_k} \Vert\rho^\nu)\\
&\qquad +\LL(\E{\mu_{\overline{\mathscr{X}}_k}}, \nu) -\lambda \KL(\nu\Vert\rho^\nu) + \lambda \KL(\E{\overline{\Pi}\pmu_k} \Vert\rho^\mu)
\end{align*}
by convex-concavity of $\LL$ as well as convexity of KL divergence, where we have written
\begin{equation*}
\overline{\Pi}\pmu_k := \frac{1}{B_k} \sum_{j=1}^k\beta_j \Pi\pmu_j^{(N)}, \quad \overline{\Pi}\pnu_k := \frac{1}{B_k} \sum_{j=1}^k\beta_j \Pi\pnu_j^{(N)}.
\end{equation*}
Again by Proposition \ref{thm-alternative}, this is further bounded as
\begin{align*}
& \EEbig{(\mathscr{X},\mathscr{Y})_{1:k}}{\mathfrak{N}(\mu_{\overline{\mathscr{X}}_k}, \nu_{\overline{\mathscr{Y}}_k})}\\
&\geq \max_{\mu,\nu} - \LL_\lambda(\mu,\E{\overline{\Pi}\pnu_k}) - \mathbb{E}_{(\mathscr{X},\mathscr{Y})_{1:k}}\Bigg[ \frac{1}{B_k}\int_{\YY} \sum_{j=1}^k \beta_j \frac{\delta \!\LL}{\delta\nu}(\mu, \E{\overline{\Pi}\pnu_k}) (\nu_{\mathscr{Y}_j} -\Pi\pnu_j^{(N)})(\rd y) \Bigg]\\
&\qquad + \LL_\lambda(\E{\overline{\Pi}\pmu_k}, \nu) + \mathbb{E}_{(\mathscr{X},\mathscr{Y})_{1:k}}\Bigg[\frac{1}{B_k}\int_{\XX} \sum_{j=1}^k \beta_j \frac{\delta \!\LL}{\delta\mu}(\E{\overline{\Pi}\pmu_k},\nu) (\mu_{\mathscr{X}_j} -\Pi\pmu_j^{(N)})(\rd x) \Bigg]\\
&\geq \NI(\E{\overline{\Pi}\pmu_k}, \E{\overline{\Pi}\pnu_k}) - \left(\frac{(r+1)^2}{rk}+O(k^{-2})\right) \cdot 2C_1(\eta) - 2C_2\sqrt{\eta} -\frac{2C_3}{\sqrt{N}},
\end{align*}
with the appropriate modification for $r=0$.

\emph{Step 3.} Finally, we convert the above pushforward proximal bounds back to a Wasserstein distance bound for the expected empirical measures. By Lemma \ref{thm-sandwichlu} and Talagrand's inequality for the MNE $(\mu^*,\nu^*)$,
\begin{align*}
&W_2^2(\E{\overline{\Pi}\pmu_k}, \mu^*) + W_2^2(\E{\overline{\Pi}\pnu_k}, \nu^*)\\
&\leq \frac{2}{\smash[b]{\alpha_\mu}}\vee \frac{2}{\alpha_\nu}
\left(\KL(\E{\overline{\Pi}\pmu_k}\Vert  \mu^*) + \KL(\E{\overline{\Pi}\pnu_k}\Vert  \nu^*)\right)\\
&\leq \frac{2}{\smash[b]{\alpha_\mu\lambda}}\vee \frac{2}{\alpha_\nu\lambda} \NI(\E{\overline{\Pi}\pmu_k}, \E{\overline{\Pi}\pnu_k})\\
&\leq \frac{2}{\smash[b]{\alpha_\mu\lambda}}\vee \frac{2}{\alpha_\nu\lambda} \Bigg[\left(\frac{(r+1)^2}{r k}+O(k^{-2})\right)\cdot\frac{17}{4}C_1(\eta) +4C_2\sqrt{\eta} +\frac{4C_3}{\sqrt{N}}\Bigg].
\end{align*}
Note also by Proposition \ref{thm-alternative} and Lemma \ref{thm-timebound} that
\begin{align*}
&M_\mu W_1(\E{\mu_{\overline{\mathscr{X}}_k}}, \E{\overline{\Pi}\pmu_k})\\
&= \sup_{\norm{F}_{\Lip}\leq M_\mu} \EEbig{ (\mathscr{X},\mathscr{Y})_{1:k}}{\frac{1}{B_k}\sum_{j=1}^k\beta_j \int_{\XX} F(\mu_{\mathscr{X}_j} - \Pi\pmu_{j}^{(N)})(\rd x)}\\
&\leq \frac{1}{B_k}\sum_{j=1}^k\beta_j \left(\frac{r+1}{j}C_1(\eta) + C_2\sqrt{\eta} +\frac{C_3}{\sqrt{N}}\right) + \frac{1}{B_k}\sum_{j=2}^k\beta_j W_1(\Pi\pmu_{j}^{(N)}, \Pi\pmu_{j-1}^{(N)})\\
&\leq \frac{1}{B_k}\sum_{j=1}^k\beta_j \left(\frac{r+1}{j}C_1(\eta) + C_2\sqrt{\eta} +\frac{C_3}{\sqrt{N}}\right) + \frac{2M_\mu}{\alpha_\mu\lambda B_k}\sum_{j=2}^k \frac{\beta_j^2}{B_j}\\
&\leq \left(\frac{(r+1)^2}{rk}+O(k^{-2})\right)\cdot\frac{3}{2}C_1(\eta) +C_2\sqrt{\eta} +\frac{C_3}{\sqrt{N}},
\end{align*}
so the square of this term can be ignored. Hence we can conclude that
\begin{equation*}
W_1^2(\E{\mu_{\overline{\mathscr{X}}_k}}, \mu^*) + W_1^2(\E{\nu_{\overline{\mathscr{Y}}_k}}, \nu^*) \leq \frac{(r+1)^2}{rk} \widetilde{C}_1(\eta) +\widetilde{C}_2\sqrt{\eta} +\frac{\widetilde{C}_3}{\sqrt{N}},
\end{equation*}
again with the $1+ \log k$ modification when $r=0$. \qed

\subsection{Expected Wasserstein Distance}\label{appendix-outside}

Theorem \ref{thm-agdiscrete} gives error bounds for the expected distributions $\E{\mu_{\overline{\mathscr{X}}_k}}$ and $\E{\nu_{\overline{\mathscr{Y}}_k}}$. This quantifies a sort of bias of the MFL-AG outputs, but does not tell us anything about the variance. Can we similarly bound the expected distance $\E{W_1(\mu_{\overline{\mathscr{X}}_k}, \mu^*) + W_1(\nu_{\overline{\mathscr{Y}}_k}, \nu^*)}$ of the empirical distributions to the MNE? The following fundamental fact about Wasserstein distance tells us that this is impossible:

\begin{thm}[Rate of convergence of the empirical measure, adapted from \citet{Fournier15}, Theorem 1]\label{thm-empirical}
Let $X^i$ be independent samples drawn from $\mu^i\in\PP(\RR^d)$ for each $i\in [N]$. If $d\geq 3$, the 1-Wasserstein distance between the empirical measure $\mu_\mathscr{X} = \frac{1}{N}\sum_{i=1}^N\delta_{X^i}$ and the underlying averaged measure $\mu=\frac{1}{N}\sum_{i=1}^N\mu^i$ is bounded in expectation as
\begin{equation*}
    \E{W_1(\mu_\mathscr{X}, \mu)} \leq C_W \sqrt{\mathfrak{m}_2(\mu)}\cdot N^{-1/d},
\end{equation*}
where $\mathfrak{m}_2(\mu)$ is the raw second moment of $\mu$ and $C_W$ is a universal constant. If $d=2$, the rate is $O(N^{-1/2} (\log N)^2)$; if $d=1$, the rate is $O(N^{-1/2})$. Furthermore, this rate is tight up to constants.
\end{thm}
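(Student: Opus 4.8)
The plan is to verify that the multiscale argument of \citet{Fournier15} is insensitive to the i.i.d.\ assumption: the only place identical distribution is ever used is a variance estimate for the measure of a fixed set, and for independent (not necessarily identically distributed) samples this estimate survives verbatim once the reference measure $\mu$ is taken to be the average $\frac1N\sum_i\mu^i$. Everything else in the Fournier--Guillin proof is deterministic geometry of dyadic partitions and is reused unchanged.

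First I would record the key probabilistic input. For any Borel set $A\subseteq\RR^d$, $\mu_\mathscr{X}(A)=\frac1N\sum_{i=1}^N\mathbf 1_{X^i\in A}$ is an average of \emph{independent} Bernoulli variables with parameters $\mu^i(A)$, so $\E{\mu_\mathscr{X}(A)}=\mu(A)$ and, by Cauchy--Schwarz together with additivity of variance,
\[
\E{|\mu_\mathscr{X}(A)-\mu(A)|}\le \Var(\mu_\mathscr{X}(A))^{1/2}=\Big(\tfrac1{N^2}\sum_{i=1}^N\mu^i(A)(1-\mu^i(A))\Big)^{1/2}\le\sqrt{\tfrac{\mu(A)}{N}}.
\]
For a finite partition $\{A_1,\dots,A_m\}$, another application of Cauchy--Schwarz gives $\sum_j\E{|\mu_\mathscr{X}(A_j)-\mu(A_j)|}\le\sqrt{m/N}$, since $\sum_j\mu(A_j)=1$. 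These are exactly the bounds used in the i.i.d.\ case, so no loss is incurred by dropping identical distribution.

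Next I would run the standard chaining scheme. Decompose $\RR^d$ into the unit ball and dyadic annuli $\mathcal C_\ell=\{2^{\ell-1}\le\|x\|<2^\ell\}$, $\ell\ge1$; on each region impose nested dyadic cube partitions $(P_n)_{n\ge0}$ at scale $2^{-n}$, and invoke the deterministic inequality bounding $W_1(\mu_\mathscr{X},\mu)$ restricted to a region by $\sum_{n\ge0}2^{-n}\sum_{F\in P_n}|\mu_\mathscr{X}(F)-\mu(F)|$ plus a residual fine-scale term. Taking expectations with the displayed estimate and using $\#P_n\lesssim 2^\ell\,2^{nd}$ for $\mathcal C_\ell$, the series reduces to $\sum_{n\le n_0}2^{-n}(2^\ell 2^{nd}/N)^{1/2}$ up to a cutoff $n_0$; optimizing $n_0$ yields an $N^{-1/d}$ contribution for $d\ge3$, extra logarithmic factors for $d=2$, and a convergent $N^{-1/2}$ series for $d=1$. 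The outer sum over annuli converges because Markov's inequality gives $\mu(\mathcal C_\ell)\le 2^{-2(\ell-1)}\mathfrak m_2(\mu)$, so the weight $\sqrt{2^\ell\mu(\mathcal C_\ell)}\lesssim 2^{-\ell/2}\sqrt{\mathfrak m_2(\mu)}$ is summable; this is precisely where $\sqrt{\mathfrak m_2(\mu)}$ enters the final constant $C_W$. Tightness follows by specializing to $\mu^i\equiv\mu$, which reduces to the i.i.d.\ lower bound already established in \citet{Fournier15} (e.g.\ $\mu$ uniform on the unit cube).

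The only genuinely delicate point is moment bookkeeping: a second moment is exactly enough for the annular tail decomposition in every dimension $d\ge1$ — it is at the borderline for $d=2$, which is why two logarithmic factors are quoted rather than one — and one must check that every appeal to concentration in \citet{Fournier15} uses only independence, not identical distribution, which it does since the variance of a sum of independent indicators is additive regardless of their parameters. Beyond this verification the argument is a routine transcription.
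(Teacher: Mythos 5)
Your proposal is correct and follows essentially the same route as the paper: both proofs isolate the single place where \citet{Fournier15} use identical distribution — the second-moment estimate for $\mu_\mathscr{X}(A)-\mu(A)$ — and observe that additivity of variance for independent Bernoullis yields the identical bound $\E{|\mu_\mathscr{X}(A)-\mu(A)|}\le 2\mu(A)\wedge\sqrt{\mu(A)/N}$ once the reference measure is $\mu=\frac1N\sum_i\mu^i$, after which the multiscale dyadic argument is reused verbatim and the resulting double series is computed. One minor point worth making explicit: you only display the $\sqrt{\mu(A)/N}$ side of the minimum; the paper also records the trivial alternative $2\mu(A)$, which is what powers the ``residual fine-scale term'' and cutoff $n_0$ you invoke — you clearly intend it, but it is worth writing out since the series optimization requires both regimes. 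Your tightness observation (specializing $\mu^i\equiv\mu$ to recover the i.i.d.\ lower bound) is a reasonable addition; the paper leaves that clause to the cited references.
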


\begin{proof}
The original theorem only considers i.i.d. samples $\mu^1=\cdots=\mu^N=\mu$ and omits the $W_1$ case for simplicity, so we present the necessary modifications.

For a Borel subset $A\subset\RR^d$, the quantity $N\mu_\mathscr{X}(A)$ is not distributed as $\text{Binomial}(N,\mu(A))$ but as a sum of independent $\text{Bernoulli}(\mu^i(A))$ random variables. Nonetheless, we obtain the same bound
\begin{align*}
\E{|\mu_\mathscr{X}(A)-\mu(A)|} &\leq (\E{\mu_\mathscr{X}(A)}+\mu(A))\wedge \sqrt{\Var \mu_\mathscr{X}(A)}\\
&\leq 2\mu(A)\wedge \sqrt{\mu(A)/N}.
\end{align*}
We now repeat the same arguments and substitute $p=1,q=2$ to arrive at the following inequality,
\begin{equation*}
\E{W_1(\mu_\mathscr{X}, \mu)} \leq C \sqrt{\mathfrak{m}_2(\mu)}\cdot \sum_{n=0}^\infty \sum_{m=0}^\infty 2^{-m} (2^{-n}\wedge (2^{dm}/N)^{1/2})
\end{equation*}
from which point we give explicit computations. Defining
\begin{equation*}
m_N=\left\lceil\frac{\log_2 N}{d}\right\rceil, \quad n_m= \left\lceil \frac{\log_2 N-dm}{2}\right\rceil,
\end{equation*}
we have for $d\geq 3$ that
\begin{align*}
&\sum_{n=0}^\infty \sum_{m=0}^\infty 2^{-m} (2^{-n}\wedge (2^{dm}/N)^{1/2})\\
&= \sum_{m=0}^{m_N-1} 2^{-m}n_m(2^{dm}/N)^{1/2} +\sum_{m=0}^{m_N-1} \sum_{n=n_m}^\infty 2^{-m-n}+ \sum_{m=m_N}^\infty \sum_{n=0}^\infty 2^{-m-n}\\
&\leq \frac{1}{2\sqrt{N}} \sum_{m=0}^{m_N-1} (dm_N-dm+2) 2^{(d/2-1)m} + \sum_{m=0}^{m_N-1} 2^{1-m-n_m} + 2^{2-m_N}\\
&\leq \frac{(2+d)2^{(d/2-1)(m_N+1)}}{(2^{d/2}-2)\sqrt{N}} + \frac{2^{2+(d/2-1)m_N}}{(2^{d/2}-2)\sqrt{N}} + 2^{2-m_N}\\
&=O(N^{-1/d}).
\end{align*}
When $d=2$, the rate is easily checked to be $N^{-1/2}(\log N)^2$. The tight rate in one dimension is derived using different techniques in \cite{Bobkov16}, Section 3.
\end{proof}

That is, even in the ideal case where chaos does not propagate and the particles are somehow i.i.d. sampled directly from the true distribution, the expected Wasserstein distance will always be of order $N^{-1/d_{\XX}\vee d_{\YY}}$, automatically incurring the curse of dimensionality. We emphasize that the uniform law of large numbers and short-term perturbation methods developed throughout Section \ref{appendix-disc} as well as the presentation of Theorem \ref{thm-agdiscrete} have been carefully designed to bypass this technicality.

Nevertheless, it is still possible to bound the expected Wasserstein distance in a similar manner save for the unavoidable $N^{-1/d_{\XX}\vee d_{\YY}}$ dependency.\footnote{Of course, we may also simply run the algorithm multiple ($M$) times and take the average of the outputs, which would also bypass the issue and yield the standard $1/\sqrt{M}$ convergence.} We first present a more direct bound for the proximal gap.

\begin{prop}\label{thm-altalt}
The following inequality holds for all $k$,
\begin{equation*}
\Ebig{W_1(\mu_{\mathscr{X}_k}, \Pi\pmu_k^{(N)})} \leq \frac{r+1}{k}C_1'(\eta) + C_2'\sqrt{\eta} + C_3'N^{-1/d_{\XX}}.
\end{equation*}
\end{prop}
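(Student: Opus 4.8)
The plan is to follow the skeleton of the proof of Proposition~\ref{thm-alternative}, but with the weak test-function quantity replaced by the genuine transport distance $W_1(\mu_{\mathscr{X}_k},\Pi\pmu_k^{(N)})$. The essential structural change is that, because the supremum over $1$-Lipschitz test functions now sits \emph{inside} the expectation, the signed leave-one-out cancellation that produced the $O(1/\sqrt N)$ term in Proposition~\ref{thm-alternative} is unavailable; it must be replaced by the empirical-measure concentration estimate of Theorem~\ref{thm-empirical}, which is exactly what introduces the dimension-dependent $O(N^{-1/d_{\XX}})$ rate. Throughout I take $\ell=\ell^\mu=O(\eta^{-1}\log\eta^{-1})$ as in~\eqref{eq-ell} and first assume $k\geq 2\ell$.

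First I would introduce the gradient-stopped process $\widetilde{\mathscr{X}}$ with drift frozen at $\bb_{k-\ell}^\mu$ after step $k-\ell$, coupled to the original particles so that $\widetilde{\mathscr{X}}_j=\mathscr{X}_j$ for $j\leq k-\ell$, and bound $W_1(\mu_{\mathscr{X}_k},\Pi\pmu_k^{(N)})$ by the triangle inequality with the three terms $W_1(\mu_{\mathscr{X}_k},\mu_{\widetilde{\mathscr{X}}_k})$, $W_1(\mu_{\widetilde{\mathscr{X}}_k},\Pi\pmu_{k-\ell}^{(N)})$ and $W_1(\Pi\pmu_{k-\ell}^{(N)},\Pi\pmu_k^{(N)})$. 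The first is at most $W_2(\mu_{\mathscr{X}_k},\mu_{\widetilde{\mathscr{X}}_k})\leq\frac{r+1}{k-\ell+1}\mathfrak{w}_\ell^\mu$ by Lemma~\ref{thm-modifiederr}, and the third is at most $\sum_{j=k-\ell+1}^{k}W_2(\Pi\pmu_{j-1}^{(N)},\Pi\pmu_j^{(N)})\leq\sum_{j=k-\ell+1}^{k}\frac{2M_\mu\beta_j}{\alpha_\mu\lambda B_j}\leq\frac{2M_\mu(r+1)\ell}{\alpha_\mu\lambda(k-\ell+1)}$ by Lemma~\ref{thm-timebound} and $\beta_j/B_j\leq(r+1)/j$. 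Both are of the form $O(\tfrac{r+1}{k})$ times an $\eta$-dependent factor ($\mathfrak{w}_\ell^\mu$, resp.\ $\ell$), to be absorbed into $C_1'(\eta)=O(\mathfrak{w}_\ell^\mu)=O(\eta^{-1-2L_\mu/(\alpha_\mu\lambda)})$ exactly as in Proposition~\ref{thm-alternative}.

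The main term is $W_1(\mu_{\widetilde{\mathscr{X}}_k},\Pi\pmu_{k-\ell}^{(N)})$, which I would handle conditionally on $(\mathscr{X},\mathscr{Y})_{1:k-\ell}$: under this conditioning the frozen-drift Euler updates are independent across particles, so $\widetilde X_k^1,\dots,\widetilde X_k^N$ are independent with $\widetilde X_k^i\sim\tilde\mu_k^i:=\Law(\widetilde X_k^i\mid(\mathscr{X},\mathscr{Y})_{1:k-\ell})$. Split further by $W_1(\mu_{\widetilde{\mathscr{X}}_k},\Pi\pmu_{k-\ell}^{(N)})\leq W_1(\mu_{\widetilde{\mathscr{X}}_k},\tfrac1N\sum_i\tilde\mu_k^i)+\tfrac1N\sum_i W_1(\tilde\mu_k^i,\Pi\pmu_{k-\ell}^{(N)})$. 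For the first piece, Theorem~\ref{thm-empirical} (case $d_{\XX}\geq3$; the cases $d_{\XX}\leq2$ are handled identically with the corresponding rate from that theorem) gives a conditional bound $C_W\sqrt{\mathfrak{m}_2(\tfrac1N\sum_i\tilde\mu_k^i)}\,N^{-1/d_{\XX}}$, where $\mathfrak{m}_2(\tfrac1N\sum_i\tilde\mu_k^i)=\tfrac1N\sum_i\E{\norm{\widetilde X_k^i}^2\mid(\mathscr{X},\mathscr{Y})_{1:k-\ell}}\leq\tfrac1N\sum_i(\norm{X_{k-\ell}^i}^2\vee\mathfrak{s}^\mu)$ by the moment estimate inside the proof of Lemma~\ref{thm-leaveoneout}. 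For the second piece, each summand is at most $W_2(\tilde\mu_k^i,\Pi\pmu_{k-\ell}^{(N)})\leq\sqrt{\tfrac{4}{\alpha_\mu}(\mathfrak{K}^\mu\norm{X_{k-\ell}^i}^2+\mathfrak{L}^\mu)}$ by Talagrand's inequality for $\Pi\pmu_{k-\ell}^{(N)}$ (which satisfies the LSI with constant $\alpha_\mu$ by Proposition~\ref{thm-aglsi}, being of the form $\rho^\mu\exp(-\lambda^{-1}h)$ with $h$ $M_\mu$-Lipschitz) together with the choice $\ell=\ell^\mu$ in Proposition~\ref{thm-klcontraction}.

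Finally I would take the full expectation, apply Jensen's inequality to the square roots, and use the uniform second-moment bound $\E{\norm{X_{k-\ell}^i}^2}\leq\E{\norm{X_1^i}^2}+\mathfrak{s}^\mu$ (Lemma~\ref{thm-secondmoment}) with $\mathfrak{p}^\mu=\tfrac1N\sum_i\E{\norm{X_1^i}^2}$: the empirical piece becomes $C_W\sqrt{\mathfrak{p}^\mu+2\mathfrak{s}^\mu}\,N^{-1/d_{\XX}}=:C_3'N^{-1/d_{\XX}}$ and the proximal-gap piece becomes $\sqrt{\tfrac4{\alpha_\mu}(\mathfrak{K}^\mu(\mathfrak{p}^\mu+\mathfrak{s}^\mu)+\mathfrak{L}^\mu)}=:C_2'\sqrt\eta$, using $\mathfrak{K}^\mu,\mathfrak{L}^\mu=O(\eta)$ and substituting $\bar{\eta}$ to render $C_2',C_3'$ polynomial in the problem constants; the Step-1 contributions give $\tfrac{r+1}{k}C_1'(\eta)$ after $\tfrac1{k-\ell+1}\leq\tfrac2k$. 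For $k<2\ell$ one argues as in Step~5 of the proof of Proposition~\ref{thm-alternative}, bounding $W_1(\mu_{\mathscr{X}_k},\Pi\pmu_k^{(N)})$ directly without passing to the frozen process and checking that $C_1'(\eta)$ dominates; the analogous bound for $\nu$ follows by symmetry. The only genuinely new obstacle compared with Proposition~\ref{thm-alternative} is the one flagged above: the loss of the $1/\sqrt N$ cancellation forces the use of Theorem~\ref{thm-empirical} and hence the unavoidable $N^{-1/d_{\XX}}$ rate (cf.\ Appendix~\ref{appendix-outside}); all remaining ingredients are supplied by the lemmas already established.
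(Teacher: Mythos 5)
Your proposal is correct and matches the paper's argument: the paper's proof uses exactly the same four-term triangle decomposition through the gradient-stopped process (Lemma~\ref{thm-modifiederr} for $W_2(\mu_{\mathscr{X}_k},\mu_{\widetilde{\mathscr{X}}_k})$, Theorem~\ref{thm-empirical} for the empirical-measure deviation, Proposition~\ref{thm-klcontraction} with Talagrand for $\tfrac1N\sum_i W_2(\tilde\mu_k^i,\Pi\pmu_{k-\ell}^{(N)})$, and Lemma~\ref{thm-timebound} telescoped over $\ell$ steps for the proximal shift), and the same moment bounds from Lemma~\ref{thm-secondmoment}. The paper's proof is terser and only states the $d_{\XX}\geq 3$ case and $k\geq 2\ell$ explicitly, but your handling of the low-dimension and small-$k$ cases is the natural completion it implicitly defers to the proof of Proposition~\ref{thm-alternative}.
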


\begin{proof}
The derivations are similar and more straightforward compared to the proof of Proposition \ref{thm-alternative}. We only look at $k\geq 2\ell$ and directly compare $\mu_{\mathscr{X}_k}$ to $\mu_{\widetilde{\mathscr{X}}_k}$ using Lemma \ref{thm-modifiederr}, $\mu_{\widetilde{\mathscr{X}}_k}$ to the expected modified distribution using Theorem \ref{thm-empirical} (recall that the modified particle trajectories $\widetilde{X}_k^i$ are independent when conditioned on $(\mathscr{X},\mathscr{Y})_{1:k-\ell}$), the expected modified distribution to the stationary distribution $\Pi\pmu_{k-\ell}^{(N)}$ using Proposition \ref{thm-klcontraction}, and $\Pi\pmu_{k-\ell}^{(N)}$ back to $\Pi\pmu_k^{(N)}$ using Lemma \ref{thm-timebound}.
\begin{align*}
&\E{W_1(\mu_{\mathscr{X}_k}, \Pi\pmu_k^{(N)})}\\ &\leq \E{W_2(\mu_{\mathscr{X}_k}, \mu_{\widetilde{\mathscr{X}}_k})} + \E{W_1(\mu_{\widetilde{\mathscr{X}}_k}, \E{\mu_{\widetilde{\mathscr{X}}_k}})}\\
&\qquad +\frac{1}{N}\sum_{i=1}^N \E{W_2(\mu_k^i, \Pi\pmu_{k-\ell}^{(N)})} + \sum_{j=0}^{\ell-1} \E{W_2(\Pi\pmu_{k-j-1}^{(N)}, \Pi\pmu_{k-j}^{(N)})} \\
&\leq \frac{r+1}{k-\ell+1} \mathfrak{w}_\ell^\mu + C_W\sqrt{\E{\mathfrak{m}_2(\mu_{\widetilde{\mathscr{X}}_{k-\ell}})}}\cdot N^{-1/d_{\XX}}\\
&\qquad +\frac{1}{N}\sum_{i=1}^N \sqrt{\frac{4}{\smash[b]{\alpha_\mu}}(\mathfrak{K}^\mu \lVert X_{k-\ell}^i\rVert^2 +\mathfrak{L}^\mu)} + \frac{2M_\mu^2}{\alpha_\mu\lambda} \sum_{j=0}^{\ell-1} \frac{\beta_{k-j}}{B_{k-j}}\\
&\leq \frac{r+1}{k-\ell+1} \mathfrak{w}_\ell^\mu + C_W \sqrt{\mathfrak{p}^\mu+\mathfrak{s}^\mu}\cdot N^{-1/d_{\XX}} +\sqrt{\frac{4}{\alpha_\mu}(\mathfrak{K}^\mu(\mathfrak{p}^\mu+\mathfrak{s}^\mu) + \mathfrak{L}^\mu)} + \frac{2M_\mu}{\alpha_\mu\lambda} \frac{(r+1)\ell}{k-\ell+1}\\
& = \frac{r+1}{k}C_1'(\eta) + C_2'\sqrt{\eta} + C_3'N^{-1/d_{\XX}}.
\end{align*}
\end{proof}

We now give the desired bound for the expected Wasserstein distance to the MNE. Note the effect of dimensionality compared to Theorem \ref{thm-agdiscrete}.

\begin{thm}[Variance of discretized MFL-AG]\label{thm-outside}
If $\eta\leq\bar{\eta}$ and $\beta_k=k^r$ with $r>0$, the MFL-AG discrete update satisfies for all $K,N$,
\begin{equation*}
\E{W_1(\mu_{\overline{\mathscr{X}}_K}, \mu^*)}^2 + \E{W_1(\nu_{\overline{\mathscr{Y}}_K}, \nu^*)}^2 \leq \frac{(r+1)^2}{rK} \widetilde{C}_1(\eta) +\widetilde{C}_2\sqrt{\eta} +\widetilde{C}_3 N^{-1/d_{\XX}\vee d_{\YY}}
\end{equation*}
with similar constants as in Proposition \ref{thm-alternative}. When $r=0$, the first term is replaced by $O(\log K/K)$. If $d_{\XX}\vee d_{\YY}= 2$, the third term is replaced by $O(N^{-1/2}(\log N)^2)$; if $d_{\XX}=d_{\YY}=1$, by $O(N^{-1/2})$.
\end{thm}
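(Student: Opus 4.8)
The plan is to reproduce the three–step structure of the proof of Theorem~\ref{thm-agdiscrete}, but to carry out every estimate pathwise and to substitute Proposition~\ref{thm-altalt} (whose particle error is $N^{-1/d_{\XX}}$) wherever Proposition~\ref{thm-alternative} was used before. Write $\overline{\Pi}\pmu_K=\frac{1}{B_K}\sum_{j=1}^K\beta_j\Pi\pmu_j^{(N)}$ and $\overline{\Pi}\pnu_K$ analogously. The triangle inequality gives
\begin{equation*}
W_1(\mu_{\overline{\mathscr{X}}_K},\mu^*)\le W_1(\mu_{\overline{\mathscr{X}}_K},\overline{\Pi}\pmu_K)+W_1(\overline{\Pi}\pmu_K,\mu^*),
\end{equation*}
and likewise for $\nu$; taking expectations, applying $(a+b)^2\le 2a^2+2b^2$, and using Jensen together with $W_1\le W_2$ for the second piece, it suffices to bound $\E{W_1(\mu_{\overline{\mathscr{X}}_K},\overline{\Pi}\pmu_K)}^2+\E{W_1(\nu_{\overline{\mathscr{Y}}_K},\overline{\Pi}\pnu_K)}^2$ and $\E{W_2^2(\overline{\Pi}\pmu_K,\mu^*)}+\E{W_2^2(\overline{\Pi}\pnu_K,\nu^*)}$ separately.

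For the first sum I would use joint convexity of $W_1$, which yields $W_1(\mu_{\overline{\mathscr{X}}_K},\overline{\Pi}\pmu_K)\le\frac{1}{B_K}\sum_{j=1}^K\beta_j W_1(\mu_{\mathscr{X}_j},\Pi\pmu_j^{(N)})$, and then apply Proposition~\ref{thm-altalt} term by term:
\begin{equation*}
\E{W_1(\mu_{\overline{\mathscr{X}}_K},\overline{\Pi}\pmu_K)}\le\frac{1}{B_K}\sum_{j=1}^K\beta_j\Big(\tfrac{r+1}{j}C_1'(\eta)+C_2'\sqrt{\eta}+C_3'N^{-1/d_{\XX}}\Big)\le\Big(\tfrac{(r+1)^2}{rK}+O(K^{-2})\Big)C_1'(\eta)+C_2'\sqrt{\eta}+C_3'N^{-1/d_{\XX}},
\end{equation*}
where $\frac{1}{B_K}\sum_{j\le K}\beta_j/j\le\frac{r+1}{rK}+O(K^{-1-r})$ for $r>0$ (and $\le\frac{1+\log K}{K}$ for $r=0$), exactly as in the proof of Theorem~\ref{thm-agdiscrete}. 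Squaring and absorbing the resulting squares and cross terms using $K,N\ge 1$ and $\eta\le\bar{\eta}$ (at the cost of a possibly worse $\eta$-dependence folded into $\widetilde{C}_1$) leaves a bound of the required form.

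The harder part is the proximal-to-MNE sum. By $W_1\le W_2$, Talagrand's inequality for $\mu^*,\nu^*$ (which satisfy the LSI with constants $\alpha_\mu,\alpha_\nu$ by Proposition~\ref{thm-aglsi}, since $\norm{\frac{\delta\!\LL}{\delta\mu}(\mu^*,\nu^*)}_{\Lip}\le M_\mu$ and symmetrically), and Lemma~\ref{thm-sandwichlu},
\begin{equation*}
\E{W_2^2(\overline{\Pi}\pmu_K,\mu^*)}+\E{W_2^2(\overline{\Pi}\pnu_K,\nu^*)}\le\Big(\tfrac{2}{\alpha_\mu\lambda}\vee\tfrac{2}{\alpha_\nu\lambda}\Big)\,\E{\NI(\overline{\Pi}\pmu_K,\overline{\Pi}\pnu_K)},
\end{equation*}
so everything reduces to bounding the expected NI error of the \emph{random} proximal averages. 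To do so I would revisit Steps~1--2 of the proof of Theorem~\ref{thm-agdiscrete} but keep the maximum inside the expectation: restricting the maximum defining $\mathfrak{N}(\mu_{\overline{\mathscr{X}}_K},\nu_{\overline{\mathscr{Y}}_K})$ to product measures, using the product structure of $\pmu_j^{(N)},\pnu_j^{(N)}$, and invoking concavity/convexity of $\LL$ in its two slots together with convexity of $\KL(\cdot\Vert\rho)$, one obtains the pathwise bound $\mathfrak{N}\ge\max_{\mu,\nu}\big[-\LL(\mu,\nu_{\overline{\mathscr{Y}}_K})+\LL(\mu_{\overline{\mathscr{X}}_K},\nu)+\lambda\KL(\overline{\Pi}\pmu_K\Vert\rho^\mu)+\lambda\KL(\overline{\Pi}\pnu_K\Vert\rho^\nu)-\lambda\KL(\mu\Vert\rho^\mu)-\lambda\KL(\nu\Vert\rho^\nu)\big]$; replacing $\nu_{\overline{\mathscr{Y}}_K}\mapsto\overline{\Pi}\pnu_K$ and $\mu_{\overline{\mathscr{X}}_K}\mapsto\overline{\Pi}\pmu_K$ inside the bracket costs at most $M_\nu W_1(\nu_{\overline{\mathscr{Y}}_K},\overline{\Pi}\pnu_K)+M_\mu W_1(\mu_{\overline{\mathscr{X}}_K},\overline{\Pi}\pmu_K)$ by the $W_1$-Lipschitzness of $\LL$ (Assumption~\ref{ass-Lreg}), and the leftover bracket is exactly $\NI(\overline{\Pi}\pmu_K,\overline{\Pi}\pnu_K)$. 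Hence
\begin{equation*}
\E{\NI(\overline{\Pi}\pmu_K,\overline{\Pi}\pnu_K)}\le\E{\mathfrak{N}(\mu_{\overline{\mathscr{X}}_K},\nu_{\overline{\mathscr{Y}}_K})}+M_\mu\E{W_1(\mu_{\overline{\mathscr{X}}_K},\overline{\Pi}\pmu_K)}+M_\nu\E{W_1(\nu_{\overline{\mathscr{Y}}_K},\overline{\Pi}\pnu_K)},
\end{equation*}
where $\E{\mathfrak{N}}$ is controlled by Step~1 of the proof of Theorem~\ref{thm-agdiscrete} (giving $O(1/K)+O(\sqrt{\eta})+O(1/\sqrt{N})$) and the two $W_1$ expectations by the convexity$+$Proposition~\ref{thm-altalt} argument above. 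Since $1/\sqrt{N}\le N^{-1/(d_{\XX}\vee d_{\YY})}$ whenever $d_{\XX}\vee d_{\YY}\ge 2$, all particle contributions collapse to $O(N^{-1/(d_{\XX}\vee d_{\YY})})$ (and to $O(N^{-1/2})$, resp.\ $O(N^{-1/2}(\log N)^2)$, in the one- and two-dimensional cases via the corresponding cases of Theorem~\ref{thm-empirical} inside Proposition~\ref{thm-altalt}); the $r=0$ case carries the $\frac{1+\log K}{K}$ modification. Combining with the reductions of the first two paragraphs and collecting constants finishes the proof.

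\textbf{Main obstacle.} The crux is the pathwise inequality $\NI(\overline{\Pi}\pmu_K,\overline{\Pi}\pnu_K)\le\mathfrak{N}+M_\mu W_1(\cdots)+M_\nu W_1(\cdots)$. In the proof of Theorem~\ref{thm-agdiscrete} the NI error is taken of the \emph{deterministic} measures $\E{\overline{\Pi}\pmu_K},\E{\overline{\Pi}\pnu_K}$ by pushing the maximum through the expectation, which discards all variance information. Keeping the maximum inside forces the empirical averages to appear where proximal averages are wanted, and reconciling the two is precisely what Proposition~\ref{thm-altalt} accomplishes — at the price of the unavoidable $N^{-1/(d_{\XX}\vee d_{\YY})}$ rate of Theorem~\ref{thm-empirical}, which is why the particle term here is genuinely worse than in Theorem~\ref{thm-agdiscrete}.
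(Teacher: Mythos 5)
Your proposal is correct and follows essentially the same route as the paper: both keep the maximum inside the expectation in Step~2 of the proof of Theorem~\ref{thm-agdiscrete}, pay a $W_1$ penalty to exchange the empirical measures for proximal pushforwards, and control that penalty with Proposition~\ref{thm-altalt} rather than Proposition~\ref{thm-alternative} (which is exactly where the $N^{-1/(d_{\XX}\vee d_{\YY})}$ rate enters), before applying Lemma~\ref{thm-sandwichlu}, Talagrand's inequality, and the triangle inequality with one more use of Proposition~\ref{thm-altalt}. The only cosmetic differences are the order in which you triangulate and that you justify the exchange cost by $W_1$-Lipschitzness of $\LL$ while the paper uses convexity of $\LL$ together with boundedness of $\nabla_x\frac{\delta\!\LL}{\delta\mu}$, which yield identical estimates under Assumption~\ref{ass-Lreg}.
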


\begin{proof}
Note that by convexity,
\begin{equation*}
\LL(\mu_{\mathscr{X}_k}, \nu) - \LL(\Pi\pmu_k^{(N)}, \nu) \geq \int_{\XX} \frac{\delta\!\LL}{\delta\mu}(\Pi\pmu_k^{(N)}, \nu)(\mu_{\mathscr{X}_k} - \Pi\pmu_k^{(N)})(\rd x) \geq -M_\mu W_1(\mu_{\mathscr{X}_k}, \Pi\pmu_k^{(N)}).
\end{equation*}
We can modify Step 2 of Section \ref{appendix-discretemain} using Proposition \ref{thm-altalt} as follows.
\begin{align*}
& \EEbig{(\mathscr{X},\mathscr{Y})_{1:k}}{\mathfrak{N}(\mu_{\overline{\mathscr{X}}_k}, \nu_{\overline{\mathscr{Y}}_k})}\\
&\geq \mathbb{E}_{(\mathscr{X},\mathscr{Y})_{1:k}}\Bigg[\max_{\mu,\nu} -\frac{1}{B_k}\sum_{j=1}^k \beta_j \!\LL(\mu, \nu_{\mathscr{Y}_j}) - \lambda \KL(\mu\Vert\rho^\mu) + \frac{\lambda}{B_k} \sum_{j=1}^k \beta_j \KL(\Pi\pnu_j^{(N)}\!\Vert\rho^\nu)\\
&\qquad + \frac{1}{B_k}\sum_{j=1}^k \beta_j \!\LL(\mu_{\mathscr{X}_j}, \nu) - \lambda  \KL(\nu\Vert\rho^\nu) +\frac{\lambda}{B_k} \sum_{j=1}^k \beta_j \KL(\Pi\pmu_j^{(N)}\!\Vert\rho^\mu)\Bigg]\\
&\geq \mathbb{E}_{(\mathscr{X},\mathscr{Y})_{1:k}}\Bigg[\max_{\mu,\nu} -\frac{1}{B_k}\sum_{j=1}^k \beta_j \!\LL(\mu, \Pi\pnu_j^{(N)}) - \lambda \KL(\mu\Vert\rho^\mu) + \frac{\lambda}{B_k} \sum_{j=1}^k \beta_j \KL(\Pi\pnu_j^{(N)}\!\Vert\rho^\nu)\\
&\qquad + \frac{1}{B_k}\sum_{j=1}^k \beta_j \!\LL(\Pi\pmu_j^{(N)}, \nu) - \lambda  \KL(\nu\Vert\rho^\nu) +\frac{\lambda}{B_k} \sum_{j=1}^k \beta_j \KL(\Pi\pmu_j^{(N)}\!\Vert\rho^\mu)\\
&\qquad -\frac{M_\mu}{B_k}\sum_{j=1}^k\beta_j {W_1(\mu_{\mathscr{X}_k}, \Pi\pmu_k^{(N)})} -\frac{M_\nu}{B_k}\sum_{j=1}^k\beta_j {W_1(\nu_{\mathscr{Y}_k}, \Pi\pnu_k^{(N)})}\Bigg]\\
&\geq \mathbb{E}_{(\mathscr{X},\mathscr{Y})_{1:k}} \Big[\max_{\mu,\nu} -\LL(\mu, \overline{\Pi}\pmu_k) - \lambda \KL(\mu\Vert\rho^\mu) + \lambda \KL(\overline{\Pi}\pnu_k \!\Vert\rho^\nu)\\
&\qquad +\LL(\overline{\Pi}\pmu_k, \nu) -\lambda \KL(\nu\Vert\rho^\nu) + \lambda \KL(\overline{\Pi}\pmu_k \!\Vert\rho^\mu)\Big]\\
&\qquad -\frac{M_\mu}{B_k}\sum_{j=1}^k\beta_j \mathbb{E}_{(\mathscr{X},\mathscr{Y})_{1:k}}[{W_1(\mu_{\mathscr{X}_k}, \Pi\pmu_k^{(N)})}] -\frac{M_\nu}{B_k}\sum_{j=1}^k\beta_j \mathbb{E}_{(\mathscr{X},\mathscr{Y})_{1:k}}[{W_1(\nu_{\mathscr{Y}_k}, \Pi\pnu_k^{(N)})}]\\
&\geq \EEbig{(\mathscr{X},\mathscr{Y})_{1:k}}{\NI(\overline{\Pi}\pmu_k, \overline{\Pi}\pnu_k)} - \frac{M_\mu}{B_k}\sum_{j=1}^k\beta_j \left(\frac{r+1}{j}C_1'(\eta) + C_2'\sqrt{\eta} + C_3'N^{-1/d_{\XX}\vee d_{\YY}}\right).
\end{align*}
Combining with Step 1 and Lemma \ref{thm-sandwichlu} gives that
\begin{equation*}
\Ebig{\KL(\overline{\Pi}\pmu_k\!\Vert\mu^*) + \KL(\overline{\Pi}\pnu_k\!\Vert\nu^*)} \leq \frac{(r+1)^2}{rk}C_1''(\eta) + C_2''\sqrt{\eta} +C_3'' N^{-1/d_{\XX}\vee d_{\YY}}.
\end{equation*}
Finally, we convert back to a Wasserstein distance bound by invoking Talagrand's inequality and Proposition \ref{thm-altalt} again:
\begin{align*}
    \E{W_1(\mu_{\overline{\mathscr{X}}_k}, \mu^*)}^2 \leq 2\bigg(\frac{1}{B_k}\sum_{j=1}^k\beta_j \E{W_1(\mu_{\mathscr{X}_j}, \Pi\pmu_j^{(N)})}\bigg)^2 + \frac{4}{\alpha_\mu}\E{\KL(\overline{\Pi}\pmu_k\!\Vert \mu^*)}.
\end{align*}
This concludes the proof.
\end{proof}

\emph{Remark.} If we assume a higher degree of regularity so that all relevant distributions have finite fourth moments, say, then Theorem \ref{thm-empirical} actually holds for the 2-Wasserstein metric. Theorem \ref{thm-outside} can then be stated in terms of the 2-Wasserstein distance to the MNE, guaranteeing us slightly better control over the error compared to Proposition \ref{thm-alternative} which only allows a $W_1$ formulation.

\section{Convergence Analysis of MFL-ABR}

\subsection{Inner Loop Convergence}\label{appendix-abrinner}

The convergence of the decoupled inner loop is a simple consequence of the convex analysis for single optimization \citep{Nitanda22}; we reproduce the proof here for completeness.

\begin{prop}[Convergence of MFL-ABR inner loop]\label{thm-abrinner}
Under Assumptions \ref{ass-rhoreg} and \ref{ass-Lregbr},
\begin{equation*}
\KL(\mu_{k,\tau}^\dagger \Vert\pmu_k) \leq \frac{2C_\mu}{\lambda}\exp(-2\alpha\lambda\tau), \quad\KL(\nu_{k,\tau}^\dagger \Vert \pnu_k) \leq \frac{2C_\nu}{\lambda} \exp(-2\alpha\lambda\tau).
\end{equation*}
\end{prop}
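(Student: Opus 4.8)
The plan is to run the textbook entropy-dissipation argument for a single Langevin optimization step and then control the initial relative entropy, exploiting that the inner loop has a \emph{fixed} drift whose unique invariant measure is exactly $\pmu_k$. Writing $\mu_t^\dagger=\Law(X_t^\dagger)$, I would first note that the Fokker--Planck equation for the inner SDE is the Wasserstein gradient flow $\partial_t\mu_t^\dagger=\lambda\nabla_x\cdot\big(\mu_t^\dagger\,\nabla_x\log(\mu_t^\dagger/\pmu_k)\big)$, so that $\partial_t\KL(\mu_t^\dagger\Vert\pmu_k)=-\lambda\int_{\XX}\lVert\nabla_x\log(\mu_t^\dagger/\pmu_k)\rVert^2\,\mu_t^\dagger(\rd x)$, i.e.\ minus $\lambda$ times the relative Fisher information of $\mu_t^\dagger$ with respect to $\pmu_k$.

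Next I would invoke the log-Sobolev inequality for $\pmu_k$: since $\lVert\frac{\delta\!\LL}{\delta\mu}(\mu_k,\nu_k)\rVert_\infty\leq C_\mu$ by Assumption \ref{ass-Lregbr} and $\rho^\mu=\exp(-U^\mu)$ satisfies the LSI with constant $r_\mu$ by Proposition \ref{thm-bakry}, the Holley--Stroock bound (Proposition \ref{thm-holley}) gives that $\pmu_k\propto\rho^\mu\exp(-\frac1\lambda\frac{\delta\!\LL}{\delta\mu}(\mu_k,\nu_k))$ satisfies the LSI with constant at least $r_\mu e^{-4C_\mu/\lambda}\geq\alpha$. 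Plugging this into the dissipation identity gives $\partial_t\KL(\mu_t^\dagger\Vert\pmu_k)\leq-2\alpha\lambda\,\KL(\mu_t^\dagger\Vert\pmu_k)$, and Gr\"onwall's inequality then yields $\KL(\mu_{k,\tau}^\dagger\Vert\pmu_k)\leq e^{-2\alpha\lambda\tau}\,\KL(\rho^\mu\Vert\pmu_k)$ since $\mu_0^\dagger=\rho^\mu$.

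The remaining step is to bound the initial gap $\KL(\rho^\mu\Vert\pmu_k)$. Writing $\pmu_k=Z^{-1}\rho^\mu\exp(-\frac1\lambda\frac{\delta\!\LL}{\delta\mu}(\mu_k,\nu_k))$ with $Z=\int_{\XX}\exp(-\frac1\lambda\frac{\delta\!\LL}{\delta\mu}(\mu_k,\nu_k))\,\rho^\mu(\rd x)$, one has $\KL(\rho^\mu\Vert\pmu_k)=\log Z+\frac1\lambda\int_{\XX}\frac{\delta\!\LL}{\delta\mu}(\mu_k,\nu_k)\,\rho^\mu(\rd x)$, and both summands are at most $C_\mu/\lambda$ by the uniform bound on $\frac{\delta\!\LL}{\delta\mu}$ (an additive renormalization of the functional derivative is immaterial, as it cancels between $\log Z$ and the linear term). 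Hence $\KL(\rho^\mu\Vert\pmu_k)\leq 2C_\mu/\lambda$, which combined with the previous display gives the claimed estimate; the bound for $\nu$ is verbatim with $(C_\mu,\pmu_k,U^\mu,r_\mu)$ replaced by $(C_\nu,\pnu_k,U^\nu,r_\nu)$.

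The only point requiring genuine care --- though it is routine in this setting --- is making the entropy-dissipation identity rigorous: interchanging $\partial_t$ with the spatial integral and checking that $\KL(\mu_t^\dagger\Vert\pmu_k)$ stays finite and that $\mu_t^\dagger$ is absolutely continuous along the flow. This holds for the usual reasons, since the drift $-\nabla_x(\frac1\lambda\frac{\delta\!\LL}{\delta\mu}(\mu_k,\nu_k)+U^\mu)$ has globally Lipschitz gradient and $\rho^\mu$ is strongly log-concave, so $\mu_t^\dagger$ admits a smooth positive density with finite second moment and finite Fisher information for $t>0$; the details mirror the single-optimization analysis of \citet{Nitanda22}, and alternatively one may carry out the whole computation on the time-discretized inner loop and pass to the limit, as in the one-step analyses elsewhere in the appendix.
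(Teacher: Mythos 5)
Your proof is correct and follows essentially the same route as the paper: derive the entropy-dissipation identity from the Fokker--Planck equation, apply the LSI for $\pmu_k$ (obtained via Holley--Stroock from the $\lVert\frac{\delta\!\LL}{\delta\mu}\rVert_\infty\leq C_\mu$ bound), invoke Gr\"onwall, and bound the initial gap $\KL(\rho^\mu\Vert\pmu_k)\leq 2C_\mu/\lambda$ using the uniform boundedness of the functional derivative. The only cosmetic difference is that the paper packages the initial-gap bound through Lemma \ref{thm-bddclass} ($\pmu_k\in\mathcal{F}_2^\mu$, i.e.\ $\lVert\log(\pmu_k/\rho^\mu)\rVert_\infty\leq 2C_\mu/\lambda$, then $\KL(\rho^\mu\Vert\pmu_k)\leq\lVert\log(\rho^\mu/\pmu_k)\rVert_\infty$), while you compute the KL divergence directly as $\log Z+\frac{1}{\lambda}\int\frac{\delta\!\LL}{\delta\mu}\rho^\mu$ --- the two estimates are equivalent.
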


\begin{proof}
For any $0\leq t\leq \tau$, the KL gap converges as
\begin{align*}
\frac{\rd}{\rd t}\KL(\mu_{k,t}^\dagger\Vert\pmu_k) &= \int_{\XX} \log\frac{\mu_{k,t}^\dagger}{\pmu_k} \partial_t\mu_{k,t}^\dagger(\rd x) = \lambda \int_{\XX} \log\frac{\mu_{k,t}^\dagger}{\pmu_k}\nabla_x\cdot \bigg(\mu_{k,t}^\dagger\nabla_x \log\frac{\mu_{k,t}^\dagger}{\pmu_k}\bigg) (\rd x) \\
&= -\lambda \int_{\XX} \bigg\lVert\nabla_x\log\frac{\mu_{k,t}^\dagger}{\pmu_k}\bigg\rVert^2 \mu_{k,t}^\dagger(\rd x) \leq -2\alpha\lambda \cdot\KL(\mu_{k,t}^\dagger\Vert\pmu_k)
\end{align*}
by substituting the Fokker-Planck equation for $\mu_{k,t}^\dagger$ and applying the LSI for $\pmu_k$ via Theorem \ref{thm-ottovillani}. Invoking Gronwall's lemma and Lemma \ref{thm-bddclass} below for $\pmu_k$ concludes the proof.
\end{proof}

The following result gives uniform bounds to control the magnitude of perturbations.

\begin{lemma}\label{thm-bddclass}
For any $w>0$, define the class
\begin{equation*}
\mathcal{F}_w^\mu := \left\{ \mu\in\PP(\XX) : \left\lVert \log\frac{\mu}{\rho^\mu}\right\rVert_\infty \leq \frac{wC_\mu}{\lambda} \right\}.
\end{equation*}
Then under Assumption \ref{ass-Lregbr}, the distribution $\pmu_k \in \mathcal{F}_2^\mu$ and $\mu_k, \mu_{k,\tau}^\dagger\in \mathcal{F}_4^\mu$.
\end{lemma}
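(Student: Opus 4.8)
The claim splits into three steps, only the middle of which requires anything beyond a direct computation.

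\emph{Step 1 ($\pmu_k\in\mathcal{F}_2^\mu$).} By definition $\pmu_k = Z_k^{-1}\rho^\mu\exp\!\big(-\tfrac1\lambda\tfrac{\delta \!\LL}{\delta\mu}(\mu_k,\nu_k)\big)$ with $Z_k=\int_\XX \exp\!\big(-\tfrac1\lambda\tfrac{\delta \!\LL}{\delta\mu}(\mu_k,\nu_k)\big)\rho^\mu(\rd x)$. Since $\norm{\tfrac{\delta \!\LL}{\delta\mu}(\mu_k,\nu_k)}_\infty\le C_\mu$ by Assumption \ref{ass-Lregbr} and $\rho^\mu$ is a probability measure, $Z_k\in[e^{-C_\mu/\lambda},e^{C_\mu/\lambda}]$, hence $\abs{\log Z_k}\le C_\mu/\lambda$. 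Therefore $\norm{\log(\pmu_k/\rho^\mu)}_\infty\le \tfrac1\lambda\norm{\tfrac{\delta \!\LL}{\delta\mu}(\mu_k,\nu_k)}_\infty+\abs{\log Z_k}\le 2C_\mu/\lambda$. Note this bound is uniform over all $\mu_k,\nu_k\in\PP(\XX)\times\PP(\YY)$.

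\emph{Step 2 ($\mu_{k,\tau}^\dagger\in\mathcal{F}_4^\mu$).} The inner-loop SDE is a reversible overdamped Langevin diffusion whose invariant law is exactly $\pmu_k$; equivalently $\mu_{k,t}^\dagger$ solves the Fokker--Planck equation $\partial_t\mu_{k,t}^\dagger = \lambda\nabla_x\cdot\big(\mu_{k,t}^\dagger\nabla_x\log(\mu_{k,t}^\dagger/\pmu_k)\big)$. Let $(P_t^k)_{t\ge 0}$ denote the associated Markov semigroup. Since $\rho^\mu$ and $\pmu_k$ are mutually absolutely continuous with bounded density ratio (Step 1), reversibility yields $\rd\mu_{k,t}^\dagger/\rd\pmu_k = P_t^k\big(\rd\mu_{k,0}^\dagger/\rd\pmu_k\big) = P_t^k\big(\rd\rho^\mu/\rd\pmu_k\big)$. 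As $P_t^k$ is a positivity-preserving contraction with $P_t^k\mathbf{1}=\mathbf{1}$, applying it to $g-\inf g\ge 0$ and $\sup g-g\ge 0$ gives $\inf_\XX g\le P_t^k g\le \sup_\XX g$ pointwise for bounded $g$; hence $t\mapsto \norm{\log(\mu_{k,t}^\dagger/\pmu_k)}_\infty$ is non-increasing, so
\[
\norm{\log(\mu_{k,\tau}^\dagger/\pmu_k)}_\infty \le \norm{\log(\rho^\mu/\pmu_k)}_\infty \le 2C_\mu/\lambda .
\]
(The same monotonicity also follows from the parabolic maximum principle applied to $h_t:=\mu_{k,t}^\dagger/\pmu_k$, which satisfies the weighted heat equation $\pmu_k\,\partial_t h_t=\lambda\nabla_x\cdot(\pmu_k\nabla_x h_t)$.) Combining with Step 1 and the triangle inequality for log-ratios, $\norm{\log(\mu_{k,\tau}^\dagger/\rho^\mu)}_\infty\le 4C_\mu/\lambda$, i.e.\ $\mu_{k,\tau}^\dagger\in\mathcal{F}_4^\mu$.

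\emph{Step 3 ($\mu_k\in\mathcal{F}_4^\mu$).} The set $\mathcal{F}_4^\mu=\{\mu:e^{-4C_\mu/\lambda}\rho^\mu\le\mu\le e^{4C_\mu/\lambda}\rho^\mu\text{ pointwise}\}$ is convex. Induct on $k$: $\mu_0=\rho^\mu\in\mathcal{F}_4^\mu$ trivially, and if $\mu_k\in\mathcal{F}_4^\mu$ then, since $\mu_{k,\tau}^\dagger\in\mathcal{F}_4^\mu$ by Step 2 (whose conclusion holds for every $k$) and $\beta\in(0,1)$, the outer update $\mu_{k+1}=(1-\beta)\mu_k+\beta\mu_{k,\tau}^\dagger$ is again in $\mathcal{F}_4^\mu$. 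This closes the induction. The statements for the $\nu$-variable are proved identically with $C_\mu,\rho^\mu,U^\mu$ replaced by $C_\nu,\rho^\nu,U^\nu$. The one point requiring care is the $L^\infty$-contraction of the density ratio $\mu_{k,t}^\dagger/\pmu_k$ along the inner Langevin flow in Step 2; once this is in hand, everything else is immediate, and it is the only place where the structure of the dynamics (rather than mere convexity of $\mathcal{F}_w^\mu$) is used.
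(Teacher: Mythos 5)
Your proof is correct and follows essentially the same route as the paper's: a direct computation for $\pmu_k$, an $L^\infty$-contraction of the density ratio $h_t=\mu_{k,t}^\dagger/\pmu_k$ along the inner Langevin flow, and convexity of $\mathcal{F}_4^\mu$ to close the outer update. The only cosmetic difference is in Step 2: the paper derives $h_t=P_t^k h_0$ via the backward-Kolmogorov/Feynman--Kac representation of the PDE $\partial_t h_t=\mathbf{L}^\dagger h_t$, whereas you obtain the identical formula from reversibility of the diffusion and then invoke positivity-preservation of the Markov semigroup; these are two phrasings of the same fact, and your inductive treatment of Step 3 is if anything slightly cleaner than the paper's geometric-sum expansion.
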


\begin{proof}
For $\pmu_k$, the exponential term and the normalizing integral
\begin{equation*}
\exp\left(-\frac{1}{\lambda}\frac{\delta \!\LL}{\delta\mu}(\mu_k,\nu_k)\right), \quad Z_k^\mu = \int_{\XX}\rho^\mu \exp\left(-\frac{1}{\lambda}\frac{\delta \!\LL}{\delta\mu}(\mu_k,\nu_k)\right) \rd x
\end{equation*}
are both bounded by $T^\mu/\lambda$, proving the assertion. For $\mu_{k,\tau}^\dagger$, define the density ratio $h_t = \mu_{k,t}^\dagger / \pmu_k$. The Fokker-Planck equation for $\mu_{k,t}^\dagger$ reads
\begin{equation*}
\partial_t\mu_{k,t}^\dagger = \nabla_x\cdot\left(\mu_{k,t}^\dagger \nabla_x\left(\frac{\delta \!\LL}{\delta\mu}(\mu_k,\nu_k) +\lambda\nabla_x U^\mu\right)\right) +\lambda \Delta_x\mu_{k,t}^\dagger = \lambda\nabla_x\cdot\left(\mu_{k,t}^\dagger\nabla_x\log\frac{\mu_{k,t}^\dagger}{\pmu_k}\right),
\end{equation*}
so that the parabolic partial differential equation satisfied by $h_t$ is derived as
\begin{align*}
\partial_t h_t &= \pmu_k^{-1}\partial_t \mu_{k,t}^\dagger\\
&= \lambda \pmu_k^{-1} \nabla_x\cdot\left(\pmu_k h_t\nabla_x \log h_t\right)\\
&=\lambda \nabla_x\log\pmu_k \cdot \nabla_x h_t + \lambda \Delta h_t\\
& = -\nabla_x \left(\frac{\delta \!\LL}{\delta\mu}(\mu_k,\nu_k) +\lambda\nabla_x U^\mu\right) \cdot \nabla_x h_t + \lambda \Delta h_t\\
& = \mathbf{L}^\dagger h_t,
\end{align*}
where $\mathbf{L}^\dagger$ is the infinitesimal generator for the stochastic process $X_t^\dagger$. Hence by the Feynman-Kac formula, we may write for any $t\in [0,\tau]$
\begin{equation*}
h_t(x) = \mathbb{E}^x [h_0(X_t^\dagger)] = \mathbb{E}^x \left[\frac{\rho^\mu}{\pmu_k}(X_t)\right].
\end{equation*}
Since $\lVert\log(\pmu_k/\rho^\mu)\rVert_\infty \leq 2C_\mu/\lambda$ as discussed above, we infer that $\norm{h_\tau}\leq 2C_\mu/\lambda$ and therefore
\begin{equation*}
\bigg\lVert\log\frac{\mu_{k,\tau}^\dagger}{\rho^\mu}\bigg\rVert_\infty \leq \bigg\lVert\log\frac{\mu_{k,\tau}^\dagger}{\pmu_k}\bigg\rVert_\infty + \bigg\lVert\log\frac{\pmu_k}{\rho^\mu}\bigg\rVert_\infty \leq\frac{4C_\mu}{\lambda},
\end{equation*}
i.e. $\mu_{k,\tau}^\dagger\in \mathcal{F}_\infty^\mu$ for all $k$. Finally, since $\mathcal{F}_\infty^\mu$ is closed under linear combinations in $\PP(\XX)$ we conclude that
\begin{equation*}
    \mu_k = \beta\mu_{k,\tau}^\dagger + \beta(1-\beta)\mu_{k-1,\tau}^\dagger + \cdots \beta(1-\beta)^k \mu_{0,\tau}^\dagger \in \mathcal{F}_\infty^\mu.
\end{equation*}
\end{proof}


\subsection{Proof of Theorem \ref{thm-abrconv}}\label{appendix-abrflow}

We perform one-step analysis of the outer loop by setting for $0\leq s\leq 1$
\begin{equation*}
\mu(s) = (1-\beta s)\mu_k +\beta s\mu_{k,\tau}^\dagger,\quad \nu(s) = (1-\beta s)\nu_k +\beta s\nu_{k,\tau}^\dagger,
\end{equation*}
so that $\mu(0)=\mu_k$, $\mu(1)=\mu_{k+1}$ and $\nu(0)=\nu_k$, $\nu(1)=\nu_{k+1}$. We track the KL divergence to the interpolated proximal distributions defined as
\begin{equation*}
\pmu(s) =\frac{\rho^\mu}{Z^\mu(s)}\exp\left(-\frac{1}{\lambda}\frac{\delta \!\LL}{\delta\mu}(\mu(s),\nu(s))\right), \quad \pnu(s) =\frac{\rho^\nu}{Z^\nu(s)}\exp\left(\frac{1}{\lambda}\frac{\delta \!\LL}{\delta\nu}(\mu(s),\nu(s))\right).
\end{equation*}
Note that the second order bounds in Assumption \ref{ass-Lregbr} immediately imply the following Lipschitz property in TV distance,
\begin{equation*}
\norm{\frac{\delta \!\LL}{\delta\mu}(\mu,\nu) - \frac{\delta \!\LL}{\delta\mu}(\mu',\nu')}_\infty \leq 2C_{\mu\mu} \TV(\mu,\mu')+ 2C_{\mu\nu}\TV(\nu,\nu').
\end{equation*}
Similarly to \citet{Lascu23}, Lemma A.2 we can then prove that
\begin{align*}
&\TV(\pmu_k, \pmu(s))\\
&\leq \frac{1}{2\lambda}\left(\exp\left(\frac{C_\mu}{\lambda}\right) + \exp\left(\frac{2C_\mu}{\lambda}\right)\right) \left(2C_{\mu\mu}\TV(\mu_k, \mu(s)) + 2C_{\mu\nu}\TV(\nu_k, \nu(s))\right) \\
&\leq\beta s\mathfrak{t}^\mu,
\end{align*}
where we have written
\begin{equation*}
\mathfrak{t}^\mu:= \frac{C_{\mu\mu}+C_{\mu\nu}}{\lambda} \left(\exp\left(\frac{C_\mu}{\lambda}\right) + \exp\left(\frac{2C_\mu}{\lambda}\right)\right).
\end{equation*}
Also, $\pmu(s)\in\mathcal{F}_2^\mu$ and $\mu(s)\in\mathcal{F}_4^\mu$ by Lemma \ref{thm-bddclass} which implies $\lVert\log (\mu(s)/\pmu(s))\rVert_\infty \leq 6C_\mu/\lambda$.

Now the derivative of the KL gap of the max policy for any $0\leq s\leq 1$ is
\begin{equation*}
\frac{\rd}{\rd s}\KL(\mu(s)\Vert\pmu(s)) = \int_{\XX} \log\frac{\mu(s)}{\pmu(s)} \partial_s\mu(s)(\rd x) - \int_{\XX}\partial_s\log\pmu(s) \mu(s)(\rd x).
\end{equation*}
The first term can be decomposed as
\begin{align*}
&\int_{\XX} \log\frac{\mu(s)}{\pmu(s)} \partial_s\mu(s)(\rd x)\\
&= \beta\int_{\XX} \log\frac{\mu(s)}{\pmu(s)} (\mu_{k,\tau}^\dagger -\mu_k)(\rd x)\\
&= \beta\int_{\XX} \log\frac{\mu(s)}{\pmu(s)} (\pmu(s)-\mu(s) +\mu(s) -\mu_k + \mu_{k,\tau}^\dagger -\pmu_k+\pmu_k - \pmu(s))(\rd x)\\
&\leq -\beta\left(\KL(\mu(s)\Vert\pmu(s)) + \KL(\mu(s)\Vert\pmu(s))\right) + 2\beta^2 s\left\lVert\log\frac{\mu(s)}{\pmu(s)}\right\rVert_\infty \TV(\mu_{k,\tau}^\dagger,\mu_k)\\
&\qquad + \beta \left\lVert\log\frac{\mu(s)}{\pmu(s)}\right\rVert_\infty \sqrt{2\KL(\mu_{k,\tau}^\dagger\Vert \pmu_k)} + 2\beta \left\lVert\log\frac{\mu(s)}{\pmu(s)}\right\rVert_\infty \TV(\pmu_k, \pmu(s))\\
& \leq -\beta\KL(\mu(s)\Vert\pmu(s)) +\frac{12\beta C_\mu}{\lambda}\Bigg(2\beta s(\mathfrak{t}^\mu + 1) + \sqrt{\frac{C_\mu}{\lambda}} \exp(-\alpha\lambda\tau)\Bigg).
\end{align*}
by Proposition \ref{thm-abrinner}. For the second term, we may follow the derivations presented in Section 3 of \citet{Lascu23} with minimal modifications to obtain
\begin{align*}
&-\int_{\XX} \partial_s\log\pmu(s) \mu(s)(\rd x)\\
&= -\frac{\beta}{\lambda}\iint_{\XX\times\XX} \frac{\delta^2 \!\LL}{\delta\mu^2}(\mu(s),\nu(s),x,z) (\pmu(s)-\mu(s))(\rd x)(\pmu_k-\mu_k)(\rd z)\\
&\qquad + \frac{\beta}{\lambda}\iint_{\XX\times\YY} \frac{\delta^2 \!\LL}{\delta\mu\delta\nu} (\mu(s),\nu(s),x,w) (\pmu(s)-\mu(s))(\rd x)(\pnu_k-\nu_k)(\rd w).
\end{align*}
When $s=0$, the first integral is nonpositive due to convexity while the second integral cancels out when adding with the corresponding term for the KL gap of the max policy, which completes the argument in \citet{Lascu23}. Hence the remaining error we must control is
\begin{align*}
&-\frac{\beta}{\lambda}\iint_{\XX\times\XX} \frac{\delta^2 \!\LL}{\delta\mu^2}(\mu(s),\nu(s),x,z) (\pmu(s)-\pmu_k+\mu_k-\mu(s))(\rd x)(\pmu_k-\mu_k)(\rd z)\\
&\qquad + \frac{\beta}{\lambda}\iint_{\XX\times\YY} \frac{\delta^2 \!\LL}{\delta\mu\delta\nu} (\mu(s),\nu(s),x,w) (\pmu(s)-\pmu_k+\mu_k-\mu(s))(\rd x)(\pnu_k-\nu_k)(\rd w)\\
&\leq \frac{4\beta}{\lambda} (C_{\mu\mu}+C_{\mu\nu}) (\TV(\pmu(s),\pmu_k) + \TV(\mu_k,\mu(s)))\\
&\leq \frac{4\beta^2 s}{\lambda} (C_{\mu\mu}+C_{\mu\nu})(\mathfrak{t}^\mu +1).
\end{align*}
Adding everything up, we obtain
\begin{align*}
&\frac{\rd}{\rd s}\left(\KL(\mu(s)\Vert\pmu(s))+\KL(\nu(s)\Vert\pnu(s))\right)\\
&\leq -\beta \left(\KL(\mu(s)\Vert\pmu(s))+\KL(\nu(s)\Vert\pnu(s))\right)\\
&\qquad + \frac{12\beta C_\mu}{\lambda}\Bigg(2\beta(\mathfrak{t}^\mu + 1) + \sqrt{\frac{C_\mu}{\lambda}} \exp(-\alpha\lambda\tau)\Bigg) +\frac{4\beta^2}{\lambda} (C_{\mu\mu}+C_{\mu\nu})(\mathfrak{t}^\mu +1)\\
&\qquad + \frac{12\beta C_\nu}{\lambda}\Bigg(2\beta(\mathfrak{t}^\nu + 1) + \sqrt{\frac{C_\nu}{\lambda}} \exp(-\alpha\lambda\tau)\Bigg) + \frac{4\beta^2}{\lambda} (C_{\mu\nu}+C_{\nu\nu})(\mathfrak{t}^\nu +1).
\end{align*}
By applying Gronwall's lemma over $s\in [0,1]$ and iterating over $k$, we conclude that
\begin{align*}
&\KL(\mu_k\Vert\pmu_k)+\KL(\nu_k\Vert\pnu_k)\\ &\leq \frac{2(C_\mu+C_\nu)}{\lambda}\exp(-\beta k) + \frac{12}{\lambda^\frac{3}{2}}\left(C_\mu^\frac{3}{2}+C_\nu^\frac{3}{2}\right)\exp(-\alpha\lambda\tau)\\
&\qquad +\frac{4\beta}{\lambda}\left((6C_\mu+C_{\mu\mu}+C_{\mu\nu})(\mathfrak{t}^\mu+1) + (6C_\nu+C_{\mu\nu}+C_{\nu\nu})(\mathfrak{t}^\nu+1)\right).
\end{align*}
Finally, applying Lemma 3.4 of \citet{Lascu23} yields the suboptimality bound
\begin{align*}
\NI(\mu_k,\nu_k)\leq 2(C_\mu+C_\nu)\exp(-\beta k) + \frac{12}{\sqrt{\lambda}}\left(C_\mu^\frac{3}{2}+C_\nu^\frac{3}{2}\right)\exp(-\alpha\lambda\tau) + C\beta.
\end{align*}
Hence an $\epsilon$-MNE may be obtained in $k = O(\frac{1}{\epsilon}\log\frac{1}{\epsilon})$ outer loop iterations by taking $\beta = O(\epsilon)$ and $\tau = O(\log \frac{1}{\epsilon})$. \qed

\end{document}